\definecolor{orangebis}{rgb}{0.99,0.25,0.00}
\definecolor{greenbis}{rgb}{0.10,0.85,0.10}
\definecolor{bluebis}{rgb}{0.10,0.30,0.99}
\author{Hugo Vanneuville\thanks{Univ. Lyon 1, Institut Camille Jordan, 69100 Villeurbanne, France, supported by the ERC grant Liko No 676999}}
\title{Annealed scaling relations for Voronoi percolation}
\date{}
\theoremstyle{plain}
\newtheorem{thm}{Theorem}[section]
\newtheorem{prop}[thm]{Proposition}
\newtheorem{lem}[thm]{Lemma}
\newtheorem{cor}[thm]{Corollary}
\newtheorem{claim}[thm]{Claim}
\theoremstyle{definition}
\newtheorem{defi}[thm]{Definition}
\theoremstyle{remark}
\newtheorem{rem}[thm]{Remark}
\newcommand{\margin}[1]{\textcolor{magenta}{*}\marginpar[\textcolor{magenta} {  \raggedleft  \footnotesize  #1 }  ]{ \textcolor{magenta} { \raggedright  \footnotesize  #1 }  }}
\renewcommand{\margin}[1]{}
\newcommand{\N}{\mathbb{N}}
\newcommand{\R}{\mathbb{R}}
\newcommand{\Z}{\mathbb{Z}}
\newcommand{\half}{\mathbb{H}}
\newcommand{\diam}{\text{\textup{diam}}}
\newcommand{\Pro}{\mathbb{P}}
\newcommand{\E}{\mathbb{E}}
\newcommand{\T}{\mathbb{T}}
\newcommand{\Var}{\mathbb{V}\text{\textup{ar}}}
\newcommand{\Prob}{\text{\textup{\textbf{P}}}}
\newcommand{\Ex}{\text{\textup{\textbf{E}}}}
\newcommand{\Piv}{\text{\textup{\textbf{Piv}}}}
\newcommand{\arm}{\text{\textup{\textbf{A}}}}
\newcommand{\setS}{\text{\textup{\textbf{S}}}}
\newcommand{\un}{\mathds{1}}
\newcommand{\petito}[1]{o\mathopen{}\left(#1\right)}
\newcommand{\grandO}[1]{O\mathopen{}\left(#1\right)}
\newcommand{\cross}{\textup{\text{Cross}}}
\newcommand{\dense}{\textup{\text{Dense}}}
\newcommand{\qbc}{\textup{\text{QBC}}}
\newcommand{\gi}{\textup{\text{GI}}}
\newcommand{\gp}{\textup{\text{GP}}}
\newcommand{\qac}{\textup{\text{QAC}}}
\def\T{\mathbb{T}}
\newcommand{\cond}{\, \Big| \,}
\renewcommand{\textbf}[1]{\begingroup\bfseries\mathversion{bold}#1\endgroup}
\def\ext{\mathrm{ext}}
\def\diam{\mathrm{diam}}
\def\dist{\mathrm{dist}}
\def\E{\mathbb{E}} %{{\bf E}}
\def\<#1{\langle #1\rangle}
\def\bi{\begin{itemize}}  %USE\bi[WHATEVER]
\def\ei{\end{itemize}}
\def\bnum{\begin{enumerate}} % USE \bnum[i)] if want i), ii) .. OR \bnum[{\bf (a)}] etc ..!
\def\enum{\end{enumerate}}
\def\ni{\noindent}
\def\bf{\bfseries}
\numberwithin{equation}{section}
\begin{document}

\maketitle

\abstract{We prove annealed scaling relations for planar Voronoi percolation. To our knowledge, this is the first result of this kind for a continuum percolation model. We are mostly inspired by the proof of scaling relations for Bernoulli percolation by Kesten~\cite{kesten1987scaling}. Along the way, we show an annealed quasi-multiplicativity property by relying on the quenched box-crossing property proved by Ahlberg, Griffiths, Morris and Tassion~\cite{ahlberg2015quenched}. Intermediate results also include the study of quenched and annealed notions of pivotal events and the extension of the quenched box-crossing property of~\cite{ahlberg2015quenched} to the near-critical regime.}

\tableofcontents

\section{The model and the main result}

\subsection{Percolation on planar lattices}\label{ss.Z2}

Consider bond percolation on the square lattice $\Z^2$ or site percolation on the planar triangular lattice $\T$. In these models, each edge or site is open (respectively closed) with probability $p$ (respectively $1-p$) independently of the others. Let $\theta(p)$ be the probability that there is an infinite open path starting from $0$. It is well known (see for instance~\cite{grimmett1999percolation,bollobas2006percolation}) that there exists a critical point $p_c \in (0,1)$ such that
\begin{enumerate}
\item[i)] $\forall p \in [0,p_c)$, $\theta(p)=0 \,$,
\item[ii)] $\forall p \in (p_c,1]$, $\theta(p) > 0 \,$.
\end{enumerate}
It is a theorem by Kesten \cite{kesten1980critical} that $p_c=1/2$ for these two models. Moreover, it has been proved by Harris~\cite{harris1960lower} that $\theta(1/2)=0$. Let us say a little more about the behaviour of this model at and near the critical point: Thanks to the Russo-Seymour-Welsh (RSW) theory and the study of interfaces between open and dual paths, one can obtain the so-called quasi-multiplicativity property of arm events and derive estimates on ``pivotal events'', see~\cite{kesten1987scaling,werner2007lectures,nolin2008near,schramm2010quantitative,manolescu2012universality}. Both are important tools in order to
\bi
\item[(a)] obtain the \textbf{scaling relations} proved by Kesten (see~\cite{kesten1987scaling,werner2007lectures,nolin2008near}),
\item[(b)] study dynamical percolation and \textbf{noise sensitivity} of percolation, see~\cite{benjamini1999noise,schramm2010quantitative,garban2010fourier,broman2013exclusion,garban2014noise,hammond2015local,garban2016exceptional}
\item[(c)] study the \textbf{scaling limits} of percolation, near-critical percolation, and dynamical percolation, see~\cite{schramm2011scaling, garban2013pivotal, garban2013scaling}.
\ei

%The \textit{first goal} of this paper is to prove the separation of arms and quasi-multiplicativity properties for \textbf{Voronoi percolation} (this percolation model is defined below). The \textit{second goal} is to deduce \textbf{Kesten's scaling relations} from it. The \textit{third goal} is to prove \textbf{quenched properties on the arm events}, still for Voronoi percolation (more precisely, we will prove second moment estimates on quenched probabilities of arm events, which will imply precise estimates on these probabilities).

%Let us end this paragraph by a motivation for this third goal: In the study of noise sensitivity of percolation on $\Z^2$, the quantities that appear are of the form:
%\[
%\sum_{e \text{ edge in } [-R,R]^2} \Pro \left[ 4 \text{-arm event from } e \text{ to distance } R \right]^2 \, .
%\]
%The presence of such \textit{probabilities squared} is one of the main motivations in the study of \textit{second moments of quenched probabilities} in the case of Voronoi percolation.

The goal of this paper is twofold: (1) We prove the quasi-multiplicativity property (and some estimates on ``pivotal events'') for planar Voronoi percolation, which is a continuum percolation model. (2) We prove two scaling relations for Voronoi percolation.
\medskip

Before recalling the definition of Voronoi percolation, let us note that the authors of \cite{ahlberg2015quenched} and~\cite{ahlberg2017noise} have proved noise sensitivity results for Voronoi pecolation by following ideas from~\cite{benjamini1999noise,schramm2010quantitative,ahlberg2014noise}. We also see the present paper as a first step in order to be able to apply the more quantitative noise sensitivity methods from~\cite{garban2010fourier}. Indeed, to apply methods from~\cite{garban2010fourier}, one needs to have good controls on the probabilities of arm events and pivotal events.

\subsection{Planar Voronoi percolation: box-crossing estimates and the quasi-multiplicativity property}\label{ss.voronoi}

In this subsection, we introduce the model of Voronoi percolation. We refer to Section~8.3 of~\cite{bollobas2006percolation} for more details.

\paragraph{A. Voronoi percolation.} Let us define planar Voronoi percolation. To this purpose, let us consider a homogeneous Poisson process of intensity $1$ in $\R^2$, that we denote by $\eta$. For each point $x \in \eta$, the \textbf{Voronoi cell} of $x$, denoted by $C(x)$, is the set of all points $u \in \R^2$ such that for all $x' \in \eta$, $||u-x||_2 \leq ||u-x'||_2$. We say that $x$ is the center of $C(x)$. Also, we say that two points of $\eta$ are adjacent if their cells intersect each other. It is not difficult to see that a.s. all the cells are bounded convex polygons. Now, let us consider some parameter $p \in [0,1]$ and, given $\eta$, let us declare each $x \in \eta$ open (we will choose to say that ``we color the point \textbf{black}'') with probability $p$ and closed (\textbf{white}) with probability $1-p$, independently of the other points of $\eta$. Let $\omega \in \lbrace -1,1 \rbrace^\eta$ be the colored configuration we thus obtain (where $1$ means black and $-1$ means white).\footnote{There is no problem of measurability here: $\omega$ can be seen for instance as a point process with values in $\R^2 \times \lbrace -1,1 \rbrace$ whose intensity is $\text{Leb}_{\R^2} \otimes \left( p \delta_1 + (1-p) \delta_{-1} \right)$, where $\text{Leb}_{\R^2} $ is the Lebesgue measure in the plane.}

%and we will write $\Omega$ for the set of Voronoi percolation configurations, i.e. $\Omega$ is the set of all locally finite subsets of points of the plane with labels $1$ or $-1$. Also, we will write $\mathcal{F}$ for the classical $\sigma$-algebra on $\Omega$ (i.e. the $\sigma$-algebra generated by the functions $A \mapsto \left( \left| \omega^{-1}(-1) \cap A \right| , \left| \omega^{-1}(1) \cap A \right| \right)$ for every Borel subset of the plane $A$ and where $\omega^{-1}(i)$ is the subset of $\eta$ whose color in $\omega$ is $i$).

We will always write $\eta$ for the non-colored point process and $\omega$ for the colored point process. The distribution of $\omega$ will be denoted by $\Pro_p$.

Given the configuration $\omega$, we define a coloring of the plane as follows: each point $u \in \R^2$ is colored black if it is contained in the cell of a black point $x \in \eta$ and is colored white if it is contained in the cell of a white point $x \in \eta$ (note that the points on the boundary of the cells may be colored both black and white but this is not important in this paper). Moreover, we call black (respectively white) path a continuous path included in the black (respectively white) region of the plane.

\paragraph{B. The critical point.}
Let $\lbrace 0 \leftrightarrow +\infty \rbrace$ be the event that there is a black path from the origin to infinity and let $\theta^{an}(p) = \Pro_p \left[ 0 \leftrightarrow \infty \right]$ denote the (annealed) percolation function. The critical point $p_c$ is defined as follows:
\[
p_c := \inf \left\lbrace p \in [0,1] \, : \, \theta^{an}(p) > 0 \right\rbrace \, .
\]
It has been proved by Zvavitch~\cite{zvavitch1996critical} that $\theta^{an}(1/2) = 0$ - hence $p_c \geq 1/2$ - and it is a result of Bollob\'{a}s and Riordan~\cite{bollobas2006critical} that $p_c = 1/2$. A crucial fact for this result is the so-called self-duality property of the model: a.s., a rectangle is crossed lengthwise by a black path if and only if it is not crossed widthwise by a white path (see for instance Lemma~$12$ in Chapter~$8$ of~\cite{bollobas2006percolation}). An important step to show the result of Bollob\'{a}s and Riordan is the proof of a weak box-crossing property. A stronger version has more recently been proved by Tassion~\cite{tassion2014crossing} and has led to the derivation of \textbf{quenched crossing estimates} in~\cite{ahlberg2015quenched} that will be crucial in the present paper. Before stating these box-crossing results, let us note that an alternative proof of $p_c=1/2$ can be found in the recent paper~\cite{duminil2017exponential}. In the said article, Duminil-Copin, Raoufi and Tassion prove the exponential decay of connection probabilities for subcritical Voronoi percolation in any dimension. An alternative proof of $p_c=1/2$ can also be found in \cite{ahlberg2017noise} Ahlberg and Baldasso study the near-critical window of Voronoi percolation.

\paragraph{C. Box-crossing properties.}

We first need two definitions/notations:

\begin{defi}\label{d.probeta}
Given $\eta$, we write $\Prob^\eta_p$ for the conditional distribution of $\omega$ given $\eta$ (which is simply the product law $\left( p\delta_1+(1-p)\delta_{-1}\right)^{\otimes \eta}$). More generally, if $E$ is a countable set, we write $\Prob_p^E=\left( p\delta_1+(1-p)\delta_{-1}\right)^{\otimes E}$.
\end{defi}

\begin{defi}\label{d.cross}
For any $\rho_1,\rho_2 > 0$, $\text{\textup{Cross}}(\rho_1,\rho_2)$ (respectively $\text{\textup{Cross}}^*(\rho_1,\rho_2)$) denotes the event that there is a black (respectively a white) path included in $[-\rho_1,\rho_1] \times [-\rho_2,\rho_2]$ that connects the left side of this rectangle to its right side.
\end{defi}
%Estimates about crossings of quads are very important tools in the study of planar percolation. In~\cite{bollobas2006critical}, Bollob\'{a}s and Riordan prove a weak form of RSW estimate for (annealed) Voronoi percolation.
%Tassion has proved a Russo-Seymour-Welsh type result and has obtained the following box-crossing property:
Now, we can state the annealed box-crossing property obtained by Tassion and the quenched box-crossing property obtained by Ahlberg, Griffiths, Morris and Tassion. An important step in the present paper is the extension of these results to the ``near-critical regime'', see Subsection~\ref{ss.pneq1/2_1}.

\begin{thm}[Theorem~$3$ of~\cite{tassion2014crossing}]\label{t.Tassion}
Let $\rho > 0$. There exists a constant $c=c(\rho) \in (0,1)$ such that, for every $R \in (0,+\infty)$,
\[
c \leq \Pro_{1/2} \left[ \text{\textup{Cross}}(\rho R,R) \right] \leq 1-c \, .
\]
\end{thm}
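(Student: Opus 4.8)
This statement is the Russo--Seymour--Welsh (RSW) theorem at criticality for Voronoi percolation, so the plan is to follow the classical route: reduce everything to a single ``hard-direction'' crossing estimate via duality, and then establish that estimate by an RSW bootstrap starting from the square, taking care of the features specific to the continuum.

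\emph{Reduction.} First I would record the two symmetries of $\Pro_{1/2}$ that are available: invariance under all rigid motions of $\R^2$ (in particular under rotation by $\pi/2$), and invariance under the global colour swap $\omega\mapsto-\omega$, which holds precisely because $p=1/2$. Combining the colour swap with the self-duality property recalled in part B of the introduction — a.s.\ black crosses $[-\rho_1,\rho_1]\times[-\rho_2,\rho_2]$ from left to right if and only if white does not cross it from top to bottom — and rotating the rectangle by $\pi/2$, one obtains the exact identity
\[
\Pro_{1/2}\big[\cross(\rho_1,\rho_2)\big]+\Pro_{1/2}\big[\cross(\rho_2,\rho_1)\big]=1,\qquad \rho_1,\rho_2>0 .
\]
Specialising to $\rho_1=\rho_2=R$ yields the base case $\Pro_{1/2}[\cross(R,R)]=1/2$, uniformly in $R$; combined with the obvious monotonicity $\Pro_{1/2}[\cross(\rho_1,\rho_2)]\ge\Pro_{1/2}[\cross(\rho_1,\rho_1)]$ for $\rho_2\ge\rho_1$ (a left--right crossing of the inscribed $\rho_1\times\rho_1$ square connects the two sides of the $\rho_1\times\rho_2$ rectangle), it shows that the full theorem is equivalent to one statement: for every $\rho\ge 1$ there is $c'(\rho)>0$ with $\Pro_{1/2}[\cross(\rho R,R)]\ge c'(\rho)$ for all $R>0$.

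\emph{Tools.} Two ingredients feed the bootstrap. The first is positive association: conditionally on $\eta$ the colours form a product measure, so Harris's inequality applies, and the coloured Voronoi model is a \emph{monotone structure} — adding a black point only enlarges the black region and shrinks the white one — from which one deduces the FKG inequality and the square-root trick for increasing events such as $\cross(\cdot,\cdot)$, as used in \cite{bollobas2006critical,bollobas2006percolation}. The second is the existence, on $\cross(\rho_1,\rho_2)$ and conditionally on $\eta$, of a measurable \emph{lowest black left--right crossing} $\gamma$ such that the colours of the points needed to certify $\gamma$ lie weakly below $\gamma$, so that what happens strictly above $\gamma$ remains unexplored; this plays the role of conditioning on the lowest open path in $\Z^2$.

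\emph{The RSW step, and the main obstacle.} With these in hand one runs a Seymour--Welsh-type argument: starting from $\Pro_{1/2}[\cross(R,R)]=1/2$, a reflection across a vertical side of the square — reflecting the lowest crossing and gluing it, via the square-root trick and FKG, to a crossing in the reflected square — produces a black left--right crossing of a rectangle of aspect ratio roughly $3/2$ with probability bounded below by a universal constant; iterating, and combining horizontal crossings of shifted rectangles with vertical crossings of squares through FKG, yields $\Pro_{1/2}[\cross(\rho R,R)]\ge c'(\rho)$ with a constant that comes from a bounded number of applications of FKG at scale $R$, hence is independent of $R$. (This is Tassion's theorem; in fact his proof replaces the delicate reflection by a cleaner one-parameter bootstrap using only rotational invariance and FKG, which I would ultimately prefer.) The hard part is the gluing itself in the continuum: instead of joining two open paths along a shared edge as in $\Z^2$, one must join two black paths across a single point of a prescribed segment while keeping the concatenation inside the black region, and one must control the event — exponentially unlikely but not impossible — that a single very large Voronoi cell straddles the junction region. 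Making this rigorous (estimating where a crossing meets a given segment, and performing bounded-cost local modifications of $\eta$ in the spirit of finite energy) is the technical core that distinguishes the Voronoi setting from Bernoulli percolation on $\Z^2$.
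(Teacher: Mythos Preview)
The paper does not prove this theorem at all: it is quoted verbatim from Tassion's paper \cite{tassion2014crossing} and used throughout as a black box. So there is no ``paper's own proof'' to compare your proposal against.

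That said, your outline has a genuine structural gap. The reduction via self-duality and colour symmetry to the single estimate $\Pro_{1/2}[\cross(\rho R,R)]\ge c'(\rho)$ for $\rho\ge 1$ is correct, as is the base case $\Pro_{1/2}[\cross(R,R)]=1/2$. But the ``RSW step'' you describe --- reflect the lowest crossing across a vertical side, glue via square-root trick and FKG, iterate --- is precisely the classical argument that \emph{fails} to give uniform bounds for Voronoi percolation. Bollob\'as and Riordan \cite{bollobas2006critical} pushed this approach as far as it goes and obtained only a weak box-crossing property (crossing probabilities bounded away from $0$ along a subsequence of scales, or with constants depending on $R$); the gluing obstruction you identify at the end is not a technicality to be cleaned up but the reason the classical proof does not work. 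You acknowledge that Tassion's argument ``replaces the delicate reflection by a cleaner one-parameter bootstrap'' and say you would ``ultimately prefer'' it, but you never describe what that bootstrap is. Tassion's actual proof introduces, for each scale $n$, a function $\phi_n(\alpha)$ measuring the probability that a crossing of $[0,n]^2$ lands in a prescribed sub-arc of the right side, and runs a renormalisation on the values of $\alpha$ where $\phi_n$ crosses a threshold; this is the new idea, and your proposal does not contain it. As written, your argument would reproduce the Bollob\'as--Riordan weak RSW, not the uniform statement of the theorem.
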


%Ahlberg, Griffiths, Morris and Tassion have strengthened the result and have obtained a quenched box-crossing property:

\begin{thm}[Theorem~$1.4$ of~\cite{ahlberg2015quenched} and the paragraph below it. See also our Appendix~\ref{a.tAGMT} where we recall the main ingredients of the proof of this theorem.\footnote{Actually, in Appendix~\ref{a.tAGMT} we will modify a little the proof of~\cite{ahlberg2015quenched} so that this proof will be easier to adapt to the near-critical phase.}]\label{t.AGMT}
Let $\rho > 0$.
\bi
\item[i)] There exists an absolute constant $\epsilon > 0$ and a constant $C = C(\rho) < +\infty$ such that, for every $R \in (0,+\infty)$,
\[
\Var \left( \Prob^\eta_{1/2} \left[ \text{\textup{Cross}}(\rho R,R) \right] \right) \leq C \, R^{-\epsilon} \, .
\]
This implies the following estimate,
\item[ii)] For every $\gamma \in (0,+\infty)$, there exists a positive constant $c=c(\rho,\gamma) \in (0,1)$ such that, for every $R \in (0,+\infty)$:
\[
\Pro \left[ \, c \leq \Prob^\eta_{1/2} \left[ \text{\textup{Cross}}(\rho R,R) \right] \leq 1-c \, \right] \geq 1 - R^{-\gamma} \, .
\]
\ei
\end{thm}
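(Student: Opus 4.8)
The plan is to concentrate on part~(i), from which part~(ii) will follow via Theorem~\ref{t.Tassion} and a renormalisation argument. Throughout write $f_R := \Prob^\eta_{1/2}[\cross(\rho R,R)]$, a random variable depending only on $\eta$, so that $\E[f_R] = \Pro_{1/2}[\cross(\rho R,R)]$; by Theorem~\ref{t.Tassion} this expectation lies in $[c_0,1-c_0]$ for some $c_0 = c_0(\rho) \in (0,1)$.

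For~(i) the natural tool is a Poincaré-type inequality for the Poisson process $\eta$, equivalently a spatial Efron--Stein/martingale decomposition: $\Var(f_R)$ is bounded by a sum, over a mesh of boxes $Q_i$ of side $\ell$ covering a dilate of the rectangle, of the expected squared effect on $f_R$ of resampling $\eta$ inside $Q_i$. Two inputs govern that effect. The first is the \emph{locality} of the Poisson--Voronoi tessellation: outside an event of probability at most $e^{-c\ell^2}$, resampling $\eta$ inside $Q_i$ alters the coloured plane only within distance $O(\ell)$ of $Q_i$, so that, coupling the colourings outside this neighbourhood, $\cross(\rho R,R)$ can change only on a four-arm ``pivotal'' event joining a neighbourhood of $Q_i$ to the four sides of the rectangle. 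The second is RSW: by Theorem~\ref{t.Tassion} together with the classical arm-separation and gluing estimates, the probability $\pi_4(\ell,R)$ of such a four-arm event from scale $\ell$ to scale $R$ satisfies $\pi_4(\ell,R) \le C(\ell/R)^{1+\delta}$ for some absolute $\delta>0$.

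The main difficulty is that feeding these two inputs naively into the Poincaré inequality yields only $\Var(f_R) \lesssim (R/\ell)^2\,\pi_4(\ell,R) \lesssim (R/\ell)^{1-\delta}$, which diverges for every choice of $\ell$: the loss comes from bounding the \emph{quenched} pivotal probability $\Prob^\eta_{1/2}[\text{pivotal around }Q_i]$ pointwise by $1$ when estimating its square. What one really needs is that this quenched pivotal probability concentrates around its annealed value $\asymp\pi_4(\ell,R)$, so that the contribution of each box is of order $\pi_4(\ell,R)^2$ and the sum converges like $(R/\ell)^{2\delta}$. But concentration of quenched arm probabilities is a statement of the same nature as the one being proved, and so must be obtained simultaneously, by induction on scales: assuming it up to scale $R/2$, one runs the decomposition at scale $R$ and splits each pivotal event at the intermediate scale $R/2$, controlling the inner part by the inductive hypothesis and the outer part --- which spans boundedly many dyadic scales --- by RSW alone. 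This induction, and the self-contained arm-regularity input it requires (separation of arms and a quasi-multiplicativity substitute not relying on the later sections of the present paper), is the heart of~\cite{ahlberg2015quenched}; I would follow that proof, incorporating the minor modifications of Appendix~\ref{a.tAGMT} that make it adapt to the near-critical regime.

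Granting~(i), part~(ii) goes as follows. Chebyshev's inequality and~(i) give $\Pro[\,f_R\notin[c_0/2,1-c_0/2]\,] \le 4Cc_0^{-2}R^{-\epsilon}$, which already proves~(ii) for $\gamma=\epsilon$; for an arbitrary $\gamma$ one bootstraps by renormalisation. Fix a small $\beta=\beta(\epsilon)>0$, set $s=R^{1-\beta}$, tile a dilate of the rectangle by $\asymp R^{2\beta}$ boxes of side $s$, and call a box \emph{$\eta$-good} when its quenched crossing probabilities in the relevant shapes exceed a fixed constant --- which, by~(i) at scale $s$ and Chebyshev, fails with probability $\le C's^{-\epsilon}$ --- and note that by locality the $\eta$-good field has finite range and dominates an independent field of density $\to1$ as $s\to\infty$. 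Since a lone $s$-box is only crossed with quenched probability bounded away from $0$, one first upgrades by grouping $A^2$ neighbouring boxes ($A$ a large constant): the square-root/union trick (a box of dimensions $s\times As$ is crossed in the short direction as soon as one of its $A$ stacked $s$-sub-boxes is, and these events are conditionally independent given $\eta$) together with quenched FKG for $\Prob^\eta_{1/2}$ produces super-boxes of side $\asymp As$ which, when built from $\eta$-good boxes, are crossed in both directions with quenched probability $\ge1-\delta(A)$, $\delta(A)$ arbitrarily small. Liggett--Schonmann--Stacey domination of the finite-range ``super-bad'' field, together with the standard large-deviation bounds for supercritical site percolation, then give, with $\Pro$-probability $\ge1-e^{-c'R^\beta}$, that every super-bad cluster has size $o(R^\beta)$ and hence fails to disconnect the rectangle; on that event, conditionally on $\eta$, the both-ways-crossed super-good super-boxes percolate and glue, by the usual renormalisation argument, into a black left--right crossing of $[-\rho R,\rho R]\times[-R,R]$ of $\Prob^\eta_{1/2}$-probability at least some $c=c(\rho)>0$. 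The matching bound $f_R\le1-c$ follows identically, applying the same renormalisation to top-to-bottom white crossings and using self-duality and the colour symmetry at $p=1/2$. As $e^{-c'R^\beta}\le R^{-\gamma}$ for $R$ large, this is~(ii), the bounded range of $R$ being handled routinely (it is vacuous for $R\le1$ and uses only $f_R>0$ a.s.\ for $1\le R\le R_0(\gamma)$).
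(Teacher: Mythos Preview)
Your treatment of part~(i) misses the central mechanism of the AGMT proof and, as written, would be circular in the context of this paper. The actual argument (see Appendix~\ref{a.tAGMT}) does \emph{not} proceed via four-arm estimates or any induction on scales for concentration of quenched pivotal probabilities. After the martingale/Efron--Stein step
\[
\Var\bigl(\Prob^\eta_{1/2}[\cross(\rho R,R)]\bigr)\;\le\;\E\Bigl[\sum_{x\in\eta}\Prob^\eta_{1/2}\bigl[\Piv^q_x(\cross(\rho R,R))\bigr]^2\Bigr],
\]
the sum of squared quenched influences is controlled by the \textbf{Schramm--Steif revealment inequality} (Corollary~\ref{c.Schramm_Steif}): one runs an interface-exploration algorithm from one side of the rectangle, and for points $x$ far from that side the revealment is bounded by a \emph{one-arm} probability $\Prob^\eta_{1/2}[\arm_1^{*,\text{cell}}(S,R)]$. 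The required quenched one-arm decay~\eqref{e.prop_3.11_AGMT} is established in~\cite{ahlberg2015quenched} directly from annealed RSW, with no four-arm input. This is the idea you are missing.

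By contrast, your proposed route requires $\pi_4(\ell,R)\le C(\ell/R)^{1+\delta}$, i.e.\ that the four-arm exponent exceeds~$1$. In the present paper this is Proposition~\ref{p.alpha4}(i), whose proof uses the quasi-multiplicativity machinery of Section~\ref{s.quasi}, which in turn relies essentially on Theorem~\ref{t.AGMT} itself (via Proposition~\ref{p.a_lot_of_quads}). The ``classical arm-separation and gluing estimates'' you invoke are precisely what is delicate for Voronoi percolation and are not available prior to Theorem~\ref{t.AGMT}; your description of an inductive scheme as ``the heart of~\cite{ahlberg2015quenched}'' is not accurate.

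Your approach to part~(ii) is in the right spirit: Chebyshev gives only $\gamma=\epsilon$, and a renormalisation bootstrap is indeed needed to reach arbitrary $\gamma$. The argument in~\cite{ahlberg2015quenched} is of this flavour (and the paper simply defers to it), though somewhat lighter than the Liggett--Schonmann--Stacey machinery you outline.
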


\paragraph{D. Arm events.} Once we have such crossing properties, a natural goal
%is to obtain the separation of arms and the quasi-multiplicativity properties which are properties on
is to study arm events.
%It will be the aim of Section~\ref{s.quasi}.
Let us first define these events:

\begin{defi}[$j$-arm events]
Let $j \in \N^*:=\{1,2,\cdots\}$ and $0 \leq r \leq R$. The $j$-arm event between scales $r$ and $R$ is the event that there exist $j$ paths of alternating colors in the annulus $[-R,R]^2 \setminus [-r,r]^2$ from $\partial [-r,r]^2$ to $\partial [-R,R]^2$ (if $j$ is odd, we ask that there are: (a) $j-1$ paths of alternating colors, and: (b) one additional black path such that there is no Voronoi cell intersected by both this additional path and one of the $j-1$ other paths). Let $\arm_j(r,R)$ denote this event. We write the \textbf{annealed} probability of this event as follows:
\[
\alpha^{an}_{j,p}(r,R) = \Pro_p \left[ \arm_j(r,R) \right] \, .
\]
\end{defi}
We write $\alpha^{an}_{j,p}(R) = \alpha^{an}_{j,p}(1,R)$ for any $j \in \N^*$. If $r > R$, we choose that $\alpha^{an}_{j,p}(r,R) = 1$. Also, we will often use the following simplified notation:
\[
\alpha^{an}_j(r,R) = \alpha^{an}_{j,1/2}(r,R) \, .
\]
An important property of the quantities $\alpha^{an}_{j,1/2}(r,R)$ is that they decay polynomially fast: There exists a constant $C=C(j) \in [1, +\infty)$ such that, for every $1 \leq r \leq R$:
\begin{equation}\label{e.poly}
\frac{1}{C} \, \left( \frac{r}{R} \right)^{C} \leq \alpha^{an}_{j,1/2}(r,R) \leq C \, \left( \frac{r}{R} \right)^{1/C} \, .
\end{equation}
The right-hand-inequality is proved in~\cite{tassion2014crossing} (Item~$2$ of Theorem~$3$) and we prove the left-hand-inequality in Subsection~\ref{ss.warm}. In the present paper, we prove the \textbf{annealed quasi-multiplicativity property} for the quantities $\alpha^{an}_{j,1/2}(r,R)$. This is the most delicate part of the paper. Even if this is an annealed result, the quenched box-crossing property Theorem~\ref{t.AGMT} will be a crucial ingredient of the proof.

\begin{prop}[Annealed quasi-multiplicativity property]\label{p.quasi}
Let $j \in \N^*$. There exists a constant $C=C(j) \in [1,+\infty)$ such that, for all $1 \leq r_1 \leq r_2 \leq r_3$,
\begin{equation}
\frac{1}{C} \, \alpha^{an}_{j,1/2}(r_1,r_3) \leq \alpha^{an}_{j,1/2}(r_1,r_2) \, \alpha^{an}_{j,1/2}(r_2,r_3) \leq C \, \alpha^{an}_{j,1/2}(r_1,r_3) \, .
\end{equation}
\end{prop}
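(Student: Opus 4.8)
The plan is to follow the classical strategy of Kesten for Bernoulli percolation, adapted to the continuum setting by leveraging the quenched box-crossing estimates of Theorem~\ref{t.AGMT}. The core quantity to control is a ``gluing construction'': given arms realizing $\arm_j(r_1,r_2)$ and independently arms realizing $\arm_j(r_2,r_3)$, we want to glue them at scale $r_2$ into arms realizing $\arm_j(r_1,r_3)$, losing only a multiplicative constant. The upper bound $\alpha^{an}_{j,1/2}(r_1,r_3) \le \alpha^{an}_{j,1/2}(r_1,r_2)\,\alpha^{an}_{j,1/2}(r_2,r_3)$ (up to a constant absorbed into $C$) is the easy half: it follows from a (near-)independence / FKG-type consideration together with the fact that arms in $[-r_3,r_3]^2\setminus[-r_1,r_1]^2$ restrict to arms in each sub-annulus, modulo the technical point that in the continuum the events in the two annuli are not exactly independent because a single Voronoi cell can straddle scale $r_2$; this is handled by a standard sprinkling/conditioning argument on the Poisson points near $\partial[-r_2,r_2]^2$, using that with good probability no cell is too large there.

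The substantial half is the lower bound $\alpha^{an}_{j,1/2}(r_1,r_2)\,\alpha^{an}_{j,1/2}(r_2,r_3)\le C\,\alpha^{an}_{j,1/2}(r_1,r_3)$. Here the standard argument proceeds as follows. First, by the right-hand inequality of~\eqref{e.poly} and quasi-multiplicativity-type reductions it suffices to treat the case where the scales are well-separated, say $r_2\ge 10 r_1$ and $r_3\ge 10 r_2$ (intermediate cases absorbed into the constant). Condition on having $j$ alternating arms from scale $r_1$ to $r_2$; these arms arrive at $\partial[-r_2,r_2]^2$ at $j$ ``landing points'' in cyclic order. Using the box-crossing property at scale $r_2$ — but crucially its \emph{quenched} version, Theorem~\ref{t.AGMT}(ii), so that we have good crossings conditionally on $\eta$ with probability $1-R^{-\gamma}$ — we build, in the annulus between scales $r_2$ and, say, $2r_2$ or $4r_2$, a system of ``well-separated'' crossings of boxes and half-annuli that connect these $j$ landing points to $j$ prescribed well-separated arcs on $\partial[-4r_2,4r_2]^2$, keeping the alternating color pattern. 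One then does the symmetric construction at the inner side of the $(r_2,r_3)$-arms to bring those arms in to the same prescribed arcs, and glues. The gluing is made possible by a ``separation of arms'' lemma: with probability bounded below (after paying a constant), the $j$ arms can be taken to land at well-separated, localized positions, so that the conditional-on-$\eta$ crossings supplied by Theorem~\ref{t.AGMT} can be wired through deterministically chosen sub-boxes.

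The main obstacle, and the reason the authors call this ``the most delicate part of the paper,'' is establishing this separation-of-arms statement in the Voronoi setting and carrying out the gluing while respecting the continuum nature of the model: the arm paths are defined up to Voronoi cells rather than lattice edges, the color of a region is a function of \emph{which} Poisson point owns each cell, and the events ``there is a black arm'' and ``there is a white arm'' must remain compatible under the gluing (the odd-$j$ clause about no common intersecting cell is exactly the kind of subtlety that must be tracked). Concretely, I expect the hard work to be: (1) a quenched RSW-type ``box-crossing in a half-annulus with prescribed well-separated endpoints'' statement, derived from Theorem~\ref{t.AGMT} by the usual square-root-trick and gluing of box crossings, but done carefully so that the crossings are realized with positive conditional probability on a high-probability set of $\eta$; (2) the separation lemma, which in the discrete case is proved via a second-moment / exploration argument and here requires controlling the geometry of cells near the landing annulus; and (3) assembling these so that independence between the $(r_1,r_2)$-randomness, the $(r_2,4r_2)$-gluing randomness, and the $(4r_2,r_3)$-randomness is genuine — again handled by conditioning on the Poisson points in a thin shell around each gluing scale and using that, off an event of probability decaying polynomially in $r_2$, no Voronoi cell in that shell is large, so the colorings inside and outside decouple.
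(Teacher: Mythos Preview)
Your high-level plan for the right-hand inequality (separation of arms, extension via quenched box-crossings from Theorem~\ref{t.AGMT}, then gluing) matches the paper's approach, and your point~(3) correctly identifies that decoupling the scales is a genuine issue in the continuum. However, there are two real gaps.

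First, you label the left-hand inequality $\alpha^{an}_j(r_1,r_3) \le C\,\alpha^{an}_j(r_1,r_2)\,\alpha^{an}_j(r_2,r_3)$ as the ``easy half,'' to be dispatched by a sprinkling argument in a shell near $\partial B_{r_2}$. The paper explicitly warns against this (Subsection~\ref{ss.strategy_quasi}, item~(b)): the events $\arm_j(r_1,r_2)$ and $\arm_j(r_2,r_3)$ are \emph{not} measurable with respect to $\omega\cap B_{r_2}$ and $\omega\setminus B_{r_2}$, because a crossing of $A(r_1,r_2)$ may pass through a Voronoi cell whose center lies far outside $B_{r_2}$. Your shell-density argument would say this does not happen on a high-probability event; but $\arm_j(r_1,r_3)$ is degenerate, and conditioning on it could in principle force $\eta$ to be arbitrarily sparse near scale $r_2$ (this is exactly the scenario of Figure~\ref{f.difficulties}), so one cannot simply intersect with a density event and absorb the complement. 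The paper's fix is to introduce the surrogate events $\widehat{\arm}_j^{ext}(r_1,r_2)$ and $\widehat{\arm}_j^{int}(r_2,r_3)$, which by construction \emph{are} measurable with respect to $\omega\cap B_{r_2}$ and $\omega\setminus B_{r_2}$, and then to prove that their probabilities $f_j^{ext},f_j^{int}$ are comparable to $\alpha^{an}_j$ via an iterative scheme (Lemma~\ref{l.looksgood}, Corollary~\ref{c.looksgood}) that is itself intertwined with the extension lemma. The left-hand inequality is obtained only \emph{after} this machinery, not as a preliminary.

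Second, you make no distinction between even and odd $j$. For even $j$ the arms alternate in color and a ``weak'' separation of interfaces (endpoints pairwise far apart, the event $\widetilde{\gi}$) suffices for the gluing. For odd $j$ two consecutive arms share a color, and the gluing requires a ``strong'' separation (each interface endpoint far from the \emph{union} of the other interfaces, the event $\gi$). The paper obtains the strong separation (Lemma~\ref{l.interfaces}) only after computing the half-plane $3$-arm exponent (Proposition~\ref{p.universal}(ii)), which in turn relies on the quasi-multiplicativity already established for even $j$ and for half-plane arms. So the odd case is bootstrapped through the even and half-plane cases; a single-pass argument as you sketch will not close.
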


\begin{rem}\label{r.quasi_j=1}
In Proposition~\ref{p.quasi}, the case $j=1$ is easier. More precisely, the right-hand-inequality in this case is a direct consequence of the box-crossing property Theorem~\ref{t.Tassion} and of the (annealed) FKG-Harris inequality (stated in Subsection~\ref{ss.correlation_in}). Moreover, the proof of the left-hand-inequality in the case $j=1$ is written in Subsection~\ref{ss.warm}.
\end{rem}

\begin{rem}\label{r.1leqr_1}
Our choice to impose that the radii $r_i$ are at least $1$ is arbitrary. In fact, we could have chosen any $a > 0$ and rather asked that $a \leq r_1 \leq r_2 \leq r_3$. We would have obtained the same result with some constant $C=C(j,a)$.
\end{rem}

The main difficulty in the study of arm events (compared to crossing events for instance) is that they are \textbf{degenerate events}. As a result, it could a priori be the case that if $r \ll R$ and if we condition on $\arm_j(r,R)$, then with high probability the point process $\eta$ is very degenerate at scale $r$, see Figure~\ref{f.difficulties}. We refer to Subsection~\ref{ss.quasi} for some key properties and some tools developed to overcome this difficulty (see in particular Propositions~\ref{p.fandalpha} and~\ref{p.techniquegeneral}).
\medskip

\begin{figure}[!h]
\begin{center}
\includegraphics[scale=0.23]{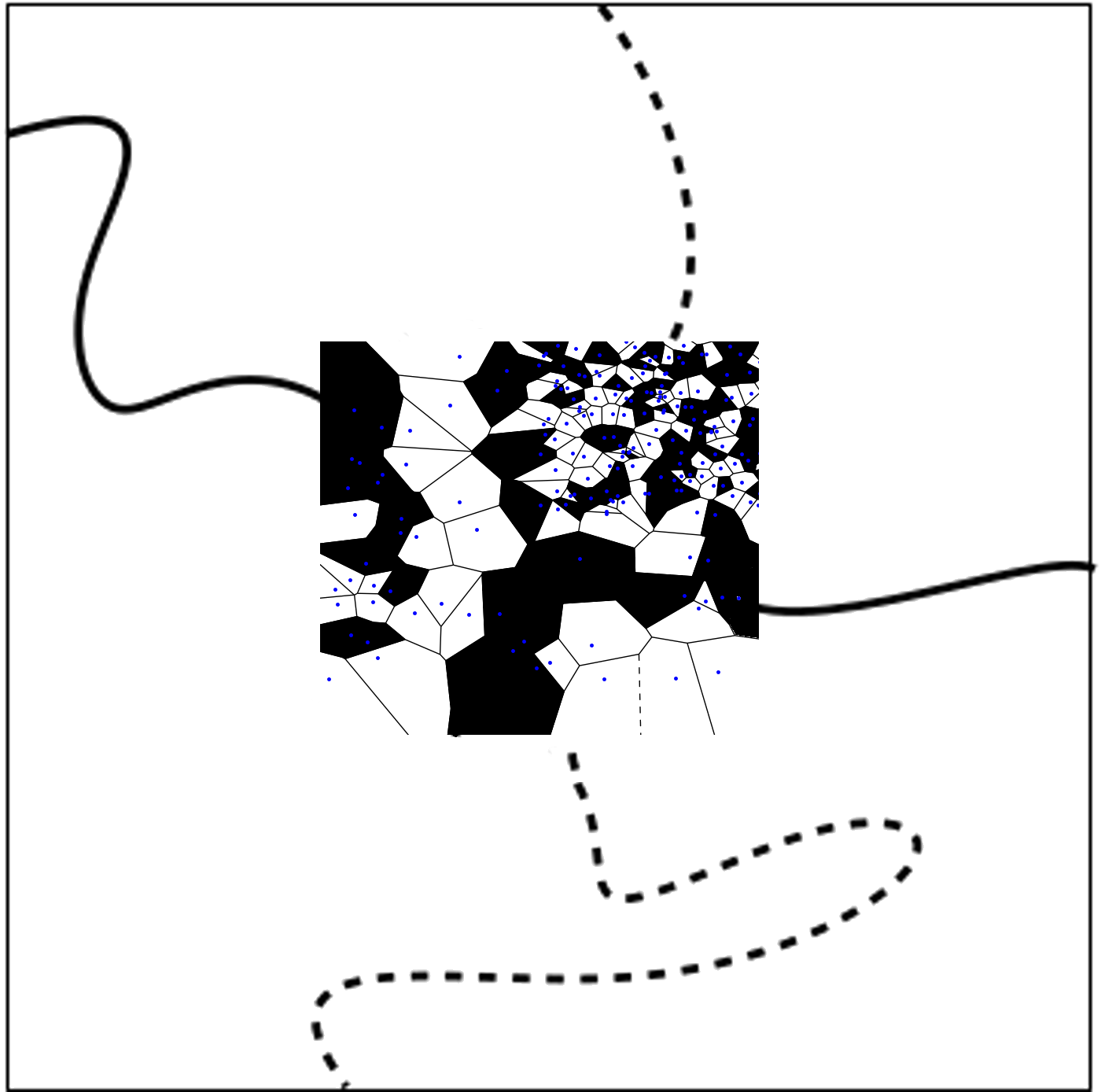}
\end{center}
\caption{In this paper, we deal with degenerate events: the $j$-arm events $\arm_j(r,R)$. It is not clear that, when conditionning on $\arm_j(r,R)$ with $r \ll R$, the random environment at scale $r$ is not typically degenerate. An example of a degenerate environment is illustrated in the figure: the Voronoi tiling is extremly dense in some regions and not dense at all in some other regions. In the region where the Voronoi tiling is extremly dense, it might be very costly to extend the arms to other scales. The biggest issue comes from the regions where the Voronoi tiling is not dense at all. In these regions, there are a lot of spatial dependences.}\label{f.difficulties}
\end{figure}

Let us now state the main result of our paper.
%One goal of this paper is to study the second moment of the quenched probability of $\arm_j(r,R)$, see~\eqref{d.quenched} for the notations we will use.

\subsection{The main result: annealed scaling relations for Voronoi percolation}

It is believed that, for a wide class of percolation models, the evolution as $p$ goes to $p_c = 1/2$ of some key quantities is determined by some \textbf{critical exponents}. Such quantities are for instance the percolation function, the correlation length and the probabilities of arm events. The famous \textbf{scaling relations} proved by Kesten~\cite{kesten1987scaling} are simple relations between these exponents. More precisely, Kesten proved that, for bond percolation on the square lattice (and site percolation on the triangular lattice): i) if we assume that these key quantities are indeed described by exponents, then these exponents satisfy the relations predicted by theoritical physicists in the 70's (we refer to~\cite{kesten1987scaling} for references concerning these predictions), and ii) even if we do not assume that these exponents exist, the corresponding relations between the percolation function, the correlation length etc hold. There is only one planar percolation model for which it is known that such exponents exist: site percolation on the triangular lattice, that is the only model for which conformal invariance has been proved, see the proof by Smirnov~\cite{smirnov2001criticalp}. These exponents have even been computed thanks to the theory of SLE's (Schramm-Loewner Evolution), see~\cite{Smirnov2001critical,lawler2002onearm,werner2007lectures}. 

Let us go back to Voronoi percolation. For this model, the existence of these exponents is not known (conformal invariance is not proved for this model even if a first important step has been made in this direction by Benjamini and Schramm~\cite{benjamini1998conformal}). To state our main result, let us define the annealed correlation length.

\begin{defi}\label{d.corrlength}
Let $\epsilon_0 \in (0,1)$ be sufficiently small\footnote{More precisely, we need that $1-2\epsilon_0 > \Pro_{1/2} \left[ \cross(2R,R) \right]$ for every $R \geq 1$ - which is possible thanks to Theorem~\ref{t.Tassion} - and that $\epsilon_0$ is sufficiently small so that a Peierls argument works - see the proof of Lemma~\ref{l.scaling2} for more about this second condition.} and let $p \in (1/2,1]$. The \textbf{annealed correlation length} at parameter $p$, denoted by $L^{an}(p)=L^{an,\epsilon_0}(p)$, is defined as follows:
\[
L^{an}(p) = \inf \left\lbrace  R \geq 1 \, : \, \Pro_p \left[ \text{\textup{Cross}}(2R,R) \right] \geq 1-\epsilon_0 \right\rbrace \, .
\]
\end{defi}

An important property is that, for every $p > 1/2$, $L^{an}(p) < +\infty$, see Lemma~\ref{l.exp_decay}. The idea behind the definition of the correlation length is that this is the largest scale such that the percolation configuration at this scale ``looks critical". In particular, we prove in Subsection~\ref{ss.pneq1/2_1} that the annealed and quenched box-crossing properties Theorems~\ref{t.Tassion} and~\ref{t.AGMT} are also true for $p>1/2$ as soon as we work at scales smaller than the correlation length (i.e. as soon as we work in the ``near-critical phase''). Moreover, we prove the following result in Section~\ref{s.kesten}.

\begin{prop}\label{p.alpha_jandalpha_1/2}
Let\footnote{The number $3/4$ does not have to be taken seriously, we consider $p \in (1/2,3/4]$ only to avoid problems with $p$ close to $1$.} $p \in (1/2,3/4]$ and let $\epsilon_0$ be the parameter of Definition~\ref{d.corrlength}. Also, let $j \in \N^*$. There exists a constant $C=C(\epsilon_0,j) \in [1,+\infty)$ such that, for every $1 \leq r \leq R \leq L^{an}(p)$,
\[
\frac{1}{C} \alpha^{an}_{j,1/2}(r,R) \leq \alpha^{an}_{j,p}(r,R) \leq C \alpha^{an}_{j,1/2}(r,R) \, .
\]
\end{prop}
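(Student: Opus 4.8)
The plan is to compare $\alpha^{an}_{j,p}(r,R)$ with $\alpha^{an}_{j,1/2}(r,R)$ by interpolating along the parameter, i.e. by controlling the logarithmic derivative $\frac{d}{dp}\log\alpha^{an}_{j,p}(r,R)$ via a Russo-type formula and the pivotal estimates, exactly as in Kesten's argument for Bernoulli percolation. First I would write down the annealed Russo formula: the derivative in $p$ of $\Pro_p[\arm_j(r,R)]$ is an expectation (over $\eta$ and $\omega$) of the number of \emph{pivotal} Voronoi cells for $\arm_j(r,R)$ in the annulus. The number of such pivotal cells at a given ``radial scale'' $\rho$ between $r$ and $R$ is controlled, via the standard quasi-multiplicativity/pivotal machinery already available in the paper (Proposition~\ref{p.quasi} together with the pivotal estimates proved alongside it), by $\alpha^{an}_{j}(r,\rho)\,\alpha^{an}_{4}(\rho,R)$ up to constants (one $j$-arm pattern from scale $r$ in to $\rho$, a 4-arm pattern from $\rho$ out to $R$ around the pivotal cell). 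Summing the geometric-type series $\sum_{\rho} \alpha^{an}_{j}(r,\rho)\,\alpha^{an}_{4}(\rho,R)$ over dyadic scales and using the polynomial decay~\eqref{e.poly} of the 4-arm probability together with quasi-multiplicativity, one gets
\[
\left| \frac{d}{dp}\log \alpha^{an}_{j,p}(r,R) \right| \;\leq\; C \sum_{k} \alpha^{an}_{4}\!\left(2^k, R\right) \;\leq\; \frac{C'}{\alpha^{an}_{4}(r,R)}\;\cdot\;\text{(bounded factor)} ,
\]
so that the relevant quantity governing the total variation of $\log\alpha^{an}_{j,p}$ as $p$ runs from $1/2$ to its final value is essentially $1/\alpha^{an}_{4}(r,R)$, i.e. the ``characteristic length'' scale; this is precisely where the correlation length enters.

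The key input that makes the integration close is that for $p \in (1/2,3/4]$ and scales $\rho \leq L^{an}(p)$, all the ingredients used above — box-crossing, quasi-multiplicativity, pivotal estimates, and the comparison constants — remain valid \emph{uniformly} in $p$, because by construction of $L^{an}(p)$ the configuration still ``looks critical'' below that scale. This is exactly the near-critical extension of Theorems~\ref{t.Tassion} and~\ref{t.AGMT} that the paper establishes in Subsection~\ref{ss.pneq1/2_1}, so I would invoke it: for $r \leq R \leq L^{an}(p)$ one has $\alpha^{an}_{4,p}(r,R) \asymp \alpha^{an}_{4,1/2}(r,R)$, uniformly in $p$ in this range, and similarly $\alpha^{an}_{j,p}(r,R)\asymp\alpha^{an}_{j,p'}(r,R)$ for $p,p'$ both in $[1/2,p]$. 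Then $(p-1/2)/\alpha^{an}_{4}(r,R)$ is bounded by a constant precisely when $R \leq L^{an}(p)$, because the definition of the correlation length together with the relation between crossing probabilities at scale $L^{an}(p)$ and the four-arm exponent (a consequence of RSW/quasi-multiplicativity in the near-critical window) forces $p - 1/2 \leq C\,\alpha^{an}_{4}\big(1,L^{an}(p)\big) \leq C\,\alpha^{an}_{4}(r,R)\cdot\text{const}$ for all $r\le R\le L^{an}(p)$. Integrating $\frac{d}{dp}\log\alpha^{an}_{j,p}(r,R)$ from $1/2$ to $p$ then yields $\log\alpha^{an}_{j,p}(r,R) - \log\alpha^{an}_{j,1/2}(r,R)$ bounded in absolute value by a constant $C(\epsilon_0,j)$, which is the claim.

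The main obstacle I anticipate is \textbf{the continuum/annealed nature of the differentiation step}: deriving and justifying the annealed Russo-type formula for Voronoi percolation, and more importantly controlling the expected number of pivotal cells at each scale by $\alpha^{an}_{j}\,\alpha^{an}_{4}$. In the Bernoulli case this is routine, but here ``pivotality'' of a Voronoi cell is a subtle geometric event (it depends on $\eta$ as well as on the coloring), and the point process can be locally degenerate (cf.\ Figure~\ref{f.difficulties}), so one must use the quenched box-crossing property and the annealed pivotal-event technology developed earlier in the paper (Propositions~\ref{p.fandalpha} and~\ref{p.techniquegeneral} and the results of Subsection~\ref{ss.quasi}) to guarantee that the environment is not typically degenerate at the pivotal scale. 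A secondary technical point is the summation: to pass from $\sum_k \alpha^{an}_{4}(2^k, R)$ to a bound of order $1/\alpha^{an}_{4}(r,R)$ one needs the 4-arm probability to decay with exponent strictly larger than... well, one needs an a priori lower bound on the 4-arm exponent ensuring the sum is dominated by its last term, which follows from~\eqref{e.poly} and quasi-multiplicativity; and one must keep every constant uniform in $p \in (1/2,3/4]$, which is where the restriction $R \leq L^{an}(p)$ is used throughout.
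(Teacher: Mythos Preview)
Your overall plan---differentiate in $p$, use a Russo-type formula, bound pivotals via arm events---is the right skeleton, but the way you close the integration has a genuine circularity.

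You want to conclude by arguing that $(p-1/2)\le C\,\alpha^{an}_{4}\big(1,L^{an}(p)\big)$, so that the integral of the log-derivative from $1/2$ to $p$ is bounded. But that inequality is one half of the scaling relation~\eqref{e.scaling2}, and in the paper (as in Kesten's original argument) that relation is proved \emph{using} Proposition~\ref{p.alpha_jandalpha_1/2}, not before it. So you cannot invoke it here. Your intermediate claim that the log-derivative is bounded by $C'/\alpha^{an}_4(r,R)$ is also not what the pivotal estimates give: the correct bound (Proposition~\ref{p.arm_event_asymp_R2alpha4} extended to the near-critical phase) is
\[
\Big|\tfrac{d}{dp}\log\alpha^{an}_{j,p}(R)\Big|\;\le\;\grandO{1}\,R^2\,\alpha^{an}_{4,p}(R)\,,
\]
which is neither $1/\alpha^{an}_4$ nor obviously integrable over $[1/2,p]$ without further input.

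The missing idea is Kesten's comparison trick: the \emph{same} quantity $R^2\alpha^{an}_{4,p}(R)$ also governs the derivative of the crossing probability, namely $\frac{d}{dp}\Pro_p[\cross(2R,R)]\asymp R^2\alpha^{an}_{4,p}(R)$ (this is~\eqref{e.russocross} in Lemma~\ref{l.scaling}). Integrating that from $1/2$ to $p$ shows
\[
\int_{1/2}^{p} R^2\,\alpha^{an}_{4,u}(R)\,du \;\asymp\; \Pro_p[\cross(2R,R)]-\Pro_{1/2}[\cross(2R,R)]\;\le\;1\,,
\]
for any $R\le L^{an}(p)$, with constants depending only on $\epsilon_0$. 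Now the very same integral bounds $\big|\log\alpha^{an}_{j,p}(R)-\log\alpha^{an}_{j,1/2}(R)\big|$, so that difference is $\grandO{1}$ and you are done (passing from $\alpha_j(R)$ to $\alpha_j(r,R)$ via quasi-multiplicativity). No scaling relation is needed; in fact this argument is what then \emph{yields} \eqref{e.scaling2} by choosing $R=L^{an}(p)$ and reading off the $\asymp$ above.
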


In the present paper, we focus on the following exponents: It is believed that there exist $\nu \in (0,+\infty)$, $\beta \in (0,+\infty)$ and $\zeta_j \in (0,+\infty)$ such that:
\begin{align*}
&\forall p \in (1/2,1), \, \theta^{an}(p) = (p-1/2)^{\beta+\petito{1}} \, ,\\
&\forall p \in (1/2,1), \, L^{an}(p) = (p-1/2)^{-\nu+\petito{1}} \, ,\\
&\forall j \in \N^* \text{ and } \forall 1 \leq r \leq R, \, \alpha^{an}_{j,1/2}(r,R) =\left(\frac{r}{R}\right)^{\zeta_j+\petito{1}} \, ,
\end{align*}
where $\petito{1}$ goes to $0$ as $p$ goes to $1/2$ (respectively as $r/R$ goes to $0$). Moreover, it is believed that the following relations hold between these exponents:
\[
\beta = \nu \, \zeta_1 \, ; \: \nu = \frac{1}{2-\zeta_4} \, .
\]

The main results of the present paper is that, if these exponents exist, then these two scaling relations hold. As in~\cite{kesten1987scaling}, we also prove that, even if we do not assume that the exponents exist, then the corresponding relations between the percolation function, the correlation length and the probabilities of arm events hold. More precisely, we obtain the following:
\begin{thm}\label{t.scaling} Let $p \in (1/2,3/4]$ and let $\epsilon_0$ be the parameter of Definition~\ref{d.corrlength}. There exists a constant $C=C(\epsilon_0) \in [1,+\infty)$ such that
\begin{equation}\label{e.scaling1}
\frac{1}{C} \, \alpha^{an}_{1,1/2}(L^{an}(p)) \leq \theta^{an}(p) \leq C \, \alpha^{an}_{1,1/2}(L^{an}(p)) \, ,
\end{equation}
and
\begin{equation}\label{e.scaling2}
\frac{1}{C} \, \frac{1}{p-1/2} \leq L^{an}(p)^2 \, \alpha^{an}_{4,1/2}(L^{an}(p)) \leq C \, \frac{1}{p-1/2} \, .
\end{equation}
\end{thm}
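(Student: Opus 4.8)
The plan is to follow Kesten's strategy for Bernoulli percolation, using the three main inputs already assembled in the paper: the annealed quasi-multiplicativity property (Proposition~\ref{p.quasi}), the comparison of near-critical arm probabilities with critical ones (Proposition~\ref{p.alpha_jandalpha_1/2}), and the extension of box-crossing estimates (both annealed and quenched) to the near-critical regime at scales below $L^{an}(p)$. The two relations \eqref{e.scaling1} and \eqref{e.scaling2} are proved separately.

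For \eqref{e.scaling1}: the key idea is that $\theta^{an}(p) = \Pro_p[0 \leftrightarrow \infty]$ should be comparable to the probability of a one-arm event from scale $1$ to scale $L^{an}(p)$ at parameter $p$. The upper bound $\theta^{an}(p) \leq C\,\alpha^{an}_{1,p}(L^{an}(p))$ is the easy direction: an infinite black path crosses the annulus between scales $1$ and $L^{an}(p)$, so $\theta^{an}(p) \leq \alpha^{an}_{1,p}(L^{an}(p))$ essentially for free (modulo handling the innermost scale, which only costs a constant). For the lower bound, one argues that conditionally on a one-arm crossing of the annulus $[1, L^{an}(p)]$, one can extend the black path to infinity with probability bounded below: beyond scale $L^{an}(p)$ the configuration is "supercritical-looking" (crossing probabilities of rectangles exceed $1-\epsilon_0$), so a standard construction gluing crossings of a sequence of concentric annuli of geometrically increasing radii, combined with FKG-Harris and a Borel–Cantelli/Peierls-type argument, produces an infinite cluster with positive conditional probability. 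One must be careful that this gluing works in the annealed setting; here the near-critical box-crossing estimates and FKG-Harris (stated in Subsection~\ref{ss.correlation_in}) do the job. Finally, one replaces $\alpha^{an}_{1,p}(L^{an}(p))$ by $\alpha^{an}_{1,1/2}(L^{an}(p))$ using Proposition~\ref{p.alpha_jandalpha_1/2}.

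For \eqref{e.scaling2}: this is the genuinely Kesten-style relation, capturing that $L^{an}(p)$ is the scale at which the total influence of raising $p$ from $1/2$ becomes of order one. The plan is to differentiate (or take discrete differences of) the crossing probability $\Pro_p[\cross(2R,R)]$ in $p$ using a Russo-type formula. The derivative is (up to the density of Poisson points and the $\pm 1$-recoloring structure) a sum/integral over potential pivotal sites of the probability that the site is pivotal for the crossing event; by a standard argument the expected number of pivotals for a near-critical crossing at scale $R \leq L^{an}(p)$ is comparable to $R^2 \alpha^{an}_{4,p}(R)$ — four arms to the boundary of the box from a pivotal cell, with the $R^2$ coming from integrating over where the pivotal cell sits. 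Using the near-critical arm comparison this is $\asymp R^2 \alpha^{an}_{4,1/2}(R)$, and quasi-multiplicativity lets us write $R^2\alpha^{an}_{4,1/2}(R) \asymp R'^2 \alpha^{an}_{4,1/2}(R') \cdot (\text{ratio})$ to control how this grows in $R$; in fact $R \mapsto R^2\alpha^{an}_{4,1/2}(R)$ is (quasi-)increasing and polynomial. Integrating the Russo formula from $1/2$ to $p$ over the range $R \leq L^{an}(p)$, and using that $\Pro_{1/2}[\cross(2R,R)]$ is bounded away from $0$ and $1$ (Theorem~\ref{t.Tassion}) while $\Pro_p[\cross(2L^{an}(p),L^{an}(p))] \geq 1-\epsilon_0$ by definition of the correlation length, yields that $(p-1/2)\cdot L^{an}(p)^2\,\alpha^{an}_{4,1/2}(L^{an}(p)) \asymp 1$, which is \eqref{e.scaling2}. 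One subtlety is that the derivative must be evaluated at scale close to $L^{an}(p)$ rather than at a fixed scale, and one uses that for $R$ slightly below $L^{an}(p)$ the crossing probability is still not too close to $1$ (otherwise one would contradict the infimum defining $L^{an}(p)$), combined with monotonicity of $p \mapsto \Pro_p[\cross]$.

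The main obstacle, I expect, is making the Russo-type/pivotal argument rigorous in the continuum annealed setting. In Bernoulli percolation Russo's formula is clean; here one is differentiating with respect to the coloring parameter $p$ of a Poisson process, and "pivotality" has both a quenched aspect (flipping the color of a point $x \in \eta$) — this is where the four-arm event enters — and an annealed aspect (the location and number of Poisson points is itself random). One needs the careful notions of quenched and annealed pivotal events developed in the paper (the excerpt alludes to "the study of quenched and annealed notions of pivotal events"), together with the estimate that the expected number of quenched-pivotal cells for a near-critical crossing is $\asymp R^2\alpha^{an}_{4,1/2}(R)$ — and the upper bound on this expectation is exactly the kind of statement that is delicate because of the degenerate-environment issue illustrated in Figure~\ref{f.difficulties}, requiring the tools of Subsection~\ref{ss.quasi} (Propositions~\ref{p.fandalpha} and~\ref{p.techniquegeneral}) and the quenched box-crossing property Theorem~\ref{t.AGMT}. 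Everything else — the gluing constructions, FKG applications, and the arithmetic of combining quasi-multiplicativity with the polynomial bounds \eqref{e.poly} — is routine once the pivotal estimate is in hand.
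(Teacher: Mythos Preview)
Your proposal is correct and follows essentially the same approach as the paper: for \eqref{e.scaling2} the paper applies Russo's formula at the quenched level (then takes expectation, using dominated convergence and the inclusion $\Piv^q_S \subseteq \Piv_S$ to pass to annealed-pivotal events) to get $\frac{d}{dp}\Pro_p[\cross(2R,R)] \asymp R^2\alpha^{an}_{4,p}(R)$, integrates at $R=L^{an}(p)$, and combines with Proposition~\ref{p.alpha_jandalpha_1/2}; for \eqref{e.scaling1} it proves $\theta^{an}(p)\asymp\alpha^{an}_{1,p}(L^{an}(p))$ via exactly the Peierls-type gluing argument you describe (a $2$-dependent percolation on the rescaled lattice $2L^{an}(p)\cdot\Z^2$). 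Your identification of the main obstacle---making Russo rigorous in the continuum annealed setting by combining quenched and annealed pivotal notions---matches precisely what the paper handles in its Lemma~\ref{l.scaling}.
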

Proposition~\ref{p.alpha_jandalpha_1/2} and Theorem~\ref{t.scaling} are proved in Section~\ref{s.kesten} by relying on all the other sections.\\

Let us note that the recent paper \cite{ahlberg2017noise} by Ahlberg and Baldasso also deals with near-critical Voronoi percolation. More precisely, the authors of \cite{ahlberg2017noise} use randomized algorithms in the spirit of \cite{duminil2017sharp,duminil2017exponential} and thinning procedures to prove that the near-critical window is of polynomial size. The present paper implies the following more precise result: the size of the near-critical window is of order $1/(R^2 \alpha^{an}_4(R))$. Our techniques are different from \cite{ahlberg2017noise} (the techniques of the present paper are much more geometrical). In particular, our use of randomized algorithms is different from \cite{ahlberg2017noise} (we use inequalities by Schramm and Steif \cite{schramm2010quantitative} in the spirit of \cite{ahlberg2015quenched} to prove quenched estimates and estimates on the $4$-arm event while Ahlberg and Baldasso use the OSSS inequality \cite{o2005every} to prove estimates on the derivative of crossing probabilities).

\begin{rem}
Note that~\eqref{e.scaling1} (together with the quasi-multiplicativity property and~\eqref{e.poly}) implies that for every $\epsilon_0' > \epsilon_0 > 0$ sufficiently small, there exist $c=c(\epsilon_0,\epsilon_0')>0$ such that, for every $p \in (1/2,3/4]$,
\[
cL^{an,\epsilon_0}(p) \leq L^{an,\epsilon_0'}(p) \leq L^{an,\epsilon_0}(p) \, .
\]
\end{rem}

In~\cite{kesten1987scaling}, Kesten also proves other scaling relations. We believe that, with the results of the present paper, analogues of these other scaling relations can also be proved, but we have restricted ourself to the two scaling relations~\eqref{e.scaling1} and~\eqref{e.scaling2}.

\subsection{Estimates on the $4$-arm events, $\theta^{an}(p)$, and $L^{an}(p)$}\label{ss.estimates_corr_perco}

In the present paper, we prove some estimates on arm events. In particular, we obtain the following estimates on the $4$-arm events in Subsections~\ref{ss.pivotals1} and~\ref{ss.alpha4}:
\begin{prop}\label{p.alpha4}
There exists an absolute constant $\epsilon > 0$ such that the following holds:
\bi 
\item[i)] For every $R \in [1,+\infty)$,
\[
\alpha^{an}_{4,1/2}(R) \leq \frac{1}{\epsilon} R^{-(1+\epsilon)} \, .
\]
\item[ii)] For every $1 \leq r \leq R$,
\[
\alpha^{an}_{4,1/2}(r,R) \geq \epsilon \, \left( \frac{r}{R} \right)^{2-\epsilon} \, .
\]
\ei
\end{prop}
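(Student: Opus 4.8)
The plan is to obtain both bounds from universality-type arguments that compare the $4$-arm event to crossing-type events, using only the RSW/box-crossing inputs (Theorems~\ref{t.Tassion} and~\ref{t.AGMT}), the quasi-multiplicativity property (Proposition~\ref{p.quasi}), and the FKG-Harris inequality. Both items are of the same flavour as Kesten's arguments for Bernoulli percolation, so the continuum-specific difficulties are the ones already flagged around Figure~\ref{f.difficulties}.

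For item~ii), the lower bound $\alpha^{an}_{4,1/2}(r,R) \geq \epsilon (r/R)^{2-\epsilon}$, I would construct an explicit $4$-arm configuration and bound its probability from below. The idea is to tile the annulus $[-R,R]^2\setminus[-r,r]^2$ along a dyadic sequence of scales $r=2^{k_0}\leq 2^{k_0+1}\leq\cdots\leq 2^{k_1}=R$ (up to bounded multiplicative errors), and at each scale $2^k$ force the four arms of alternating colors to pass through a bounded number of well-chosen disjoint quads, each crossed in the appropriate direction by the appropriate color; gluing is done exactly as in the proof of the left-hand inequality of~\eqref{e.poly} in Subsection~\ref{ss.warm}, which I may invoke. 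Each elementary crossing at scale $2^k$ has probability bounded below by a constant by Theorem~\ref{t.Tassion}, and there are $O(\log(R/r))$ scales, so this only gives a polynomial lower bound; to get the sharp power $2-\epsilon$ one instead uses the second-moment / reverse-FKG idea: the probability that the four arms are \emph{confined} to a thin tube of aspect ratio bounded at each scale costs only a constant per scale, and confining all four arms to reach a single point costs at most a factor $(r/R)^2$ for the ``two faces'' of the four-arm landscape (the white arms cost $\leq (r/R)$ and the black arms cost $\leq (r/R)$ by separate one-arm confinement estimates), with an extra $(r/R)^{-\epsilon}$ slack absorbed by RSW. I would make this precise by the standard argument: $\alpha_4(r,R)\geq \alpha_4(r,2r)^{\,c}\cdot(\text{product over scales of a constant})\geq \epsilon (r/R)^{2-\epsilon}$, where the $(r/R)^2$ comes from requiring, at the innermost scales only, that two disjoint monochromatic crossings of opposite colors each reach within distance $r$ of the origin. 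Quasi-multiplicativity (Proposition~\ref{p.quasi}) lets me reduce to $\alpha_4(r,2r)$, which is bounded below by a constant by RSW.

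For item~i), the upper bound $\alpha^{an}_{4,1/2}(R)\leq \epsilon^{-1}R^{-(1+\epsilon)}$, the key observation is that the $4$-arm exponent is strictly larger than $1$. The cleanest route is via the quenched $4$-arm estimate that will be proved in Subsections~\ref{ss.pivotals1}--\ref{ss.alpha4}: using the Schramm--Steif randomized-algorithm inequalities in the spirit of~\cite{ahlberg2015quenched} (and as advertised after Theorem~\ref{t.scaling}), one shows $\alpha^{an}_{4,1/2}(R)=o(R^{-1})$ with a polynomial rate, which is exactly the extra $\epsilon$ in the exponent. Concretely: revealing the colors of the Poisson points along a suitable algorithm that determines $\cross(R,R)$, each point has small revealment, so the OSSS/Schramm--Steif bound forces the influence of a single cell (which is comparable to $\alpha^{an}_4$ at the appropriate scale, via a pivotality estimate) to be small; combined with $\sum_{\text{cells in }[-R,R]^2}(\text{influence})\geq \text{const}$ (a consequence of RSW, since crossing probabilities are bounded away from $0$ and $1$ and must move as $p$ varies), one gets $R^2\,\alpha^{an}_4(R)\cdot R^{-\epsilon}\gtrsim$ something, equivalently $\alpha^{an}_4(R)\lesssim R^{-(1+\epsilon)}$. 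Here I must be careful to compare the \emph{annealed} $4$-arm probability to an annealed pivotality probability for crossing events; this uses Propositions~\ref{p.fandalpha} and~\ref{p.techniquegeneral} to pass between arm events at scale $R$ (a degenerate event, where the environment could be atypical) and pivotal events for macroscopic crossings.

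The main obstacle is the degeneracy issue of Figure~\ref{f.difficulties}: in the annealed model, conditioning on a four-arm event at scale $r$ could in principle bias the Poisson intensity near the origin, so the gluing constructions in item~ii) and the arm-to-pivotal comparison in item~i) are not purely combinatorial as they are on $\Z^2$ or $\T$. I expect to handle this exactly as in Subsection~\ref{ss.quasi}: one works on the ``good environment'' event where the cells near each relevant scale are of bounded size and bounded in number, uses Theorem~\ref{t.AGMT} (quenched box-crossing) to guarantee this good event has overwhelming probability at each scale, and checks that restricting to good environments changes all probabilities by at most a bounded factor. Given these tools, both bounds follow; the genuinely new work compared to Bernoulli percolation is entirely in controlling the environment, which is where I would concentrate the effort.
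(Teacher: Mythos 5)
Your approach to item~i) is essentially the paper's: one lower-bounds quenched pivotality probabilities for $\cross(2R,R)$ by $\alpha^{an}_4(R)$ (this is Lemma~\ref{l.piv_one_point}, using the ``Hex'' trick to extend arms into a single cell) and then feeds this into the Schramm--Steif sum-of-squares bound from~\cite{ahlberg2015quenched}, via Jensen. Your last inequality chain is written incorrectly -- from $\sum_i I_i \geq \mathrm{const}$ together with small \emph{maximal} influence you cannot deduce the required bound; the paper instead uses $I_i \gtrsim \alpha^{an}_4(R)$ pointwise over the $\approx R^2$ bulk boxes, so that $R^2\alpha^{an}_4(R)^2 \leq \sum I_i^2 \lesssim R^{-\epsilon}$ -- but you do identify the right pivotality input, so this is a bookkeeping confusion rather than a conceptual one.

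Item~ii) has a genuine gap. You never invoke the universal $5$-arm exponent, and your ``confinement'' heuristic cannot produce the exponent $2-\epsilon$. Forcing the four arms into thin tubes at each of $\log_2(R/r)$ scales by RSW costs a constant per scale and so only yields $\alpha^{an}_4(r,R) \geq (r/R)^{C}$ for an unspecified $C$; the claimed $(r/R)^2$ from ``two faces'' does not come from anything rigorous (indeed, in the $\T$ case the $4$-arm exponent is $5/4 < 2$), and the ``$(r/R)^{-\epsilon}$ slack absorbed by RSW'' has no source. The paper's actual argument (Subsection~\ref{ss.alpha4}, inspired by Beffara's proof in the appendix of~\cite{garban2010fourier}) compares $\alpha^{an}_5$ to $\alpha^{an}_4$: conditioned on a good environment $\gp(\rho,M)$, adding a fifth arm to a $4$-arm configuration forces a black crossing in each of $\log_5 M$ disjoint sub-annuli, which the quenched box-crossing property makes fail with probability $\geq c$ per annulus, giving
\[
\alpha^{an}_5(\rho,M\rho) \leq \grandO{1}\,(1-c)^{\lfloor\log_5 M\rfloor}\,\alpha^{an}_4(\rho,M\rho) + \grandO{1}\,\rho^{-3}\,.
\]
Combined with the universal identity $\alpha^{an}_5(\rho,M\rho)\asymp M^{-2}$ (item~iii of Proposition~\ref{p.universal}) and quasi-multiplicativity, one gets $\alpha^{an}_4(\rho,M\rho) \geq M^{-(2-\epsilon)}$ and then iterates. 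Note also that the argument crucially uses the annealed BK inequality at $p=1/2$ (Proposition~\ref{p.BK}) to control the number of interfaces $Y$; none of this appears in your proposal. You should start item~ii) from Proposition~\ref{p.universal}~iii) and the $\alpha^{an}_5$-vs-$\alpha^{an}_4$ comparison; the confinement heuristic is the wrong tool.
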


If we apply the first part of Proposition~\ref{p.alpha4} to the scaling relation~\eqref{e.scaling2} of Theorem~\ref{t.scaling}, then we obtain that
\[
L^{an}(p) \geq \epsilon \, \left( p-1/2 \right)^{-(1+\epsilon)} \, ,
\]
for some $\epsilon > 0$. If we rather use the second part of Proposition~\ref{p.alpha4}, then we obtain that
\[
L^{an}(p) \leq C (p-1/2)^{-C} \, ,
\]
for some $C<+\infty$. As a result, if the exponent $\nu$ exists, then $\nu \in (1,+\infty)$ (which is exactly - as far as we know - what is known for Bernoulli percolation on $\Z^2$, see~\cite{kesten1987scaling}). By using the polynomial decay property~\eqref{e.poly} and the scaling relation~\eqref{e.scaling1} of Theorem~\ref{t.scaling}, we deduce from this that
\begin{equation}\label{e.estimate_for_theta_an}
\epsilon \left( p-1/2 \right)^C \leq \theta^{an}(p) \leq C \left( p-1/2 \right)^\epsilon \, ,
\end{equation}
for some $C<+\infty$ and $\epsilon > 0$. In~\cite{kesten1987strict}, Kesten and Zhang have proved the following for Bernoulli percolation on $\Z^2$:
\begin{equation}\label{e.est_theta}
\theta(p) \geq \epsilon \, \left( p-1/2 \right)^{1-\epsilon} \, .
\end{equation}
In the case of Bernoulli percolation on the triangular lattice, it is known (see~\cite{lawler2002onearm} and~\cite{Smirnov2001critical}) that
\[
L(p) = \left( p-1/2 \right)^{-4/3+\petito{1}}
\]
and
\[
\theta(p) = \left( p-1/2 \right)^{5/36+\petito{1}} \, ,
\]
where $\petito{1} \rightarrow 0$ as $p \searrow 1/2$. The estimate~\eqref{e.estimate_for_theta_an} is strengthened in the two following papers:
\bi
\item In~\cite{quant_voro}, we prove that $\theta^{an}(p) \geq \epsilon (p-1/2)^{1-\epsilon}$ (by relying a lot on the present paper and in particular on Appendix~\ref{a.quant}).
\item In the recent work~\cite{duminil2017exponential} Duminil-Copin, Raoufi and Tassion use the OSSS inequality to prove that, for Voronoi percolation in any dimension $d \geq 2$, there exists $c=c(d)>0$ such that, for any $p>p_c=p_c(d)$,
\[
\theta^{an}(p) \geq c \,(p-p_c) \, .
\]
\ei

\subsection{Quenched or annealed results?}

In the present paper, our main goal is to prove \textbf{annealed} properties. The most important ones are the annealed scaling relations (Theorem~\ref{t.scaling}) and the annealed quasi-multiplicativity property (Proposition~\ref{p.quasi}). However, the \textbf{quenched} property Theorem~\ref{t.AGMT} will be one of our main tools. The multiple passages from quenched to annealed properties will be rather technical, see in particular Section~\ref{s.quasi}. As a result, it seems at first sight that it would be easier to prove quenched properties. We indeed believe that one could use Theorem~\ref{t.AGMT} to prove a quenched quasi-multiplicativity property (with a less technical proof than the annealed quasi-multiplicativity property). However, proving scaling relations at the quenched level seems much more complicated than proving them at the annealed level since the classical methods (that we follow in the present paper) deeply rely on \textbf{translation invariance} properties.

\paragraph{Acknowledgments:} I would like to thank Christophe Garban for many helpful discussions and for his comments on earlier versions of the manuscript. I would also like to thank Vincent Tassion for fruitful discussions and for having welcomed me in Zürich several times. Finally, I would like to thank the anonymous referee for his$\cdot$her careful reading and helpful comments.

\section{Strategy and organization of the paper}\label{s.strat}

\subsection{Some notations}\label{ss.notations}

Before stating the main intermediate results and explaining the global strategy, let us introduce some notations.

\paragraph{Boxes, annuli and quads.} In all the paper, we will write $B_R=[-R,R]^2$ and we will write $A(r,R)=[-R,R]^2 \setminus (-r,r)^2$. Also, for every $y \in \R^2$, we will write $B_r(y) = y + B_r$ and $A(y;r,R) = y + A(r,R)$. A quad $Q$ is a topological rectangle in the plane with two distinguished opposite sides. Also, a black (respectively white) path included in $Q$ that joins one distinguished side to the other is called a crossing (respectively dual crossing). The event that $Q$ is crossed (respectively dual-crossed) will be written $\cross(Q)$ (respectively $\cross^*(Q)$).
%We will often use the terminology \textit{primal path} for a black path and \textit{dual path} for a white path. \color{green} (Je fais vraiment ça ??) \color{black}
%When $Q$ is a (real {\em ça veut dire qq chose ça ??}) rectangle, we will also assume that it is given with two distinguished sides. Also, if when we consider a $l_1 \times l_2$ rectangle, $l_1$ will always denote the length of the distinguished sides.

\paragraph{Other notations.} In all the paper, we will use the following notations: (a) $\grandO{1}$ is a positive bounded function, (b) $\Omega(1)$ is a positive function bounded away from $0$ and (c) if $f$ and $g$ are two non-negative functions, then $f \asymp g$ means $\Omega(1) f \leq g \leq \grandO{1} f$.

We will also use the following notation: Let $(\Omega,\mathcal{F},\Pro)$ be a probability space. If $\mathcal{G} \subseteq \mathcal{F}$ is a $\sigma$-field, $B$ is some event such that $\Pro \left[ B \right] > 0$, and $A$ is some event, then
\[
\Pro \left[ A \mid B, \, \mathcal{G} \right] := \frac{\Pro \left[ A \cap B \mid \mathcal{G} \right]}{\Pro \left[ B \mid \mathcal{G} \right]} \, \un_{\lbrace \Pro [ B \mid \mathcal{G} ] > 0 \rbrace} \, .
\]
Note that, $\Pro [ \: \cdot \mid B ]$-a.s., we have: $\Pro[ A \mid B, \, \mathcal{G} ]$ is the conditional expectation of $A$ with respect to $\mathcal{G}$ and under $\Pro [ \: \cdot \mid B]$.

%In this subsection, we explain what are the main tools and intermediate results. In Subsection~\ref{ss.correlation_in}, we state some important correlation inequalities (that are already known results). In Subsection~\ref{ss.quasi}, we state our results about quasi-multiplicativity, and in Subsection~\ref{ss.intro_piv}, we introduce a quenched and an annealed definitions of pivotal sets and we state a result that links probability on pivotal sets and probability of arm events.

\subsection{Correlation inequalities for Voronoi percolation}\label{ss.correlation_in}

In this subsection, we recall two very useful families of correlation inequalities: the FKG-Harris inequalities and the BK inequalities, which are inequalities for increasing events. First, let us define what is an increasing event in our context. Since we work in random environment, it is interesting to consider quenched and annealed notions of increasing events.

\begin{defi}
\bi 
\item[i)] First, we recall the classical notion of increasing events. Let $E$ be a countable set. An event $A$ of the product $\sigma$-algebra on $\lbrace -1,1 \rbrace^E$ is increasing if for any $\omega,\omega' \in \lbrace -1,1 \rbrace^E $ such that $\omega \leq \omega'$ and $\omega \in A$, we have $\omega' \in A$.
\item[ii)] An event $A$ measurable with respect to the colored configuration $\omega$ is quenched-increasing if, for every point configuration of the plane $\eta$ and every $\omega,\omega' \in \lbrace -1,1 \rbrace^\eta$ such that $\omega \in A$ and $\omega \leq \omega'$, we have $\omega' \in A$.
\item[iii)] An event $A$ is annealed-increasing if, for any colored configuration $\omega \in A$ and any $\omega'$ obtained from $\omega$ by adding black points or deleting white points, we have $\omega' \in A$.
\ei
\end{defi}
Note that, if $A$ is annealed-increasing, then $A$ is quenched-increasing. 

\paragraph{The FKG-Harris inequalities.}
%Let us end this subsection on general properties on Voronoi percolation by stating FKG-Harris inequality which is an inequality for increasing events.

\bi
\item[i)] The classical FKG-Harris inequality is the following (see~\cite{grimmett1999percolation,bollobas2006percolation}): Let $E$ be a countable set. Remember that we write $\Prob^E_p := \left( p\delta_1+(1-p)\delta_{-1} \right)^{\otimes E}$. Let $A$ and $B$ be increasing events. Then, for every $p$ we have
\[
\Prob^E_p \left[ A \cap B \right] \geq \Prob^E_p \left[ A \right] \cdot \Prob^E_p \left[ B \right] \, .
\]
\item[ii)]
In the quenched case, the FKG-Harris inequality is a direct consequence of the above inequality and can be stated as follows: Let $A$ and $B$ be two quenched-increasing events. Then, for every point configuration of the plane $\eta$ and every $p$ we have
\[
\Prob^\eta_p \left[ A \cap B \right] \geq \Prob^\eta_p \left[ A \right] \cdot \Prob^\eta_p \left[ B \right] \, .
\]
\item[iii)] In the annealed case, we have: Let $A$ and $B$ be two annealed-increasing events. Then, for every $p$,
\[
\Pro_p \left[ A \cap B \right] \geq \Pro_p \left[ A \right] \cdot \Pro_p \left[ B \right] \, .
\]
See Lemma~$14$ in Chapter~$8$ of~\cite{bollobas2006percolation} for the proof of this inequality. (Note that this does not hold in general for quenched-increasing events; indeed if $A$ depends only on $\eta$ and if $\Pro \left[ A \right] \in ]0,1[$ then $A$ and $A^c$ are quenched-increasing and $0=\Pro \left[ A \cap A^c \right] < \Pro \left[ A \right] \Pro \left[ A^c \right]$.)
\ei

\paragraph{The BK inequalities.}

%The classical BK inequality is the following: Let $E$ be a countable set, let $\Prob^E_p = (p\delta_1+(1-p)\delta_{-1})^{\otimes E}$. Also, let $A$ and $B$ be two events of the product $\sigma$-algebra on $\lbrace -1,1 \rbrace^E$ that depend on the value of finitely many points of $E$. Define the disjoint occurrence of $A$ and $B$ as follows:
%\[
%A \square B = \left\lbrace  \omega \in \lbrace -1,1 \rbrace^E \; : \; \exists I_1,I_2 \text{ disjoint subsets of } E, \; \sigma(\omega,I_1) \subseteq A \text{ and } \sigma(\omega,I_2) \subseteq B \right\rbrace  \, ,
%\]
%where $\sigma(\omega,I)$ is the set of all the configurations obtained by assigning the value $\omega_i$ to each $i \in I$ and any other value to each $i \in \eta \setminus I$. We have:
%\[
%\Prob_p^E \left[ A \square B \right] \leq \Prob_p^E \left[ A \right] \cdot \Prob_p^E \left[ B \right] \, .
%\]
Let $A$ and $B$ be two quenched increasing events measurable with respect to $\omega$ restricted to a bounded domain. Define the disjoint occurrence of $A$ and $B$ as follows (where, for every colored configuration $\omega$, we write $\eta(\omega)$ for the underlying (non-colored) point configuration):
\begin{equation}\label{e.disjoint_occ}
A \square B = \left\lbrace \omega \in \Omega  \; : \; \exists I_1,I_2 \text{ finite disjoint subsets of } \eta(\omega), \; \omega^{I_1} \subseteq A \text{ and } \omega^{I_2} \subseteq B \right\rbrace  \, ,
\end{equation}
where $\Omega$ is the set of all colored configurations and, if $I \subseteq \eta(\omega)$, $\omega^I \subseteq \lbrace -1,1 \rbrace^{\eta(\omega)}$ is the set of all $\omega'$ such that $\omega'_i=\omega_i$ for every $i \in I$.

We will use the following quenched BK inequality which is a direct consequence of the classical BK inequality (see for instance~\cite{grimmett1999percolation} or~\cite{bollobas2006percolation}): For every $\eta$ and every $p$ we have
\begin{equation}\label{e.quenched_BK}
\Prob^\eta_p \left[ A \square B \right] \leq \Prob^\eta_p \left[ A \right] \cdot \Prob^\eta_p \left[ B \right] \, .
\end{equation}
Unfortunately, the annealed-version of the BK-inequality is only known for $p=1/2$ (and it seems actually not clear whether or not it should be true for $p \neq 1/2$). This will cause some difficulties when we want to extend some results to the near-critical phase, see Section~\ref{s.pneq1/2}.

\begin{prop}[Lemma~$3.4$ of~\cite{ahlberg2015quenched},\cite{joosten2012random} - both refer to van den Berg]\label{p.BK}
Let $A$ and $B$ be two annealed increasing events measurable with respect to the colored configuration $\omega$ restricted to a bounded domain. Then
\[
\Pro_{1/2} \left[ A \square B \right] \leq \Pro_{1/2} \left[ A \right] \cdot \Pro_{1/2} \left[ B \right] \, .
\]
\end{prop}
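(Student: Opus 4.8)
The plan is to reduce the continuum statement to the discrete BK inequality by approximating the Poisson process with a fine grid and then passing to the limit. First I would fix a large box $\Lambda$ containing the bounded domain on which $A$ and $B$ are measurable, and tile it with a grid of $N^2$ tiny squares $(Q_k)_{k}$ of side $\delta = |\Lambda|^{1/2}/N$. On this finite index set, consider the discrete product space $\{-1,0,1\}^{N^2}$ where coordinate $k$ records whether $Q_k$ contains no point of $\eta$ (value $0$), a black point (value $1$), or a white point (value $-1$); the relevant measure is $\left((1-\delta^2)\delta_0 + \tfrac12\delta^2\delta_1 + \tfrac12\delta^2\delta_{-1}\right)^{\otimes N^2}$, which converges in distribution (as $N\to\infty$) to the restriction of $\omega$ to $\Lambda$, in the sense that the probability of two points falling in the same tile, or of a point falling within $\delta$ of the boundary of the relevant domain, is $O(\delta)\to 0$. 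The key point is that at $p=1/2$ this measure has a product structure over tiles with each tile carrying a symmetric law on $\{-1,0,1\}$, which is exactly the regime where van den Berg's three-valued BK inequality applies.

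Next I would verify that the symmetry $p=1/2$ is what makes the single-tile marginal amenable to BK. Following van den Berg (as cited), the disjoint-occurrence inequality holds for product measures on $\{-1,0,1\}^{n}$ provided each marginal is \emph{symmetric} under the swap $1\leftrightarrow -1$ (equivalently, invariant under the natural order-reversing involution); this is precisely where $p\neq 1/2$ fails, since then the per-tile law $(1-\delta^2)\delta_0 + p\delta^2\delta_1 + (1-p)\delta^2\delta_{-1}$ is no longer symmetric, which is consistent with the remark in the excerpt that the annealed BK inequality is unclear for $p\neq 1/2$. Granting this discrete inequality, for the discretized versions $A_\delta, B_\delta$ of the events (defined via the obvious map from grid configurations to colored point configurations, placing one point at the center of each occupied tile) one gets
\[
\Pro^{\mathrm{grid}}_{1/2}\!\left[A_\delta \square B_\delta\right] \leq \Pro^{\mathrm{grid}}_{1/2}\!\left[A_\delta\right]\cdot \Pro^{\mathrm{grid}}_{1/2}\!\left[B_\delta\right].
\]

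Finally I would pass to the limit $\delta\to 0$ (with an outer limit $\Lambda\uparrow\R^2$). For the right-hand side this is routine: $\Pro^{\mathrm{grid}}_{1/2}[A_\delta]\to\Pro_{1/2}[A]$ and likewise for $B$, using that $A$ and $B$ are measurable with respect to $\omega$ in a bounded domain and that the continuity/nondegeneracy of the Voronoi tiling means the event's indicator is a.s. continuous under the approximation (no point of $\eta$ lies exactly on a cell boundary, a.s.). For the left-hand side one needs $\liminf_\delta \Pro^{\mathrm{grid}}_{1/2}[A_\delta\square B_\delta] \geq \Pro_{1/2}[A\square B]$, which follows because a witnessing pair of disjoint finite sets $I_1,I_2\subseteq\eta$ for the continuum event $A\square B$ gets faithfully reproduced by the grid once $\delta$ is small enough that the points of $I_1\cup I_2$ lie in distinct tiles and the tile-center perturbation does not change membership in the (open, by quenched-monotonicity and the continuity of the tiling) events $\omega^{I_1}\subseteq A$, $\omega^{I_2}\subseteq B$; one should use that annealed-increasing events are in particular quenched-increasing so that adding the remaining $\eta$-points outside $I_1\cup I_2$ and recoloring is harmless in the right direction. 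Combining the three limits gives $\Pro_{1/2}[A\square B]\leq \Pro_{1/2}[A]\,\Pro_{1/2}[B]$.

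The main obstacle I anticipate is not the limiting argument itself but making precise the claim that the events are ``essentially open/continuous'' under discretization — i.e.\ controlling the boundary effects where a point of $\eta$ sits near a tile boundary or where moving a point to a tile center flips the Voronoi adjacency structure. Handling this cleanly requires an a.s.\ statement that the Voronoi tiling and the arm/crossing configuration in the bounded domain are stable under sufficiently small perturbations of the (finitely many relevant) points, which is standard for Poisson--Voronoi but needs to be invoked carefully; alternatively one can avoid it entirely by citing the cited sources (\cite{ahlberg2015quenched} Lemma~3.4, \cite{joosten2012random}) where this discretization-and-limit scheme is carried out, and simply record that their argument applies verbatim here. \qed
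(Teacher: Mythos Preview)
The paper does not prove this proposition; it is stated with citations to \cite{ahlberg2015quenched} (Lemma~3.4) and \cite{joosten2012random}, where the argument (attributed to van den Berg) is carried out. Your high-level strategy --- discretize the colored Poisson process on a fine grid, apply a discrete disjoint-occurrence inequality, and pass to the limit --- is indeed the strategy in those references. However, your sketch has a genuine gap at the step where you claim that a Voronoi witness pair $(I_1,I_2)$ ``gets faithfully reproduced by the grid.''

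The issue is a mismatch between the two notions of disjoint occurrence. In the paper's definition of $A\square B$, the witnesses $I_1,I_2$ are subsets of $\eta$ and the witness condition $\omega^{I_1}\subseteq A$ only quantifies over \emph{recolorings} of the points in $\eta\setminus I_1$; the underlying point set $\eta$ is held fixed, and in particular empty tiles stay empty. In the standard cylinder-based $\square$ on $\{-1,0,1\}^{N^2}$, the witness condition $[\sigma]_{J_1}\subseteq A_\delta$ quantifies over \emph{arbitrary} values of $\sigma$ on $J_1^c$, which for an annealed-increasing $A_\delta$ amounts to setting every tile outside $J_1$ to $-1$ (a white point). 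Thus taking $J_1=\{\text{tiles of }I_1\}$ does \emph{not} give a grid witness: it forces white points into all previously empty tiles, and since $A$ is decreasing in white points this may well push you out of $A$. The obvious fix --- enlarging $J_1$ to include the empty tiles (fixing them at $0$) --- makes $J_1$ and $J_2$ share all empty tiles, destroying disjointness. Your remark that ``annealed-increasing events are in particular quenched-increasing so that adding the remaining $\eta$-points outside $I_1\cup I_2$ and recoloring is harmless in the right direction'' does not address this: annealed monotonicity controls adding black and removing white, but the grid witness condition forces you to \emph{add white} to empty tiles, which goes the wrong way.

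Relatedly, the ``three-valued BK for symmetric marginals'' you invoke is not a standard result you can cite off the shelf, and you do not prove it; if by $\square$ you instead mean the Voronoi-style disjoint occurrence (witnesses drawn from $\mathrm{supp}(\sigma)$ with only $\pm1$-recoloring allowed), then this inequality \emph{is} the heart of the matter and requires its own argument --- simply conditioning on $\mathrm{supp}(\sigma)$, applying quenched BK, and averaging does not work, since $\mathbb{E}[\mathbf{P}^\eta[A]\mathbf{P}^\eta[B]]$ can strictly exceed $\mathbb{P}[A]\mathbb{P}[B]$ (take $A=B=\cross(2R,R)$ and use that $\mathrm{Var}(\mathbf{P}^\eta[\cross(2R,R)])>0$). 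The actual argument in the cited references exploits the $p=1/2$ symmetry in a more delicate way than your sketch indicates; you should consult those sources directly rather than treating this step as routine. The limiting issues you flag in your last paragraph (stability of the Voronoi tessellation under small perturbations) are real but secondary; the main obstacle is the one above.
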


\subsection{Consequences of the annealed quasi-multiplicativity property}\label{ss.quasi}

In this subsection, we discuss important consequences of the (annealed) quasi-multiplicativity property Proposition~\ref{p.quasi}. As mentioned in Subsection~\ref{ss.voronoi}, Proposition~\ref{p.quasi} is the most technical result of the paper. For this reason, we have chosen to postpone its proof to the final section: Section~\ref{s.quasi}.  We will use this property in most of the other sections of the paper (for more about which section depends on which other section, see the beginning of Subsection~\ref{ss.orga}). Let us state some results that will be useful all along the paper and which are consequences of the quasi-multiplicativity property (and of intermediate results from Section~\ref{s.quasi}). These results are essentially useful to \textbf{overcome the spatial dependencies of the model} and will be crucial in Section~\ref{s.pivotals} where we deal with ``pivotal events''. We first need a definition.

\begin{defi}\label{d.hatarm}
We let
\[
\widehat{\arm}_j(r,R) = \left\lbrace \Pro \left[ \text{\textbf{A}}_j(r,R) \cond \omega \cap A(r,R) \right] > 0 \right\rbrace \, .
\]
In words, $\widehat{\arm}_j(r,R)$ is the event that, conditionally on the colored configuration in the annulus $A(r,R)$, the arm event $\arm_j(r,R)$ holds with positive probability.
\end{defi}

What is interesting with $\widehat{\arm}_j(r,R)$ is that \textbf{it is measurable with respect to $\omega \cap A(r,R)$}. Note also that a.s. $\arm_j(r,R) \subseteq \widehat{\arm}_j(r,R)$ (i.e. $\Pro \left[ \arm_j(r,R) \setminus \widehat{\arm}_j(r,R) \right] = 0$).\footnote{To prove this, use for instance the following result with $X=\un_{\arm_j(r,R)}$ and $\mathcal{G} = \sigma(\omega \cap A(r,R))$: Let $X$ be a non-negative random variable and let $\mathcal{G}$ be a sub-$\sigma$-field of the underlying $\sigma$-field. Then, a.s. we have: $\E \left[ X \cond \mathcal{G} \right] = 0 \Rightarrow X = 0$.} The following result will be proved in Subsection~\ref{ss.QM_consequences}:
\begin{prop}\label{p.fandalpha}
Let $j \in \N^*$, let $1 \leq r \leq R$, and write
\begin{eqnarray}
f_j(r,R)=f_{j,1/2}(r,R) & := & \Pro_{1/2} \left[ \widehat{\arm}_j(r,R) \right] \ .
\end{eqnarray}
There exists a constant $C = C(j) < +\infty$ such that
\[
\alpha^{an}_{j,1/2}(r,R) \leq f_j(r,R) \leq C \, \alpha^{an}_{j,1/2}(r,R) \, .
\]
\end{prop}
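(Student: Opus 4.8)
The plan is to prove the two inequalities separately. The left-hand inequality $\alpha^{an}_{j,1/2}(r,R) \leq f_j(r,R)$ is essentially free: since a.s. $\arm_j(r,R) \subseteq \widehat{\arm}_j(r,R)$ (as noted just before the statement), taking $\Pro_{1/2}$-probabilities gives $\alpha^{an}_{j,1/2}(r,R) = \Pro_{1/2}[\arm_j(r,R)] \leq \Pro_{1/2}[\widehat{\arm}_j(r,R)] = f_j(r,R)$. So the entire content is the right-hand inequality $f_j(r,R) \leq C\,\alpha^{an}_{j,1/2}(r,R)$.

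For the upper bound, the natural strategy is to show that, once $\widehat{\arm}_j(r,R)$ occurs, we can complete the arms by modifying the configuration only in controlled regions, at a cost that is $\Omega(1)$ and independent of $r,R$. Concretely, $\widehat{\arm}_j(r,R)$ is measurable with respect to $\omega \cap A(r,R)$, and on this event there is a positive-probability way to extend whatever partial configuration is seen in $A(r,R)$ to a full configuration realizing $\arm_j(r,R)$; the difficulty is that this completion probability is not uniformly bounded below, because the environment $\eta$ inside $A(r,R)$ — or near $\partial[-r,r]^2$ and $\partial[-R,R]^2$ — may be degenerate (this is exactly the phenomenon illustrated in Figure~\ref{f.difficulties}). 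To handle this I would introduce the middle annulus, say $A(2r,R/2)$, and argue as follows. First, one shows that on $\widehat{\arm}_j(r,R)$, with cost $\Omega(1)$, one can realize $\arm_j(r,R)$ by producing $j$ disjoint arms of alternating colors that cross the sub-annulus $A(2r,R/2)$ together with the "landing" regions near the two boundaries; here one uses the box-crossing property Theorem~\ref{t.Tassion} and the FKG-Harris inequality (in the annealed form of Subsection~\ref{ss.correlation_in}) to glue arms together and to build the short connections near $\partial[-r,r]^2$ and $\partial[-R,R]^2$ using a bounded number of crossing events in well-chosen $O(1)$-aspect-ratio quads located at scales $\asymp r$ and $\asymp R$. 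The key input that makes this uniform is the machinery of Section~\ref{s.quasi} referenced in the statement — in particular the quasi-multiplicativity property Proposition~\ref{p.quasi} together with the intermediate results (the gluing/extension lemmas, such as Proposition~\ref{p.techniquegeneral}) that precisely say that arms living in an annulus can be extended to adjacent annuli with only an $\Omega(1)$ multiplicative loss, even in the presence of spatial dependencies.

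The main obstacle, as flagged in the paragraph around Figure~\ref{f.difficulties}, is the passage from "$\arm_j(r,R)$ holds with positive conditional probability given $\omega \cap A(r,R)$" to "the unconditional probability of $\arm_j(r,R)$ is comparable to that of $\widehat{\arm}_j(r,R)$." The subtle point is that $\widehat{\arm}_j$ is defined via positivity of a conditional probability that could be arbitrarily small; so a direct "cost $\Omega(1)$ to complete" argument fails pointwise. The way around it is to not complete at scale $r$ directly but to work in the well-separated middle annulus where one has genuine room: one shows $\widehat{\arm}_j(r,R)$ implies (up to $\Omega(1)$ cost, using the extension lemmas of Section~\ref{s.quasi}) an event defined through $\omega$ in $A(2r, R/2)$ on which the arms are realized by actual paths, and that this latter event has probability $\asymp \alpha^{an}_{j}(2r, R/2) \asymp \alpha^{an}_{j}(r,R)$ by quasi-multiplicativity and the polynomial bounds~\eqref{e.poly}. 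I expect the bookkeeping — carefully defining the intermediate "robust" arm events, checking measurability, and invoking the correct gluing statement from Section~\ref{s.quasi} — to be where all the real work lies; the probabilistic estimates themselves are then immediate from Proposition~\ref{p.quasi}, Theorem~\ref{t.Tassion}, and FKG-Harris.
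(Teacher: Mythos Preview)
Your treatment of the lower bound is fine, and you have correctly located the real difficulty in the upper bound: the conditional probability defining $\widehat{\arm}_j(r,R)$ can be arbitrarily small, so there is no uniform ``complete the arms at cost $\Omega(1)$'' argument. However, your proposed resolution has a genuine gap.

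You want to pass from $\widehat{\arm}_j(r,R)$ to an event in $A(2r,R/2)$ on which actual arms exist. The natural way to do this is via denseness: on $\dense_{1/100}(A(r/2,2r)) \cap \dense_{1/100}(A(R/2,2R))$, every cell meeting $A(2r,R/2)$ has its centre in $A(r,R)$, so $\widehat{\arm}_j(r,R)$ forces $\arm_j(2r,R/2)$. The problem is the error term at the \emph{inner} scale: $\Pro[\neg\,\dense_{1/100}(A(r/2,2r))]$ is of order $\exp(-\Omega(1)r^2)$, and this is \emph{not} dominated by $\alpha^{an}_j(r,R)$ once $R$ is super-exponentially large in $r$. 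So a single-step ``$\Omega(1)$ cost'' reduction, as you describe it, fails. You could rescue this by an iteration (as in the $j=1$ warm-up of Subsection~\ref{ss.warm}), but you do not set one up. Separately, your appeal to Proposition~\ref{p.techniquegeneral} is circular: that proposition is proved \emph{from} Proposition~\ref{p.fandalpha}, not the other way round.

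The paper's proof sidesteps all of this by stripping only the \emph{outer} boundary. One writes
\[
\widehat{\arm}_j(r,R) \subseteq \widehat{\arm}_j^{int}(r,R/2) \cup \big(\widehat{\arm}_j(r,R) \setminus \dense_{1/100}(A(R/2,2R))\big),
\]
so $f_j(r,R) \leq f_j^{int}(r,R/2) + \grandO{1}\exp(-\Omega(1)R^2)$. The error is now at scale $R$, hence negligible against the polynomial lower bound~\eqref{e.poly}. The quantity $f_j^{int}(r,R/2)$ is then handled by Corollary~\ref{c.looksgood} (the output of the Section~\ref{s.quasi} machinery, already proved at this point), which gives $f_j^{int}(r,R/2) \asymp \alpha^{an}_j(r,R/2)$, and quasi-multiplicativity finishes. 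The key move you are missing is to reduce not to a genuine arm event but to the one-sided hat event $\widehat{\arm}_j^{int}$, for which the comparison with $\alpha^{an}_j$ has already been established inside the quasi-multiplicativity proof.
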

The following is a consequence of Proposition~\ref{p.fandalpha} and illustrates how this last proposition can help us to overcome spatial dependency problems.
\begin{prop}\label{p.techniquegeneral}
Let $j \in \N^*$. For every $h \in (0,1)$, there exists a constant $\epsilon = \epsilon(j,h) \in (0,1)$ such that, for every $1 \leq r \leq R$ and for every event $G$ which is measurable with respect to $\omega \setminus A(2r,R/2)$ and satisfies $\Pro_{1/2} \left[ G \right] \geq 1-\epsilon$, we have
\[
\Pro_{1/2} \left[ \arm_j(r,R) \cap G \right] \geq (1-h) \, \alpha^{an}_{j,1/2}(r,R) \, .
\]
\end{prop}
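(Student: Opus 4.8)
The plan is to use Proposition~\ref{p.fandalpha} to replace the genuine arm event by the environment-measurable event $\widehat{\arm}_j(r,R)$, and then to localize the "bad" part of $\widehat{\arm}_j(r,R)$ inside the annulus $A(2r,R/2)$, which is disjoint from the support of $G$. More precisely, first I would introduce the event $\widehat{\arm}_j(2r,R/2)$, which is measurable with respect to $\omega \cap A(2r,R/2)$ and hence independent of $G$ under $\Pro_{1/2}$. By Proposition~\ref{p.fandalpha} applied at scales $2r$ and $R/2$, together with the polynomial decay bound \eqref{e.poly} and the quasi-multiplicativity property Proposition~\ref{p.quasi}, one has $\Pro_{1/2}[\widehat{\arm}_j(2r,R/2)] = f_j(2r,R/2) \asymp \alpha^{an}_{j,1/2}(2r,R/2) \asymp \alpha^{an}_{j,1/2}(r,R)$, where the constants depend only on $j$. (For $r$ and $R$ within a bounded ratio, say $R \le 4r$, the statement is trivial after adjusting $\epsilon$, so we may assume $R$ is much larger than $r$ and all the annuli are non-degenerate.)

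The next step is to produce, on the event $\widehat{\arm}_j(2r,R/2) \cap G$, the full arm event $\arm_j(r,R)$ with conditional probability bounded below by a constant $c_0 = c_0(j) > 0$. Here I would condition on the pair $(\eta, \omega \cap A(2r,R/2))$ restricted to the event $\widehat{\arm}_j(2r,R/2)$; by definition of $\widehat{\arm}_j$, conditionally on this data the event $\arm_j(2r,R/2)$ has positive probability, and in fact — using the intermediate results of Section~\ref{s.quasi} referenced in the statement of Proposition~\ref{p.fandalpha}, which control the geometry of the environment on $\widehat{\arm}_j$ away from degeneracy — it has probability bounded below by $\Omega(1)$. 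Given $\arm_j(2r,R/2)$, one extends the $j$ arms across the two remaining annuli $A(r,2r)$ and $A(R/2,R)$ using the box-crossing property Theorem~\ref{t.Tassion} and the (annealed) FKG-Harris inequality, at a multiplicative cost of $\Omega(1)$. (The colored configuration inside $A(r,2r) \cup A(R/2,R)$ is independent of both $G$ and of $\omega \cap A(2r,R/2)$, which is what makes this clean; the parity/no-shared-cell condition for odd $j$ is handled exactly as in the standard extension arguments of Section~\ref{s.quasi}.) This gives
\[
\Pro_{1/2}\left[ \arm_j(r,R) \cond \widehat{\arm}_j(2r,R/2), \, G \right] \geq c_0
\]
on a set of full $\Pro_{1/2}[\,\cdot \mid \widehat{\arm}_j(2r,R/2), G]$-measure, where $c_0 = c_0(j)>0$.

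Combining, and using that $\widehat{\arm}_j(2r,R/2)$ and $G$ are independent,
\[
\Pro_{1/2}\left[ \arm_j(r,R) \cap G \right] \;\geq\; c_0 \, \Pro_{1/2}\left[ \widehat{\arm}_j(2r,R/2) \right] \Pro_{1/2}[G] \;\geq\; c_0 \, (1-\epsilon) \, \Omega(1) \, \alpha^{an}_{j,1/2}(r,R),
\]
and, using \eqref{e.poly} together with Proposition~\ref{p.quasi} once more to compare $\alpha^{an}_{j,1/2}(r,R)$ with itself up to the fixed ratios $2$ and $1/2$, the product of the two $\Omega(1)$ constants and $c_0$ is a constant $c_1 = c_1(j) > 0$. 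It remains only to choose $\epsilon = \epsilon(j,h)$ so that $c_1(1-\epsilon) \ge 1-h$; since $c_1$ may be smaller than $1-h$, this naive choice fails, so instead one iterates the extension step to gain the missing constant: extend across thicker buffer annuli, or apply the argument with $\widehat{\arm}_j(Kr, R/K)$ for a large fixed $K=K(j,h)$, so that the product of the extension cost and the probability ratio exceeds $1-h$ — this works because, by quasi-multiplicativity, the ratio $\alpha^{an}_{j,1/2}(Kr,R/K)/\alpha^{an}_{j,1/2}(r,R)$ is bounded below by a constant depending only on $j$ and $K$, while the extension cost across the two buffers can be pushed arbitrarily close to the full unconditioned arm probability. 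The main obstacle is the second step — showing that on $\widehat{\arm}_j$ the conditional probability of the genuine arm event is bounded below by a universal constant rather than merely being positive — and this is precisely where the geometric control of the environment from Section~\ref{s.quasi} (Propositions~\ref{p.fandalpha} and the tools behind it) does the real work. \qed
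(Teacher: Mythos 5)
Your proposal correctly identifies the key independence structure — $\widehat{\arm}_j(2r,R/2)$ is measurable with respect to $\omega\cap A(2r,R/2)$ and hence independent of $G$ — and correctly invokes Proposition~\ref{p.fandalpha} and quasi-multiplicativity to compare $f_j(2r,R/2)$ with $\alpha^{an}_{j,1/2}(r,R)$. But from there you take the wrong direction and the argument does not close.

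You try to \emph{lower-bound} $\Pro_{1/2}[\arm_j(r,R)\cap G]$ directly by conditioning on $\widehat{\arm}_j(2r,R/2)\cap G$ and extending the arms. Any such extension/gluing step produces a constant $c_0 = c_0(j) < 1$ that is independent of $h$ and cannot be pushed to $1$; your own final paragraph acknowledges this. The proposed fix — widen the buffer to $\widehat{\arm}_j(Kr,R/K)$ — does not help: the extension cost $c_0(K)$ is bounded above by (roughly) $\alpha^{an}_j(r,Kr)\,\alpha^{an}_j(R/K,R)$, which shrinks as $K$ grows, while $\Pro[\widehat{\arm}_j(Kr,R/K)]\leq C\,\alpha^{an}_j(Kr,R/K)$ grows; by quasi-multiplicativity their product is $\leq C''\,\alpha^{an}_j(r,R)$ with $C''$ independent of $K$, and the lower bound you can actually extract is $\Omega(1)\,\alpha^{an}_j(r,R)$ with an $\Omega(1)$ constant bounded away from $1$. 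The phrase ``the extension cost can be pushed arbitrarily close to the full unconditioned arm probability'' has no valid meaning here; the conditional probability of extending the arms does not approach $\alpha^{an}_j(r,R)/f_j(2r,R/2)$, and if it did, that claim would be circular — it is equivalent to the proposition you are trying to prove.

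The correct (and much shorter) route, which is what the paper does, is to bound the \emph{complement}. Since a.s.\ $\arm_j(r,R)\subseteq\widehat{\arm}_j(2r,R/2)$, you get
\[
\Pro_{1/2}\left[ \arm_j(r,R)\setminus G \right]\;\leq\;\Pro_{1/2}\left[ \widehat{\arm}_j(2r,R/2)\setminus G \right]\;=\;f_j(2r,R/2)\,\Pro_{1/2}[\neg G]\;\leq\;C\,\alpha^{an}_{j,1/2}(r,R)\,\epsilon,
\]
using exactly the independence and the comparison $f_j(2r,R/2)\asymp\alpha^{an}_{j,1/2}(r,R)$ you already established. Then $\Pro_{1/2}[\arm_j(r,R)\cap G]=\alpha^{an}_{j,1/2}(r,R)-\Pro_{1/2}[\arm_j(r,R)\setminus G]\geq(1-C\epsilon)\,\alpha^{an}_{j,1/2}(r,R)$, and it suffices to take $\epsilon\leq h/C$. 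No extension of arms is needed at all; the whole point of Proposition~\ref{p.fandalpha} here is to control the exceptional set $\arm_j\setminus G$, not to rebuild $\arm_j$ from scratch.
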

\begin{proof}
We have
\begin{eqnarray*}
\Pro_{1/2} \left[ \arm_j(r,R) \setminus G \right] & \leq & \Pro_{1/2} \left[ \widehat{\arm}_j(2r,R/2) \setminus G \right]\\
& = & f_j(2r,R/2) \cdot \Pro_{1/2} \left[ \neg G \right] \, ,
\end{eqnarray*}
by spatial independence. Proposition~\ref{p.fandalpha} implies that $f_j(2r,R/2) \asymp \alpha^{an}_{j,1/2}(2r,R/2)$. Moreover, the quasi-multiplicativity property and~\eqref{e.poly} imply that $\alpha^{an}_{j,1/2}(2r,R/2) \asymp \alpha^{an}_{j,1/2}(r,R)$, which ends the proof.
\end{proof}

\begin{rem}
Note that, with essentially the same proof, we obtain the following result: Let $j \in \N^*$. For every $h \in (0,1)$, there exists a constant $\epsilon = \epsilon(j,h) \in (0,1)$ such that, for every $1 \leq r \leq \rho \leq R$ and for every event $G$ which is measurable with respect to $\omega \setminus \left( A(2r,\rho/2) \cup A(2\rho,R/2) \right)$ and satisfies $\Pro_{1/2} \left[ G \right] \geq 1-\epsilon$, we have
\[
\Pro_{1/2} \left[ \arm_j(r,R) \cap G \right] \geq (1-h) \, \alpha^{an}_{j,1/2}(r,R) \, .
\] 
\end{rem}

%We end the subsection on the quasi-multiplicativity property by giving some ideas of proof. To this p

%In Subsection~\ref{ss.warm}, we prove the left-hand-inequality of the quasi-multiplicativity property for $j=1$. This proof can be seen as an illustration of how we use events of the kind $\widehat{\arm}_j(r,R)$.

In Section~\ref{s.quasi}, we also use the quasi-multiplicativity property to compute universal arm exponents. For every $1 \leq r \leq R$, let $\alpha^{an,+}_{j,1/2}(r,R)$ denote the probability of the $j$-event in the half plane (i.e. the event that there are $j$ paths of alternating colors from $\partial B_r$ to $\partial B_R$ that live in the upper half-plane). See Subsection~\ref{ss.half-plane}: the quasi-multiplicativity property can also be proved for these quantities. We have the following:

\begin{prop}\label{p.universal}
The computation of the universal arm-exponents (that goes back as far as we know to Aizenman\footnote{See the first exercise sheet of \cite{werner2007lectures} for the proof in the case of Bernoulli percolation on the triangular lattice.}) holds for Voronoi percolation: Let $1 \leq r \leq R$, we have
\bi
\item[\textup{i)}] $\alpha^{an,+}_{2,1/2}(r,R) \asymp r/R \,$,
\item[\textup{ii)}] $\alpha^{an,+}_{3,1/2}(r,R) \asymp \left( r/R \right)^2 \,$,
\item[\textup{iii)}] $\alpha^{an}_{5,1/2}(r,R) \asymp \left( r/R \right)^2 \,$. 
\ei
\end{prop}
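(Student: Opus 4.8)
The plan is to follow the classical argument for the universal arm exponents (due to Aizenman, as recalled in~\cite{werner2007lectures}) and to check that the only inputs needed are (a) the box-crossing property Theorem~\ref{t.Tassion}, (b) the annealed FKG-Harris inequality, (c) the annealed BK inequality at $p=1/2$ (Proposition~\ref{p.BK}), and (d) the quasi-multiplicativity property both in the plane (Proposition~\ref{p.quasi}) and in the half-plane (proved in Subsection~\ref{ss.half-plane}). The upper bounds will come from the BK inequality and the lower bounds from FKG combined with Theorem~\ref{t.Tassion}; quasi-multiplicativity is what lets us pass from a single dyadic scale to the full ratio $r/R$.

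For the lower bounds, the strategy is the same for all three items: exhibit the required arm configuration inside $A(r,R)$ as an intersection of crossing-type events of fixed aspect ratio in a bounded number of concentric dyadic annuli, each of probability bounded below by Theorem~\ref{t.Tassion} (and its rotations/reflections, together with the standard RSW-type concatenation of crossings of long rectangles into a crossing of a circuit-type region), then use the annealed FKG-Harris inequality to lower-bound the intersection by the product. In a dyadic annulus $A(2^k,2^{k+1})$ this produces a constant $c>0$; over the $\asymp \log(R/r)$ annuli one would naively get $c^{\log(R/r)} = (r/R)^{C}$, but to get the sharp exponent one instead builds the configuration scale by scale and invokes quasi-multiplicativity (Proposition~\ref{p.quasi}, resp. its half-plane analogue) to glue, so that $\alpha^{an,+}_{2,1/2}(r,R) \geq \Omega(1) \prod_k \alpha^{an,+}_{2,1/2}(2^k,2^{k+1})$ and one only has to identify the one-scale quantity. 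For i), a black arm and a white arm crossing a single dyadic annulus in the half-plane has probability $\Omega(1)$, while the trivial ``one path must cross the annulus transversally'' gives the matching $\grandO{r/R}$ bound per scale, yielding $r/R$ after multiplication; items ii) and iii) are the genuinely clever ones — $\alpha^{an,+}_{3}$ and $\alpha^{an}_{5}$ both match the \emph{square} of the transversal-crossing probability, and the lower bound is again a one-scale FKG construction combined with quasi-multiplicativity.

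For the upper bounds, the key device is the BK inequality. For item iii), note that on $\arm_5(r,R)$ there are, disjointly, four arms (alternating colors) plus a fifth black arm; among the five arms of alternating colors there are at least three that are, say, black, or there are three white, and one isolates a configuration of two disjoint ``$2$-arm in a half-plane'' type events (a black and a white arm together confined to complementary half-annuli), so $\alpha^{an}_{5,1/2}(r,R) \leq \grandO{1}\,(\alpha^{an,+}_{2,1/2}(r,R))^2$ by Proposition~\ref{p.BK} applied at each dyadic scale and quasi-multiplicativity; combined with i) this gives $\grandO{(r/R)^2}$. For item ii), the three half-plane arms can be split (using the middle arm as a separator into two disjoint half-annuli, one containing an arm, the other containing two arms, hence a transversal crossing) into disjointly-occurring events whose probabilities multiply to $(r/R)^2$ via BK and quasi-multiplicativity. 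The matching lower bounds for ii) and iii) are FKG constructions. The main obstacle I expect is \textbf{the bookkeeping of disjoint occurrence across many scales}: the annealed BK inequality is only available at $p=1/2$ and only for events measurable in a bounded domain, so one must carefully restrict each arm event to a finite annulus, decompose into dyadic blocks, apply Proposition~\ref{p.BK} block by block, and then reassemble via quasi-multiplicativity — and one must make sure the ``extra black arm is separated from the others'' clause in the definition of odd arm events does not interfere with the disjointness partition. A secondary technical point is that Proposition~\ref{p.quasi} as stated is for the plane, so item iii) needs the planar version while i) and ii) need the half-plane version announced in Subsection~\ref{ss.half-plane}; these should be invoked explicitly.
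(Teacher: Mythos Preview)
Your proposal has a genuine gap: the scale-by-scale BK/FKG approach you describe cannot produce the \emph{exact} exponents $1$, $2$, $2$. Quasi-multiplicativity tells you that $\alpha(r,R)\asymp\prod_k\alpha(2^k,2^{k+1})$, but each dyadic factor is just some constant in $(0,1)$, so all you can extract this way is polynomial decay with \emph{some} exponent --- which is exactly the content of~\eqref{e.poly}, not of Proposition~\ref{p.universal}. Your sentence ``the trivial `one path must cross the annulus transversally' gives the matching $\grandO{r/R}$ bound per scale'' is not a valid argument: a single dyadic half-annulus crossing event has probability bounded away from $0$ and $1$, not comparable to $1/2$. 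Likewise, the claimed decomposition $\alpha^{an}_5\le\grandO{1}(\alpha^{an,+}_2)^2$ via BK does not work: the five arms of $\arm_5(r,R)$ live in the full annulus and are not confined to complementary half-annuli, so there is no way to extract two disjoint half-plane $2$-arm events.

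What you are missing is the \emph{counting argument} (this is the actual content of Aizenman's computation in \cite{werner2007lectures}). For $\alpha^{an,+}_3$: around the lowest point of any black cluster that crosses $A(r,R)$ there is a half-plane $3$-arm event, and conversely; the number of such crossing clusters has expectation $\asymp 1$ (this uses box-crossing and BK at the quenched level), while there are $\asymp R^2$ candidate unit squares for the lowest point, so by translation invariance each square carries probability $\asymp R^{-2}$. The $2$-arm half-plane exponent comes from counting interface endpoints on $\partial B_R$ ($\asymp R$ candidates, $\asymp 1$ interfaces), and the $5$-arm exponent from an analogous planar count. The paper implements exactly this, with the extra wrinkle that the quenched BK and box-crossing estimates are only available on the high-probability event $\dense_{1/100}(B_{2R})\cap\qbc^3_{1/100}(B_{2R})$; one works under $\Prob^\eta$ for good $\eta$, gets the two-sided bound on the sum $\sum_S\Prob^\eta[F_S(R)]$, and then averages over $\eta$. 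Quasi-multiplicativity enters only at the very end, to pass from $r=1$ to general $r$ and to show $\Pro[F_S(R)]\asymp\alpha^{an,+}_3(R)$.
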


Items~i) and~ii) of Proposition~\ref{p.universal} are proved in Subsection~\ref{ss.half-plane} while Item~iii) is proved in Subsection~\ref{ss.QM_odd}.

\subsection{Some important events: the pivotal events and the ``good'' events}

\subsubsection{Pivotal events}\label{ss.intro_piv}

A crucial step in the proof of the scaling relations is the study of pivotal events for crossing and arm events. In the present work, we introduce \textbf{a quenched and an annealed definitions for pivotal events}. Let us begin with a classical definition: Let $E$ be a countable set and let $A$ be an event of the product $\sigma$-algebra on $\lbrace -1,1 \rbrace^E$. A point $i \in E$ is pivotal for a configuration $\omega \in \lbrace -1,1 \rbrace^E$ and the event $A$ if changing the value of $\omega_i$ changes the value of $\un_A(\omega)$. We write $\Piv_i^E(A)$ for the event that $i$ is pivotal for $A$ (if $E = \lbrace 1, \cdots, n \rbrace$, we denote this event by $\Piv_i^n(A)$). More generally, if $I$ is a finite subset of $E$, we say that $I$ is pivotal for $\omega$ and $A$ if there exists $\omega' \in \lbrace -1,1 \rbrace^E$ such that $\omega$ and $\omega'$ coincide outside of $I$ and $\un_A(\omega') \neq \un_A(\omega)$. We denote by $\Piv^E_I(A)$ the corresponding event. Let us now introduce a quenched and an annealed notions of pivotal sets. The quenched version is very similar to the above notion: 

\begin{defi}\label{d.quenched_piv}
Let $A$ be an event measurable with respect to the colored configuration $\omega$ and let $\eta$ be the underlying (non-colored) point configuration. A bounded Borel set $D$ is quenched-pivotal for $\omega$ and $A$ if there exists $\omega' \in \lbrace -1,1 \rbrace^\eta$ (note that $\omega'$ has the same underlying point configuration as $\omega$) such that $\omega$ and $\omega'$ coincide on $\eta \cap D^c$ and $\un_A(\omega') \neq \un_A(\omega)$. We write $\Piv^q_D(A)$ for the event that $D$ is quenched-pivotal for $A$.\\
We also use the following terminology: if $x \in \eta$, we say that $x$ is quenched-pivotal for $A$ if changing the color of $x$ modifies the value of $\un_A$. If we work conditionally on $\eta$ and if $x \in \eta$, then $\left\lbrace  x \text{ is quenched-pivotal for } A \right\rbrace $ is an event of the product space $\lbrace -1,1 \rbrace^\eta$, and we denote this event by $\Piv^q_x(A)$.
\end{defi}

\begin{defi}\label{d.annealed_piv}
A bounded Borel set $D$ is annealed-pivotal for some colored configuration $\omega$ and some event $A$ if both $\Pro_p \left[ A \, | \, \omega \setminus D \right]$ and $\Pro_p \left[ \neg A \, | \, \omega \setminus D \right]$ are positive. We write $\Piv_D(A)$ for the event that $D$ is annealed-pivotal for $A$ (note that we omit the parameter $p$ in the notation; actually, as far as $p \in (0,1)$ and since $D$ is bounded, the event $\Piv_D(A)$ does not depend on $p$).
\end{defi}

%We will also use the following terminology: let $x \in \eta$, we say that $x$ is annealed-pivotal for $A$ if for every $\epsilon > 0$ the box $B_\epsilon(x)$ is pivotal for $A$. An equivalent definition is the following: a point $x \in \eta$ is pivotal if changing the color of $x$ or removing $x$ can change the value of $\un_A$ (euh c'est n'importe quoi ça non ???).\\

We have the following link between annealed and quenched pivotal events: Let $p \in (0,1)$, let $D$ be a bounded Borel set, and let $A$ be an event measurable with respect to the colored configuration $\omega$. Then, a.s. we have $\Piv^q_D(A) \subseteq \Piv_D(A)$, i.e. $\Pro_p \left[ \Piv_D^q(A) \setminus \Piv_D(A) \right] = 0$. This is an easy consequence\footnote{Use for instance the following result with $X=\un_A$, $\mathcal{G}_1 = \sigma(\omega \setminus D)$ and $\mathcal{G}_2 = \sigma(\eta, \, \omega \setminus D)$: Let $X$ be a non-negative random variable and let $\mathcal{G}_1 \subseteq \mathcal{G}_2$ be two sub-$\sigma$-fields of the underlying $\sigma$-field. Then, a.s. we have: $\E \left[ X \cond \mathcal{G}_1 \right] = 0 \Rightarrow \E \left[ X \cond \mathcal{G}_2 \right] = 0$.} of the fact that, if $D$ is quenched-pivotal for $A$, then a.s. (since $\eta \cap D$ is finite) $\displaystyle \Pro \left[ A \, | \, \omega \setminus D, \, \eta \cap D \right]$ and $\displaystyle \Pro \left[ \neg A \, | \, \omega \setminus D, \, \eta \cap D \right]$ are positive.\\
%This can be proved as follows: Fix $\omega$ outside of $D$ and work conditionally on $\eta \cap D$ (i.e. work under $\Prob^\omega_{p,D^c} \left[ \cdot \cond \eta \cap D \right]$). The fact that $\eta \cap D$ is finite implies that, if $D$ is quenched-pivotal for $A$, then:
%\[
%\Prob^\omega_{p,D^c} \left[ A \cond \eta \cap D \right] > 0 \, ,
%\]
%Hence a.s., if $D$ is quenched-pivotal for $A$, then $\Prob^\omega_{p,D^c} \left[ A \right] > 0$.

Let $Q$ be a quad. The event that some box is (quenched or annealed) pivotal for the event $\cross(Q)$ is closely related to arm events and particularly to the $4$-arm events. We prove estimates in this spirit in Subsections~\ref{ss.pivotals1} and~\ref{ss.pivotals2}.

\subsubsection{The events $\dense$, $\qbc$ and $\gi$}\label{ss.formal_events}

In this subsection, we define three ``good'' events that we will use all along the paper. Their introduction is motivated by the three following observations: i) There are less spatial dependencies when the point configuration $\eta$ is sufficiently dense. ii) It is often interesting to condition on $\eta$ since the conditional measure is the product measure $\Pro^\eta_p=(p\delta_1+(1-p)\delta_{-1})^{\otimes \eta}$. To apply geometric arguments under this quenched measure, we need $\Pro^\eta_p[\cross(Q)]$ to be non-negligible for a large family of quads $Q$. iii) It is easier to deal with arm events when the arms are well separated.

\begin{defi}\label{d.dense}
Let $D$ be a bounded subset of the plane and let $\delta \in (0,1)$. We denote by $\dense_\delta(D)$ the event that, for every point $u \in D$, there exists $x \in \eta \cap D$ such that $||x-u||_2 \leq \delta \cdot \text{diam}(D)$.
\end{defi}

\begin{lem}\label{l.dense}
Let $R \geq 1$ and $\delta \in (0,1)$. We have
\[
\Pro \left[ \dense_\delta(B_R) \right] \geq 1-\grandO{1} \delta^{-2} \exp \left( -\frac{(\delta \cdot R)^2}{2} \right) \, .
\]
\end{lem}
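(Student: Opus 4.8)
The plan is to cover the box $B_R = [-R,R]^2$ by a grid of small subboxes of side-length comparable to $\delta R$, and to estimate the probability that some subbox is empty of Poisson points; whenever every subbox is non-empty, every point $u \in B_R$ lies within distance $O(\delta)\cdot\mathrm{diam}(B_R)$ of some point of $\eta \cap B_R$, which is precisely what $\dense_\delta(B_R)$ asks for. More concretely, I would fix a mesh size $\ell = c\,\delta R$ for a small absolute constant $c$ (to be chosen so that two adjacent grid cells have points within the required distance), and tile $B_R$ by $N \asymp (R/\ell)^2 = \grandO{1}\,\delta^{-2}$ closed subboxes $Q_1,\dots,Q_N$, each contained in $B_R$ and of side $\ell$. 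For each $i$, the number of Poisson points in $Q_i$ is Poisson with parameter $\ell^2$ (intensity $1$), so $\Pro[\eta \cap Q_i = \emptyset] = e^{-\ell^2} = \exp(-c^2 (\delta R)^2)$.

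Then I would apply a union bound: $\Pro[\exists i,\ \eta \cap Q_i = \emptyset] \leq N e^{-\ell^2} = \grandO{1}\,\delta^{-2}\exp(-c^2(\delta R)^2)$. On the complementary event — every $Q_i$ contains at least one point of $\eta$ — I must check that $\dense_\delta(B_R)$ holds. Given $u \in B_R$, $u$ lies in some $Q_i$; pick $x \in \eta \cap Q_i \subseteq \eta \cap B_R$. Then $\|x-u\|_2 \leq \sqrt{2}\,\ell = \sqrt{2}\,c\,\delta R$, and since $\mathrm{diam}(B_R) = 2\sqrt{2}\,R$, we get $\|x-u\|_2 \leq (\sqrt{2}c/(2\sqrt{2}))\,\delta\cdot\mathrm{diam}(B_R) = (c/2)\,\delta\cdot\mathrm{diam}(B_R)$; choosing $c \leq 2$ (in fact $c=1$ works, or one absorbs the geometric constants into the implicit $\grandO{1}$ via Remark-style rescaling, replacing $\delta$ by a constant multiple of itself) gives $\|x-u\|_2 \leq \delta\cdot\mathrm{diam}(B_R)$ exactly as required. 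This yields $\Pro[\dense_\delta(B_R)] \geq 1 - \grandO{1}\,\delta^{-2}\exp(-c^2(\delta R)^2/2)$, and after adjusting the absolute constant inside the exponential (or noting the stated bound has a $1/2$ that leaves room), this is the claimed estimate; a minor point is that one wants $\ell^2 \geq (\delta R)^2/2$, i.e. $c \geq 1/\sqrt 2$, which is compatible with $c \leq 1$, so a single choice such as $c \in [1/\sqrt2, 1]$ simultaneously gives the decay rate and the covering property.

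There is no real obstacle here: the only thing to be careful about is the bookkeeping of the geometric constants — reconciling the $\sqrt 2$ factors from passing between side-length, diameter, and Euclidean distance of points in adjacent or identical cells, and making sure the exponent ends up at least $(\delta R)^2/2$ while the prefactor stays $\grandO{1}\delta^{-2}$. One also needs $\ell \le 2R$ so that the grid is non-degenerate, which holds since $\delta < 1$ and $c \le 1$ (and in the regime $\delta R$ small the bound is vacuous anyway as the right-hand side is negative, so only $\delta R$ not-too-small matters). All of this is routine, so I would present the argument compactly: define the grid, compute the single-cell void probability, union-bound, and verify the covering implication.
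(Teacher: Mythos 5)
Your proof is correct and follows essentially the same route as the paper's: tile $B_R$ by $\grandO{1}\,\delta^{-2}$ subboxes of side comparable to $\delta R$, union-bound the void probabilities $e^{-\ell^2}$, and note that on the complementary event the covering implies $\dense_\delta(B_R)$. The paper simply fixes the mesh to $\ell = \delta R/\sqrt{2}$ from the outset, which is the choice at the endpoint $c=1/\sqrt 2$ of the interval you identify.
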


\begin{proof}
This lemma can be obtained by covering $B_R$ by a family $(S_i)_{1 \leq i \leq N}$ of $N \asymp \delta^{-2}$ squares of side-length $\delta \cdot R/\sqrt{2}$ and by observing that:
\[
\dense_\delta(B_R) \supseteq \lbrace \forall i,  \, \eta \cap S_i \neq \emptyset \rbrace
\]
and:
\[
\forall i, \, \Pro \left[ \eta \cap S_i = \emptyset \right] = \exp \left( -\frac{(\delta \cdot R)^2}{2} \right)\, .
\]
See Lemma~$18$ in Chapter~$8$ of~\cite{bollobas2006percolation} for the proof of a similar result.
\end{proof}

In the following, we restrict ourselves to the case $p=1/2$. See~Subsection~\ref{ss.pneq1/2_2} for the extension of the results to the near-critical phase.
%Similar results will be obtained for $p > 1/2$ under the correlation length in Subsection~\ref{ss.pneq1/2_2}.

\begin{defi}\label{d.a_lot_of_quads}
Let $D$ be a subset of the plane and let $\delta \in (0,1)$. We denote by $\mathcal{Q}'_\delta(D)$ the set of all quads $Q \subseteq D$ which are drawn on the grid $(\delta \, \diam(D)) \cdot \Z^2$ (i.e. whose sides are included in the edges of $(\delta \, \diam(D)) \cdot \Z^2$ and whose corners are vertices of $(\delta \, \diam(D)) \cdot \Z^2$). Also, we denote by $\mathcal{Q}_\delta(D)$ the set of all quads $Q \subseteq D$ such that there exists a quad $Q' \in \mathcal{Q}'_\delta(D)$ satisfying $\cross(Q') \subseteq \cross(Q)$.
\end{defi}

The following result will be proved in Subsection~\ref{ss.a_lot_of_quads} by using Theorem~\ref{t.AGMT}.

\begin{prop}\label{p.a_lot_of_quads}
There is an absolute constant $C<+\infty$ such that the following holds: Let $\delta \in (0,1)$ and $\gamma \in (0,+\infty)$. There exists a constant $c = c(\delta,\gamma) \in (0,1)$ such that, for every bounded subset of the plane $D$ that satisfies $\diam(D) \geq \delta^{-2}/100$, we have
\[
\Pro \left[ \qbc^\gamma_\delta(D) \right] \geq 1 - C \diam(D)^{-\gamma} \, , 
\]
where
\[
\qbc^\gamma_\delta(D) = \left\lbrace \forall Q \in \mathcal{Q}_{\delta}(D), \, \Prob^\eta_{1/2} \left[ \cross(Q) \right] \geq c(\delta,\gamma) \right\rbrace \, .
\]
The notation $\qbc$ means ``Quenched Box-Crossing property''.
\end{prop}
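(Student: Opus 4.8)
The plan is to reduce the statement to a union bound over the (finitely many) quads drawn on the grid, combined with the quenched box-crossing estimate Theorem~\ref{t.AGMT}. First I would observe that by the very definition of $\mathcal{Q}_\delta(D)$, it suffices to control $\Prob^\eta_{1/2}[\cross(Q')]$ for $Q' \in \mathcal{Q}'_\delta(D)$: indeed if $\cross(Q') \subseteq \cross(Q)$ then $\Prob^\eta_{1/2}[\cross(Q)] \geq \Prob^\eta_{1/2}[\cross(Q')]$ pointwise in $\eta$, so the event $\qbc^\gamma_\delta(D)$ follows from the analogous event with $\mathcal{Q}'_\delta(D)$ in place of $\mathcal{Q}_\delta(D)$. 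Next I would note that, up to rescaling, every $Q' \in \mathcal{Q}'_\delta(D)$ is a topological rectangle whose sides have length a bounded multiple of $\delta\,\diam(D)$, and which is contained in $D$; moreover any such quad contains a crossing of some rectangle $\rho' \times \rho''$ with aspect ratio bounded above and below by constants depending only on (a suitable discretization of) the quad. Since quads on the grid with bounded side-lengths come in only finitely many combinatorial types after translation, there is a single ratio $\rho = \rho(\delta) > 0$ and an absolute way to find, inside each $Q' \in \mathcal{Q}'_\delta(D)$, a translate of a fixed rectangle $[-\rho s, \rho s] \times [-s,s]$ (with $s \asymp \delta\,\diam(D)$, up to bounded factors) such that a crossing of that rectangle forces $\cross(Q')$.

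Then I would apply Theorem~\ref{t.AGMT}(ii): for the ratio $\rho = \rho(\delta)$ and exponent $\gamma' := \gamma + 3$ (chosen so that the number of quads, which is polynomial in $\diam(D)$, is absorbed), there exists $c_0 = c_0(\delta,\gamma) \in (0,1)$ such that for every scale $s \geq 1$ and every center $y$,
\[
\Pro\Big[\, \Prob^\eta_{1/2}\big[\cross\big(y + [-\rho s, \rho s]\times[-s,s]\big)\big] \geq c_0 \,\Big] \geq 1 - s^{-\gamma'} \, ;
\]
here translation invariance of the Poisson process lets me center the rectangle arbitrarily. Taking $c(\delta,\gamma) := c_0(\delta,\gamma)$ and intersecting over all $Q' \in \mathcal{Q}'_\delta(D)$, a union bound over the at most $\grandO{1}\,\delta^{-4}\,(\diam(D)/(\delta\diam(D)))^4 = \grandO{1}\,\delta^{-4}$ grid-quads — wait, more carefully, the number of grid quads contained in $D$ is at most $\grandO{1}(\diam(D)/(\delta\diam(D)))^{4} = \grandO{1}\delta^{-4}$, which is bounded in $\diam(D)$ — so actually the union bound costs only a constant factor $\grandO{1}\delta^{-4}$ times $(\delta\diam(D))^{-\gamma'}$. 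Using $\diam(D) \geq \delta^{-2}/100$ to absorb $\delta^{-4}$ and the $\delta^{-\gamma'}$ into $\diam(D)^{-\gamma}$ (at the cost of enlarging the absolute constant $C$), one gets $\Pro[\qbc^\gamma_\delta(D)] \geq 1 - C\,\diam(D)^{-\gamma}$.

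The main obstacle is purely bookkeeping rather than probabilistic: one must check that an arbitrary quad $Q'$ drawn on the grid — which may be a quite wiggly topological rectangle, not an axis-parallel box — really does contain a crossing of a genuine rectangle of controlled aspect ratio whose crossing event implies $\cross(Q')$, and that the relevant aspect ratios and scales are uniformly bounded in terms of $\delta$ alone (independently of $\diam(D)$ and of which grid-quad we picked). This is where the hypothesis $\diam(D) \geq \delta^{-2}/100$ enters: it guarantees the grid mesh $\delta\diam(D)$ is at least $\delta^{-1}/100 \geq 1$, so the rectangles we produce live at scales $s \geq \Omega(1)$ to which Theorem~\ref{t.AGMT} applies, and it ensures there are enough grid cells that the combinatorial count of quad-types is finite and uniform. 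Once this geometric reduction is in place, the rest is the union bound above. Finally, I would remark that the constant $c(\delta,\gamma)$ genuinely depends on $\gamma$ only through the choice $\gamma' = \gamma + 3$ in Theorem~\ref{t.AGMT}(ii), while $C$ is absolute, matching the statement.
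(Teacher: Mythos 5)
Your reduction to a single rectangle per grid quad does not work, and this is where the proposal breaks down. The set $\mathcal{Q}'_\delta(D)$ consists of \emph{all} quads drawn on the grid $(\delta\,\diam(D))\cdot\Z^2$ inside $D$, not just axis-parallel rectangles. Such a quad can have boundary arcs winding through up to $\grandO{1}\,\delta^{-2}$ grid cells, so its ``aspect ratio'' in grid units is not bounded, and more importantly it need not even be convex: for an L-shaped or snake-shaped quad there is simply no single rectangle $R$ with $\cross(R)\subseteq\cross(Q')$. You would need to \emph{glue} several rectangle crossings to force $\cross(Q')$, and gluing requires a correlation inequality, not just a pointwise inclusion. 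Your counting estimate has a related problem: the number of grid quads contained in $D$ is not $\grandO{1}\,\delta^{-4}$; a quad is determined by its (arbitrary) boundary arcs on the grid graph, and the count is exponential in $\delta^{-2}$. A naive union bound over all grid quads would then demand $\diam(D)$ exponentially large in $\delta^{-2}$, far beyond the hypothesis $\diam(D)\geq\delta^{-2}/100$.

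The paper avoids both issues by reversing the order of the argument. Rather than applying Theorem~\ref{t.AGMT} to each quad, it applies it only to the $\grandO{1}\,\delta^{-2}$ basic $2\delta\,\diam(D)\times\delta\,\diam(D)$ grid rectangles $Q_1,\dots,Q_{N(D,\delta)}$. The key observation is that if \emph{every} such rectangle is crossed lengthwise, then every $Q\in\mathcal{Q}_\delta(D)$ is crossed, so by the quenched FKG--Harris inequality
\[
\Prob^\eta\bigl[\cross(Q)\bigr] \;\geq\; \prod_{i=1}^{N(D,\delta)}\Prob^\eta\bigl[\,Q_i\text{ crossed lengthwise}\,\bigr]
\]
holds \emph{for all} quads $Q\in\mathcal{Q}_\delta(D)$ simultaneously, with a single union bound over the $\grandO{1}\,\delta^{-2}$ rectangles $Q_i$. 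This yields a quenched lower bound $c=c_0^{N(D,\delta)}=c_0^{\grandO{1}\delta^{-2}}$, which is indeed a constant $c(\delta,\gamma)$, at the cost of only $\grandO{1}\,\delta^{-2}(\delta\,\diam(D))^{-\gamma'}$ in probability. Choosing $\gamma'=2+4\gamma$ and using $\diam(D)\geq\delta^{-2}/100$ then absorbs the $\delta$-factors. This FKG-gluing step is the idea missing from your argument; once you have it, your union-bound bookkeeping and the absorption of $\delta$-powers are essentially correct.
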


%In a lot of proofs, the main idea will be that if we condition on some $\eta$ such that events $\gp^1_\delta(B_R)$ and $\gp^2_\delta(B_R)$ holds, then we can follow the same proofs as in the lattice case. The main difficulties will arise when we work at different scales. For instance, if $r \ll R$ and if we work conditionally on the arm event $\arm_j(r,R)$, then it would a priori cost a lot to assume that the events $\dense_\delta(R)$ and $\gp^2(r)$ hold.

Let us end this subsection by defining quantities related to the well-separateness of interfaces. Let $\delta \in (0,1)$, let $1 \leq r \leq R$, and let $\beta_1, \cdots, \beta_k$ be the interfaces from $\partial B_r$ to $\partial B_R$ (an interface is a continuous path $\beta$ drawn on the edges of the Voronoi tiling and such that one side of $\beta$ is black and its other side is white). Also, let $z_i^{ext}$ (respectively $z_i^{int}$) denote the endpoint on $\partial B_R$ (respectively on $\partial B_r$) of $\beta_i$, and let $s^{ext}(r,R)$ (resp. $s^{int}(r,R)$) be the least distance between $z_i^{ext}$ (respectively $z_i^{int}$) and $\cup_{j \neq i} \beta_j$.

%If $y \in \R^2$ and $0<\rho_1 \leq \rho_2$, let $A(y;\rho_1,\rho_2) := y + A(\rho_1,\rho_2)$.
Let $\gi^{ext}_\delta (R)$ (for ``Good Interfaces'') be the event that there does not exist $y \in \partial B_R$ such that the $3$-arm event in $A(y;10 \delta R, R/4) \cap B_R$ holds. Note that, if $r \leq 3R/4$, then
\[
\gi^{ext}_\delta (R) \subseteq \lbrace s^{ext}(r,R) \geq 10 \delta R \rbrace \, .
\]
This inclusion will be very useful. Note that $\lbrace s^{ext}(r,R) \geq 10 \delta R \rbrace$ is not monotonic in $r$. This is actually the reason why we have introduced the event $\gi^{ext}_\delta (R)$: this event ``depends only on the crossings in $A(3R/4,R)$'' and is included in $\lbrace s^{ext}(r,R) \geq 10 \delta R \rbrace$ (for $r \leq 3R/4$).

Similarly, let $\gi^{int}_\delta (r)$ be the event that there does not exist $y \in \partial B_r$ such that the $3$-arm event in $A(y;10 \delta r, r/2) \setminus B_r$ holds. Note that, if $R \geq 3r/2$,
\[
\gi^{int}_\delta (r) \subseteq \lbrace s^{int}(r,R) \geq 10 \delta r \rbrace \, .
\]
We will prove the following lemma in Subsection~\ref{ss.preliminary}:
\begin{lem}\label{l.interfaces}
Let $\delta \in (0,1)$ and let $r,R \geq 100 \, \delta^{-1}$. There exist absolute constants $C < +\infty$ and $\epsilon > 0$ such that
\[
\Pro_{1/2} \left[ \gi^{ext}_\delta(R) \right] \geq 1- C \, \delta \, ,
\]
and
\[
\Pro_{1/2} \left[ \gi^{int}_\delta(r) \right] \geq  1- C \, \delta^{\epsilon} \, .
\]
\end{lem}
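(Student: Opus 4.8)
The plan is to bound the probabilities of $\neg \gi^{ext}_\delta(R)$ and $\neg \gi^{int}_\delta(r)$ by a union bound over a finite net of points on $\partial B_R$ (resp.\ $\partial B_r$), reducing everything to estimates on the $3$-arm event in a half-plane-type domain. First I would observe that if there exists $y \in \partial B_R$ for which the $3$-arm event in $A(y;10\delta R, R/4)\cap B_R$ holds, then for any point $y'$ on a $(\delta R)$-net of $\partial B_R$ with $\|y-y'\|_2 \leq \delta R$, a slightly fattened $3$-arm event holds around $y'$, say the $3$-arm event in $A(y';C_0\delta R, R/8)\cap B_R$ for a suitable absolute constant $C_0$. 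Since the net has $O(\delta^{-1})$ points, a union bound gives
\[
\Pro_{1/2}\left[\neg\gi^{ext}_\delta(R)\right] \leq O(1)\,\delta^{-1}\,\max_{y'}\Pro_{1/2}\left[\text{$3$-arm in }A(y';C_0\delta R, R/8)\cap B_R\right].
\]
The key input is then that the $3$-arm event in a half-plane between scales $\rho$ and $S$ has probability $\asymp (\rho/S)^2$; this is exactly Item~ii) of Proposition~\ref{p.universal} (note the domain $A(y';\cdot,\cdot)\cap B_R$ is, up to bounded distortion near the corner/edge of $B_R$, a half-plane $3$-arm configuration, and the half-plane $3$-arm probability dominates the probability in any such domain by monotonicity in the domain). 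Hence each term is $O\!\left((\delta R / R)^2\right) = O(\delta^2)$, and multiplying by the $O(\delta^{-1})$ cardinality of the net yields $\Pro_{1/2}[\neg\gi^{ext}_\delta(R)] \leq O(1)\,\delta^{-1}\cdot O(\delta^2) = O(1)\,\delta$, which is the first claim. The hypothesis $R \geq 100\,\delta^{-1}$ guarantees that the inner and outer scales $C_0\delta R$ and $R/8$ are genuinely separated (and both $\geq 1$, so the arm estimates apply), and that the net argument is not vacuous.

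For the interior event $\gi^{int}_\delta(r)$ the strategy is the same — net $\partial B_r$ with $O(\delta^{-1})$ points and union-bound over a fattened $3$-arm event in $A(y';C_0\delta r, r/4)\setminus B_r$ — but here the domain sits \emph{outside} $B_r$, i.e.\ in the complement of a disk rather than inside it. Near $\partial B_r$ at the inner scale $\delta r$ this still looks locally like a half-plane, so Proposition~\ref{p.universal}~ii) again gives the local contribution; however the outer region is not a half-plane, and one cannot directly claim a clean $(\rho/S)^2$ bound all the way out to scale $r/4$. Instead I would only use the half-plane $3$-arm estimate down to an intermediate scale of order $r$ (say between $C_0\delta r$ and $r/100$, which is a legitimately half-plane-like annulus near $\partial B_r$), getting a bound $O((\delta r / r)^{2-o(1)})$, and accept the weaker exponent: this is why the statement only asserts $\Pro_{1/2}[\neg\gi^{int}_\delta(r)] \leq C\delta^\epsilon$ for some $\epsilon>0$ rather than $\leq C\delta$. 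Concretely, using quasi-multiplicativity for half-plane arm events (mentioned in Subsection~\ref{ss.half-plane}) together with the polynomial bound~\eqref{e.poly}, the $3$-arm probability in $A(y';C_0\delta r, r/100)\cap(\text{half-plane})$ is at most $O(1)(\delta)^{1/C}$ for an absolute $C$; multiplying by the $O(\delta^{-1})$ net size and choosing $\epsilon = 1/C - 1 > 0$ (after taking $\delta$ small, which is harmless since for $\delta$ bounded below the bound $C\delta^\epsilon$ can be made $\geq 1$ by enlarging $C$) gives the claim. Actually the cleanest route is to note that the $3$-arm event in $A(y';10\delta r, r/2)\setminus B_r$ implies the $3$-arm event $\arm_3$ (full-plane, or half-plane) between scales $10\delta r$ and $r/2$ translated to $y'$, minus a bounded region; by~\eqref{e.poly} this is at most $O(1)(\delta)^{1/C}$, and the same union bound closes it.

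The main obstacle is handling the geometry at the \emph{boundary} of $B_r$ and $B_R$: the $3$-arm events in the lemma live in domains of the form (annulus) $\cap\, B_R$ or (annulus) $\setminus B_r$, which are not quite half-planes, especially near the corners of the square $B_R$. The resolution is that near $\partial B_R$ (away from corners) the domain is, after a bi-Lipschitz change of coordinates with bounded distortion, a genuine half-plane annulus, so Proposition~\ref{p.universal}~ii) applies; near the four corners one can either enlarge the domain to a full half-plane (increasing the probability, which is fine for an upper bound) or absorb the corners into $O(1)$ extra net points. One must also be a little careful that the "$3$-arm event" as defined here involves paths of alternating colors, and that the passage to a net point $y'$ preserves the alternation and the disjointness of the three arms up to shrinking the annulus by a bounded factor — this is routine but should be stated. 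Finally, one should double-check that all invoked scales ($C_0\delta R$, $R/8$, $C_0\delta r$, $r/2$) are at least $1$ under the hypothesis $r,R\geq 100\,\delta^{-1}$, so that the arm estimates~\eqref{e.poly} and Proposition~\ref{p.universal} are legitimately applicable.
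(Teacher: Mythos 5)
Your treatment of $\gi^{ext}_\delta(R)$ is correct and is exactly the paper's argument: for any $y\in\partial B_R$ the domain $A(y;\cdot,\cdot)\cap B_R$ sits in a half-plane whose boundary contains $y$ (including near the corners, since $B_R$ itself is contained in such a half-plane), so a $(\delta R)$-net of $\grandO{1}\delta^{-1}$ points together with Item~ii) of Proposition~\ref{p.universal} gives $\grandO{1}\,\delta^{-1}\,\delta^2 = \grandO{1}\,\delta$.

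For $\gi^{int}_\delta(r)$, however, there is a genuine gap, and the root of it is the corners of the square $B_r$. When $y$ is at distance $\rho$ from a corner with $\rho$ comparable to $r$ or smaller, the domain $A(y;10\delta r,r/2)\setminus B_r$ wraps around the corner: it is \emph{not} contained in any half-plane through $y$, so the half-plane $3$-arm exponent cannot be invoked for those net points. Your fallback — bound the event by a full-plane $3$-arm event between $10\delta r$ and $r/2$ and apply~\eqref{e.poly} — does give $\grandO{1}\delta^{1/C}$, but then the union bound over $\grandO{1}\delta^{-1}$ net points yields $\grandO{1}\delta^{1/C-1}$, and since $C\geq 1$ in~\eqref{e.poly} (and the true full-plane $3$-arm exponent is believed to be strictly less than~$1$), the exponent $1/C-1$ is nonpositive; your proposed "$\epsilon=1/C-1>0$" never holds, and this route does not close. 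The suggestion to absorb the corners into "$\grandO{1}$ extra net points" also does not resolve it: a net point at, say, distance $100\,\delta r$ from the corner is neither one of finitely many corner points nor one for which the annulus out to scale $r/100$ is half-plane-like.

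The paper's actual argument for $\gi^{int}_\delta(r)$ is a multiscale factorization. Fix a net point $y$ at distance $\rho\in[20\delta r,r/2]$ from the nearest corner $y_0$. The inner annulus $A(y;20\delta r,\rho)\setminus B_r$ \emph{is} in a half-plane, so the $3$-arm event there has probability $\asymp(\delta r/\rho)^2$; the outer annulus $A(y_0;2\rho,r/2)$ (centered at the corner) is disjoint from the inner one, and the three arms must also cross it, giving at least a $2$-arm event there, which decays like $(\rho/r)^{\Omega(1)}$ by~\eqref{e.poly}. Using the "$\widehat{\arm}$" events of Subsection~\ref{ss.quasi} to obtain independence of the two pieces, the probability for each $y$ is $\grandO{1}(\delta r/\rho)^2(\rho/r)^{\epsilon}$. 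One then partitions the net points of $\partial B_r$ into families $Y_i$ of size $\asymp 2^i$, $i\leq\grandO{1}\log_2\delta^{-1}$, at distance $\asymp 2^i\delta r$ from the corners, and sums; with the factorized bound the sum becomes $\grandO{1}\delta^{\epsilon}\sum_i 2^{i(\epsilon-1)}=\grandO{1}\delta^{\epsilon}$ (a geometric series since $\epsilon<1$). It is precisely this independence-and-multiscale decomposition near the corners that you are missing, and it is what replaces your divergent union bound with a convergent one.
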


\subsection{Organization of the paper and interdependence of the sections}\label{ss.orga}

As explained in Subsection~\ref{ss.quasi}, we postpone the proof of the quasi-multiplicativity property to the final section: Section~\ref{s.quasi}. We summarize the interdependence of the sections of the paper in Figures~\ref{f.dep_1} and~\ref{f.dep_2}.

\begin{figure}[!h]
\begin{center}
\includegraphics[scale=0.60]{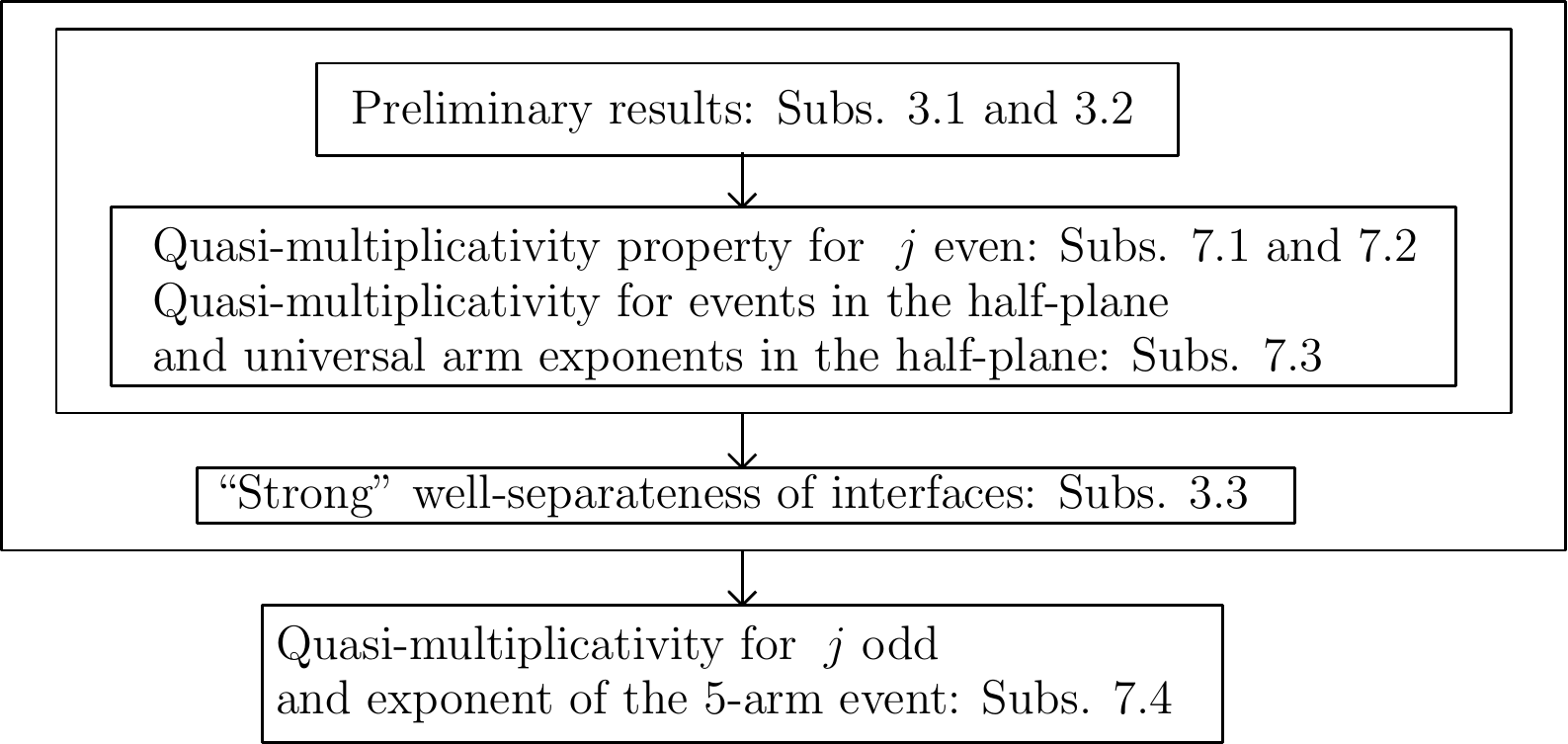}
\end{center}
\caption{Interdependence between Sections~\ref{s.first} and~\ref{s.quasi}.}\label{f.dep_1}
\end{figure}

\begin{figure}[!h]
\begin{center}
\includegraphics[scale=0.60]{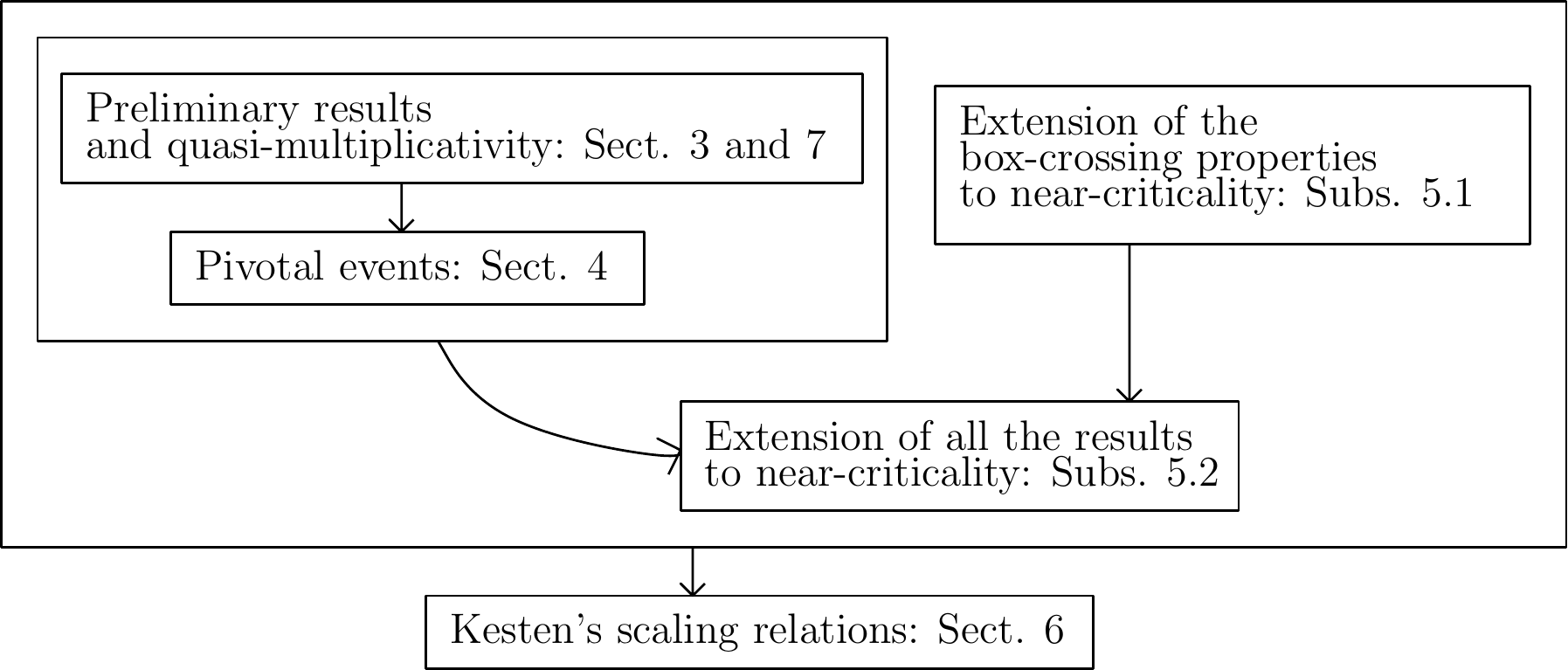}
\end{center}
\caption{Interdependence between Sections~\ref{s.first} to~\ref{s.quasi}.}\label{f.dep_2}
\end{figure}

\subsection{Some ideas of proof}\label{ss.strategy}

Let us end Section~\ref{s.strat} by giving a few more details about our strategies of proofs.

\subsubsection{The quasi-multiplicativity property (at $p=1/2$)}\label{ss.strategy_quasi}

In this subsection, we work at the parameter $p=1/2$, and we explain ideas behind the proof of the quasi-multiplicativity property Proposition~\ref{p.quasi} in the case $j \geq 2$ (see Subsection~\ref{ss.warm} for the proof of the easier case $j=1$). The proof is written in Section~\ref{s.quasi}. We begin with two observations that illustrate the new difficulties compared to the study of Bernoulli percolation on a deterministic lattice.

\bi 
\item[(a)] The first observation is that, for Voronoi percolation, the following result does not seem easier to prove than the quasi-multiplicativity property itself: There exists a constant $C=C(j)$ such that, for every $R \geq 100$,
\[
\alpha^{an}_{j,1/2}(100,R) \leq C \, \alpha^{an}_{j,1/2}(10,R) \, .
\]
A first idea to prove the above would be to condition on the event $\arm_j(100,R)$ and on the colored configuration outside of $B_{100}$ and then extend the arms to $\partial B_{10}$ ``by hands''. The problem is that $\arm_j(100,R)$ is a degenerated event and thus, as already suggested in Figure~\ref{f.difficulties}, it does not seem obvious at all that the following does not happen: ``If we condition on $\arm_j(100,R)$, then, with high probability, the point configuration in the neighbourhood of $\partial B_{100}$ is very dense''. This may be a problem since it is difficult to extend the arms ``by hands'' when the point configuration is very dense.
\item[(b)] For Bernoulli percolation on a deterministic lattice, the left-hand-inequality of the quasi-multiplicativity property is an easy consequence of the independence on disjoint sets. For Voronoi percolation, even the following does not seem easy to prove: There exists $C=C(j) < +\infty$ such that, for every $1 \leq r_1 \leq r_2 \leq r_3/2 \,$: $\alpha^{an}_j(r_1,r_3) \leq C \, \alpha^{an}_j(r_1,r_2) \, \alpha^{an}_j(2r_2,r_3)$. %Remember that we will prove Proposition~\ref{p.fandalpha} in Section~\ref{s.quasi}.
However, one can note that the left-hand-inequality of the quasi-multiplicativity property is a direct consequence of Proposition~\ref{p.fandalpha} (which enables to use spatial independence properties). Actually, our strategy will be the following: we will first prove Lemma~\ref{l.looksgood} and Corollary~\ref{c.looksgood} which are results analogous to Proposition~\ref{p.fandalpha}. Then, we will prove
%(left- and right-hand inequalities of)
the quasi-multiplicativity property, and finally we will prove Proposition~\ref{p.fandalpha}.
\ei

%we try to explain how we think that the quasi-multiplicativity property should be used in practise and

%The following result (which will be a consequence of the quasi-multiplicativity property and will be proved in Subsection~\ref{ss.QM_consequences}) will help us when dealing with problems analogous to item~(a) above:
%\begin{prop}\label{p.techniquegeneral}
%Let $j \in \N^*$. For every $h \in (0,1)$, there exists a constant $\epsilon = \epsilon(j) \in (0,1)$ such that, for every $1 \leq r \leq R$ and for every event $G$ which is measurable with respect to $\omega \setminus A(2r,R/2)$ and that satisfies $\Pro \left[ G \right] \geq 1-\epsilon$, we have:
%\[
%\Pro \left[ \arm_j(r,R) \cap G \right] \geq (1-h) \, \alpha^{an}_j(r,R) \, .
%\]
%\end{prop}

%Concerning item~(b), as explained in Subsection~\ref{ss.quasi}, analogues of Proposition~\ref{p.fandalpha} - which deals with the events $\widehat{\arm}_j(r,R) = \lbrace \Prob^\omega_{A(r,R)} \left[ \arm_j(r,R) \right] > 0 \rbrace$ - will be very useful.

Now, let us be a little more precise about the proof of the quasi-multiplicativity property. In the spirit of~\cite{kesten1987scaling,werner2007lectures,nolin2008near,schramm2010quantitative}, we will prove the following properties:
\bi 
\item[i)] If $\arm_j(r,R)$ holds and if the configuration ``looks good'' near $\partial B_R$, then we can extend the arms at larger scale, see Lemma~\ref{l.extension}.
\item[ii)] If we condition on $\arm_j(r,R)$, then the configuration near $\partial B_R$ ``looks good" with non-negligible probability, see Lemma~\ref{l.looksgood}.
\ei

A difference with the case of Bernoulli percolation on a deterministic lattice is that, in the notion of ``looking good'', we will have to ask that \textbf{both the random tiling and the random coloring} look good. Concerning the random coloring: As in the case of Bernoulli percolation on $\Z^2$ or on $\T$, we will ask that the interfaces between black and white crossings are well separated so that we can use  box-crossing estimates. Concerning the random tiling: $(1)$ To avoid spatial dependence problems, we will ask that $\dense_\delta(A(R/2,2R))$ (see Definitions~\ref{d.dense}) holds for some well-chosen $\delta>0$. $(2)$ In order to use box-crossing estimates when we condition on $\eta$, we will ask that $\qbc^1_\delta(A(R))$ (see Proposition~\ref{p.a_lot_of_quads}) holds for some well-chosen annulus $A(R)$ at scale $R$ and $\delta>0$. The idea is that, if the interfaces are well separated, if the two conditions $(1)$ and $(2)$ above are satisfied, and if we condition on $\eta$ and on the interfaces, then we can extend the arms by using box-crossing techniques and the (quenched) Harris-FKG inequality. As we will see in Section~\ref{s.quasi}, we will have to consider events a little more complicated because we will want the events to be \textbf{measurable with respect to $\omega \cap A(R/2,2R)$}.

Also, we will see that, for technical reasons, we will have to consider different notions of well-separateness of interfaces. More precisely, we will first prove the quasi-multiplicativity property in the case $j$ even (in Subsection~\ref{ss.quasi_even}) and by using the following definition of well-separateness: two interfaces are well separated if their end-points are. By following the same proof, we will also obtain the quasi-multiplicativity property for $j$-arm events in the half-plane (with $j$ either even or odd). Thanks to this last property, we will be able to compute the universal exponent of the $3$-arm event in the half-plane. Then, it will be possible (by using our knowledge on $\alpha_{3,1/2}^+(r,R)$) to deal with the following slightly different definition of well-separateness of interfaces: two interfaces are well-separated if the end-point of each of them is far enough from the union of the other interfaces (and not only far enough from the other end-points). This other notion of well-separateness is the one defined in Subsection~\ref{ss.formal_events}, and we will need this notion to prove the quasi-multiplicativity property in the case $j$ odd (see Subsection~\ref{ss.QM_odd}).

\subsubsection{The anneled scaling relations}\label{ss.strategy_kesten}

Once we have proved the quasi-multiplicativity property and all the results stated in Section~\ref{s.strat}, the ideas for the proof of the annealed scaling relations are the same as in the original paper of Kesten~\cite{kesten1987scaling} (see also~\cite{werner2007lectures,nolin2008near}). The only difference is that we will need to combine quenched and annealed notions of pivotal events.

\section{Preliminary results}\label{s.first}

In this section, we only work at the parameter $p=1/2$, hence we intentionally forget the subscript $p$ in the notations.

%Thanks to FKG-Harris inequality, we easily deduce from the above theorem the following property:

%\begin{prop}\label{AGMTbis}
%Let $a \in (0,1)$ and $\gamma > 0$. Then, there exists a constant $c = c(a,\gamma) > 0$ such that, for every $r,R \geq 1$ such that $r \leq aR$ we have:
%\[
%\Pro \left[ \Prob^\eta \left[ \exists \text{ an open circuit in } A(r,R) \right] \geq c \, \right] \geq 1 - R^{-\gamma}.
%\]
%\end{prop}

%Of course this is not a good bound when $r \ll R$ and we have:

\subsection{Warm-up: proof of~\eqref{e.poly} and of the quasi-multiplicativity property for $j=1$}\label{ss.warm}

In this subsection, we prove that the probabilities of arm events decay polynomially fast, i.e. we prove~\eqref{e.poly} (this can be seen as an illustration of how we use the events ``$\dense$'' from Definition~\ref{d.dense}). We also prove the quasi-multiplicativity property in the case $j=1$ (this can be seen as an illustration of how we use the events ``$\dense$'' and events of the kind $\widehat{\arm}_j(r,R)$ from Definition~\ref{d.hatarm}). To prove these inequalities, we do not rely on any result proved in this paper but only on the (annealed) FKG property and on the (annealed) box-crossing property Theorem~\ref{t.Tassion}.

\begin{proof}[Proof of~\eqref{e.poly}] As explained below~\eqref{e.poly}, the upper-bound is proved in~\cite{tassion2014crossing}. Let us prove the lower-bound. First, note that we can choose a constant $M=M(j) \in [10,+\infty)$ such that we can define $j$ sets of $2n \asymp \log(R/r)$ rectangles: $\lbrace Q_i^1 \cdots, Q_i^{2n} \rbrace$, $i = 0, \cdots, j-1$ that satisfy:
\bi 
\item[(a)] For every $i \in \lbrace 0, \cdots, j-1 \rbrace$ and every $l \in \lbrace 1, \cdots, n \rbrace$, $Q_i^{2l-1}$ and $Q_i^{2l}$ are $(2^lr) \times (2^{l-M}r)$ rectangles;
\item[(b)] For all $i \neq i' \in \lbrace 0, \cdots, j-1 \rbrace$ and all $l,l' \in \lbrace 1, \cdots, 2n \rbrace$, $Q_i^l$ is at distance at least $\max(2^{l-M}r,2^{l'-M}r)$ from $Q_{i'}^{l'}\,$;
\item[(c)] If for every $l \in \lbrace 1, \cdots, 2n \rbrace$ and every $i \in \lbrace 0, \cdots, j-1 \rbrace$ even (respectively odd) the rectangle $Q_i^l$ is crossed lengthwise (respectively dual-crossed lengthwise), then $\arm_j(r,R)$ holds. (See Figure~\ref{f.poly}.)
\ei

\begin{figure}[!h]
\begin{center}
\includegraphics[scale=0.8]{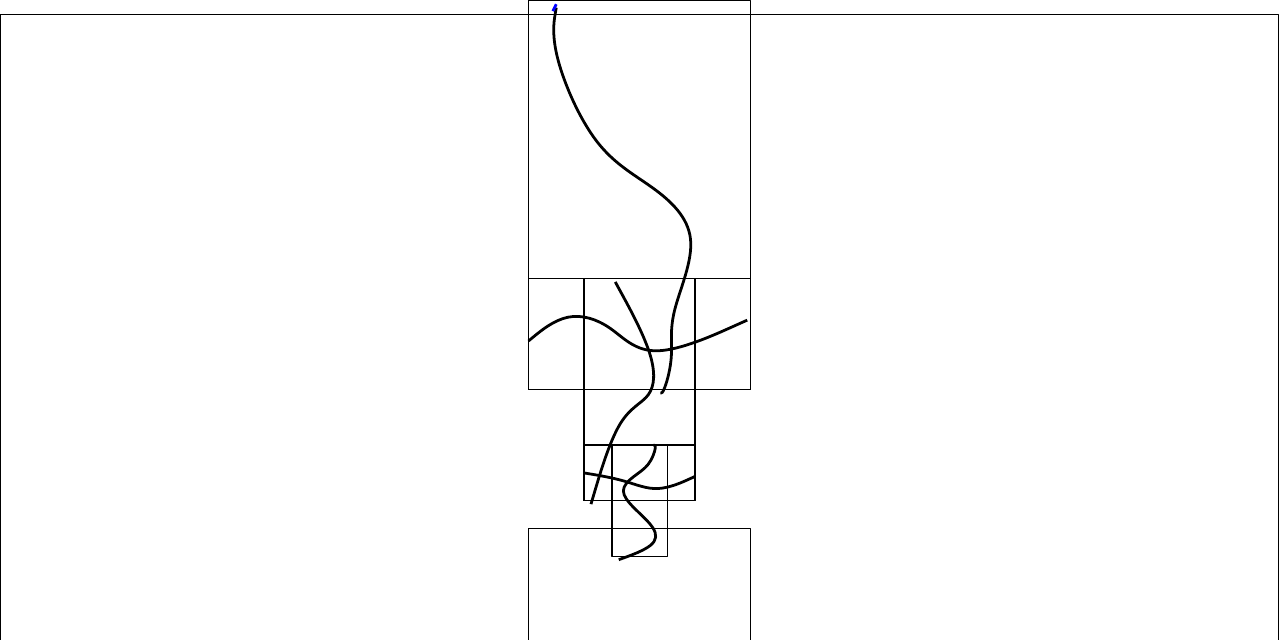}
\end{center}
\caption{The rectangles $Q_i^l$ for some $i$.}\label{f.poly}
\end{figure}

Let $i \in \lbrace 0, \cdots, j-1 \rbrace$ even (respectively odd), and let $\widetilde{\dense}(Q_i^l)$ be the event that, for any $u \in Q_i^l$, there exists a black (respectively white) point $x \in \eta \cap Q_i^l$ at Euclidean distance less than $2^{l-2M}r$ from $x$. Note that the event $\widetilde{\dense}(Q_i^l)$ is slightly different from the event $\dense(Q_i^l)$ of Definition~\ref{d.dense}; in particular, it is annealed increasing (respectively annealed decreasing) if $i$ is even (respectively odd).

We have
\begin{eqnarray*}
\alpha^{an}_j(r,R)\geq \Pro \left[ \bigcap_{i,l} \cross(Q_i^l) \right] \geq \Pro \left[ \bigcap_{i,l} \cross(Q_i^l) \cap \widetilde{\dense}(Q_i^l) \right] \, .
\end{eqnarray*}

Next, note that the $j$ events $\cap_{l=1}^{2n} \left( \cross(Q_i^l) \cap \widetilde{\dense}(Q_i^l) \right)$ are independent. As a result the above equals
\[
\prod_{i=0}^{j-1} \Pro \left[ \bigcap_{l=1}^{2n} \cross(Q_i^l) \cap \widetilde{\dense}(Q_i^l) \right] \, .
\]
We can now use the (annealed) FKG-Harris inequality. Indeed, for every $i$ even (respectively odd) and every $l \in \lbrace 1, \cdots, 2n \rbrace$, the event $\cross(Q_i^l) \cap \widetilde{\dense}(Q^l_i)$ is annealed increasing (respectively annealed decreasing). We thus obtain that the above is at least
\[
\prod_{i=0}^{j-1} \prod_{l=1}^{2n} \Pro \left[ \cross(Q_i^l) \cap \widetilde{\dense}(Q_i^l) \right] \, .
\]
By the same proof as Lemma~\ref{l.dense}, we have: $\Pro \left[ \widetilde{\dense}(Q_i^l) \right] \geq 1-C \exp(-c2^{2l})$, for some $C=C(M)<+\infty$ and $c=c(M) > 0$. By using this estimate and the box-crossing property Theorem~\ref{t.Tassion}, we obtain that there exists a constant $c'=c'(M) > 0$ such that, for every $l$ large enough (larger than some $l_0(M)$, say) and every $i$, $\Pro_{1/2} \left[ \cross(Q_i^l) \cap \widetilde{\dense}(Q_i^l) \right] \geq c'$. Moreover, it is easy to see that, for every $i \in \{ 0, \cdots, j-1 \}$ and every $l \in \{ 1, \cdots, l_0 \}$, we have $\Pro_{1/2} \left[ \cross(Q_i^l) \cap \widetilde{\dense}(Q_i^l) \right] \geq c''$ for some $c''=c''(M,l_0)>0$. Thus, we have
\[
\alpha_j^{an}(r,R) \geq (\min\{c',c''\})^{2nj} \, ,
\]
which ends the proof.
\end{proof}

\begin{proof}[Proof of the quasi-multiplicativity property in the case $j=1$] As pointed out in Remark~\ref{r.quasi_j=1}, if $j=1$ then the right-hand-inequality of the quasi-multiplicativity property is a direct consequence of the annealed FKG-Harris inequality and of the annealed box-crossing result Theorem~\ref{t.Tassion}. Here, we prove the left-hand-inequality (by relying on the right-hand-inequality). The main difficulty is the lack of spatial independence. To overcome it, we work with the following events and quantities (analogous to those introduced in Subsection~\ref{ss.quasi}). Let $1 \leq r \leq R$ and
\begin{eqnarray*}
\widehat{\arm}^{ext}_1(r,R) & := & \left\lbrace \Pro \left[ \arm_1(r,R) \cond \omega \cap B_R \right] > 0 \right\rbrace \, ,\\
\widehat{\arm}^{int}_1(r,R) & := & \left\lbrace \Pro \left[ \arm_1(r,R) \cond \omega \setminus B_r \right] > 0 \right\rbrace \, ,\\
f_1^{ext}(r,R) & := & \Pro \left[ \widehat{\arm}^{ext}_1(r,R) \right] \, ,\\
f_1^{int}(r,R) & := & \Pro \left[ \widehat{\arm}^{int}_1(r,R) \right] \, .
\end{eqnarray*}
Note that $\alpha^{an}_1(r,R) \leq f_1^{ext}(r,R)$ and $\alpha^{an}_1(r,R) \leq f_1^ {int}(r,R)$. What is interesting with these events is that,
%$\widehat{\arm}^+_1(r,R)$ (respectively $\widehat{\arm}^+_1(r,R)$) is measurable with respect to $B_R$ (respectively $B_r^c$). Hence,
if $1 \leq r_1 \leq r_2 \leq r_3$, then $\widehat{\arm}_1^{ext}(r_1,r_2)$ and $\widehat{\arm}_1^{int}(r_2,r_3)$ are independent (indeed, the first one is measurable with respect to $\omega \cap B_{r_2}$ while the second one is measurable with respect to $\omega \setminus B_{r_2}$). Hence we have
\begin{eqnarray*}
\alpha^{an}_1(r_1,r_3) & \leq & \Pro \left[ \widehat{\arm}_1^{ext}(r_1,r_2) \cap \widehat{\arm}_1^{int}(r_2,r_3) \right]\\
& = & f_1^{ext}(r_1,r_2) \, f_1^{int}(r_2,r_3) \, .
\end{eqnarray*}
As a result, it is sufficient to prove that $f_1^{ext}(r,R),f_1^{int}(r,R) \leq \grandO{1} \alpha^{an}_1(r,R)$. We prove this only for $f_1^{int}(r,R)$ since the proof for $f_1^{ext}(r,R)$ is the same.

Let $\dense(r):= \dense_{1/100} \left( A(r/2,2r) \right)$ where $\dense_\delta(D)$ is defined in Definition~\ref{d.dense}. With the same proof as Lemma~\ref{l.dense}, we have: $\Pro \left[ \dense(r) \right] \geq 1 - \grandO{1} \exp \left( - \Omega(1) \, r^2 \right)$. If $\dense(r)$ holds, then we have the following: if $x \in \eta$ is such that the Voronoi cell of $x$ intersects $A(2r,R)$, then $x \notin B_r$. As a result, $\widehat{\arm}_1^{int}(r,R) \cap \dense(r) \subseteq \arm_1(2r,R)$. So
\[
f_1^{int}(r,R) \leq \alpha^{an}_1(2r,R) + \Pro \left[ \widehat{\arm}_1^{int}(r,R) \setminus \dense(r) \right] \, .
\]
Moreover, by using the fact that $\dense(r)$ and $\widehat{\arm}_1^{int}(2r,R)$ are independent (the first one is measurable with respect to $\eta \cap A(r/2,2r)$ while the second one is measurable with respect to $\omega \setminus B_{2r}$), we obtain that $f^{int}_1(r,R)$ is at most
\begin{eqnarray*}
 \alpha^{an}_1(2r,R) +\Pro \left[ \widehat{\arm}_1^{int}(r,R) \setminus \dense(r) \right] & \leq &  \alpha^{an}_1(2r,R) +\Pro \left[ \widehat{\arm}_1^{int}(2r,R) \setminus \dense(r) \right]\\
& = &  \alpha^{an}_1(2r,R) +f_1^{int}(2r,R) \, (1 - \Pro \left[ \dense(r) \right])\\
& \leq &  \alpha^{an}_1(2r,R) +f_1^{int}(2r,R) \, \grandO{1} \exp \left( - \Omega(1) \, r^2 \right) \, .
\end{eqnarray*}
By iterating the above inequality, we obtain that
\begin{multline*}
f_1^{int}(r,R) \leq \alpha^{an}_1(2r,R) + \grandO{1}  \sum_{i=0}^{\lfloor \log_2(R/r) \rfloor - 2} \Big( \alpha^{an}_1(2^{i+2}r,R) \, \exp \left( - \Omega(1) \, (2^i r)^2 \right) \Big)\\
+ \grandO{1} \exp \left( - \Omega(1) \, (2^{\lfloor \log_2(R/r) \rfloor - 1} r)^2 \right) \, .
\end{multline*}
We now use the right-hand-inequality of the quasi-multiplicativity property and~\eqref{e.poly}, which imply that there exists a constant $C_1 < +\infty$ such that, for every $i \in \lbrace 1, \cdots, \lfloor \log_2(R/r) \rfloor +1 \rbrace$, we have
\[
\alpha^{an}_1(2^ir,R) \leq C_1^i \, \alpha^{an}_1(r,R) \, .
\]
%(with $\alpha^{an}_1(2^ir,R) := 1$ when $i = \lfloor \log_2(R/r) \rfloor +1 $).
We finally obtain
\begin{multline*}
f_1^{int}(r,R) \leq \grandO{1} \alpha^{an}_1(r,R) \times \bigg( C_1 + \sum_{i=0}^{\lfloor \log_2(R/r) \rfloor - 2} \left( C_1^{i+2} \, \exp \left( - \Omega(1) \, (2^i r)^2 \right) \right)\\
+ C_1^{\lfloor \log_2(R/r) \rfloor +1} \, \exp \left( - \Omega(1) \, (2^{\lfloor \log_2(R/r) \rfloor - 1} r)^2 \right) \bigg) \, .
\end{multline*}
This ends the proof since the quantity between parentheses can be bounded by some absolute constant.
\end{proof}

\subsection{A generalization of Theorem~\ref{t.AGMT} to a family of quads}\label{ss.a_lot_of_quads}

The fact that we can choose any $\gamma > 0$ in the quenched box-crossing property Theorem~\ref{t.AGMT} is crucial for us. In particular, this implies that the quenched box crossing property is true \textbf{for a lot of quads simultaneously} with high probability. In this subsection, we use the notations from Definition~\ref{d.a_lot_of_quads} and Proposition~\ref{p.a_lot_of_quads} and we prove Proposition~\ref{p.a_lot_of_quads}.

\begin{proof}[Proof of Proposition~\ref{p.a_lot_of_quads}]
Let $\left( Q_i \right)_{i \in \lbrace 1, \cdots, N(D,\delta)\rbrace}$ be an enumeration of all $2 \delta \, \text{diam}(D) \times \delta \text{diam}(D)$ rectangles that intersect $D$ and that are drawn on the grid $(\delta \, \diam(D)) \cdot \Z^2$. Note that, if $Q \in \mathcal{Q}_{\delta}(D)$, then
\[
\bigcap_{i=1}^{N(D,\delta)} \lbrace  Q_i \text{ is crossed lengthwise} \rbrace \subseteq \cross(Q) \, .
\]
The (quenched) FKG-Harris inequality implies that, for every $Q \in \mathcal{Q}_{\delta}(D)$ and for every $\eta$ we have
\begin{equation}\label{e.GL2_1}
\prod_{i \in \lbrace 1, \cdots, N(D,\delta)\rbrace} \Prob^\eta \left[ Q_i \text{ is crossed lengthwise} \right] \leq \Prob^\eta \left[ \cross(Q) \right] \, .
\end{equation}
Now, let $\gamma'>0$ to be fixed later. Theorem~\ref{t.AGMT} implies that there exists a constant $c_0=c_0(\gamma') \in (0,1)$ such that
\[
\forall i, \, \Pro \left[ \Prob^\eta \left[ Q_i \text{ is crossed lengthwise} \right] \geq c_0 \right] \geq 1-(\delta \, \diam(D))^{-\gamma'} \, .
\]
By a union bound we obtain that
\begin{eqnarray*}
\Pro \left[ \forall i, \, \Prob^\eta \left[ Q_i \text{ is crossed lengthwise} \right] \geq c_0 \right] & \geq & 1-\grandO{1} N(D,\delta) \, (\delta \, \diam(D))^{-\gamma'}\\
& \geq & 1-\grandO{1} \delta^{-2} \, (\delta \, \diam(D))^{-\gamma'} \, .
\end{eqnarray*}
Together with~\eqref{e.GL2_1}, this implies that
\[
\Pro \left[ \forall Q \in \mathcal{Q}_\delta(D), \, \Prob^\eta \left[ \cross(Q) \right] \geq c_0^{N(D,\delta)} \right] \geq 1-\grandO{1} \delta^{-2} \, (\delta \, \diam(D))^{-\gamma'} \, .
\]
We now use the fact that $\diam(D) \geq \delta^{-2}/100$ and we choose $\gamma'=2+4\gamma$. We have
\begin{eqnarray*}
\delta^{-2} \, (\delta \, \diam(D))^{-\gamma'} & = & \delta^{-2-\gamma'} \,  \diam(D)^{-\gamma'}\\
& \leq & (100)^{1+\gamma'/2} \diam(D)^{1-\gamma'/2} = (100)^{2(1+\gamma)} \diam(D)^{-2\gamma} \, .
\end{eqnarray*}
This ends the result if $\diam(D)$ is sufficiently large (e.g. $\diam(D) \geq (100)^2$) and if $c=c_0^{\sup_D N(D,\delta)}(=c_0^{\grandO{1}\delta^{-2}})$. If $\diam(D) \leq (100)^2$ then the proof is easy.
\end{proof}

%We write $\gp^1(D)$ for the event that, for every $z \in D$, there exists some point $x \in \eta \cap D$ at (Euclidean) distance at most $\log(\diam(D))$ from $z$ (where $\diam(D)$ is the Euclidean diameter of $D$).

In Section~\ref{s.quasi}, we will work with the following family of quads.

\begin{defi}\label{d.a_lot_of_quads_bis}
Let $\widetilde{\mathcal{Q}}'_\delta(D)$ be the set of all quads $Q \subseteq D$ such that there exists $k \in \N$ such that $Q$ is  drawn on the grid $(2^k \, \delta \, \diam(D)) \cdot \Z^2$ and the length of each side of $Q$ is less than $100 \cdot 2^k \, \delta \, \diam(D)$. Also, let $\widetilde{\mathcal{Q}}_\delta(D)$ be the set of all quads $Q \subseteq D$ such that there exists a quad $Q' \in \widetilde{\mathcal{Q}}'_\delta(D)$ satisfying $\cross(Q') \subseteq \cross(Q)$.
\end{defi}
%Similarly, let $\mathcal{A}_{\delta}(D)$ be the set of all topological annuli $A \subseteq D$ such that the distance between the inner border and the outer border of $A$ is at least $\delta \, \diam(D)$.

\begin{prop}\label{p.a_lot_of_quads_bis}
Let $\delta \in (0,1)$ and $\gamma \in (0,+\infty)$. There exists $\widetilde{c} = \widetilde{c}(\gamma) \in (0,1)$ such that,\footnote{The fact that $\widetilde{c}$ does not depend on $\delta$ will be crucial.} for every bounded subset of the plane $D$ satisfying $\diam(D) \geq \delta^{-2}/100$, we have
\[
\Pro \left[ \forall Q \in \widetilde{\mathcal{Q}}_{\delta}(D), \, \Prob^\eta \left[ \cross(Q) \right] \geq \widetilde{c} \right] \geq 1 - \grandO{1} \diam(D)^{-\gamma} \, ,
\]
where the constants in $\grandO{1}$ are absolute constants.
\end{prop}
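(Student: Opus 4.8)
The plan is to reduce to Proposition~\ref{p.a_lot_of_quads} applied at a dyadic family of scales, and to use a union bound over the scales, exploiting crucially that the probability lower bound $c=c(\delta,\gamma)$ in Proposition~\ref{p.a_lot_of_quads} depends on $\delta$ only through the number $N(D,\delta) = \grandO{1}\delta^{-2}$ of basic rectangles, so that when we apply that proposition at scale $2^k\delta$ the constant does \emph{not} worsen with $k$. First I would observe that any quad $Q' \in \widetilde{\mathcal{Q}}'_\delta(D)$ belongs, by definition, to $\mathcal{Q}'_{2^k\delta}(D)$ for the corresponding $k$ (its sides lie on the grid $(2^k\delta\,\diam(D))\cdot\Z^2$ and have length at most $100\cdot 2^k\delta\,\diam(D)$, so $Q'$ is a bounded union of at most $\grandO{1}$ basic $2\cdot 2^k\delta\,\diam(D) \times 2^k\delta\,\diam(D)$ rectangles drawn on that grid). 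Hence $\widetilde{\mathcal{Q}}_\delta(D) \subseteq \bigcup_{k \geq 0} \mathcal{Q}_{2^k\delta}(D)$, where in fact only finitely many values of $k$ matter: once $2^k\delta\,\diam(D)$ exceeds $\diam(D)$, i.e. $2^k > 1/\delta$, there is no quad $Q' \subseteq D$ drawn on the grid $(2^k\delta\,\diam(D))\cdot\Z^2$, so it suffices to consider $k \leq k_{\max} := \lceil \log_2(1/\delta) \rceil = \grandO{1}\log(1/\delta)$.

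Next I would apply Proposition~\ref{p.a_lot_of_quads} at each scale $\delta_k := 2^k\delta$ for $0 \leq k \leq k_{\max}$, with the parameter $\gamma$ replaced by $\gamma' := \gamma + 1$, say. For each such $k$ we need $\diam(D) \geq \delta_k^{-2}/100$; this holds for all $k \geq 0$ because $\delta_k \geq \delta$ and $\diam(D) \geq \delta^{-2}/100$. We obtain: there is $c_k = c(\delta_k,\gamma')\in(0,1)$ with
\[
\Pro\left[ \forall Q \in \mathcal{Q}_{\delta_k}(D),\ \Prob^\eta\left[\cross(Q)\right] \geq c_k \right] \geq 1 - C\,\diam(D)^{-\gamma'}.
\]
The key point, which I would spell out by inspecting the proof of Proposition~\ref{p.a_lot_of_quads}, is that $c_k = c_0^{\,\grandO{1}\delta_k^{-2}}$ where $c_0 = c_0(\gamma')$ is absolute, and since $\delta_k \geq \delta$ we cannot control $\delta_k^{-2}$ from below, but we CAN bound it from above by $\delta^{-2}$ — wait, that goes the wrong way. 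The correct observation is rather that $\delta_k^{-2} = 4^{-k}\delta^{-2}$ is \emph{decreasing} in $k$, so $c_k = c_0^{\grandO{1}4^{-k}\delta^{-2}} \geq c_0^{\grandO{1}\delta^{-2}} =: \widetilde{c}_0$ for all $k$; thus all the $c_k$ are bounded below by the single quantity $\widetilde{c}_0$. Hmm, but $\widetilde{c}_0$ still depends on $\delta$. To remove the $\delta$-dependence I need to be more careful: the whole point of Definition~\ref{d.a_lot_of_quads_bis} is that a quad in $\widetilde{\mathcal{Q}}'_\delta(D)$ is covered by only $\grandO{1}$ basic rectangles (an absolute number, not $\delta^{-2}$ of them), because its side lengths are comparable to the grid mesh $2^k\delta\,\diam(D)$. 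So instead of invoking Proposition~\ref{p.a_lot_of_quads} verbatim I would re-run its FKG argument using, for a fixed $Q' \in \widetilde{\mathcal{Q}}'_\delta(D)$, only the $\grandO{1}$ basic rectangles that tile $Q'$: this gives $\Prob^\eta[\cross(Q')] \geq \prod (\text{those } \grandO{1} \text{ factors})$, and if each factor is $\geq c_0(\gamma')$ then $\Prob^\eta[\cross(Q')] \geq c_0(\gamma')^{\grandO{1}} =: \widetilde{c}(\gamma)$, which is absolute.

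So the final argument I would write is: enumerate all basic rectangles at all scales $\delta_k$, $0 \leq k \leq k_{\max}$, that intersect $D$; there are $\grandO{1}\sum_{k=0}^{k_{\max}} \delta_k^{-2} = \grandO{1}\delta^{-2}\sum_k 4^{-k} = \grandO{1}\delta^{-2}$ of them in total. By Theorem~\ref{t.AGMT} (with $\gamma'$) and a union bound, with probability at least $1 - \grandO{1}\delta^{-2}(\delta\,\diam(D))^{-\gamma'}$ every one of these rectangles $Q_i$ satisfies $\Prob^\eta[Q_i \text{ crossed lengthwise}] \geq c_0(\gamma')$. On this event, for every $Q \in \widetilde{\mathcal{Q}}_\delta(D)$ we pick $Q' \in \widetilde{\mathcal{Q}}'_\delta(D)$ with $\cross(Q') \subseteq \cross(Q)$, note $Q'$ is tiled by $\grandO{1}$ of the $Q_i$'s, and apply the quenched FKG-Harris inequality to conclude $\Prob^\eta[\cross(Q)] \geq \Prob^\eta[\cross(Q')] \geq c_0(\gamma')^{\grandO{1}} =: \widetilde{c}(\gamma)$. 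Finally, using $\diam(D) \geq \delta^{-2}/100$ exactly as in the proof of Proposition~\ref{p.a_lot_of_quads}, choosing $\gamma' = 2 + 2\gamma$ (the extra logarithmic factor $k_{\max} = \grandO{1}\log(1/\delta) = \grandO{1}\log\diam(D)$ from summing the union bound over scales is absorbed by a slightly larger exponent), we get $\grandO{1}\delta^{-2}(\delta\,\diam(D))^{-\gamma'} \leq \grandO{1}\diam(D)^{-\gamma}$, as desired.

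The main obstacle, and the point that needs genuine care rather than routine computation, is exactly the $\delta$-uniformity of $\widetilde{c}$: one must not apply Proposition~\ref{p.a_lot_of_quads} as a black box (that would give a constant of the form $c_0^{\grandO{1}\delta^{-2}}$, which degenerates as $\delta \to 0$), but instead re-open its FKG argument and use that a quad in $\widetilde{\mathcal{Q}}'_\delta(D)$ — being, at its own scale $2^k\delta\,\diam(D)$, a quad of \emph{bounded aspect ratio} (sides $\leq 100$ mesh units) — is crossed as soon as an \emph{absolutely bounded} number of aligned basic rectangles are crossed, so the product in the FKG step has $\grandO{1}$ factors independent of $\delta$. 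A secondary (purely bookkeeping) point is tracking that the union bound over the $\grandO{1}\log(1/\delta)$ scales costs only a logarithmic factor, harmlessly absorbed by raising the exponent $\gamma'$.
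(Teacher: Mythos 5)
Your proof is correct and follows essentially the same route as the paper: you correctly identify that the crucial point is that a quad in $\widetilde{\mathcal{Q}}'_\delta(D)$ has bounded aspect ratio at its own scale, so the FKG step requires only $\grandO{1}$ (rather than $\grandO{1}\delta^{-2}$) basic rectangle crossings, and you then sum the failure probabilities over dyadic scales and use $\diam(D)\geq\delta^{-2}/100$ to eliminate the $\delta$-dependence. The only cosmetic differences from the paper are that the paper first records a uniform bound for the fixed-scale family $\widehat{\mathcal{Q}}_{2^k\delta}(D)$ and then union-bounds over $\grandO{1}\log\diam(D)$ scales, whereas you union-bound over all rectangles at all scales at once (making your parenthetical remark about an ``extra logarithmic factor'' unnecessary, since your geometric sum already converges); both bookkeeping choices lead to the same conclusion.
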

\begin{rem}
One could use Proposition~\ref{p.a_lot_of_quads_bis} and gluing arguments to prove Proposition~\ref{p.a_lot_of_quads} (with $c(\delta,\gamma)=\widetilde{c}(\gamma)^{\grandO{1}\delta^{-2}})$ but since we will essentially use Proposition~\ref{p.a_lot_of_quads} we have chosen to write the proof of this proposition and then mimic it in order to obtain Proposition~\ref{p.a_lot_of_quads_bis}.
\end{rem}
\begin{proof}[Proof of Proposition~\ref{p.a_lot_of_quads_bis}]
First, we work with the following set of quads: Let $\widehat{\mathcal{Q}}_\delta(D) \subseteq \mathcal{Q}_\delta(D)$ be the set of all quads $Q \subseteq D$ drawn on the grid $(\delta \, \diam(D)) \cdot \Z^2$ such that the length of each side of $Q$ is less than $100 \cdot \delta \, \diam(D)$. We have
\[
\widetilde{\mathcal{Q}}_\delta(D) = \bigcup_{k=0}^{+\infty} \widehat{\mathcal{Q}}_{2^k\delta}(D) \, .
\]
By following the proof of Proposition~\ref{p.a_lot_of_quads} we obtain that there exists $c_0=c_0(\gamma) \in (0,1)$ such that
\begin{equation}\label{e.GL2_2}
\Pro \left[ \forall Q \in \widehat{\mathcal{Q}}_{\delta}(D), \, \Prob^\eta \left[ \cross(Q) \right] \geq c_0^{\grandO{1}} \right] \geq 1 - \grandO{1} \diam(D)^{-\gamma} \, . 
\end{equation}
The fact that we have $c_0^{\grandO{1}}$ instead of $c_0^{\grandO{1}\delta^{-2}}$ comes from the fact that we only consider quads of side length $\leq 100 \cdot \delta \, \diam(D)$. Now, note that the sets $\widehat{\mathcal{Q}}_{2^k\delta}(D)$ are empty when $k > -\log_2(\delta)$, hence
\[
\widetilde{\mathcal{Q}}_\delta(D) = \bigcup_{k=0}^{-\log_2(\delta)} \widehat{\mathcal{Q}}_{2^k\delta}(D) \, .
\]
Note also that $-\log_2(\delta) \leq \grandO{1} \log_2 (\diam(D)) $ since $\diam(D) \geq \delta^{-2}/100$. Let us apply~\eqref{e.GL2_2} to $\widehat{\mathcal{Q}}_{2^k\delta}(D)$ for every $k \in \{ 0, \cdots, -\log_2(\delta) \}$ and with $\gamma+1$ instead of $\gamma$. A union bound implies that there exists a constant $\widetilde{c} = \widetilde{c}(\gamma) > 0$ such that
\begin{eqnarray*}
\Pro \left[ \forall Q \in \widetilde{\mathcal{Q}}_{\delta}(D), \, \Prob^\eta \left[ \cross(Q) \right] \geq \widetilde{c} \right] & \geq & 1 - \grandO{1} \log_2 (\diam(D)) \, \diam(D)^{-(\gamma+1)}\\
& \geq & 1 - \grandO{1} \diam(D)^{-\gamma} \, .
\end{eqnarray*}
\end{proof}

\subsection{``Strong'' well-separateness of interfaces}\label{ss.preliminary}

In this subsection, we prove Lemma~\ref{l.interfaces} i.e. we prove that the interfaces are well separated with high probability. Subsections~\ref{ss.warm} and~\ref{ss.a_lot_of_quads} do not depend on the other subsections of the paper but this is not the case of the present subsection. Indeed, we are going to rely on the results of Subsections~\ref{ss.quasi_even},~\ref{ss.QM_consequences} and~\ref{ss.half-plane} where the quasi-multiplicativity property is proved in the case of an even number of arms and in the case of arm events in the half-plane, and where the exponent of the $3$-arm event in the half-plane is computed.

\begin{rem}
In Subsection~\ref{ss.quasi_even} we will prove another ``well-separateness of interfaces lemma'': Lemma~\ref{l.good}; but the notion of well-separateness of Lemma~\ref{l.good} is weaker than the one in Lemma~\ref{l.interfaces}. Lemma~\ref{l.good} is actually enough to deal with an even number of arms or with arm events restricted to a wedge, but is not enough to deal with an odd number of arms.
\end{rem}

\begin{proof}[Proof of Lemma~\ref{l.interfaces}]
First, note that there exist $\asymp \delta^{-1}$ points $y \in \partial B_R$ such that, if the event $\gi_\delta^{ext}(R)$ does not hold, then there is a $3$-arm event in one of the sets $A(y;20 \delta R,R/4) \cap B_R$. Note also that, if $y \in \partial B_R$, then $A(y;20 \delta R,R/4) \cap B_R$ is included in a half-plane whose boundary contains $y$. Together with Item~ii) of Proposition~\ref{p.universal}, this implies that
\[
\Pro \left[ \gi_\delta^{ext}(R) \right] \geq 1-\grandO{1} \delta^{-1} \, \left( \frac{\delta \, R}{R} \right)^2 = 1-\grandO{1} \delta \, .
\]

Now, let us study $\gi^{int}_\delta(r)$. As above, there exist $\asymp \delta^{-1}$ points $y \in \partial B_r$ such that, if the event $\gi_\delta^{int}(r)$ does not hold, then there is a $3$-arm event in one of the sets $A(y;20 \delta r,r/2) \setminus B_r$. However, it is not true that, for every $y \in \partial B_r$, $A(y;20 \delta r,r/2) \setminus B_r$ is included in a half-plane whose boundary contains $y$ (there are problems at the corners of $B_r$). This is why we need the following result:
\begin{claim}
Let $y \in \partial B_r$ and let $\rho$ be such that $y$ is at distance at least $\rho$ from the corners of $B_r$. Assume that $\rho  \in [20 \delta r,r/2]$. Then, there exists a constant $\epsilon > 0$ such that
\[
\Pro \left[ 3\text{-arm event in } A(y;20 \delta r,r/2) \setminus B_r \right] \leq \grandO{1} \left( \frac{\delta \, r}{\rho} \right)^2 \, \left( \frac{\rho}{r} \right)^\epsilon \, .
\]
\end{claim}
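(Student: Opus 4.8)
The plan is to interpolate between two scales: a "small" scale $20\delta r$ near $y$ where the geometry at the corner of $B_r$ is irrelevant, and an intermediate scale $\rho$ (the distance from $y$ to the nearest corner) below which the relevant region $A(y;\cdot,\cdot)\setminus B_r$ does sit inside a half-plane with $y$ on its boundary. First I would observe that the $3$-arm event in $A(y;20\delta r, r/2)\setminus B_r$ implies the $3$-arm event in the smaller annulus $A(y;20\delta r,\rho)\setminus B_r$ (it is monotone in the outer radius), and — crucially — on this smaller annulus the region $A(y;20\delta r,\rho)\setminus B_r$ is contained in a half-plane whose boundary passes through $y$, because within distance $\rho$ of $y$ the boundary $\partial B_r$ looks like a straight line (no corner is within distance $\rho$). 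Hence this $3$-arm event is contained in the half-plane $3$-arm event $\arm_3^+(20\delta r,\rho)$, and by Item~ii) of Proposition~\ref{p.universal}, $\alpha^{an,+}_{3,1/2}(20\delta r,\rho) \asymp (\delta r/\rho)^2$.

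That bound alone gives $\grandO{1}(\delta r/\rho)^2$, which is not quite the claimed estimate — we still need the extra gain factor $(\rho/r)^\epsilon$. To get it, I would combine the half-plane $3$-arm event at scales $[20\delta r,\rho]$ with a (full-plane or half-plane) one-arm-type event at scales $[\rho, r/2]$: the full $3$-arm event in $A(y;20\delta r,r/2)\setminus B_r$ forces, in the annulus $A(y;\rho,r/2)\setminus B_r$, at least one arm — in fact one can extract a crossing of this annular region by a path of one color together with the fact that not everything is monochromatic, giving at least a $1$-arm event there, whose probability is at most $\grandO{1}(\rho/r)^\epsilon$ by the polynomial decay estimate~\eqref{e.poly} (or directly by Theorem~\ref{t.Tassion} via RSW: a path crossing such an annulus has probability bounded away from $1$, and iterating over $\asymp \log(r/\rho)$ disjoint sub-annuli gives polynomial decay). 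The two events live on disjoint annular regions in space, hence are independent, so the probability of the full $3$-arm event is at most the product
\[
\grandO{1}\left(\frac{\delta r}{\rho}\right)^2 \cdot \grandO{1}\left(\frac{\rho}{r}\right)^\epsilon,
\]
which is exactly the claimed bound.

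The main obstacle I anticipate is making the independence/disjointness argument clean in the continuum setting. In Voronoi percolation, the natural worry (emphasized in Figure~\ref{f.difficulties}) is that a single Voronoi cell straddling the interface between the two annular regions could create a dependence. I would handle this by inserting a small buffer annulus — replacing $[20\delta r,\rho]$ and $[\rho,r/2]$ by $[20\delta r,\rho]$ and $[2\rho, r/2]$, say — so that the two events become measurable with respect to $\omega$ restricted to genuinely disjoint regions (at the cost of absorbing constants into the $\grandO{1}$ via the quasi-multiplicativity property Proposition~\ref{p.quasi} and Proposition~\ref{p.universal}, which let us pass between $\rho$ and $2\rho$ freely). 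A secondary, more bookkeeping-level point is verifying that $A(y;20\delta r,\rho)\setminus B_r$ really does lie in a half-plane through $y$: this uses the convexity of $B_r$ together with $\rho$ being at most the distance from $y$ to the corners, and the hypothesis $\rho\in[20\delta r, r/2]$ guaranteeing the scales are ordered correctly and that $20\delta r$ is large enough for Proposition~\ref{p.universal} to apply (recall $r\geq 100\delta^{-1}$, so $20\delta r\geq 1$).
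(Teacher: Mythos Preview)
Your overall strategy---split the annulus at scale $\rho$, use the half-plane $3$-arm exponent on the inner piece and a polynomially decaying arm event on the outer piece, then multiply---is exactly the paper's approach. The paper differs in two details. First, for the outer event it uses a full-plane $3$-arm event centered at the nearest corner $y_0$ (which it then bounds via the $2$-arm event, since Proposition~\ref{p.fandalpha} is at this stage only available for even $j$ and for half-plane events); your choice of the $1$-arm event centered at $y$ is simpler and equally valid, since the $j=1$ analogues of Proposition~\ref{p.fandalpha} are already established in Subsection~\ref{ss.warm}.

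Second, and more importantly, your handling of independence has a genuine gap. Inserting a buffer so that the two arm events live in the disjoint annuli $A(y;20\delta r,\rho)$ and $A(y;2\rho,r/2)$ does \emph{not} make them measurable with respect to $\omega$ restricted to those regions: in Voronoi percolation, whether a path crosses a given annulus depends on the full tessellation, and a distant point of $\eta$ can have a cell reaching into the annulus. The buffer alone does not cure this. The paper fixes this by passing to the events $\widehat{\arm}_3^+(y;20\delta r,\rho)$ and $\widehat{\arm}_3(y_0;2\rho,r/2)$ of Definition~\ref{d.hatarm}, which \emph{are} measurable with respect to $\omega$ restricted to the corresponding annuli and hence genuinely independent; it then invokes Proposition~\ref{p.fandalpha} (available here for half-plane events and for even $j$) to bound their probabilities by the corresponding arm probabilities. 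Your proof goes through once you make this substitution, but as written the sentence ``so that the two events become measurable with respect to $\omega$ restricted to genuinely disjoint regions'' is false.
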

\begin{proof}
Note that $A(y;20 \delta r,\rho) \setminus B_r$ is included in a half-plane whose boundary contains $y$. Write $\arm_3^+(y;20 \delta r,\rho)$ for the $3$-arm event in $A(y;20 \delta r,\rho) \setminus B_r$ and let
\[
\widehat{\arm}_3^+(y;20 \delta r,\rho) = \left\lbrace \Pro \left[ \arm_3^+(y;20\delta r,\rho) \cond \omega \cap A(y;20 \delta r,\rho) \right] > 0 \right\rbrace \, .
\]
Let $y_0$ be the corner of $B_r$ closest to $y$, let $\arm_3(y_0;2\rho,r/2)$ be the $3$-arm event $\arm_3(2\rho,r/2)$ translated by $y_0$, and let
\[
\widehat{\arm}_3(y_0;2\rho,r/2) = \left\lbrace \Pro \left[ \arm_3(y_0;2\rho,r/2) \cond \omega \cap A(y_0;2\rho,r/2) \right] > 0 \right\rbrace  \, .
\]
The events $\widehat{\arm}_3^+(y;20 \delta r,\rho)$ and $\widehat{\arm}_3(y_0;2\rho,r/2)$ are independent. Moreover, if the $3$-arm event in $A(y;20 \delta r,r/2) \setminus B_r$ holds then both these events hold. Remember that in the present subsection we rely on the results of Subsections~\ref{ss.quasi_even},~\ref{ss.QM_consequences} and~\ref{ss.half-plane} where the quasi-multiplicativity property and its consequences (e.g. Proposition~\ref{p.fandalpha}) are proved for $j$ odd and also for arm events in the half plane for any $j$. We apply Proposition~\ref{p.fandalpha} to the $2$-arm event in the whole plane and to the $3$-arm event in the half-plane. We obtain that
\[
\Pro \left[ \widehat{\arm}_3(y_0;2\rho,r/2) \right] \leq \Pro \left[ \widehat{\arm}_2(y_0;2\rho,r/2) \right] \asymp \alpha^{an}_2(2\rho,r/2)
\]
and
\[
\Pro \left[ \widehat{\arm}_3^+(y;20 \delta r,\rho) \right] \asymp \alpha^{an,+}_3(20 \delta r,\rho) \, .
\]
If we combine these estimates with~\eqref{e.poly} and with the computation of the $3$-arm event in the half-plane (Item~(ii) of Proposition~\ref{p.universal}), we obtain that $\Pro \left[ \widehat{\arm}_3(y_0;2\rho,r/2) \right] \leq \grandO{1} \left( \frac{\rho}{r} \right)^{\Omega(1)}$ and $\Pro \left[ \widehat{\arm}_3^+(y;20 \delta \, r,\rho) \right] \asymp \left( \frac{\delta r}{\rho} \right)^2$. Finally,
\begin{eqnarray*}
\Pro \left[ 3\text{-arm event in } A(y;20 \, \delta \, r,r/2) \setminus B_r \right] & \leq & \Pro \left[ \widehat{\arm}_3^+(y;20  \delta r,\rho) \cap \widehat{\arm}_3(y_0;2\rho,r/2) \right]\\
& \leq & \grandO{1} \, \left( \frac{\delta \, r}{\rho} \right)^2 \, \left( \frac{\rho}{r} \right)^{\Omega(1)} \, ,
\end{eqnarray*}
which ends the proof.
\end{proof}

We can (and do) assume that the constant $\epsilon$ of the claim is in $(0,1)$. Now, note that there exist $N(\delta) \asymp \log_2 ( \delta^{-1} )$ finite subsets of $\partial B_r$: $Y_1, \cdots, Y_{N(\delta)}$ such that: (a) $| Y_i | \asymp 2^i$, (b) for every $y \in Y_i$, there exists a corner of $B_r$ at distance $\asymp 2^i \delta r$ from $y$ and (c) if $\gi^{int}_\delta(r)$ does not hold, then there exists $y \in \cup_{i=1}^{N(\delta)} Y_i$ such that the $3$-arm event in $A(y;20 \delta r,r/2) \setminus B_r$ holds. Combined with the claim, this observation implies that
\begin{eqnarray*}
\Pro \left[ \gi^{int}_\delta(r) \right] & \leq & \grandO{1} \, \sum_{i=1}^{N(\delta)} 2^i \left( \frac{\delta \, r}{2^i \delta r} \right)^2 \,\left( \frac{2^i \delta r}{r} \right)^\epsilon\\
& \leq & \grandO{1} \, \delta^\epsilon \sum_{i=1}^{N(\delta)} 2^{i(\epsilon-1)}\\
& = & \grandO{1} \, \delta^\epsilon \, .
\end{eqnarray*}
\end{proof}

\section{Pivotal events and some estimates on arm events}\label{s.pivotals}

In this section, we only work at the parameter $p=1/2$, hence we intentionally forget the subscript $p$ in the notations. We will rely on the quasi-multiplicativity property (proved in Section~\ref{s.quasi}), on its consequences Propositions~\ref{p.fandalpha} and~\ref{p.techniquegeneral}, and on the preliminary results from Section~\ref{s.first}. Our goal is to estimate pivotal events. We refer to Subsection~\ref{ss.intro_piv} for the notations we use for these events. Our main goal is to prove the following result:
\begin{prop}\label{p.asymp_R2alpha4}
Let $\rho \geq 1$ and $R \geq 100\rho$. We have\footnote{The constant $2$ in ``$2\rho \Z^2$'' does not have to be taken seriously. The reason why we look at grids of mesh $\geq 2$ is only that we have stated the quasi-multiplicativity property Proposition~\ref{p.quasi} for $1 \leq r_1 \leq r_2 \leq r_3$ i.e. for arm events around boxes of side length at least $2$.}
\[
\sum_{S \text{ square of the grid } (2 \rho \Z)^2} \Pro_{1/2} \left[ \Piv_S(\cross(2R,R)) \right] \asymp \left( \frac{R}{\rho} \right)^2 \alpha^{an}_{4,1/2}(\rho,R) \, .
\]
\end{prop}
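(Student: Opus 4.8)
The plan is to establish the two inequalities separately, each by relating the sum over squares $S$ of $\Pro_{1/2}[\Piv_S(\cross(2R,R))]$ to the $4$-arm event at scale $\rho$ around $S$ and the bulk $4$-arm event from scale $\rho$ to scale $R$. The heuristic is classical (see~\cite{kesten1987scaling,werner2007lectures,nolin2008near}): a square $S$ of side $\asymp\rho$ near the ``middle'' of the rectangle $[-2R,2R]\times[-R,R]$ is pivotal for $\cross(2R,R)$ precisely when there are four arms of alternating colours emanating from a neighbourhood of $S$ out to distance $\asymp R$ (two black arms reaching the two short sides, two white arms separating them). For a fixed square $S$ at distance $\asymp R$ from the boundary of the rectangle, one expects $\Pro_{1/2}[\Piv_S(\cross(2R,R))] \asymp \alpha^{an}_{4,1/2}(\rho,R)$, and since there are $\asymp (R/\rho)^2$ such squares this gives the stated order. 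The squares close to the sides of the rectangle contribute less and must be handled by a summation argument, using the polynomial decay~\eqref{e.poly} and quasi-multiplicativity Proposition~\ref{p.quasi} to see that $\sum_S \alpha^{an}_{4,1/2}(\rho,\dist(S,\partial))$ over squares at distance $\ell\rho$ from the boundary is dominated by the bulk term; here the universal $3$-arm exponent (Item~iii of Proposition~\ref{p.universal}, and its half-plane analogue Item~ii) is what makes the boundary contribution summable.

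For the \textbf{upper bound}: fix a square $S$ of the grid $(2\rho\Z)^2$ meeting the rectangle. If $S$ is annealed-pivotal for $\cross(2R,R)$, then in particular there is a colour configuration making $S$ pivotal, which forces a $4$-arm (or, near a side, a half-plane $3$-arm or quarter-plane $2$-arm) configuration in the annulus between a box of size $\asymp\rho$ around $S$ and the largest box around $S$ staying inside the rectangle. The key point is to make this measurable in $\omega$ restricted to that annulus so that spatial independence is available — this is exactly the role of the events $\widehat{\arm}_j$ from Definition~\ref{d.hatarm}. So I would bound $\Pro_{1/2}[\Piv_S(\cross(2R,R))]$ by $\Pro_{1/2}[\widehat{\arm}_4(S;O(\rho),\dist(S,\partial\text{rectangle}))]$ (with half-plane/quarter-plane variants near the boundary), then apply Proposition~\ref{p.fandalpha} to replace $f_4$ by $\alpha^{an}_{4,1/2}$, then use quasi-multiplicativity to write $\alpha^{an}_{4,1/2}(\rho,\dist) \asymp \alpha^{an}_{4,1/2}(\rho,R)/\alpha^{an}_{4,1/2}(\dist,R)$ and sum over $S$; the geometric sum is controlled by the known arm exponents exactly as in~\cite{kesten1987scaling}.

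For the \textbf{lower bound}: it suffices to exhibit, for each of the $\asymp (R/\rho)^2$ squares $S$ that lie in the ``bulk'' (at distance $\geq R/10$ from $\partial([-2R,2R]\times[-R,R])$), a configuration making $S$ genuinely pivotal, with probability $\gtrsim \alpha^{an}_{4,1/2}(\rho,R)$. Here I would build the standard four-arm landing picture: impose the $4$-arm event $\arm_4(S;O(\rho),cR)$ around $S$, then use Theorem~\ref{t.Tassion} (box-crossing) together with the annealed FKG-Harris inequality to connect the two black arms to the two short sides of the rectangle and to connect the two white arms into white paths separating top from bottom, so that the colour of the Voronoi cells in $S$ controls $\cross(2R,R)$. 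Making the arms \emph{land} in the right places so the gluing succeeds is where the well-separateness technology is needed: I would invoke the ``good interfaces'' events $\gi^{ext},\gi^{int}$ of Lemma~\ref{l.interfaces} (or rather Proposition~\ref{p.techniquegeneral}, which is tailored to exactly this kind of ``arm event intersected with a high-probability environment event'' situation) to ensure the arm endpoints are well-spaced, then glue. This yields $\Pro_{1/2}[\Piv_S(\cross(2R,R))] \gtrsim \alpha^{an}_{4,1/2}(\rho,R)$ for each bulk $S$, and summing over the $\asymp(R/\rho)^2$ bulk squares finishes.

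The \textbf{main obstacle} I anticipate is the lower bound, specifically the gluing step: one must simultaneously control the positions where the four arms hit the circle of radius $\asymp R$ around $S$ \emph{and} feed in box-crossings to extend them to the sides of the rectangle, all while keeping track of the random Voronoi environment. This is where degeneracy of the arm event (Figure~\ref{f.difficulties}) could in principle ruin the argument, and it is precisely what Proposition~\ref{p.techniquegeneral} and the interface-separation estimate Lemma~\ref{l.interfaces} are designed to overcome — the environment events $\dense$, $\qbc$, $\gi$ are each high-probability and can be intersected with $\arm_4$ at only a bounded multiplicative cost, after which the gluing is a deterministic geometric construction using only Theorem~\ref{t.Tassion} and quenched FKG. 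A secondary bookkeeping nuisance is the treatment of squares near the boundary of the rectangle in the upper bound, where the relevant arm count drops from $4$ to $3$ (half-plane) or $2$ (quarter-plane); these are handled with the half-plane quasi-multiplicativity and the universal exponents in Proposition~\ref{p.universal}, and contribute only lower-order terms.
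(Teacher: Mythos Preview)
Your proposal captures the correct overall strategy and matches the paper's approach: the lower bound from bulk squares via the four-arm-plus-gluing construction (this is Lemmas~\ref{l.warm-up_piv} and~\ref{l.piv_bulk}, using exactly the $\gi$/$\qbc$/$\dense$ machinery you describe), and the upper bound by summing arm-event estimates with half-plane $3$-arm and quarter-plane $2$-arm corrections near the boundary (Lemma~\ref{l.piv_boundary}). Two technical points are glossed over that the paper treats with some care.

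First, the inclusion $\Piv_S(\cross(2R,R)) \subseteq \widehat{\arm}_4(S;O(\rho),\cdot)$ that you invoke for the upper bound is not automatic in Voronoi percolation. The paper flags this as observation~(ii) just below the proposition: unlike Bernoulli percolation, one does \emph{not} have $\Piv_S = \arm_4^\square(S,R)$, because a Voronoi cell with centre outside any fixed annulus around $S$ can still reach into $S$ and be affected by changing $\omega$ there. The remedy (proof of Lemma~\ref{l.piv_bulk}, see~\eqref{e.from_piv_to_arm}--\eqref{e.dense2^k_0}) is an iteration over dyadic scales $2^k\rho$ using the events $\dense(2^k\rho)$: on $\dense(2^k\rho)$ the cells are local enough that $\Piv_S \subseteq \arm_4(2^{k+1}\rho,R)$, while $\neg\dense(2^k\rho)$ has superpolynomially small probability and can be absorbed against the polynomial decay~\eqref{e.poly} after passing to $\widehat{\arm}_4$ for spatial independence. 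Your appeal to Proposition~\ref{p.fandalpha} covers the $\widehat{\arm}\to\alpha^{an}$ step but not this localisation step.

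Second, you restrict to squares meeting the rectangle, but the sum in the statement is over \emph{all} squares of the grid, and observation~(i) below the proposition notes that even squares far outside $[-2R,2R]\times[-R,R]$ have positive pivotal probability (again because cells can be huge). The paper disposes of these in Lemma~\ref{l.piv_far_away}, showing their total contribution is dominated by that of the nearest boundary square via another $\dense$-type argument.
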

The event $\Piv_S(\cross(2R,R))$ is an annealed-pivotal event. We will also prove similar bounds for quenched-pivotal events, see Lemma~\ref{l.piv_one_point}. Let us make two observations in order to illustrate the difficulties that will arise in the proof of Proposition~\ref{p.asymp_R2alpha4}.
%In order to prove estimates on the probability of pivotal events thanks to the probabilities of arm-events, we will have to deal with the following difficulties that would not not arise if we had worked with Bernoulli percolation on a deterministic lattice:
Let $S$ be a square of the grid $2\rho \, \Z^2$.
\bi 
\item[i)] Even if $S$ is far-away from $[-2R,2R] \times [-R,R]$, we have $\Pro_{1/2} \left[ \Piv_S(\cross(2R,R)) \right] > 0$.
\item[ii)] Assume that $S \subseteq [-2R,2R] \times [-R,R]$ and let  $\arm_4^\square(S,R)$ denote the event that there are two black arms included in $[-2R,2R] \times [-R,R] \setminus S$ from $\partial S$ to the left and right sides of $[-2R,2R] \times [-R,R]$ and two white arms included in $[-2R,2R] \times [-R,R] \setminus S$ from $\partial S$ to the top and bottom sides of $[-2R,2R] \times [-R,R]$. The events $\Piv_S ( \cross(2R,R) )$ and $\arm_4^\square(S,R)$ are closely related. However, we do not have $\arm_4^\square(S,R) = \Piv_S( \cross(2R,R) )$ (contrary to Bernoulli percolation on $\Z^2$).
\ei
\begin{rem}
Proposition~\ref{p.asymp_R2alpha4} is stated for the crossing events $\cross(2R,R)$ since we will apply this result to $2R \times R$ rectangles, but of course the proof works for any shape of rectangle.
\end{rem}

%As explained in the discussion of Subsection~\ref{ss.strategy_quasi}, in order to extend arms, it is nice to assume that the underlying point process and the interfaces are ``good''. For instance, the general idea to prove that the probability that a box in the bulk is pivotal for a crossing event is at least of the order of the probability of a $4$-arm event (see Lemma~\ref{l.warm-up_piv}), is to use Proposition~\ref{p.techniquegeneral} and the estimates of Subsection~\ref{ss.formal_events} to say that, conditionally on a $j$-arm event in $A(r,R)$, the arms are well separated and the point configuration is nice near $\partial B_r \cup \partial B_R$. Once we know that our process is nice in this region, we can use classical techniques of extension of arms. Outside of the bulk, we will have to deal with arm-events in the half plane.\\

%In order to prove that the probability that a box in the bulk is pivotal for a crossing event is at most of the order of the probability of a $4$-arm event, we will have to use the quasi-multiplicativity property together with some spatial properties of our model.

%Proposition~\ref{p.techniquegeneral} and the events of Subsection~\ref{ss.formal_events} will be very useful when dealing with such difficulties.
%Proposition~\ref{p.asymp_R2alpha4} and Proposition~\ref{p.arm_event_asymp_R2alpha4}, which is the analogous result for arm events.

\subsection{The case of the bulk}\label{ss.pivotals1}

Let $1 \leq \rho \leq R/10 \leq R$, let $y$ be a point of the plane and let $S=B_\rho(y)$ be the square of size length $2\rho$ centered at $y$. In this subsection, we use the quasi-multiplicativity property and its consequences to estimate the probability of $\Piv_S \left( \cross(2R,R) \right)$ when $S$ is ``in the bulk''. We start with the following lemma:

\begin{lem}\label{l.warm-up_piv}
Let $\rho$, $R$ and $S$ be as above, and assume that $S$ is at distance at least $R/3$ from the sides of the rectangle $[-2R,2R] \times [-R,R]$. Also, let $\arm_4^\square(S,R)$ be the event that there are two black arms in $[-2R,2R] \times [-R,R] \setminus S$ from $\partial S$ to the left and right sides of $[-2R,2R] \times [-R,R]$ and two white arms  in $[-2R,2R] \times [-R,R] \setminus S$ from $\partial S$ to the top and bottom sides of $[-2R,2R] \times [-R,R]$. Then
\[
\Pro \left[ \arm_4^\square(S,R) \right] \asymp \alpha^{an}_4(\rho,R) \, ,
\]
where the constants in $\asymp$ are absolute constants.
\end{lem}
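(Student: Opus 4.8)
The plan is to prove the two inequalities $\Pro[\arm_4^\square(S,R)] \leq \grandO{1}\, \alpha^{an}_4(\rho,R)$ and $\Pro[\arm_4^\square(S,R)] \geq \Omega(1)\, \alpha^{an}_4(\rho,R)$ separately, relying on the quasi-multiplicativity property (Proposition~\ref{p.quasi}), on Proposition~\ref{p.fandalpha} (to linearize the passage to $\widehat{\arm}$-type events that enjoy genuine spatial independence), and on Proposition~\ref{p.techniquegeneral} together with the box-crossing estimates (Theorems~\ref{t.Tassion} and~\ref{t.AGMT}) and the annealed FKG-Harris inequality. By translation invariance we may as well assume $S = B_\rho$. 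Since $S$ is at distance at least $R/3$ from the sides of $[-2R,2R]\times[-R,R]$, this rectangle contains the annulus $A(2\rho, R/4)$ (say), and conversely $\arm_4^\square(S,R)$ lives inside $B_{3R}$; so $\arm_4(\rho,R)$ and $\arm_4^\square(S,R)$ are comparable ``up to constant factors on the scales'', which is exactly what quasi-multiplicativity is designed to absorb.

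\textbf{Upper bound.} If $\arm_4^\square(S,R)$ holds, then in particular the four alternating arms cross the annulus $A(\rho, R/4)$ (the inner radius is $\rho$ because $\partial S = \partial B_\rho$, and $B_{R/4}$ is contained in $[-2R,2R]\times[-R,R]$ by the distance hypothesis). Hence $\arm_4^\square(S,R) \subseteq \widehat{\arm}_4(\rho,R/4)$ modulo a null set, and therefore
\[
\Pro\left[\arm_4^\square(S,R)\right] \leq \Pro\left[\widehat{\arm}_4(\rho,R/4)\right] = f_4(\rho,R/4) \leq \grandO{1}\,\alpha^{an}_4(\rho,R/4) \leq \grandO{1}\,\alpha^{an}_4(\rho,R),
\]
where the last two inequalities use Proposition~\ref{p.fandalpha} and then the quasi-multiplicativity property Proposition~\ref{p.quasi} together with~\eqref{e.poly} to replace $R/4$ by $R$ at the cost of an absolute constant. (Actually for the upper bound one does not even need $\widehat{\arm}$: one can simply use $\arm_4^\square(S,R) \subseteq \arm_4(\rho,R/4)$ and quasi-multiplicativity directly; the $\widehat{\arm}$ formulation is more robust but not needed here.)

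\textbf{Lower bound.} This is the delicate direction and, as in Kesten's argument, the point is to go from an arm event in a round annulus to an arm event landing on the four prescribed sides of the rectangle. First I would restrict to $\arm_4(\rho, R/4)$ and, using Proposition~\ref{p.techniquegeneral} (and the remark following it, which allows two good annuli), intersect with a good event $G$ — measurable away from $A(2\rho, R/8)$, of probability $\geq 1-\epsilon$ — on which the environment near $\partial B_{R/4}$ is dense and satisfies the quenched box-crossing property for a family of quads, and on which the four interfaces reaching $\partial B_{R/4}$ are well separated (events $\dense$, $\qbc$, $\gi^{ext}$ from Subsection~\ref{ss.formal_events}). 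On this good event, conditionally on $\eta$ and on the four interfaces, one extends each of the four arms from $\partial B_{R/4}$ to the correct side of $[-2R,2R]\times[-R,R]$ through a bounded-modulus sequence of quads, glued using the quenched FKG-Harris inequality and the quenched box-crossing estimates; this costs only a multiplicative constant, so
\[
\Pro\left[\arm_4^\square(S,R)\right] \geq \Omega(1)\,\Pro\left[\arm_4(\rho,R/4)\cap G\right] \geq \Omega(1)\,\alpha^{an}_4(\rho,R/4) \geq \Omega(1)\,\alpha^{an}_4(\rho,R),
\]
again invoking Proposition~\ref{p.quasi} and~\eqref{e.poly} at the end.

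\textbf{Main obstacle.} The hard part is the extension step in the lower bound: because the interfaces are only well separated on the event $\gi^{ext}$, and because conditioning on the degenerate event $\arm_4$ could a priori make $\eta$ pathologically dense near $\partial B_{R/4}$, one cannot naively ``extend the arms by hand.'' This is precisely why Proposition~\ref{p.techniquegeneral} (itself a consequence of quasi-multiplicativity via Proposition~\ref{p.fandalpha}) is invoked: it lets us assume the good environment events hold with probability close to $1$ \emph{without} losing the right order of magnitude, and then the quenched box-crossing property $\qbc$ makes the gluing through quads work deterministically in $\eta$. Care must also be taken that the separation parameter $\delta$ is chosen small enough (in terms of absolute geometric constants only) so that the well-separated interfaces can be routed to the four sides inside $[-2R,2R]\times[-R,R]\setminus S$ without colliding; since everything is at a single scale $\asymp R$, only finitely many quads are needed and all constants are absolute, consistent with the statement.
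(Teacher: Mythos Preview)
Your proposal is correct and follows essentially the same approach as the paper: the upper bound via the inclusion $\arm_4^\square(S,R)\subseteq\arm_4(\rho,R/4)$ plus quasi-multiplicativity, and the lower bound by intersecting $\arm_4(\rho,\cdot)$ with the good events $\dense$, $\qbc$, $\gi^{ext}$ via Proposition~\ref{p.techniquegeneral}, then conditioning on $\eta$ and the interfaces and extending the arms to the four sides through quads using quenched FKG and the quenched box-crossing property. The only cosmetic difference is that the paper works with the intermediate scale $R/2$ rather than your $R/4$, which is immaterial.
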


%\paragraph{A $4$-arm event with arms on prescribed sides} We now prove the following result, which can be seen as a first step towards the study of quantities of the form $\Pro \left[ \Piv_S(\cross(2R,R)) \right]$.

%\begin{lem}\label{l.warm-up_piv}
%Let $1 \leq r \leq R/2 \leq R$ and write $\arm_4^\square(r,R)$ for the event that there are two black arms from $\partial B_r$ to the left and right sides of $B_R$ and two white arms from $\partial B_r$ to the top and bottom sides of $B_R$. Then:
%\[
%\alpha^{an}_4(r,R) \geq \Pro \left[ \arm_4^\square(r,R) \right] \geq \Omega(1) \, \alpha^{an}_4(r,R) \, .
%\]
%\end{lem}

\begin{proof} The proof of the inequality $\Pro \left[ \arm_4^\square(S,R) \right] \leq \grandO{1} \alpha^{an}_4(\rho,R)$ is a direct consequence of the quasi-multiplicativity property (and of~\eqref{e.poly}). Let us prove the other inequality. We write the proof only for $y=0$ (i.e. for $S = B_\rho$) since the proof for other values of $y$ is the same. Note that it is sufficient to prove the result for $R$ sufficiently large. Let $\delta \in (0,1)$ to be determined later and assume that $R \geq \delta^{-2}$. Consider the following events (see Definition~\ref{d.dense} and Proposition~\ref{p.a_lot_of_quads}):
\[
\dense_\delta(R) := \dense_{\delta/100} \left( A(R/4,2R) \right) \, ,
\]  
\[
\qbc_\delta(R) := \qbc^1_{\delta} \left( A(3R/8,2R) \right) \, ,
\]
and let $\gi^{ext}_\delta(R/2)$ be defined as in~Subsection~\ref{ss.formal_events}.

Note that the event $\dense_\delta(R) \cap \qbc_\delta(R) \cap \gi^{ext}_\delta(R/2)$ is measurable with respect to $\omega \setminus B_{R/4}$. With exactly the same proof as Lemma~\ref{l.dense}, we obtain that $\Pro \left[ \dense_\delta(R) \right] \geq 1 - \grandO{1} \delta^{-2} \exp(-\Omega(1)\left( \delta \cdot R)^2 \right) \geq 1-\grandO{1} \exp \left( -\Omega(1) \delta^{-2} \right)$ (since $R \geq \delta^{-2}$). Moreover, Proposition~\ref{p.a_lot_of_quads} implies that $\Pro \left[ \qbc_\delta(R) \right] \geq 1-\grandO{1} R^{-1} \geq 1-\grandO{1} \delta^2$ and Lemma~\ref{l.interfaces} implies that $\Pro \left[ \gi^{ext}_\delta(R/2) \right] \geq 1 - \grandO{1} \delta$. Therefore, $\Pro \left[ \dense_\delta(R) \cap \qbc_\delta(R) \cap \gi^{ext}_\delta(R/2) \right]$ can be made as close to $1$ as we want provided that we take $\delta$ sufficiently small. Hence, we can use Proposition~\ref{p.techniquegeneral} (which is the key result here) to say that, if $\delta$ is sufficiently small, then
\[
\Pro \left[ \arm_4(\rho,R/2) \cap \dense_\delta(R) \cap \qbc_\delta(R) \cap \gi^{ext}_\delta(R/2) \right] \geq \alpha^{an}_4(\rho,R/2)/2 \geq \alpha^{an}_4(\rho,R)/2 \, .
\]
See Subsection~\ref{ss.formal_events}: we have $\lbrace s^{ext}(\rho,R/2) \geq 5 \delta  R \rbrace = \lbrace s^{ext}(\rho,R/2) \geq 10 \delta  R/2 \rbrace \supseteq \gi^{ext}_\delta(R)$. Hence, we also have
\begin{equation}\label{e.looks_good_nice}
\Pro \left[ \arm_4(\rho,R/2) \cap \dense_\delta(R) \cap \qbc_\delta(R) \cap \lbrace s^{ext}(\rho,R/2) \geq 5 \delta R \rbrace \right] \geq \alpha^{an}_4(\rho,R)/2 \, .
\end{equation}
Let $\eta \in \dense_\delta(R) \cap \qbc_\delta(R)$ be such that $\Prob^\eta \left[ \arm_4(\rho,R/2) \cap \lbrace s^{ext}(\rho,R/2) \geq 5\delta R \rbrace  \right] > 0$ and
%(in other words, such that $\gp^+_\delta(R)$ holds in $\eta$ and $\Prob^\eta \left[ \arm_4(r,R) \cap GC^+_\delta(r,R) \right] > 0$)
%Given a coloring of $\eta \cap B(2R)$,w
write $\beta_0, \cdots, \beta_{k-1}$ for the interfaces that cross $A(\rho,R)$ (in counter-clockwise order and such that the right-hand-side of $\beta_0$ - when going from $\partial S$ to $\partial B_R$ - is black, say). First, we work under the following conditional probability measure:
\begin{multline*}
\nu^\eta_{\rho,R,(\beta_j)_j}\\
\hspace{1.2em }:= \Prob^\eta \left[ \; \cdot \; \cond \arm_4(\rho,R/2) \cap \dense_\delta(R) \cap \qbc_\delta(R) \cap \lbrace s^{ext}(\rho,R/2) \geq 5 \delta R \rbrace, \, \beta_0, \cdots, \beta_{k-1} \right] .
\end{multline*}
Thanks to~\eqref{e.looks_good_nice}, it is sufficient to prove that there exists a constant $c=c(\delta) > 0$ such that
\[
\nu^\eta_{\rho,R,(\beta_j)_j} \left[ \arm_4^\square(S,R) \right] \geq c \, .
\]
%This can be proved by classical box-crossing and (quenched) Harris-FKG estimates. More precisely, 
Since $\eta \in \dense_\delta(R) \cap \lbrace s^{ext}(\rho,R/2) \geq 5 \delta R \rbrace$, we can choose four quads $Q(\beta_j)$, $j \in \{0,\cdots,3 \}$ such that
\bi 
\item[(a)] For every $j \in \{0, \cdots, 3 \}$, $Q(\beta_j) \in  \mathcal{Q}_\delta \left( A(3R/8,2R) \right)$;
\item[ (b)] For every $j \in \lbrace 0, \cdots, 3 \rbrace$, one of the distinguished sides of $Q(\beta_j)$ is included in $\beta_j$;
\item[(c)] The other distinguished side of $Q(\beta_0)$ (respectively $Q(\beta_1)$, $Q(\beta_2)$ and $Q(\beta_3)$) is included in the right (respectively top, left and bottom) side of $[-2R,2R] \times [-R,R]$;
\item[(d)] For every $j \in \{0, \cdots, 3 \}$, $Q(\beta_j) \cap B_{R/2}$ is included in the region between $\beta_j$ and $\beta_{j-1}$ (where $\beta_{-1} := \beta_{k-1}$);
\item[(e)] If $0 \leq i \neq j \leq 3$, then there is no Voronoi cell that intersects both $Q(\beta_i)$ and $Q(\beta_j)$.
\ei See Figure~\ref{f.Q(beta)_square}. Let $F$ be the event that, for every $j \in \lbrace 0, \cdots, 3 \rbrace$, $Q(\beta_j)$ is crossed (respectively dual-crossed) when $j$ is even (respectively odd). Note that conditioning on $(\beta_j)_j$ affects the percolation process as follows: if $j$ is even (respectively odd) then there is a black (respectively white) crossing from $\beta_j$ to $\beta_{j-1}$. Hence, by using the fact that $\eta \in \qbc_\delta(R)$ and by applying the (quenched) Harris-FKG inequality, we obtain that there exists $c=c(\delta) > 0$ such that
\[
\nu^\eta_{\rho,R,(\beta_j)_j} \left[ F \right] \geq c > 0 \, .
\]
This ends the proof since $F \subseteq \arm_4^\square(\rho,R)$.
\begin{figure}[!h]
\begin{center}
\includegraphics[scale=1.2]{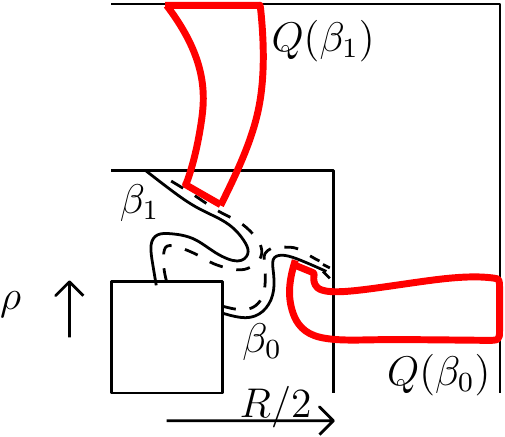}
\end{center}
\caption{The quads $Q(\beta_1)$ and $Q(\beta_2)$.}
\label{f.Q(beta)_square}
\end{figure}
\end{proof}

We have the following strengthening of Lemma~\ref{l.warm-up_piv}, in the spirit of Proposition~\ref{p.techniquegeneral}:

\begin{cor}\label{c.warm-up_piv}
There exists an absolute constant $\epsilon \in (0,1)$ such that, for every event $G$ measurable with respect to $\omega \setminus A(y;2\rho,R/6)$ that satisfies $\Pro \left[ G \right] \geq 1-\epsilon$, we have
\[
\Pro \left[ \arm_4^\square(S,R) \cap G \right] \geq \epsilon \, \alpha^{an}_4(\rho,R) \, .
\]
\end{cor}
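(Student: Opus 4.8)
The plan is to argue exactly as in the proof of Proposition~\ref{p.techniquegeneral}. Recall that in this subsection $1 \le \rho \le R/10 \le R$, that $S = B_\rho(y)$, and that $S$ lies at distance at least $R/3$ from the sides of $[-2R,2R]\times[-R,R]$. I would first dispose of the range $\rho > R/12$: there $R/\rho \le 12$, so the polynomial bound~\eqref{e.poly} gives $\alpha^{an}_4(\rho,R) = \Omega(1)$, and the claim follows directly from Lemma~\ref{l.warm-up_piv} and $\Pro[\neg G] \le \epsilon$ once $\epsilon$ is a small enough absolute constant. So assume $2\rho \le R/6$. The only geometric input is then the inclusion coming from $\partial S \subseteq B_{2\rho}(y) \subseteq B_{R/6}(y) \subseteq [-2R,2R]\times[-R,R]$ (the last two inclusions because $R/6 < R/3$): on $\arm_4^\square(S,R)$ each of the four monochromatic arms joins $\partial S$ to a side of the rectangle, hence crosses the annulus $A(y;2\rho,R/6)$, and the four restrictions realise four arms of alternating colours there. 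Thus, up to a null event,
\[
\arm_4^\square(S,R) \subseteq \arm_4(y;2\rho,R/6) \subseteq \widehat{\arm}_4(y;2\rho,R/6),
\]
where $\widehat{\arm}_4(y;2\rho,R/6)$ is the event of Definition~\ref{d.hatarm} recentred at $y$ between scales $2\rho$ and $R/6$; in particular this last event is measurable with respect to $\omega \cap A(y;2\rho,R/6)$ and is therefore independent of $G$.

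Using this, the independence, and Lemma~\ref{l.warm-up_piv}, I would write
\begin{align*}
\Pro\left[\arm_4^\square(S,R) \cap G\right]
&\ge \Pro\left[\arm_4^\square(S,R)\right] - \Pro\left[\widehat{\arm}_4(y;2\rho,R/6) \setminus G\right] \\
&= \Pro\left[\arm_4^\square(S,R)\right] - \Pro\left[\widehat{\arm}_4(y;2\rho,R/6)\right]\,\Pro\left[\neg G\right] \\
&\ge \Omega(1)\,\alpha^{an}_4(\rho,R) - \Pro\left[\widehat{\arm}_4(y;2\rho,R/6)\right]\,\epsilon .
\end{align*}
By translation invariance and Proposition~\ref{p.fandalpha}, $\Pro[\widehat{\arm}_4(y;2\rho,R/6)] = f_4(2\rho,R/6) \asymp \alpha^{an}_4(2\rho,R/6)$; and by the quasi-multiplicativity property Proposition~\ref{p.quasi} together with~\eqref{e.poly} (which yields $\alpha^{an}_4(\rho,2\rho) \asymp 1 \asymp \alpha^{an}_4(R/6,R)$) one has $\alpha^{an}_4(2\rho,R/6) \asymp \alpha^{an}_4(\rho,R)$. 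Hence there are absolute constants $0 < c_1 < 1$ and $c_2 < +\infty$, not depending on $\rho$, $R$, $y$ or $G$, such that
\[
\Pro\left[\arm_4^\square(S,R) \cap G\right] \ge \left(c_1 - c_2\,\epsilon\right)\alpha^{an}_4(\rho,R).
\]

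It then suffices to fix $\epsilon \in (0,1)$ an absolute constant small enough that $c_1 - c_2\,\epsilon \ge \epsilon$ (for instance $\epsilon = c_1/(c_2+1)$, further shrunk to also cover the excluded range $\rho > R/12$ mentioned above); this gives the corollary. As with Proposition~\ref{p.techniquegeneral}, the only real point — and the single place where care is needed — is the decoupling step: one cannot work with $\arm_4^\square(S,R)$ directly because it is not measurable with respect to $\omega \cap A(y;2\rho,R/6)$, so one passes to the larger event $\widehat{\arm}_4(y;2\rho,R/6)$, which is, and whose probability remains comparable to $\alpha^{an}_4(\rho,R)$ by quasi-multiplicativity; everything else is bookkeeping with~\eqref{e.poly} and Proposition~\ref{p.quasi}.
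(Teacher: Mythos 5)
Your proof is correct and follows exactly the approach the paper invokes: the paper's own proof simply says it is ``exactly the same as Proposition~\ref{p.techniquegeneral}'', namely replace $\arm_4^\square(S,R)$ by the superset $\widehat{\arm}_4(y;2\rho,R/6)$, which is measurable with respect to $\omega\cap A(y;2\rho,R/6)$ and hence independent of $G$, and then apply Lemma~\ref{l.warm-up_piv}, Proposition~\ref{p.fandalpha}, and quasi-multiplicativity. Your explicit treatment of the range $\rho>R/12$ is a harmless piece of bookkeeping that the paper handles implicitly via the convention $\alpha^{an}_j(r,R)=1$ for $r>R$.
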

\begin{proof}
The proof is exactly the same as the similar result Proposition~\ref{p.techniquegeneral}. More precisely, this is a direct consequence of Lemma~\ref{l.warm-up_piv} and Proposition~\ref{p.fandalpha}.
\end{proof}

Now, let us prove the following result:

\begin{lem}\label{l.piv_bulk}
Let $\rho$, $R$ and $S$ be as in Lemma~\ref{l.warm-up_piv}. Then
\[
\Pro \left[ \Piv_S(\cross(2R,R)) \right] \asymp \alpha^{an}_4(\rho,R) \, .
\]
\end{lem}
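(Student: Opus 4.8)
The goal is to show $\Pro[\Piv_S(\cross(2R,R))] \asymp \alpha^{an}_4(\rho,R)$ when $S$ is in the bulk. The upper bound should come essentially for free: if $S$ is annealed-pivotal for $\cross(2R,R)$, then conditionally on $\omega \setminus S$ both the crossing and its complement are possible; exploring the colored configuration outside $S$, one finds that there must be (quenched, hence annealed) a four-arm configuration in $[-2R,2R]\times[-R,R]\setminus S$ emanating from $\partial S$ — two black arms to the left/right sides and two white arms to the top/bottom sides — possibly only up to the thickening caused by cells straddling $\partial S$. More precisely, $\Piv_S(\cross(2R,R)) \subseteq \widehat{\arm}^\square_4(2S, R)$ (or some similar slightly enlarged four-arm-in-the-rectangle event measurable with respect to $\omega$ outside a small neighbourhood of $S$), and then Proposition~\ref{p.fandalpha}, the quasi-multiplicativity property and \eqref{e.poly} give $\Pro[\Piv_S(\cross(2R,R))] \leq \grandO{1}\,\alpha^{an}_4(2\rho,R) \asymp \alpha^{an}_4(\rho,R)$. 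A subtlety is that pivotality is not purely local to $\partial S$ because of cells of $\eta$ intersecting both $S$ and its complement; this is handled by passing to a slightly larger box $2S$ and using that $\dense_\delta$-type events hold with high probability, so that with high probability no cell is too large near $\partial S$.

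For the lower bound, the plan is to build a pivotal configuration out of the four-arm event $\arm_4^\square(S,R)$ of Lemma~\ref{l.warm-up_piv} together with a favourable configuration inside the small box $B_{R/6}(y)$ around $S$. The idea: on $\arm_4^\square(S,R)$ we already have two black arms to the left/right sides and two white arms to the top/bottom sides of the rectangle, living outside $S$ but possibly only reaching down to $\partial S$ (really to $\partial(2S)$ after accounting for boundary cells). Inside the annulus $A(y;\rho, R/6)$ we want to continue these four arms inward to $\partial S$ with the correct colours and well-separated landing points, and inside $S$ itself we want to place a configuration in which the state of $S$ (say, colouring all points of $\eta\cap S$ black versus colouring them white) flips $\cross(2R,R)$: black inside $S$ plus the two black side-arms gives a left-right crossing, while white inside $S$ plus the two white top/bottom arms blocks any left-right crossing (by self-duality / the presence of a white top-bottom crossing). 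So $\Piv_S(\cross(2R,R)) \supseteq \arm_4^\square(S,R) \cap \{\text{good continuation of the 4 arms in } A(y;\rho,R/6)\} \cap \{\text{suitable local config near } S\}$.

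The execution I would follow: first apply Corollary~\ref{c.warm-up_piv} with the event $G$ being ``the four arms of $\arm_4^\square$ can be continued into $\partial(2S)$ with well-separated landing points, and the Voronoi tiling near $S$ is nice'' — this $G$ is measurable with respect to $\omega \setminus A(y;2\rho,R/6)$ (it only concerns the configuration inside $B_{R/6}(y)$ and the arms outside), and by the usual recipe ($\dense_\delta$ on $A(y;\rho, R/6)$-type scales, $\qbc^1_\delta$ on an annulus at scale $\rho$, $\gi^{int}_\delta$, $\gi^{ext}_\delta$, and quenched RSW + quenched FKG to do the gluing), it has probability $\geq 1-\epsilon$ for suitable $\delta$. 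This yields $\Pro[\arm_4^\square(S,R)\cap G] \geq \epsilon\,\alpha^{an}_4(\rho,R)$. Then, conditionally on $\eta$ and on the relevant interfaces, with probability bounded below (by $\qbc$ applied to $O(1)$ quads plus quenched FKG) we complete the four arms all the way to $\partial S$ with alternating colours, and — crucially — arrange a bounded-probability local event inside $B_\rho(y)$ making $S$ pivotal: e.g. the black arms reach two points of $\partial S$ joined through black cells inside $S$, the white arms reach two points joined through white cells inside $S$, in such a way that flipping the colours of $\eta \cap S$ toggles whether there is a black left-right crossing. This last combinatorial/geometric step is the main obstacle: unlike Bernoulli percolation on $\Z^2$, the Voronoi geometry inside $S$ must be controlled (cells not too big, arm endpoints well separated) so that such a pivotal local pattern genuinely exists with probability $\Omega(1)$ under the quenched measure; but $\dense_\delta$ together with quenched box-crossing estimates (Proposition~\ref{p.a_lot_of_quads}, or its variant Proposition~\ref{p.a_lot_of_quads_bis}) and the Harris--FKG inequality provide exactly the tools needed, in the same spirit as the proof of Lemma~\ref{l.warm-up_piv}. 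Combining, $\Pro[\Piv_S(\cross(2R,R))] \geq \Omega(1)\,\alpha^{an}_4(\rho,R)$, which together with the upper bound gives the claimed $\asymp$. \qed
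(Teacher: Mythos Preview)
Your lower bound is considerably over-engineered, and your upper bound has a genuine gap.

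\textbf{Lower bound.} For the \emph{annealed} pivotal event you are allowed to resample the entire configuration $\omega\cap S$, including the underlying point process $\eta\cap S$, not merely the colours. So once $\arm_4^\square(S,R)$ holds, you can place in $S$ an arbitrarily dense set of black points (positive conditional probability) to force $\cross(2R,R)$, or an arbitrarily dense set of white points to force $\neg\cross(2R,R)$. Hence $\arm_4^\square(S,R)\subseteq \Piv_S(\cross(2R,R))$ almost surely, and Lemma~\ref{l.warm-up_piv} gives the lower bound immediately. All of your machinery about continuing arms inward, controlling the Voronoi geometry inside $S$, well-separated landing points, and quenched FKG is unnecessary here; that kind of work is what is needed for the \emph{quenched} pivotal event in Lemma~\ref{l.piv_one_point}, not for the annealed one.

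\textbf{Upper bound.} Your claim that one can pass to a ``slightly larger box $2S$'' and absorb the error from cells straddling $\partial S$ via a single $\dense_\delta$-type event does not work. The point is that $\Pro[\neg\dense(\rho)]\asymp e^{-c\rho^2}$ is a fixed constant when $\rho$ is fixed (e.g.\ $\rho=1$), whereas $\alpha_4^{an}(\rho,R)$ can be as small as $R^{-C}$. So the splitting
\[
\Pro[\Piv_S]\le \Pro[\arm_4(2\rho,R)]+\Pro[\neg\dense(\rho)]
\]
gives a useless bound when $R$ is large. The paper resolves this by a multi-scale iteration: for each $k\in\{0,\dots,\lfloor\log_2(R/(4\rho))\rfloor\}$ one has
\[
\Piv_S(\cross(2R,R))\subseteq \arm_4(2^{k+1}\rho,R)\cup\bigl(\Piv_S(\cross(2R,R))\setminus\dense(2^k\rho)\bigr),
\]
and iterating yields a union of events $\widehat{\arm}_4(2^{k+2}\rho,R)\setminus\dense(2^k\rho)$. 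Crucially $\widehat{\arm}_4(2^{k+2}\rho,R)$ is independent of $\dense(2^k\rho)$, so each term contributes $\grandO{1}\,\alpha_4^{an}(\rho,R)\cdot 2^{\grandO{1}k}e^{-\Omega(1)(2^k\rho)^2}$ by Proposition~\ref{p.fandalpha} and quasi-multiplicativity, and the sum over $k$ converges. This dyadic decomposition is the missing idea in your argument.
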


\begin{proof}
The fact that $\Pro \left[ \Piv_S(\cross(2R,R)) \right] \geq \Omega(1) \alpha^{an}_4(\rho,R)$ is a direct consequence of Lemma~\ref{l.warm-up_piv}. Indeed, (except on a zero probability set) we have\footnote{Consider a configuration for which $\arm_4^\square(S,R)$ holds. If we replace the configuration restricted to $S$ by a sufficiently dense set of black (respectively white) points then $\cross(2R,R)$ is satisfied (respectively not satisfied).}
\[
\arm_4^\square(S,R) \subseteq \Piv_S(\cross(2R,R)) \, .
\]

Now, let us prove that $\Pro \left[ \Piv_S(\cross(2R,R)) \right] \leq \grandO{1} \alpha^{an}_4(\rho,R)$. We write the proof in the case $y=0$. For every $\rho' > 0$, let $\dense(\rho'):=\dense_{1/100} \left( A(\rho',2 \rho') \right)$ (remember Definition~\ref{d.dense}). Note that we have
\[
\Piv_S(\cross(2R,R)) \subseteq \arm_4(2\rho,R) \cup \left( \Piv_S(\cross(2R,R)) \setminus \dense(\rho) \right).
\]
More generally, for all $k \in \left\lbrace 0, \cdots, \lfloor \log_2(\frac{R}{4\rho}) \rfloor =: k_0 \right\rbrace$ we have
\begin{equation}\label{e.from_piv_to_arm}
\Piv_S(\cross(2R,R)) \subseteq \arm_4(2^{k+1}\rho,R) \cup \left( \Piv_S(\cross(2R,R)) \setminus \dense (2^k\rho) \right),
\end{equation}
which implies that $\Piv_S(\cross(2R,R))$ is included in:
\begin{align}
& \arm_4 \big(2\rho,R \big) \, \bigcup \, \Big( \bigcup_{k = 0}^{k_0} \arm_4 \big(2^{k+2}\rho,R \big) \setminus \dense(2^k\rho) \Big) \, \bigcup \, \neg \dense(2^{k_0+1}\rho) \nonumber \\
& \subseteq \widehat{\arm}_4 \big(2\rho,R \big) \, \bigcup \, \Big( \bigcup_{k = 0}^{k_0} \widehat{\arm}_4 \big(2^{k+2}\rho,R \big) \setminus \dense(2^k\rho) \Big) \, \bigcup \, \neg \dense(2^{k_0+1}\rho) \, , \label{e.union_bound_piv}
\end{align}
where the events $\widehat{\arm}_4 \big( \cdot,\cdot \big)$ are the events defined in Definition~\ref{d.hatarm}. By using Proposition~\ref{p.fandalpha} and the fact that $\widehat{\arm}_4 \big(2^{k+2}\rho,R \big)$ and $\dense(2^k\rho)$ are independent, we obtain that, for each $k \in \left\lbrace  0, \cdots, k_0 \right\rbrace$),
\[
\Pro \left[ \widehat{\arm}_4 \big(2^{k+2}\rho,R \big) \setminus \dense(2^k\rho) \right] \leq \grandO{1} \alpha^{an}_4 \big( 2^{k+2}\rho,R \big) \, \Pro \left[ \neg \dense(2^k\rho) \right] \, .
\]
With the same proof as Lemma~\ref{l.dense}, we obtain that
\begin{equation}\label{e.denseforpiv}
\Pro \left[ \neg \dense(2^k\rho) \right] \leq \grandO{1} \, e^{-\Omega(1)(2^k \rho)^2} \, .
\end{equation}
Note also that the quasi-multiplicativity property and~\eqref{e.poly} imply that
\[
\alpha^{an}_4 \big( 2^{k+2}\rho,R \big) \leq \grandO{1} \, 2^{\grandO{1} k} \, \alpha^{an}_4(\rho,R) \, .
\]
Therefore,
\begin{equation}\label{e.dense2^k}
\Pro \left[ \widehat{\arm}_4 \big(2^{k+2}\rho,R \big) \setminus \dense(2^k\rho) \right] \leq \grandO{1} \, \alpha^{an}_4(\rho,R) \, 2^{\grandO{1} k} \, e^{-\Omega(1)(2^k \rho)^2} \, .
\end{equation}
Similarly, $1 \leq \grandO{1} \, \alpha^{an}_4(\rho,2^{k_0}\rho) \, 2^{\grandO{1} k_0}$, hence
\begin{eqnarray}
\Pro \left[ \neg \dense(2^{k_0+1}\rho) \right] & \leq & \grandO{1} \, \alpha^{an}_4(\rho,2^{k_0}\rho) \, 2^{\grandO{1} k_0} \, e^{-\Omega(1)(2^{k_0} \rho)^2} \nonumber\\
& \leq & \grandO{1} \alpha^{an}_4(\rho,R) \, 2^{\grandO{1} k_0} \, e^{-\Omega(1)(2^{k_0} \rho)^2} \, . \label{e.dense2^k_0}
\end{eqnarray}
Now, we can conclude by applying the union-bound to~\eqref{e.union_bound_piv} and by using the inequalities~\eqref{e.dense2^k} and~\eqref{e.dense2^k_0}.
\end{proof}

We still consider the case where $S$ is in the bulk. We end this subsection by showing another result which will be useful in the proof of the annealed scaling relations. The difference with Lemma~\ref{l.piv_bulk} is that we study \textbf{quenched} pivotal events (see Subsection~\ref{ss.intro_piv} for the definition of these pivotal events). 

\begin{lem}\label{l.piv_one_point}
Let $R$ and $S$ be as in Lemma~\ref{l.warm-up_piv}, and assume that $\rho = 1$ (i.e. $S$ is a $2 \times 2$ square). We have
\[
\Pro \left[ \left\lbrace \left| \eta \cap S \right| = 1 \right\rbrace \cap \Piv^q_S \left( \cross(2R,R) \right) \right] \geq \Omega(1) \, \alpha^{an}_4(R) \, .
\]
\end{lem}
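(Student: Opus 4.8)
The idea is to realise the four-arm configuration around a large constant ball $B_K(y)$ together with the event that $\eta$ has a single point $x$ inside $B_K(y)$ (which forces $|\eta\cap S|=1$, since $S=B_1(y)\subseteq B_K(y)$). When $\eta\cap B_K(y)=\{x\}$ with $x$ close to $y$ and the environment just outside $B_K(y)$ is dense, the cell $C(x)$ is squeezed into (roughly) the Euclidean disc of radius $K/2$ around $y$, and every Voronoi cell meeting $\partial B_K(y)$ is a thin ``radial sliver'' reaching inward until it hits $\partial C(x)$; hence $C(x)$ is adjacent to the cell in which each arm ends, so $x$ alone controls $\cross(2R,R)$.

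Here are the steps. Fix a large absolute constant $K$ and a small absolute constant $\delta$, and assume $R$ is large enough that $B_{2K}(y)\subseteq[-2R,2R]\times[-R,R]$ (if $R$ is bounded then $\alpha^{an}_4(R)\asymp 1$ and the statement is immediate). Let $\arm_4^\square(B_K(y),R)$ be the four-arm event defined exactly as in Lemma~\ref{l.warm-up_piv} but with inner box $B_K(y)$, and set
\[
E:=\arm_4^\square(B_K(y),R)\cap\dense_\delta\big(A(y;K,3K/2)\big)\cap\big\{\,|\eta\cap B_K(y)|=1,\ x\in B_1(y)\,\big\},
\]
where $x$ denotes the unique point of $\eta$ in $B_K(y)$ on the last event. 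On $E$, a comparison of distances gives $B_{K/2-1}(y)\subseteq C(x)\subseteq B_{K/2}(y)$: a point at Euclidean distance $d$ from $y$ is closer to $x$ than to any point of $\eta$ outside $B_K(y)$ (all of which are at Euclidean distance $>K$ from $y$) precisely when $d<K/2$ up to a bounded error, and $\dense_\delta(A(y;K,3K/2))$ ensures there is such an outside point in every direction. In particular $C(x)$ does not meet the arm region $[-2R,2R]\times[-R,R]\setminus B_K(y)$, so on $E$ the event $\arm_4^\square(B_K(y),R)$, and with it all four arms, is a function of $\omega$ restricted to $B_K(y)^c$. If $\beta$ is one of the arms and $q\in\partial B_K(y)$ a point of $\beta$, the cell $C(z)$ containing $q$ has $z\notin B_K(y)$ and has the colour of $\beta$, and along the segment from $q$ to $y$ every point of $\eta$ other than $x$ is farther from it than $z$, so this segment crosses from $C(z)$ into $C(x)$; thus $C(z)$ and $C(x)$ are adjacent. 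Hence, when $x$ is black the two black arms are joined through $C(x)$ to the left and right sides of the rectangle, so $\cross(2R,R)$ holds; recolouring $x$ white joins the two white arms through $C(x)$ to the top and bottom sides, producing a white widthwise crossing, so by self-duality $\cross(2R,R)$ fails. The case ``$x$ white'' is symmetric. Therefore flipping the colour of $x$ flips $\cross(2R,R)$, i.e. $E\subseteq\{|\eta\cap S|=1\}\cap\Piv^q_S(\cross(2R,R))$.

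It remains to show $\Pro[E]\geq\Omega(1)\,\alpha^{an}_4(R)$. On $\dense_\delta(A(y;K,3K/2))$ the four arms are determined by $\omega|_{B_K(y)^c}$, while $\{\,|\eta\cap B_K(y)|=1,\ x\in B_1(y)\,\}$ depends only on $\eta\cap B_K(y)$ and is independent of $\omega|_{B_K(y)^c}$, with probability $|B_1|\,e^{-|B_K|}=4e^{-4K^2}$. Hence, conditioning on $\omega|_{B_K(y)^c}$ and then constructing the four arms inside $[-2R,2R]\times[-R,R]\setminus B_K(y)$ exactly as in the proof of Lemma~\ref{l.warm-up_piv} (which only uses $\omega|_{B_K(y)^c}$), one gets, with the help of Corollary~\ref{c.warm-up_piv} (for $K$ large enough that $\Pro[\dense_\delta(A(y;K,3K/2))]\geq 1-\epsilon$), of the quasi-multiplicativity property and of~\eqref{e.poly} (which give $\alpha^{an}_4(K,R)\asymp\alpha^{an}_4(R)$),
\[
\Pro[E]\ \geq\ 4e^{-4K^2}\cdot c(K,\delta)\,\alpha^{an}_4(R).
\]
Since $K$ and $\delta$ are fixed absolute constants, this yields $\Pro[E]\geq\Omega(1)\,\alpha^{an}_4(R)$, proving the lemma. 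The point requiring the most care — the main obstacle — is the geometric claim of the previous paragraph, namely the uniform control of $C(x)$ and the verification that the four arm-endpoint cells are radial slivers adjacent to $C(x)$, valid for every admissible environment and every $x\in B_1(y)$; once this is established the rest of the argument is short, in the spirit of the proof of Lemma~\ref{l.warm-up_piv}.
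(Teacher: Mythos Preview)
Your proof has a genuine gap in the central geometric claim. You assert that ``along the segment from $q$ to $y$ every point of $\eta$ other than $x$ is farther from it than $z$,'' and hence that the cell $C(z)$ containing each arm endpoint $q\in\partial B_K(y)$ is adjacent to $C(x)$. This fails near the corners of $B_K(y)$.

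Take $y=x=0$ and let $q$ be the corner $(K,K)$, so that by density $z$ lies within $O(\delta K)$ of $(K,K)$. Density also places points of $\eta$ all along $\partial B_K$, so $C(z)$ is hemmed in by the cells of nearby points on the two incident sides and has diameter $O(\delta K)$. On the other hand, for $p=(s,s)$ the nearest point of $\partial B_K$ is $(K,s)$ (or $(s,K)$) at Euclidean distance $K-s$, while $\|p-x\|= s\sqrt 2$; hence $\partial C(x)$ meets the diagonal near $s=K(\sqrt 2-1)\approx 0.41K$, at distance $\approx 0.83K$ from the corner. Thus $C(z)$ does not reach $\partial C(x)$: the segment from $q$ to $x$ passes through many intervening cells $C(w)$ with $w$ along the top or right side of $\partial B_K$. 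Since nothing in your event $E$ prevents an arm from landing near a corner of $\partial B_K(y)$, the inclusion $E\subseteq\{|\eta\cap S|=1\}\cap\Piv^q_S(\cross(2R,R))$ is not established.

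The paper avoids this altogether. It first forces the four arms into fixed rectangles $Q_i(r_1)$ centred on the \emph{midpoints} of the sides of $B_{r_1}$ (the event $\widetilde{\arm}^\square_4(B_{r_1},R)$), controls the environment via a ``dense but not too dense'' event $\dense^N(r_1)$, uses quenched FKG to colour the continuation rectangles $\widetilde Q_i(r_1)$ appropriately, and finally intersects with a positive-probability event $\mathrm{Hex}(r_1/2)$ on which the tiling inside $B_{r_1/2}$ is an explicit perturbed hexagonal lattice. On $\mathrm{Hex}$ one has $|\eta\cap S|=1$, and the arms are extended \emph{by hand} through this controlled structure to the central cell --- no adjacency claim is needed. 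Your single-point idea is appealing and could probably be repaired (e.g.\ by forcing the arm endpoints into mid-side windows, where a careful version of your bisector argument does go through, and by handling the factorisation $\Pro[E]=\Pro[F]\cdot\Pro[\text{arms using }\eta\cap B_K^c]$ more explicitly), but as written the key geometric step is incorrect.
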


Before proving Lemma~\ref{l.piv_one_point}, let us note that this lemma together with results from~\cite{ahlberg2015quenched} implies that $\alpha^{an}_4(R) \leq \grandO{1} R^{-(1+\epsilon)}$ for some $\epsilon > 0$, which is the first part of Proposition~\ref{p.alpha4}:
\begin{proof}[Proof of the first part of Proposition~\ref{p.alpha4}]
By~\cite{ahlberg2015quenched}, if we let $\setS_1$ be the set of all the squares of the grid $2 \Z^2$ that are included in $[-2R,2R] \times [-R,R]$ and at distance less than $R/3$ from the sides of $[-2R,2R] \times [-R,R]$, then
\begin{equation}\label{e.agmt_est}
\E \left[ \sum_{S \in \setS_1} \sum_{x \in \eta \cap S} \Prob^\eta \left[ \Piv_x^q(\cross(2R,R) \right]^2 \right] \leq \grandO{1} R^{-\Omega(1)} \, .
\end{equation}
See the end of Appendix~\ref{a.tAGMT} where we recall how the authors of~\cite{ahlberg2015quenched} have obtained this estimate. (The definition of $\setS_1$ is not the same as in Appendix~\ref{a.tAGMT} but the proof is exactly the same with the present definition.) The left-hand-side of~\eqref{e.agmt_est} is at least
\begin{multline*}
\sum_{S \in \setS_1} \E \left[ \Prob^\eta \left[ \Piv^q_S(\cross(2R,R) \right]^2 \un_{|\eta \cap S| = 1} \right]\\
\geq \sum_{S \in \setS_1} \Pro \left[ \left\lbrace \left| \eta \cap S \right| = 1 \right\rbrace \cap \Piv^q_S \left( \cross(2R,R) \right) \right]^2 \text{ (by Jensen).}
\end{multline*}
We conclude by applying Lemma~\ref{l.piv_one_point}.
\end{proof}

The difficulty in the proof of Lemma~\ref{l.piv_one_point} is that it is not obvious that, if $\arm_4(100,R)$ holds (for instance), then we can easily extend the arms until scale $1$. We overcome this difficulty by considering the event that the Voronoi tiling near $0$ ``looks like the hexagonal lattice''.

\begin{figure}[!h]
\begin{center}
\includegraphics[scale=0.52]{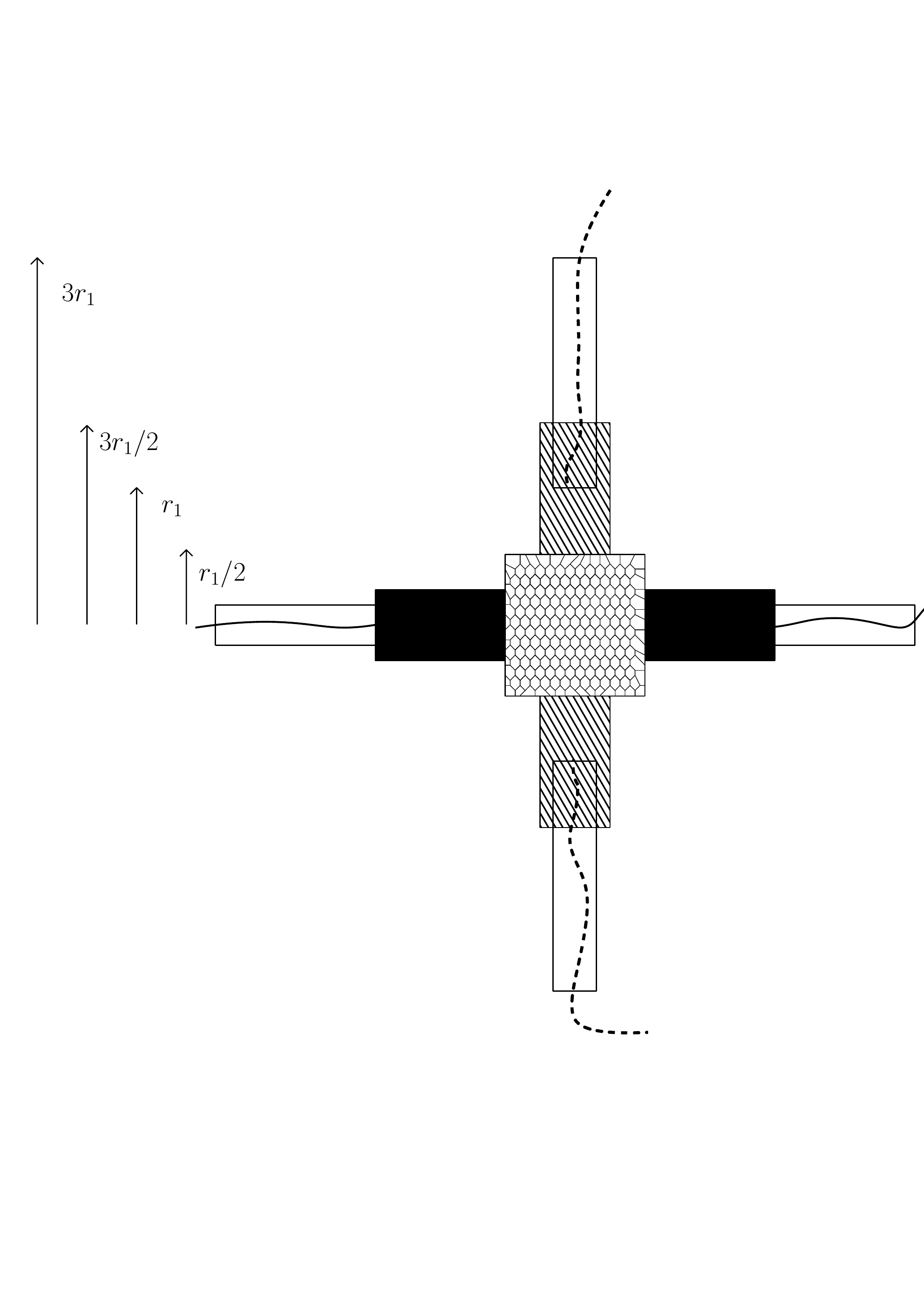}
\end{center}
\caption{The proof of Lemma~\ref{l.piv_one_point}.}\label{f.l_piv_one_pt}
\end{figure}

\begin{proof}[Proof of Lemma~\ref{l.piv_one_point}]
We write the proof in the case $y=0$ (i.e. $S=B_\rho=B_1$). The strategy is illustrated in Figure~\ref{f.l_piv_one_pt}. Note that it is sufficient to prove the result for $R$ larger than some constant. Let $r_1 \geq 1000$ to be determined later and assume that $R \geq 10r_1$.

We first need the following definition. Let $r \in [1,R/10]$. The event $\widetilde{\arm}^\square_4(B_r,R)$ is the event that i) there are two black paths $\gamma_0$ and $\gamma_2$ in $[-2R,2R] \times [-R,R] \setminus B_r$ from $\partial B_r$ to the left and right sides of $[-2R,2R] \times [-R,R]$, ii) there are two white paths $\gamma_1$ and $\gamma_3$ in $[-2R,2R] \times [-R,R] \setminus B_r$ from $\partial B_r$ to the top and bottom sides of $[-2R,2R] \times [-R,R]$, iii) we can choose the four paths such that, for every $i \in \lbrace 0, \cdots, 3 \rbrace$, $\gamma_i \cap A(r,2r) \subseteq Q_i$ where the rectangles $Q_i=Q_i(r)$ are defined in Figure~\ref{f.Qr0}.\\

\begin{figure}[!h]
\begin{center}
\includegraphics[scale=0.485]{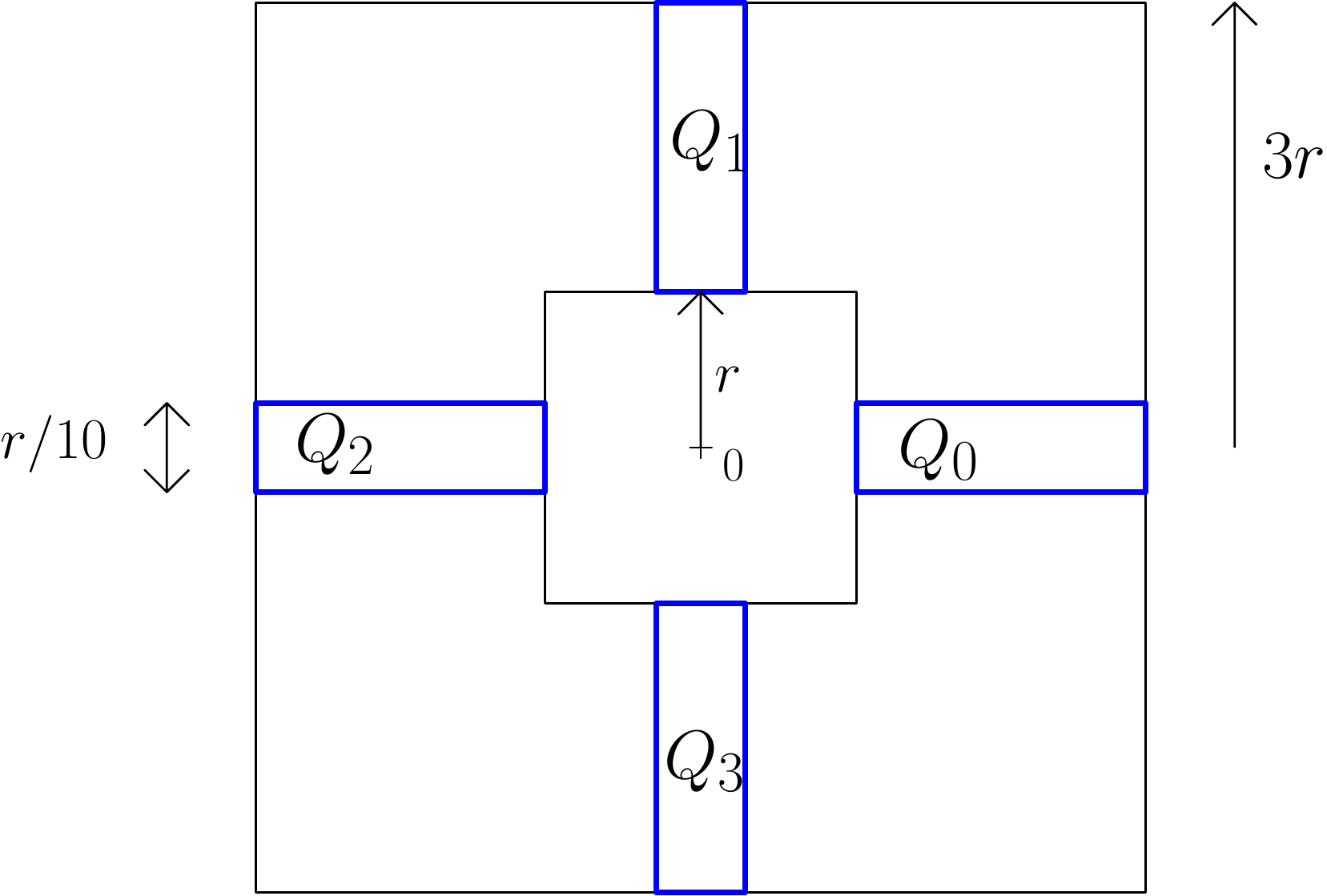}
\includegraphics[scale=0.485]{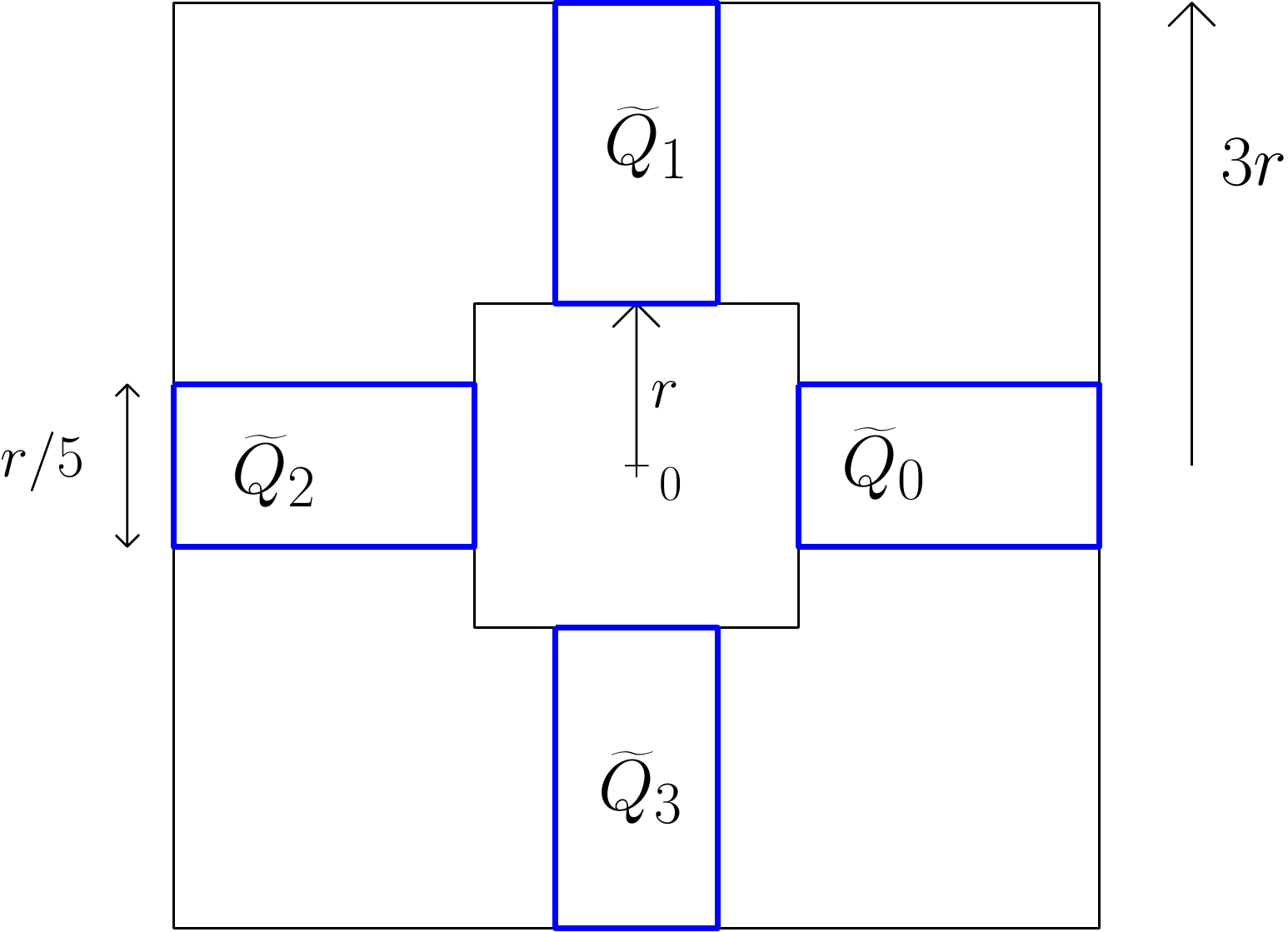}
\end{center}
\caption{The rectangles $Q_i=Q_i(r)$ and the rectangles $\widetilde{Q}_i=\widetilde{Q}_i(r)$.}\label{f.Qr0}
\end{figure}

With the same proof as Lemma~\ref{l.warm-up_piv} (except that we have to work both at scale $r$ and at scale $R$ instead of working only at scale $R$) we obtain that, if $r \leq R/10$ and if $r$ is sufficiently large, then
\[
\Pro \left[ \widetilde{\arm}^\square_4(B_r,R) \right] \geq \Omega(1) \, \alpha_4^{an}(r,R) \, .
\]

As in Corollary~\ref{c.warm-up_piv}, we also have the following stronger result: There exist $r_0\geq 1$ and $\epsilon \in (0,1)$ such that, if $r \in [r_0,R/10]$ and if $G$ is an event measurable with respect to $\omega \setminus A(2r,R/6)$ that satisfies $\Pro \left[ G \right] \geq 1-\epsilon$, then
\begin{equation}\label{e.warm-up_piv}
\Pro \left[ \widetilde{\arm}^\square_4(B_r,R) \cap G \right] \geq \epsilon \, \alpha_4^{an}(r,R) \, .
\end{equation}
Now, for any $r \geq 1$ and any $N \in \N$, write $\dense^N(r)$ for the event that $\dense_{1/100}(A(r/2,2r))$ holds and that $\left| \eta \cap A(r/2,2r) \right| \leq N$. This event is a little different from the other events ``$\dense$'' that we study in this paper since this is an event that $\eta$ is sufficiently dense \textbf{but not too much}. Let $\epsilon > 0$ as above and note that there exist $r_1 \geq r_0$ and $N \in \N$ such that $\Pro \left[ \dense^N(r_1) \right] \geq 1-\epsilon$. Fix such an $r_1$ and an $N$. By the above, we have
\begin{equation}\label{e.PPPPP}
\Pro \left[ \widetilde{\arm}^\square_4(B_{r_1},R) \cap \dense^N(r_1) \right] \geq \epsilon \, \alpha_4^{an}(r_1,R) \geq \Omega(1) \alpha_4^{an}(R) \, .
\end{equation}
The event $\dense^N(r_1)$ provides sufficiently spatial independence so that, given 	a colored configuration that satisfies $\widetilde{\arm}^\square_4(B_{r_1},R) \cap \dense^N(r_1)$, one can extend the four arms until scale $1$ with probability larger than some constant independent of $R$. This can be done for instance as follows:

Let $\text{Color}(r)$ denote the event that each point of $\eta \cap \widetilde{Q}_i$ is black (respectively white) if $i$ is even (respectively odd), where the $\widetilde{Q}_i=\widetilde{Q}_i(r)$'s are the rectangles defined in Figure~\ref{f.Qr0}. Note that we have
\bi 
\item[i)] By the quenched FKG-Harris inequality\footnote{Here, one actually needs a generalized FKG-Harris inequality which is a consequence of the classical FKG-Harris inequality and can be stated as follows (see Lemma~13 of \cite{nolin2008near} for the proof): Work conditionally on $\eta$, let $A^+,B^+ \subseteq \{-1,1\}^\eta$ be two increasing events, let $A^-,B^- \subseteq \{-1,1\}^\eta$ be two decreasing events, and let $\mathcal{A}^+,\mathcal{A},\mathcal{A}^-$ be three mutually disjoint finite subsets of $\eta$ such that $A^+,A^-,B^+,B^-$ depend only on the configuration in, respectively, $\mathcal{A} \cup \mathcal{A}^+$, $\mathcal{A} \cup \mathcal{A}^-$, $\mathcal{A}^+$ and $\mathcal{A}^-$. Then, $\Prob^\eta \left[ B^+ \cap B^- \, | \, A^+ \cap A^- \right] \geq \Prob^\eta \left[ B^+ \right] \Prob^\eta \left[ B^- \right]$.} (for the first inequality) and~\eqref{e.PPPPP} (for the second inequality), we have
\begin{multline*}
\Pro \left[ \widetilde{\arm}^\square_4(B_{r_1},R) \cap \dense^N(r_1) \cap \text{Color}(r_1/2) \right]\\
= \E \left[ \Prob^\eta \left[ \widetilde{\arm}^\square_4(B_{r_1},R) \cap \text{Color}(r_1/2) \right] \un_{\dense^N(r_1) } \right]\\
\geq \frac{1}{2^N} \, \E \left[ \Prob^\eta \left[ \widetilde{\arm}^\square_4(B_{r_1},R) \right] \un_{\dense^N(r_1) } \right]\\ = \frac{1}{2^N} \Pro \left[ \widetilde{\arm}^\square_4(B_{r_1},R) \cap \dense^N(r_1) \right] \geq c_1 \alpha_4^{an}(R) \, ,
\end{multline*}
where $c_1 > 0$ is a constant that depends only on $r_1$ and $N$.
\item[ii)] The event
\[
\widetilde{\arm}^\square_4(B_{r_1},R) \cap \dense^N(r_1) \cap \text{Color}(r_1/2)
\]
is independent of $\omega \cap B_{r_1/2}$.
\ei
As a result, for any event $A$ measurable with respect to $\omega \cap B_{r_1/2}$ we have
\[
\Pro \left[ \widetilde{\arm}^\square_4(B_{r_1},R) \cap \dense^N(r_1) \cap \text{Color}(r_1/2) \cap A \right] \geq c_1 \, \Pro \left[A \right] \alpha_4^{an}(R) \, ,
\]
So it is sufficient for our purpose to find an event $A$ measurable with respect to $\omega \cap B_{r_1/2}$ such that $\Pro \left[ A \right] > 0$ and
\begin{multline}\label{e.QQQQQ}
\Pro \left[ \left\lbrace \left| \eta \cap S \right| = 1 \right\rbrace \cap \Piv^q_S \left( \cross(2R,R) \right) \right]\\
\geq \Omega(1) \Pro \left[ \widetilde{\arm}^\square_4(B_{r_1},R) \cap \dense^N(r_1) \cap \text{Color}(r_1/2) \cap A \right] \, ,
\end{multline}
where the constants in $\Omega(1)$ only depend on $r_1$. We choose $A = \text{Hex}(r_1/2)$ where $\text{Hex}(r)$ is the event (measurable with respect to $\omega \cap B_r$) that the Voronoi diagram ``looks like the hexagonal lattice'' in $B_r$. More precisely, we let $\T$ denote the triangular lattice of mesh size $2$ and we define $\text{Hex}(r)$ as the event that there exists a bijection $f \, : \, \T \cap B_r \rightarrow \eta \cap B_r$ such that $|f(y)-y|\leq 1/100$ for every $y$. On the event $\text{Hex}(r_1/2)$, we have $| \eta \cap S | =1$. It is easy to see that $\Pro \left[ \text{Hex}(r_1/2) \right] > 0$ and that, if we condition on the event $\widetilde{\arm}^\square_4(B_{r_1},R) \cap \dense^N(r_1) \cap \text{Color}(r_1/2) \cap \text{Hex}(r_1/2)$, then we can extend the four arms ``by hand'' until the Voronoi cell of $f(0)$ (where $f$ is the above bijection) with probability larger than some constant that depends only on $r_1$ (see Figure~\ref{f.l_piv_one_pt}). Hence,~\eqref{e.QQQQQ} holds and we are done.
%Note that, without considering the case where the Voronoi tiling looks like the hexagonal lattice (or any other regular lattice), it seems difficult to prove that the arms can be extended until sale $1$.
\end{proof}

\subsection{An estimate on the $4$-arm event}\label{ss.alpha4}

Thanks to Proposition~\ref{p.universal} (whose proof is written in Section~\ref{s.quasi}), we have the following: Let $1 \leq r \leq R$, then
\begin{equation}\label{e.alpha4easy}
\alpha^{an,+}_3(r,R) \asymp \left( \frac{r}{R} \right)^2 \asymp \alpha^{an}_5(r,R) \leq \alpha^{an}_4(r,R) \, .
\end{equation}
We now prove that $\alpha_4^{an}(r,R) \geq \Omega(1) (r/R)^{2-\epsilon}$ for some $\epsilon > 0$ (which strengthens the above inequality) i.e. we prove the second part of Proposition~\ref{p.alpha4}. In the case of percolation on $\Z^2$ or on the triangular lattice, the analogue of this proposition is a direct consequence of Reimer's inequality (\cite{reimer2000proof}). In the context of Voronoi percolation, it seems a priori natural to try to prove the following annealed Reimer's inequality: Let $A$ and $B$ be two events measurable with respect to $\omega$ restricted to a bounded domain, and define the disjoint occurrence of $A$ and $B$ as in~\eqref{e.disjoint_occ}; then, $\Pro \left[A \square B \right] \leq \Pro \left[ A \right] \Pro \left[ B \right]$. Unfortunately, this inequality is not true in general since it is not true as soon as $A=B$, $A$ depends only on $\eta$, and $\Pro \left[ A \right] \in ]0,1[$ (indeed, if $A$ and $B$ depend only on $\eta$, then $A \square B = A \cap B$).

%This proof can be skipped for a first reading since we do not use this proposition in the proof of our main result Theorem~\ref{t.scaling}.

%Still, we will use Proposition~\ref{p.alpha4} twice: Once in Subsection~\ref{ss.estimates_corr_perco}, where we prove (rough) estimations on $L^{an}(p)$ and $\theta(p)$ and once in Subsection~\ref{ss.pivotals2} in order to prove estimates on quantities of the form $\Pro \left[ \Piv_S (\arm_j(r,R) \right]$ when $j \neq 1$ and $j$ is odd. In Subsection~\ref{ss.scalingproof}, we will use such estimates to prove Proposition~\ref{p.alpha_jandalpha_1/2}. This last proposition is an intermediate result towards Theorem~\ref{t.scaling} but, in the proof of Theorem~\ref{t.scaling}, we will use Proposition~\ref{p.alpha_jandalpha_1/2} only with $j \in \lbrace 1,4 \rbrace$.

\begin{proof}[Proof of the second part of Proposition~\ref{p.alpha4}]
Let $M \in [100,+\infty)$ to be determined later. We are inspired by the proof (by Beffara) of Proposition~A$.1$ of~\cite{garban2010fourier}. For any $\rho \geq M$, let $\dense(\rho) := \dense_{1/100} (A(\rho,2\rho))$ (remember Definition~\ref{d.dense}). Also, let $\text{Circ}(r_1,r_2)$ denote the event that there is a black circuit (i.e. an injective continuous function from $\R/\Z$ to the black region) in $A(r_1,r_2)$ surrounding the origin and, for any $c \in (0,1)$, let
\[
\qac_c(\rho) = \lbrace \Prob^\eta \left[ \text{Circ}(\rho,2\rho) \right] \geq c \rbrace
\]
(for ``Quenched Annulus Circuit'').
Theorem~\ref{t.AGMT} (applied for instance to four rectangles that surround the origin) and the (quenched) Harris-FKG inequality imply that there exists a constant $c \in (0,1)$ such that, for every $\rho$,
\begin{equation}\label{e.estimateQAC}
\Pro \left[ \qac_c(\rho) \right] \geq 1-\rho^{-3} \, .
\end{equation}
Fix such a constant $c$. Now, let $\gp(\rho,M)$ (for ``Good Point configuration'') be the following event
\[
\bigcap_{k=0}^{\lfloor \log_5 \left( M \right) \rfloor - 1} \dense(5^k\rho) \cap \qac_c(5^k\rho) \, .
\]
If we use (a direct analogue of) Lemma~\ref{l.dense} and~\eqref{e.estimateQAC}, we obtain that
\[
\Pro \left[ \gp(\rho,M) \right] \geq 1-\sum_{k=0}^{\lfloor \log_5 \left( M \right) \rfloor - 1} \left( \grandO{1} e^{-\Omega(1)(5^k \rho)^2} + (5^k \rho)^{-3} \right) \geq 1-\grandO{1} \rho^{-3} \, .
\]
Now, let $\eta \in \gp(\rho,M)$ be such that $\Prob^\eta \left[ \arm_5(\rho,M\rho) \right] > 0$. Also, let $\beta_0, \beta_1, \beta_2$ be three simple paths drawn in the Voronoi grid, included in $A(\rho,M\rho)$, that go from $\partial B_\rho$ to $\partial B_{M\rho}$, and that can arise as three consecutive interfaces. Write $S_{\beta_0,\beta_1,\beta_2}$ for the region between $\beta_0$ and $\beta_2$ that does not contain $\beta_1$. Write $\mathcal{A}_{\beta_0,\beta_1,\beta_2}$ for the event that $\beta_0,\beta_1,\beta_2$ are indeed consecutive interfaces, and write $\mathcal{B}_{\beta_0,\beta_1,\beta_2}$ for the event that $\mathcal{A}_{\beta_0, \beta_1, \beta_2}$ holds and that there is an additional (i.e. disjoint from the Voronoi cells adjacent to $\beta_0 \cup \beta_1 \cup \beta_2$) black path in $S_{\beta_0,\beta_1,\beta_2}$. Observe that, since $\eta \in \dense(2^k \rho)$, the $\lfloor \log_5(M) \rfloor$ events
\[
\lbrace \exists \text{ a black path in } S_{\beta_0,\beta_1,\beta_2} \cap A(5^k\rho, 2 \cdot 5^k \rho) \text{ from a cell adjacent to } \beta_0 \text{ to a cell adjacent to } \beta_2 \rbrace \, ,
\]
for $k = 0, \cdots, \lfloor \log_5(M) \rfloor - 1$, are independent under $\Prob^\eta$. Therefore,
\[
\Prob^\eta \left[ \mathcal{B}_{\beta_0,\beta_1,\beta_2} \cond \mathcal{A}_{\beta_0,\beta_1,\beta_2} \right] \leq (1-c)^{\lfloor \log_5 \left( M \right) \rfloor} \, .
\]
Since $\arm_5(\rho,M\rho)$ is the union over every possible $\beta_0,\beta_1,\beta_2$ of $\mathcal{B}_{\beta_0,\beta_1,\beta_2}$ , we have
\[
\Prob^\eta \left[ \arm_5(\rho,M\rho) \right] \leq (1-c)^{\lfloor \log_5 \left( M \right) \rfloor} \, \Ex^\eta \left[ Y^3 \un_{Y \geq 4} \right] \, ,
\]
where $Y=Y(\rho,M)$ is the number of interfaces from $\partial B_{\rho}$ to $\partial B_{M \rho}$. By taking the expectation, we obtain that
\begin{eqnarray*}
\alpha^{an}_5(\rho,M\rho) & \leq & (1-c)^{\lfloor \log_5 \left( M\right) \rfloor} \, \E \left[ Y^3 \un_{Y \geq 4} \right] + \Pro \left[ \neg \gp(\rho,M) \right]\\
& \leq & (1-c)^{\lfloor \log_5 \left( M \right) \rfloor} \, \E \left[ Y^3 \un_{Y \geq 4} \right] + \grandO{1}\rho^{-3} \, .
\end{eqnarray*}
Now, we use the annealed BK inequality Proposition~\ref{p.BK}. Since $\arm_{2j}(\rho, M \rho )$ is included in the $j$-disjoint occurrence of $\arm_1(\rho,M \rho)$, we have
\[
\Pro \left[ Y \geq 2j \right] = \alpha^{an}_{2j}(\rho, M \rho) \leq \alpha^{an}_1(\rho,M \rho)^j \, .
\]
The above together with~\eqref{e.poly} imply that $\Pro \left[ Y \geq j \right] \leq \grandO{1} M^{-j a}$ for some $a > 0$. Moreover, $\Pro  \left[ Y \geq 4 \right] = \alpha^{an}_4(\rho,M\rho) \geq \Omega(1) M^{-b}$ for some $b < +\infty$. Hence,
\[
\E \left[ Y^3 \un_{Y \geq 4} \right] \leq \grandO{1} \alpha^{an}_4(\rho,M\rho) \, .
\]
Therefore,
\[
\alpha^{an}_5(\rho,M\rho) \leq \grandO{1} \, (1-c)^{\lfloor \log_5 \left( M \right) \rfloor} \, \alpha^{an}_4(\rho,M\rho) + \grandO{1}\rho^{-3} \, .
\]
Remember that $\rho \geq M$. By using the fact that the exponent of the $5$-arm event is $2$ (see Proposition~\ref{p.universal}), we obtain that, if $M$ is sufficiently large, then
\[
\alpha^{an}_5(\rho,M\rho) - \grandO{1}\rho^{-3} \geq \Omega(1) \, M^{-2} - \grandO{1} \, M^{-3} \geq \Omega(1) \, M^{-2} \, .
\]
Hence, if $M$ is sufficiently large then for every $\rho \geq M$ we have
\begin{equation}\label{e.alpha4beforeQM}
M^{-2} \leq \grandO{1} \, (1-c)^{\lfloor \log_5 \left( M \right) \rfloor} \, \alpha^{an}_4(\rho,M\rho) \leq M^{-2\epsilon} \, \alpha^{an}_4(\rho,M\rho) \, ,
\end{equation}
for some $\epsilon > 0$.

Let us end the proof. Let $C=C(j=4)$ be the constant that appears in the statement of the quasi-multiplicativity property Proposition~\ref{p.quasi} and fix $M \geq 100$ sufficiently large so that~\eqref{e.alpha4beforeQM} holds and so that $M^{\epsilon} \geq C$. First, note that it is sufficient to prove the result for quantities of the form $\alpha^{an}_4(M^i,M^j)$, where $j \geq i$ are positive integers. Next, note that the quasi-multiplicativity property implies that
\[
\alpha^{an}_4(M^i,M^j) \geq C^{-(j-i)} \, \prod_{k=i}^{j-1} \alpha^{an}_4(M^k,M^{k+1}) \, .
\]
If we use~\eqref{e.alpha4beforeQM}, we obtain that
\[
\alpha^{an}_4(M^i,M^j) \geq C^{-(j-i)} \, M^{(-2+2\epsilon)(j-i)} \, ,
\]
which is at least $M^{(-2+\epsilon)(j-i)}$ since $M^{\epsilon} \geq C$. This ends the proof.
%We will keep the same notations which are also suitable to our case when we fix $\eta$. More precisely, let $1 \leq \rho_1 \leq \rho_2$ and, given $\eta$, define:
%\bi
%\item[i)] $\gamma_1, \gamma_2, \gamma_3$ be $3$ simple paths drawn on the sides of the Voronoi cells of $\eta$ and that can arise as $3$ consecutive interfaces from $\partial B_{\rho_1}$ to $\partial B_{\rho_2}$ (note that, if $\arm_4(\rho_1,\rho_2)$ holds, then there exist at least $4$ such interfaces).
%\item[ii)] Write $\mathcal{A}_\gamma$ for the event that $\gamma_1, \gamma_2, \gamma_3$ are $3$ consecutive interfaces in the percolation configuration.
\end{proof}

\subsection{Pivotal events for crossing events and arm events}\label{ss.pivotals2}

In this subsection, we prove Proposition~\ref{p.asymp_R2alpha4}. Note that, if in this proposition we had summed only on the squares $S$ in the ``bulk'' of the rectangle $[-2R,2R] \times [-R,R]$, it would have been a direct consequence of Lemma~\ref{l.piv_bulk}. We now have to deal with all the other squares. This is essentially technical so the reader can skip this whole subsection in a first reading and only keep in mind that we also prove the following analogue of Proposition~\ref{p.asymp_R2alpha4} for arm events:

\begin{prop}\label{p.arm_event_asymp_R2alpha4}
Let $\rho \geq 1$ and let $R \geq 100 \rho$. Also, let $j \in \N^*$. Then,
\[
\sum_{S \text{ square of the grid } (2 \rho \Z)^2} \Pro \left[ \Piv_S ( \arm_j(1,R) ) \right] \leq \grandO{1} \alpha^{an}_j(\rho,R) + \grandO{1}  \alpha^{an}_j(R) \left( \frac{R}{\rho} \right)^2 \alpha^{an}_4(\rho,R) \, ,
\]
where the constants in the $\grandO{1}$'s may only depend on $j$. Note that, if $\rho=1$ (and since $R^2 \alpha^{an}_4(R) \geq \Omega(1)$ by~\eqref{e.alpha4easy}), we have the following simpler formula:
\[
\sum_{S \text{ square of the grid } (2 \rho \Z)^2} \Pro \left[ \Piv_S ( \arm_j(1,R) ) \right] \leq \grandO{1} \alpha^{an}_j(R) \, R^2 \alpha^{an}_4(R) \, .
\]
\end{prop}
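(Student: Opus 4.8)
The plan is to decompose the sum over squares $S$ of the grid $(2\rho\Z)^2$ according to the geometric position of $S$ relative to the rectangle $\mathcal{R}=[-2R,2R]\times[-R,R]$ and the scales $1$ and $R$. There are essentially three regimes: (i) $S$ is ``in the bulk'' of $\mathcal{R}$, i.e. inside $\mathcal{R}$ and at distance $\geq R/3$ from its sides, (ii) $S$ is near $\partial\mathcal{R}$ (inside or outside but at distance $\lesssim R$ from a side or a corner), and (iii) $S$ is far from $\mathcal{R}$. In each case the key point is that if $S$ is pivotal for $\arm_j(1,R)$ then the configuration must support, simultaneously and in disjoint regions, a $j$-arm event at the scale separating $\partial B_1$ from $\partial B_R$ (minus the piece near $S$ that is ``used up'' by being pivotal) \emph{and} a $4$-arm-type event around $S$ up to the scale at which $S$ is free to influence $\arm_j$. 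To turn this heuristic into an upper bound without spatial-independence headaches, I would systematically replace $\arm_j$ and the local $4$-arm event by the corresponding ``hatted'' events $\widehat{\arm}_j(\cdot,\cdot)$ of Definition~\ref{d.hatarm} (and their analogues centred at $S$), use that these are measurable with respect to $\omega$ restricted to disjoint annuli so that probabilities factor, and then invoke Proposition~\ref{p.fandalpha} to pass back from $f_j=\Pro[\widehat{\arm}_j]$ to $\alpha^{an}_j$, together with the quasi-multiplicativity property Proposition~\ref{p.quasi} and the polynomial bound~\eqref{e.poly} to recombine the factors.

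Concretely, for the bulk squares the bound $\Pro[\Piv_S(\arm_j(1,R))]\leq\grandO{1}\,\alpha^{an}_j(R)$ follows because being pivotal for $\arm_j(1,R)$ forces (up to negligible density events, handled exactly as in the proof of Lemma~\ref{l.piv_bulk} via the events $\dense(2^k\rho)$ and a union bound over dyadic scales) the event $\widehat{\arm}_j(1,\dist(S,0)/2)$ far from $S$ together with $\widehat{\arm}_4$ around $S$ at scales $\rho$ to roughly $\dist(S,0)$; factoring by spatial independence, applying Proposition~\ref{p.fandalpha} twice, and gluing via quasi-multiplicativity gives $\alpha^{an}_j(\rho,R)\cdot(\text{a sum over }S\text{ of }\alpha^{an}_4(\rho,\sim\dist(S,0)))$, and summing over the $\asymp (R/\rho)^2$ bulk squares using~\eqref{e.poly} and quasi-multiplicativity produces the term $\alpha^{an}_j(R)\,(R/\rho)^2\,\alpha^{an}_4(\rho,R)$ — in fact this part is essentially Proposition~\ref{p.asymp_R2alpha4} with $\cross(2R,R)$ replaced by the localized $j$-arm picture, so I would cite and adapt the argument there rather than repeat it. For squares $S$ near $\partial\mathcal{R}$ one uses the half-plane (or quarter-plane) arm events: near a side, pivotality for $\arm_j(1,R)$ forces a half-plane $3$-arm event in a small annulus around $S$, whose probability is $\asymp(\rho/\dist(S,\partial\mathcal R))^2$ by Proposition~\ref{p.universal}(i)--(ii); combined with $\widehat{\arm}_j$ at the ambient scale this is summable and contributes at most $\grandO{1}\alpha^{an}_j(R)\,(R/\rho)^2\alpha^{an}_4(\rho,R)$ (absorbing the boundary terms into the bulk-type term, using that $\alpha^{an,+}_3\asymp (r/R)^2\lesssim \alpha^{an}_4(r,R)$ by~\eqref{e.alpha4easy}), and similarly at the corners using a quarter-plane arm event. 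For squares $S$ far from $\mathcal{R}$, at distance $d$ say, pivotality forces both $\widehat{\arm}_j(1,R)$ near the origin \emph{and} a $1$-arm (a single black-or-white path) from $\partial S$ out to distance $\asymp d$, i.e. $\widehat{\arm}_1(\rho,d/\grandO{1})$ around $S$; these live in disjoint regions so probabilities multiply, $\Pro[\widehat{\arm}_1(\rho,d)]\asymp\alpha^{an}_1(\rho,d)\lesssim (\rho/d)^{1/C}$ decays polynomially, and there are $\asymp (d/\rho)^2$ squares at distance $\asymp d$, so the contribution is $\lesssim \alpha^{an}_j(R)\sum_{d}(d/\rho)^2(\rho/d)^{\Omega(1)}$ — which \emph{does not} obviously converge unless the $1$-arm exponent exceeds $2$, which it does not; so this is where I expect the main obstacle.

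The resolution of the far-away difficulty, and the step I would flag as the heart of the matter, is that when $S$ is far from $\mathcal{R}$ the relevant ``arm from $S$'' is not a single arm but (essentially) a $2$-arm or even $4$-arm configuration: for $S$ to be pivotal for a \emph{connection} event, changing $\omega\cap S$ must both create and destroy a macroscopic connection, so around $S$ one actually sees a $2$-arm event up to some scale and the real constraint is $\Pro[\widehat{\arm}_2(\rho,d)]\asymp\alpha^{an}_2(\rho,d)$, which by the annealed BK inequality Proposition~\ref{p.BK} is at most $\alpha^{an}_1(\rho,d)^2\lesssim(\rho/d)^{2/C}$ — still not enough on its own. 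The correct bound uses that $\Piv_S(\arm_j(1,R))$ with $S$ far away forces, disjointly, $\widehat{\arm}_j(1,R)$ \emph{and} a $4$-arm event $\widehat{\arm}_4(\rho, \text{(distance from }S\text{ to }\mathcal R\text{)})$ around $S$ joined to a $2$-arm bridging the gap, so the per-square probability is $\lesssim \alpha^{an}_j(R)\,\alpha^{an}_4(\rho,d)\,\alpha^{an}_2(\text{gap})$; since $\alpha^{an}_4(\rho,d)\lesssim (\rho/d)^{1+\epsilon}$ by Proposition~\ref{p.alpha4}(i) and there are $\asymp(d/\rho)^2$ such squares, the sum over $d\geq \grandO{1}R$ converges (geometrically in $\log(d/R)$), and using quasi-multiplicativity $\alpha^{an}_4(\rho,d)\asymp\alpha^{an}_4(\rho,R)\alpha^{an}_4(R,d)$ one bounds the total far-away contribution by $\grandO{1}\,\alpha^{an}_j(R)\,\alpha^{an}_4(\rho,R)\sum_{k\geq0} 5^{2k}\alpha^{an}_4(R,5^kR)\lesssim \grandO{1}\alpha^{an}_j(R)\,(R/\rho)^2\alpha^{an}_4(\rho,R)$ once one also notes $(R/\rho)^2\geq\Omega(1)$. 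Putting the three regimes together, and separating off the genuinely ``innermost'' contribution $\grandO{1}\alpha^{an}_j(\rho,R)$ coming from $S$ at distance $\lesssim\rho$ from the origin (where the $4$-arm factor degenerates), yields exactly the claimed inequality $\grandO{1}\alpha^{an}_j(\rho,R)+\grandO{1}\alpha^{an}_j(R)(R/\rho)^2\alpha^{an}_4(\rho,R)$, and the simpler form for $\rho=1$ follows since then $\alpha^{an}_j(1,R)=\alpha^{an}_j(R)$ and $R^2\alpha^{an}_4(R)\geq\Omega(1)$ absorbs the first term.
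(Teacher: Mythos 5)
Your decomposition is based on the wrong geometry and your treatment of the far‑away squares rests on an intuition from Bernoulli percolation that does not transfer to Voronoi percolation; in addition you omit the $j$ odd difficulty that the paper handles separately. Concretely: $\arm_j(1,R)$ lives in the annulus $A(1,R)=B_R\setminus(-1,1)^2$, so the decomposition should be relative to the box $B_R$ (and the inner boundary $\partial B_1$ or $\partial B_\rho$), not the rectangle $[-2R,2R]\times[-R,R]$, which is the relevant domain for $\cross(2R,R)$ in Proposition~\ref{p.asymp_R2alpha4}. The near‑origin contribution giving the extra term $\grandO{1}\alpha^{an}_j(\rho,R)$ (your ``innermost'' remark) should be built into the decomposition from the start, as in Lemma~\ref{l.piv_arm_2}.

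The treatment of far‑away squares is the most serious gap. You correctly observe that a $1$‑ or $2$‑arm bound from $\partial S$ does not converge, and then propose to salvage it with a $4$‑arm structure around $S$, ``joined to a $2$‑arm bridging the gap.'' But there is no arm event around a square $S$ lying far outside $B_R$: the arms of $\arm_j(1,R)$ are confined to $B_R$, and the only way $S$ can be annealed‑pivotal is if, conditionally on $\omega\setminus S$, some Voronoi cell of a point in $S$ can extend into $B_R$ — which requires the Poisson process to be sparse in the gap. The paper handles this (Lemmas~\ref{l.piv_far_away} and~\ref{l.piv_arm_4}) via the $\dense$ events of Definition~\ref{d.dense}: if $\dense_{1/100}(\widetilde{S}\setminus S)$ holds, $S$ cannot be pivotal, so $\Pro[\Piv_{S'}(\cdot)]\leq\grandO{1}\exp(-\Omega(1)(d')^2)$ with $d'$ the distance to the box. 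This superexponential decay makes the far‑away sum trivially controlled; no BK inequality or $4$‑arm exponent estimate is needed there. Your attempt also appears not to converge even on its own terms: $\alpha^{an}_4(\rho,d)\lesssim(\rho/d)^{1+\epsilon}$ times $(d/\rho)^2$ squares gives $(d/\rho)^{1-\epsilon}$ per dyadic shell, which diverges.

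Finally, for $j$ odd and $j\geq 3$ the basic heuristic ``pivotal box in the bulk implies a $4$‑arm event around it'' fails: one may instead see a $4$‑arm event around $S$ up to some intermediate scale and a $5$‑arm (or $6$‑arm) event beyond it, because removing $S$ can merge or split an odd number of arms. The paper treats this in Appendix~\ref{a.joddpivarm} by summing over the scale at which the transition occurs, using $\alpha^{an}_5(\rho_1,\rho_2)\leq\grandO{1}(\rho_1/\rho_2)^\epsilon\,\alpha^{an}_4(\rho_1,\rho_2)$ (which follows from Proposition~\ref{p.alpha4} and $\alpha^{an}_5\asymp(r/R)^2$) to absorb the extra scale sum. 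Your proposal is silent on this, yet Proposition~\ref{p.arm_event_asymp_R2alpha4} is stated for all $j\in\N^*$, so the odd case must be addressed.
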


In order to prove Proposition~\ref{p.asymp_R2alpha4}, we first pursue the analysis of Subsection~\ref{ss.pivotals1}. To deal with the spatial dependencies of the model, we first need to introduce the notation $\Piv_D^E(A)$ which in words denotes the event that, conditionally on the colored configuration in the set $E$, the probability that the set $D$ is annealed-pivotal for $A$ is positive. We introduce this quantity since it is measurable with respect to $E$. We will often let $E$ be an annulus which surrounds the set $D$. Let $D$ be a bounded Borel set, let $A$ be an event measurable with respect to the colored configuration $\omega$ and let $E$ be a Borel set. We write
\[
\Piv_D^E(A) := \left\lbrace \Pro \left[ \Piv_D(A) \cond \omega \cap E \right] > 0 \right\rbrace \, .
\]
Let $1 \leq \rho \leq R/10 \leq R$, let $y$ be a point of the plane and let $S=B_\rho(y)$ be the square of side length $2\rho$ centered at $y$. 

\begin{lem}\label{l.pivhat}
Let $y$, $\rho$, $R$ and $S=B_{\rho}(y)$ be as above. Let $\rho_1 \in [\rho,+\infty)$ and $\rho_2 \in [\rho_1,+\infty)$ and assume that $S$ is included in the bounded connected component of $A(y;\rho_1,\rho_2)^c$ and that $A(y;\rho_1,\rho_2) \subseteq [-2R,2R] \times [-R,R]$ (in particular, $y \in [-2R,2R] \times [-R,R]$). Then,
\[
\Pro \left[ \Piv_S^{A(y;\rho_1,\rho_2)}\left( \cross(2R,R) \right) \right] \leq \grandO{1} \alpha^{an}_4(\rho_1,\rho_2) \, .
\]
\end{lem}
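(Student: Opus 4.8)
The natural strategy is to reduce Lemma~\ref{l.pivhat} to the (conditional) four-arm event $\widehat{\arm}_4(y;\rho_1,\rho_2)$ of Definition~\ref{d.hatarm} centred at $y$, and then to invoke Proposition~\ref{p.fandalpha}. Precisely: both $\Piv_S^{A(y;\rho_1,\rho_2)}\!\left(\cross(2R,R)\right)$ and $\widehat{\arm}_4(y;\rho_1,\rho_2)$ are measurable with respect to $\omega\cap A(y;\rho_1,\rho_2)$, and by translation invariance of $\Pro$ together with Proposition~\ref{p.fandalpha} (which applies since $1\le\rho\le\rho_1\le\rho_2$) one has $\Pro\!\left[\widehat{\arm}_4(y;\rho_1,\rho_2)\right]=f_4(\rho_1,\rho_2)\le\grandO{1}\,\alpha^{an}_4(\rho_1,\rho_2)$. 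So it suffices to prove the a.s. inclusion
\[
\Piv_S^{A(y;\rho_1,\rho_2)}\!\left(\cross(2R,R)\right)\ \subseteq\ \widehat{\arm}_4(y;\rho_1,\rho_2)\,.
\]
Since both events are $\sigma\!\left(\omega\cap A(y;\rho_1,\rho_2)\right)$-measurable and $\widehat{\arm}_4(y;\rho_1,\rho_2)=\{\Pro[\arm_4(y;\rho_1,\rho_2)\mid\omega\cap A(y;\rho_1,\rho_2)]>0\}$, it is enough to show that, on the event that $\Pro[\Piv_S(\cross(2R,R))\mid\omega\cap A(y;\rho_1,\rho_2)]>0$, we have $\Pro[\arm_4(y;\rho_1,\rho_2)\mid\omega\cap A(y;\rho_1,\rho_2)]>0$. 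On this event there is a positive-probability family of colored configurations $\widetilde{\omega}$ coinciding with $\omega$ on $A(y;\rho_1,\rho_2)$ (which is disjoint from $B_{\rho_1}(y)\supseteq S$) and for which $S$ is annealed-pivotal for $\cross(2R,R)$; so it is enough to produce, from one such $\widetilde{\omega}$, a further configuration agreeing with $\omega$ on $A(y;\rho_1,\rho_2)$, arising with positive conditional probability, and on which $\arm_4(y;\rho_1,\rho_2)$ holds.

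The construction is a ``dense filling'' of the inner box. Conditionally on $\widetilde{\omega}$ restricted to $B_{\rho_1}(y)^c$ (which determines $\omega$ on $A(y;\rho_1,\rho_2)$ up to a null set), with positive probability $\eta\cap B_{\rho_1}(y)$ is $\delta$-dense in $B_{\rho_1}(y)$ for a suitably small $\delta$; this makes $B_{\rho_1}(y)$ monochromatic of the chosen colour and confines every Voronoi cell centred in $B_{\rho_1}(y)$ to a $\delta$-neighbourhood of $B_{\rho_1}(y)$, which we may take to be $\subsetneq B_{\rho_2}(y)$. Filling with black yields a configuration $C_b$ on which $\cross(2R,R)$ holds: here one uses that $\cross(2R,R)$ is annealed-increasing, that $\Pro[\cross(2R,R)\mid\widetilde{\omega}\setminus S]>0$ forces $\cross(2R,R)$ once $S$ is overwhelmingly black (a comparison of black regions, taking $\delta$ small enough for the given $\widetilde\omega$), so that also overwhelmingly-blacking the larger box $B_{\rho_1}(y)$ keeps $\cross(2R,R)$. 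Recolouring the same points white produces $C_w$ which shares the same point process as $C_b$ and differs only on $B_{\rho_1}(y)$, and on which, by self-duality, there is a white top--bottom crossing of $[-2R,2R]\times[-R,R]$. Since flipping the colours inside $B_{\rho_1}(y)$ turns $\cross(2R,R)$ on and off, the same topological argument as for a pivotal vertex of a rectangle-crossing on a deterministic lattice applies — using $B_{\rho_2}(y)\subseteq[-2R,2R]\times[-R,R]$, which follows from $A(y;\rho_1,\rho_2)\subseteq[-2R,2R]\times[-R,R]$ and convexity, so that $\rho_2\le R$ and $B_{\rho_2}(y)$ cannot span the rectangle in the horizontal nor vertical direction: the black crossing of $C_b$ and the white crossing of $C_w$ each traverse the annulus $A(y;\rho_1,\rho_2)$ and, split along their excursions into $B_{\rho_1}(y)$, yield respectively two disjoint black and two disjoint white arms across $A(y;\rho_1,\rho_2)$, interleaved. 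Because these four arms lie inside $A(y;\rho_1,\rho_2)$, where $C_b$ and $C_w$ carry the same colours, all four occur in $C_b$ itself, so $\arm_4(y;\rho_1,\rho_2)$ holds in $C_b$. As $C_b$ agrees with $\omega$ on $A(y;\rho_1,\rho_2)$ and arises with positive conditional probability, this gives $\Pro[\arm_4(y;\rho_1,\rho_2)\mid\omega\cap A(y;\rho_1,\rho_2)]>0$, completing the inclusion and hence the lemma.

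The main obstacle is exactly the ``degenerate environment'' phenomenon emphasised throughout the paper: when $\omega\cap A(y;\rho_1,\rho_2)$ is sparse, a Voronoi cell centred in $S$ could a priori reach far into or through the annulus, and then the colour of $S$ cannot be converted into a clean statement about arms across $A(y;\rho_1,\rho_2)$. The dense filling of $B_{\rho_1}(y)$ is precisely what rules this out and makes the deterministic-lattice topology usable; care is also needed in the (boundary) case where $B_{\rho_2}(y)$ abuts $\partial\left([-2R,2R]\times[-R,R]\right)$, where the four-arm event should be read in a half-plane and bounded using $\alpha^{an,+}_{4}(\rho_1,\rho_2)\le\alpha^{an}_4(\rho_1,\rho_2)$, and in the bookkeeping of the conditioning (choosing the fineness $\delta$ of the fill, which may depend on $\widetilde\omega$, but only through a positive-probability event), which is routine.
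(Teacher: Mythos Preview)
Your strategy---reduce to the almost-sure inclusion $\Piv_S^{A(y;\rho_1,\rho_2)}(\cross(2R,R))\subseteq\widehat{\arm}_4(y;\rho_1,\rho_2)$ and then invoke Proposition~\ref{p.fandalpha}---is different from the paper's, which instead mimics Lemma~\ref{l.piv_bulk}: it shows the scale-by-scale inclusion
\[
\Piv_S^{A(y;\rho_1,\rho_2)}(\cross(2R,R))\ \subseteq\ \arm_4(y;2^{k+1}\rho_1,\rho_2/2)\ \cup\ \Big(\Piv_S^{A(y;\rho_1,\rho_2)}(\cross(2R,R))\setminus\dense(2^k\rho_1,\rho_2/2)\Big)
\]
and then iterates over $k$, using Proposition~\ref{p.fandalpha} and the super-polynomial decay of $\Pro[\neg\dense]$ to sum the pieces. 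Your route, if it went through, would be slicker; but as written there is a real gap.

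The problem is your sentence ``Because these four arms lie inside $A(y;\rho_1,\rho_2)$, where $C_b$ and $C_w$ carry the same colours, all four occur in $C_b$ itself.'' Filling $B_{\rho_1}(y)$ with a $\delta$-dense set does \emph{not} confine the cells of those points to a $\delta$-neighbourhood of $B_{\rho_1}(y)$: a cell centred near $\partial B_{\rho_1}(y)$ extends outward until it meets the perpendicular bisector with the nearest point of $\widetilde\omega$ \emph{in the annulus}, and if $\omega\cap A(y;\rho_1,\rho_2)$ is sparse near $\partial B_{\rho_1}(y)$ this leakage can be macroscopic. Consequently the colouring of the planar region $A(y;\rho_1,\rho_2)$ is \emph{not} the same in $C_b$ and $C_w$: the leaked cells are black in $C_b$ and white in $C_w$. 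A white arm of $C_w$ that reaches $\partial B_{\rho_1}(y)$ will typically end inside such a leaked cell, and that portion is black in $C_b$; so you cannot transport the white arms to $C_b$, and you do not get four alternating arms in any single configuration. (Your assertion that the dense fill ``confines every Voronoi cell centred in $B_{\rho_1}(y)$ to a $\delta$-neighbourhood of $B_{\rho_1}(y)$'' is exactly what fails.)

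This is precisely the ``degenerate environment'' issue you flagged, but it bites in the annulus rather than inside $B_{\rho_1}(y)$, and your fill does not cure it because you are not allowed to touch $\omega\cap A(y;\rho_1,\rho_2)$. The paper's approach handles this by putting the density requirement on $\eta$ \emph{inside} the annulus (the events $\dense(2^k\rho_1,\rho_2/2)$ are $\sigma(\omega\cap A(y;\rho_1,\rho_2))$-measurable), which legitimately bounds the leakage, and pays for the rare failure of density by the iteration. A repair of your argument would need, at minimum, to work with arms from $\partial U$ (the boundary of the union of filled cells) rather than $\partial B_{\rho_1}(y)$, and then produce a single \emph{mixed} colouring of the dense set extending all four arms through $U\cap A(y;\rho_1,\rho_2)$ down to $\partial B_{\rho_1}(y)$; this is not the ``routine bookkeeping'' you suggest.
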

\begin{proof}
We write the proof for $y = 0$ since the proof in the other cases is the same. The proof is very similar to the proof of the inequality $\Pro \left[ \Piv_S(\cross(2R,R)) \right] \leq \grandO{1} \alpha^{an}_4(\rho,R)$ of Lemma~\ref{l.piv_bulk}. Hence, we choose to indicate what is the result analogous to~\eqref{e.from_piv_to_arm} (that is the key estimate in the proof of Lemma~\ref{l.piv_bulk}) and to omit the rest of the proof. For $0<\rho'\leq \rho''$, let
\[
\dense(\rho',\rho'') := \dense_{1/100} \left( A(\rho',2\rho') \right) \cap \dense_{1/100} \left( A(\rho'',2\rho'') \right) \, .
\]
Then, for every $k \in \lbrace 0, \cdots, \lfloor \log_2 ( \rho_2/(4 \rho_1)) \rfloor \rbrace$, $\Piv_S^{A(\rho_1,\rho_2)}\left(\cross(2R,R)\right)$ is included in
\[
 \arm_4(2^{k+1}\rho_1,\rho_2/2) \cup \left( \Piv_S^{A(\rho_1,\rho_2)}(\cross(2R,R)) \setminus \dense(2^k\rho_1,\rho_2/2) \right) \, .
\]
\end{proof}

We now use Lemma~\ref{l.pivhat} to estimate the quantity $\Pro \left[ \Piv_S(\cross(2R,R)) \right]$ when $S$ intersects the rectangle $[-2R,2R] \times [-R,R]$ (for instance when $S$ is included in this rectangle). We first need the following notations: Let $d_0=d_0(S)$ be the distance between $S$ and the closest side of $[-2R,2R] \times [-R,R]$ and let $y_0$ be the orthogonal projection of $y$ on this side. Also, let $d_1=d_1(S) \geq d_0$ be the distance between $y_0$ and the closest corner of $[-2R,2R] \times [-R,R]$ and let $y_1$ be this corner. Write $\alpha^{an,++}_j(\cdot,\cdot)$ for the probability of the $j$-arm event in the quarter plane. The following lemma is a generalization of Lemma~\ref{l.piv_bulk}. 

\begin{lem}\label{l.piv_boundary}
Let $1 \leq \rho \leq R/10$, let $y$ be a point of the plane and let $S=B_\rho(y)$. Assume that $S$ intersects the rectangle $[-2R,2R] \times [-R,R]$. We have
\[
\Pro \left[ \Piv_S(\cross(2R,R)) \right] \leq \grandO{1} \, \alpha^{an,++}_2(d_1+\rho,R) \, \alpha^{an,+}_3(d_0 + \rho,d_1) \, \alpha^{an}_4(\rho,d_0) \, .
\]
\end{lem}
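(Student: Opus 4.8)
The plan is to follow the strategy of Lemma~\ref{l.piv_bulk} and especially Lemma~\ref{l.pivhat}, cutting the range of scales between $\rho$ and $R$ around $S$ into three consecutive pieces dictated by the position of $S$ inside the rectangle $P:=[-2R,2R]\times[-R,R]$. Let $H$ be the closed half-plane bounded by the side of $P$ closest to $S$ and containing $P$, and let $\mathcal{Q}$ be the closed quadrant with apex $y_1$ bounded by the two sides of $P$ through $y_1$ and containing $P$. Fix an absolute constant $C$ to be chosen large. The geometric heart of the argument is the (a.s.) inclusion
\[
\Piv_S(\cross(2R,R)) \;\subseteq\; \Piv_S^{A(y;2\rho,\,d_0/C)}(\cross(2R,R)) \;\cap\; \widehat{\arm}_3^{+} \;\cap\; \widehat{\arm}_2^{++}\,,
\]
where $\widehat{\arm}_3^{+}$ is the hat-version (as in Definition~\ref{d.hatarm}, conditioning on the colours in the region) of the $3$-arm event in the half-annulus $A(y_0;Cd_0,d_1/C)\cap H$, and $\widehat{\arm}_2^{++}$ the hat-version of the $2$-arm event in the quarter-annulus $A(y_1;Cd_1,R/C)\cap\mathcal{Q}$. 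Indeed, if $S$ is annealed-pivotal then there are (away from a null set) two black arms and two white arms of alternating colours from $\partial S$ to the four sides of $P$, pairwise disjoint; the arm reaching the side closest to $S$ has length $O(d_0)$, so at scales $\gg d_0$ around $y_0$ only $3$ alternating arms survive, and they stay in $H$; and of those three the one reaching the second side through $y_1$ has length $O(d_1)$, so at scales $\gg d_1$ around $y_1$ only $2$ alternating arms survive (one black, one white, heading to the two far sides of $P$), staying in $\mathcal{Q}$. All three inclusions on the right are then a consequence of the elementary fact (used repeatedly in the paper) that a non-negative random variable which is positive has a.s.\ positive conditional expectation.

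Next, up to multiplicative constants one may assume the three scales are well separated, $2\rho\le d_0/C$, $Cd_0\le d_1/C$, $Cd_1\le R/C$; the degenerate situations ($S$ touching or straddling a side so that $d_0\lesssim\rho$; the corner being close so that $d_1\lesssim d_0$; $d_1\lesssim\rho$ or $R\lesssim d_1$) are treated by the same argument after deleting or merging the corresponding scale ranges, and only produce smaller bounds. With $C$ fixed large the three regions $A(y;2\rho,d_0/C)$, $A(y_0;Cd_0,d_1/C)\cap H$ and $A(y_1;Cd_1,R/C)\cap\mathcal{Q}$ are pairwise disjoint --- using $\dist(y,y_0)=d_0+\rho$, $\dist(y_0,y_1)=d_1$ and $\rho\le d_0\le d_1$ --- and the three events above are measurable with respect to $\omega$ restricted to these three regions respectively; hence, by spatial independence, $\Pro\!\left[\Piv_S(\cross(2R,R))\right]$ is at most the product of their three probabilities.

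It remains to bound each factor. Lemma~\ref{l.pivhat}, applied with $\rho_1=2\rho$ and $\rho_2$ of order $d_0$, gives the first factor $\le\grandO{1}\,\alpha^{an}_4(\rho,d_0)$. For the second and third factors I would invoke Proposition~\ref{p.fandalpha} together with its half-plane and quarter-plane analogues (obtained by the method of Subsection~\ref{ss.half-plane}, which also yields quasi-multiplicativity for half- and quarter-plane arm events), giving $\le\grandO{1}\,\alpha^{an,+}_3(Cd_0,d_1/C)$ and $\le\grandO{1}\,\alpha^{an,++}_2(Cd_1,R/C)$ respectively; quasi-multiplicativity and~\eqref{e.poly} then absorb the constant-factor changes of radii, turning these into $\grandO{1}\,\alpha^{an,+}_3(d_0+\rho,d_1)$ and $\grandO{1}\,\alpha^{an,++}_2(d_1+\rho,R)$. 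Multiplying the three bounds yields the claim. The main obstacle is not any single computation but the geometric bookkeeping: one must check that the half- and quarter-annuli genuinely lie inside $P$ (equivalently, that the surviving arms really stay in $H$ and $\mathcal{Q}$), so that the \emph{restricted} half-plane and quarter-plane arm events --- rather than the cheaper full-plane ones --- are what pivotality forces; this, together with the degenerate-case analysis, is exactly what produces the three different arm types and the three ranges $\rho\le d_0$, $d_0\le d_1$, $d_1\le R$ appearing in the statement.
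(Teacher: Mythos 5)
Your overall plan is the paper's plan: decompose $\Piv_S(\cross(2R,R))$ into three events supported on disjoint annular regions around $y$, $y_0$ and $y_1$, use spatial independence to factorize, then bound each factor by the corresponding arm probability. Where you deviate is in the outer two factors, and there is a genuine gap there. You replace the paper's hat-pivotal events $\Piv_{S_0}^{A(y_0;\cdot,\cdot)}(\cross)$ and $\Piv_{S_1}(\cross)$ by hat-arm events $\widehat{\arm}_3^{+}$ and $\widehat{\arm}_2^{++}$, and claim that the inclusion $\Piv_S(\cross) \subseteq \widehat{\arm}_3^{+} \cap \widehat{\arm}_2^{++}$ follows from the elementary conditional-expectation fact (the one in the footnote after Definition~\ref{d.hatarm}). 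That fact does give $\Piv_S(\cross) \subseteq \Piv_{S_0}^{E}(\cross)$ a.s.\ for $E$ the half-annulus, by applying it to $X = \un_{\Piv_{S_0}(\cross)}$. It does \emph{not} give $\Piv_S(\cross) \subseteq \widehat{\arm}_3^{+}$ a.s., because pivotality of a box does not deterministically imply the presence of actual alternating arms in the annulus --- that is exactly observation (ii) in Subsection~\ref{ss.pivotals1}, and the whole reason Lemma~\ref{l.pivhat} exists. Your phrase \emph{``if $S$ is annealed-pivotal then there are (away from a null set) two black and two white arms \ldots''} is precisely what fails when $\eta$ is sparse; the passage from hat-pivotal to arm probabilities requires the dyadic $\dense(\cdot)$-decomposition and union bound of Lemma~\ref{l.pivhat}, not just Proposition~\ref{p.fandalpha}.

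Concretely, the fix is to do exactly what you already do for the innermost factor: write
\[
\Piv_S(\cross(2R,R)) \;\subseteq\; \Piv_{S_1}(\cross(2R,R)) \,\cap\, \Piv_{S_0}^{A(y_0;10(d_0+\rho),d_1)}(\cross(2R,R)) \,\cap\, \Piv_S^{A(y;\rho,(\rho+d_0)/10)}(\cross(2R,R))
\]
(which \emph{does} follow from the monotonicity $\Piv_D \subseteq \Piv_{D'}$ for $D\subseteq D'$ plus the conditional-expectation fact), factorize by spatial independence, and then run the Lemma~\ref{l.pivhat} argument --- with the $4$-arm event replaced by the $3$-arm half-plane and $2$-arm quarter-plane events --- on \emph{each} of the three factors. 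Once that is done, your handling of the degenerate regimes ($d_0\lesssim\rho$, $d_1\lesssim d_0$, etc.) via quasi-multiplicativity and~\eqref{e.poly} is fine, and the geometric bookkeeping you flag (the surviving arms stay in the half-plane resp.\ quadrant, yielding $\alpha^{an,+}_3$ and $\alpha^{an,++}_2$ rather than full-plane arms) is indeed the content that needs to be checked.
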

\begin{proof}
We use the notations from above the lemma and we let $S_0=B_{10(d_0 + \rho)}(y_0)$ and $S_1=B_{100(d_1 + \rho)}(y_1)$. We also consider the annuli $A(y;\rho,(\rho+d_0)/10)$ and $A(y_0;10(d_0+\rho),d_1)$ (note that these annuli may be empty), see Figure~\ref{f.annuli_and_boxes}. Since $S \subseteq S_0 \subseteq S_1$, $\Piv_S(\cross(2R,R))$ is included in the following event:
\begin{figure}[!h]
\begin{center}
\includegraphics[scale=0.6]{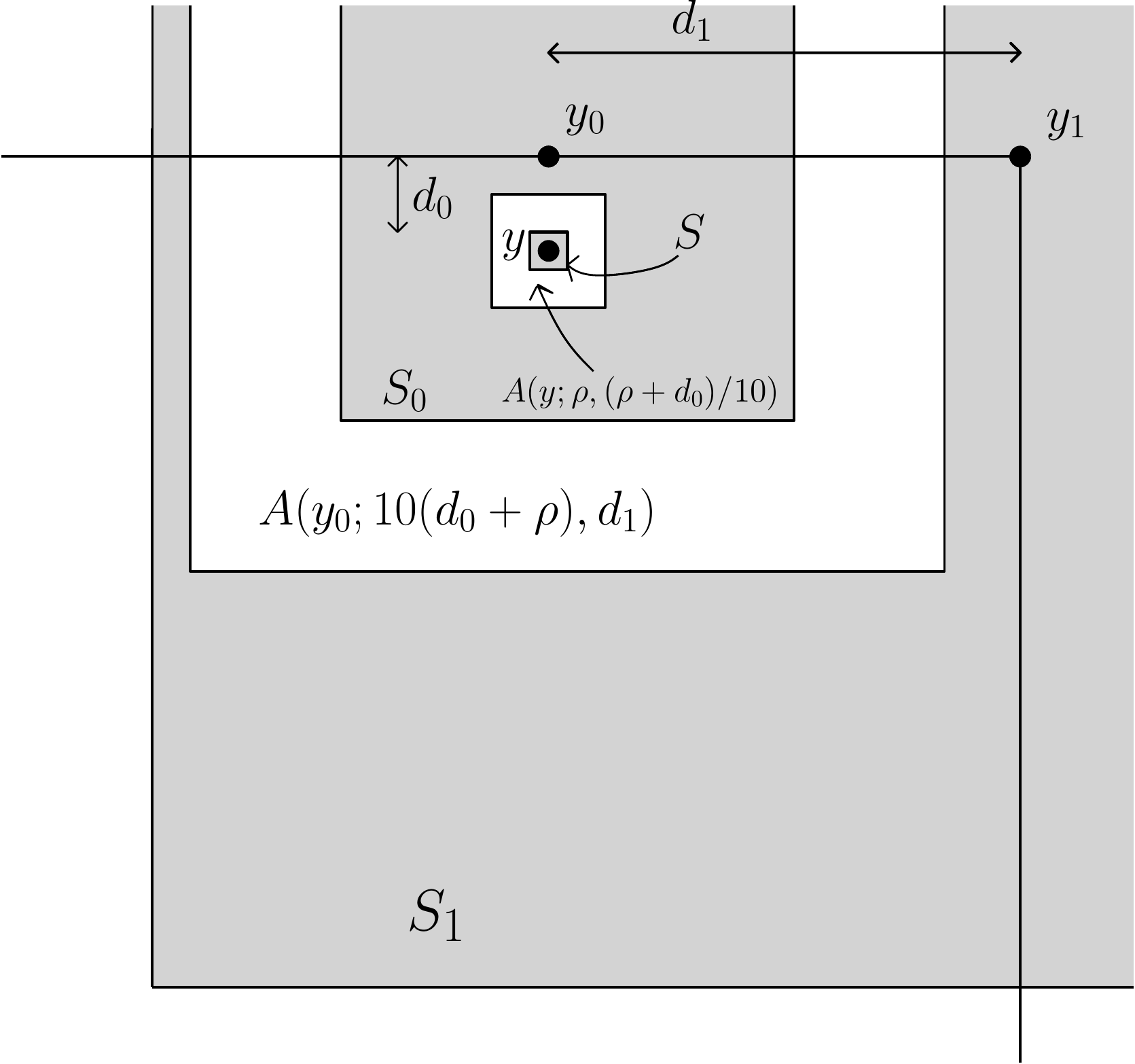}
\end{center}
\caption{The points $y$, $y_0$ and $y_1$, the boxes $S$, $S_0$ and $S_1$, and some annuli centered at $y$ or $y_0$.}\label{f.annuli_and_boxes}
\end{figure}
\begin{multline*}
\Piv_{S_1}(\cross(2R,R)) \cap \Piv_{S_0}^{A(y_0;10(d_0 + \rho),d_1)}(\cross(2R,R))\\
\cap \Piv_{S}^{A(y;\rho,(\rho+d_0)/10)}(\cross(2R,R)) \, .
\end{multline*}
Note furthermore that: i) $S$ is the inner square of $A(y;\rho,(\rho+d_0)/10)$, ii) $A(y;\rho,(\rho+d_0)/10)$ is included in $S_0$, iii) $S_0$ is the inner square of $A(y_0;10(d_0+\rho),d_1)$ and iv) $A(y_0;10(d_0+\rho),d_1)$ is included in $S_1$. Note also that
\bi 
\item[i)] $\Piv_S^{A(y;\rho,(\rho+d_0)/10)}(\cross(2R,R))$ is measurable with respect to $\omega \cap A(y;\rho,(\rho+d_0)/10)$,
\item[ii)] $\Piv_{S_0}^{A(y_0;10(d_0 + \rho),d_1)}(\cross(2R,R))$ is measurable with respect to $\omega \cap A(y_0;10(d_0 + \rho),d_1)$,
\item[iii)] $\Piv_{S_1}(\cross(2R,R))$ is measurable with respect to $\omega \setminus S_1$.
\ei
Hence, by spatial independence, $\Pro \left[ \Piv_S(\cross(2R,R)) \right]$ is at most
\begin{multline*}
\Pro \left[ \Piv_{S_1}(\cross(2R,R)) \right]
\times \Pro \left[  \Piv_{S_0}^{A(y_0;10(d_0 + \rho),d_1)}(\cross(2R,R)) \right] \\\times \Pro \left[ \Piv_{S}^{A(y;\rho,(\rho+d_0)/10)}(\cross(2R,R))  \right] \, .
\end{multline*}
Lemma~\ref{l.pivhat} implies that
\[
\Pro \left[\Piv_S^{A(y;\rho,(\rho+d_0)/10)}(\cross(2R,R)) \right] \leq \grandO{1} \alpha^{an}_4(\rho,\rho+d_0) \, .
\]
Moreover, by the quasi-multiplicativity property and~\eqref{e.poly}, we have $\alpha^{an}_4(\rho,\rho+d_0) \leq \grandO{1} \alpha^{an}_4(\rho,d_0)$.\\

By exactly the same proof as Lemma~\ref{l.pivhat} but applied to the $3$-arm event in the half-plane, we have
\[
\Pro \left[ \Piv_{S_0}^{A(y_0;10(d_0 + \rho),d_1)}(\cross(2R,R)) \right] \leq \grandO{1} \alpha^{an,+}_3(d_0+\rho,d_1)
\]
and
\[
\Pro \left[ \Piv_{S_1}(\cross(2R,R)) \right] \leq \grandO{1} \alpha^{an,++}_2(d_1+\rho,R) \, ,
\]
which ends the proof.
\end{proof}

Let us now prove an estimate about the probability that boxes outside of $[-2R,2R] \times [-R,R]$ are pivotal. Roughly speaking, this estimate implies that, if we want to bound
\[
\sum_{S \text{ square of the grid } 2\rho\Z^2 \text{ not included in } [-2R,2R] \times [-R,R]} \Pro \left[ \Piv_S(\cross(2R,R)) \right] \, ,
\]
then it is enough to control the sum over the squares $S$ that \textbf{intersect} $\partial ([-2R,2R] \times [-R,R])$.
\begin{lem}\label{l.piv_far_away}
Let $\rho \geq 1$ and let $R \geq 100\rho$. Also, let $S$ be a square of the grid $2\rho\Z^2$ that intersects $\partial ([-2R,2R] \times [-R,R])$. Moreover, let $\setS$ be the set of all squares $S'$ of the grid $2\rho \Z^2$ that do not intersect $[-2R,2R] \times [-R,R]$ and are such that $S$ is the argmin of $S'' \mapsto \dist(S'',S')$ where $S''$ ranges over the set of squares of the grid $2\rho\Z^2$ that intersect $\partial ([-2R,2R] \times [-R,R])$. Then,
\[
\sum_{S' \in \setS} \Pro \left[ \Piv_{S'}(\cross(2R,R)) \right] \leq \grandO{1} \, \alpha^{an,++}_2(d_1+\rho,R) \, \alpha^{an,+}_3(d_0 + \rho,d_1) \, \alpha^{an}_4(\rho,d_0) \, ,
\]
where $d_0=d_0(S)$ and $d_1=d_1(S)$ are the distances defined above Lemma~\ref{l.piv_boundary}.
\end{lem}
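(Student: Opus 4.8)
Since $S$ meets $\partial\big([-2R,2R]\times[-R,R]\big)$ we have $d_0(S)=0$, so $\alpha^{an}_4(\rho,d_0)=1$ and the claimed bound is $\grandO{1}\,\alpha^{an,++}_2(d_1+\rho,R)\,\alpha^{an,+}_3(\rho,d_1)$. The plan is to show that, for each $S'\in\setS$ — writing $\ell=\ell(S'):=\dist(S',S)$ — annealed-pivotality of $S'$ for $\cross(2R,R)$ forces, essentially independently, (i) an \emph{empty region} near $S'$ of linear size $\asymp\ell$ (needed for the Voronoi cells of points of $S'$ to reach $[-2R,2R]\times[-R,R]$ at all), of probability $\le\grandO{1}\,\ell^{\grandO{1}}e^{-\Omega(1)\ell^2}$, and (ii) a boundary pivotal event at scale $\ell$ near $y_0$, to which Lemma~\ref{l.piv_boundary} applies; summing, the super-exponential factor from (i) beats every polynomial loss. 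I first record the relevant geometry: because $S$ is the \emph{closest} square meeting $\partial\big([-2R,2R]\times[-R,R]\big)$ to $S'$, the point of $[-2R,2R]\times[-R,R]$ nearest $S'$ lies within $\grandO{1}\ell$ of $y_0$, one has $\dist\big(S',[-2R,2R]\times[-R,R]\big)\asymp\ell$ once $\ell\ge\rho$, and $\#\{S'\in\setS:2^k\rho\le\ell(S')<2^{k+1}\rho\}\le\grandO{1}4^k$. For a scale $L$ put $\widetilde S_L:=B_{CL}(y_0)$ with $C$ a large absolute constant, so that $S'\subseteq\widetilde S_L$ whenever $\ell(S')\le L$, while $\widetilde S_L$ meets $[-2R,2R]\times[-R,R]$ with $d_0(\widetilde S_L)=0$ and $d_1(\widetilde S_L)=d_1$.

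Now fix $S'\in\setS$. \textbf{(a)} If $S'$ is annealed-pivotal, then given $\omega\setminus S'$ the coloring of $[-2R,2R]\times[-R,R]$ must depend on $\omega\cap S'$, so for a positive-probability choice of points in $S'$ some $x\in S'$ has a Voronoi cell containing a point $u\in[-2R,2R]\times[-R,R]$; then $B(u,\dist(u,S'))$ is disjoint from $S'$, free of $\eta\setminus S'$, and of radius $\dist(u,S')\ge\dist\big(S',[-2R,2R]\times[-R,R]\big)\asymp\ell$. If $\dist(u,S')\le CL$ this is an event measurable with respect to $\omega\cap\widetilde S_L$ of probability $\le\grandO{1}e^{-\Omega(1)\ell^2}$ (same computation as Lemma~\ref{l.dense}); otherwise there is an empty disk of radius $>CL$ near $S'$, of probability $\le\grandO{1}\,\ell^{\grandO{1}}e^{-\Omega(1)(CL)^2}$. \textbf{(b)} As $S'\subseteq\widetilde S_L$ we have $\sigma(\omega\setminus\widetilde S_L)\subseteq\sigma(\omega\setminus S')$, so $\Pro[\cross(2R,R)\mid\omega\setminus\widetilde S_L]=0$ implies $\Pro[\cross(2R,R)\mid\omega\setminus S']=0$ a.s. (and likewise for $\neg\cross(2R,R)$), by the elementary fact recalled after Definition~\ref{d.annealed_piv}; hence $\Piv_{S'}(\cross(2R,R))\subseteq\Piv_{\widetilde S_L}(\cross(2R,R))$ up to a null set. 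Taking $L\asymp\ell$ with $CL\le R/10$, the disk event of (a) is independent of $\Piv_{\widetilde S_L}(\cross(2R,R))$ (one is measurable w.r.t.\ $\omega\cap\widetilde S_L$, the other w.r.t.\ $\omega\setminus\widetilde S_L$), so Lemma~\ref{l.piv_boundary} applied to $\widetilde S_L$ gives
\[
\Pro\big[\Piv_{S'}(\cross(2R,R))\big]\le\grandO{1}\,e^{-\Omega(1)\ell^2}\,\alpha^{an,++}_2(d_1+CL,R)\,\alpha^{an,+}_3(CL,d_1)+\grandO{1}\,\ell^{\grandO{1}}e^{-\Omega(1)(CL)^2}.
\]
The quasi-multiplicativity property (Proposition~\ref{p.quasi}) together with~\eqref{e.poly}, and their analogues for half- and quarter-plane arm events, then give $\alpha^{an,++}_2(d_1+CL,R)\le\grandO{1}(\ell/\rho)^{\grandO{1}}\alpha^{an,++}_2(d_1+\rho,R)$ and $\alpha^{an,+}_3(CL,d_1)\le\grandO{1}(\ell/\rho)^{\grandO{1}}\alpha^{an,+}_3(\rho,d_1)$.

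Next I sum over $\setS$. For the dyadic block $2^k\rho\le\ell(S')<2^{k+1}\rho$ with $C2^k\rho\le R/10$ there are $\le\grandO{1}4^k$ squares and $\ell\ge2^k$ (as $\rho\ge1$), so this block contributes $\le\grandO{1}\,4^k\,2^{\grandO{1}k}e^{-\Omega(1)4^k}\,\alpha^{an,++}_2(d_1+\rho,R)\alpha^{an,+}_3(\rho,d_1)$; since $\sum_{k\ge0}4^k\,2^{\grandO{1}k}e^{-\Omega(1)4^k}\le\grandO{1}$, all such blocks together contribute $\le\grandO{1}\,\alpha^{an,++}_2(d_1+\rho,R)\alpha^{an,+}_3(\rho,d_1)$. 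The boundedly many $S'$ with $\ell(S')<\rho$ are handled by applying Lemma~\ref{l.piv_boundary} directly to $B_{8\rho}(y_0)\supseteq S'$ and absorbing the change of scale by quasi-multiplicativity. For the remaining $S'$ (those too far from $S$ for the scale-$\ell$ estimate to apply) the empty-region event of (a) alone gives $\Pro[\Piv_{S'}(\cross(2R,R))]\le\grandO{1}\,\ell^{\grandO{1}}e^{-\Omega(1)\ell^2}$, and summing this Gaussian tail over those squares (at most $\grandO{1}(\ell/\rho)^2$ of scale $\ell$, with $\ell\ge\rho$ throughout) leaves a quantity which the polynomial lower bound in~\eqref{e.poly} makes $\le\grandO{1}\,\alpha^{an,++}_2(d_1+\rho,R)\alpha^{an,+}_3(\rho,d_1)$ — negligible when $R$ is large compared to $\rho$, and bounded since $\alpha^{an,++}_2(d_1+\rho,R)\alpha^{an,+}_3(\rho,d_1)=\Omega(1)$ when $R\le\grandO{1}\rho$. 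Adding the three contributions gives the lemma.

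The main obstacle is step (a): one must check that annealed-pivotality of a \emph{far} square genuinely forces an empty disk whose radius is comparable to the \emph{full} distance $\ell$ (not merely to $\rho$), and localize it inside a box of side $\asymp\ell$ touching $\partial\big([-2R,2R]\times[-R,R]\big)$ — or, failing that, bound it crudely — so that it decouples from the scale-$\ell$ boundary pivotal estimate of Lemma~\ref{l.piv_boundary}. Once the $e^{-\Omega(1)\ell^2}$ bound is in hand, the dyadic bookkeeping against quasi-multiplicativity is routine, the Gaussian factor dominating every polynomial loss.
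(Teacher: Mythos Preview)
Your approach is the paper's: show that pivotality of a far square $S'$ forces an empty region of linear size $\asymp d':=\dist(S',[-2R,2R]\times[-R,R])$, combine this with the boundary-pivotal bound of Lemma~\ref{l.piv_boundary} applied to an enlarged box, and sum the resulting $e^{-\Omega(1)(d')^2}$ against the polynomial losses. The paper centres the enlarged box at $y$ rather than at $y_0$, writes $\widetilde S=B_{3(\rho+d')}(y)$, and replaces your empty-disk event by $\neg\dense_{1/100}(\widetilde S\setminus S)$; the observation is simply that if $\dense_{1/100}(\widetilde S\setminus S)$ holds then no cell can meet both $S'$ and the rectangle, so $\Piv_{S'}(\cross(2R,R))\subseteq\Piv_{\widetilde S}(\cross(2R,R))\cap\neg\dense_{1/100}(\widetilde S\setminus S)$ with the two events manifestly measurable with respect to $\omega\setminus\widetilde S$ and $\omega\cap\widetilde S$ respectively.

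There is one point in your write-up that does not go through as stated. You claim that when $\dist(u,S')\le CL$ the empty-ball event is measurable with respect to $\omega\cap\widetilde S_L=\omega\cap B_{CL}(y_0)$. But $u$ can sit as far as $CL+\ell+\grandO{1}\rho$ from $y_0$ (go from $u$ to the nearest point of $S'$, then to $y$, then to $y_0$), so $B(u,\dist(u,S'))$ can extend out to roughly $2CL+\ell$ from $y_0$ and need not lie inside $B_{CL}(y_0)$. The independence with $\Piv_{\widetilde S_L}$ therefore fails with your box. The fix is immediate --- take $\widetilde S_L=B_{C'L}(y_0)$ with $C'$ a larger absolute constant so that both $S'$ and every admissible empty ball sit inside it, and apply Lemma~\ref{l.piv_boundary} to this larger box (quasi-multiplicativity absorbs the change) --- but the paper's $\neg\dense$ formulation sidesteps the issue entirely and is what you should use here.
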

\begin{proof}
If $S' \in \setS$, we let $d'$ be the distance between $S'$ and $[-2R,2R] \times [-R,R]$. We first observe that, if we sum only on the squares $S'$ that are at distance at least $R/1000$ from $[-2R,2R] \times [-R,R]$, then the result is easy. Indeed, $\Piv_{S'}(\cross(2R,R))$ implies that, given $\eta \setminus S'$, the probability that a Voronoi cell intersects both $S'$ and $[-2R,2R] \times [-R,R]$ is positive, which is an event of probability less than $\grandO{1}\exp(-\Omega(1)(d')^2)$ if $d'$ is at least of order $R$. Thus, the sum over such squares $S'$ is less than $\grandO{1}\exp(-\Omega(1)R^2)$, which is much less than the desired bound.\\

Now, let $S' \in \setS$ be such that $d' \leq R/1000$. Let $y$ be the center of $S$ and let $\widetilde{S}=B_{3(\rho+d')}(y)$. Note that $\widetilde{S} \supseteq S,S'$. In particular, $\Piv_{S'}(\cross(2R,R)) \subseteq \Piv_{\widetilde{S}}(\cross(2R,R))$. Let $\widetilde{\rho}=3(\rho+d')$. Since $\widetilde{\rho} \leq R/10$ (this comes from the fact that $d'\leq R/1000$), we can apply Lemma~\ref{l.piv_boundary} to $\widetilde{S}$ and we obtain that
\begin{equation}\label{e.piv_S''}
\Pro \left[ \Piv_{\widetilde{S}}(\cross(2R,R)) \right] \leq \grandO{1} \alpha^{an,++}_2(\widetilde{d}_1+\widetilde{\rho},R) \, \alpha^{an,+}_3(\widetilde{d}_0 + \widetilde{\rho},\widetilde{d}_1) \, \alpha^{an}_4(\widetilde{\rho},\widetilde{d}_0) \, ,
\end{equation}
where $\widetilde{d}_0=d_0(\widetilde{S})$ and $\widetilde{d}_1=d_1(\widetilde{S})$. Note that $\widetilde{d}_0$ and $\widetilde{d}_1$ satisfy $|d_0-\widetilde{d}_0|\leq \grandO{1}(\rho+d')$ and $|\widetilde{d}_1-d_1| \leq \grandO{1} (\rho+d')$.\\

\begin{figure}[!h]
\begin{center}
\includegraphics[scale=0.5]{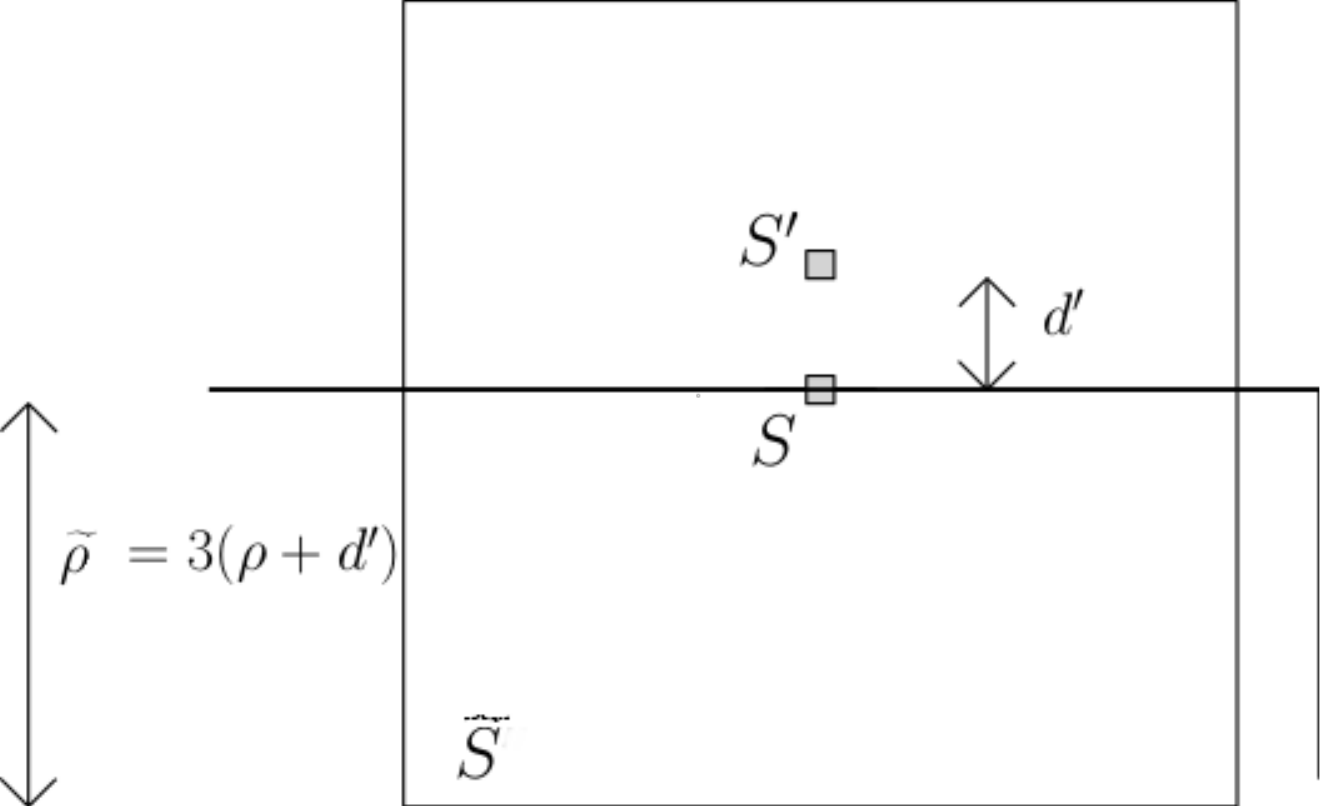}
\end{center}
\caption{The boxes $S$, $S'$ and $\widetilde{S}$ when $d'$ is at least of the order of $\rho$.}\label{f.boxes_dprime}
\end{figure}

We now distinguish between the two cases $d' \leq 4\rho$ and $d' \in [4\rho,R/1000]$:
\bi 
\item If $d' \leq 4\rho$, then $|\widetilde{\rho}-\rho|$, $|\widetilde{d}_1-d_1|$ and $|\widetilde{d}_0-d_0|$ are less than $\grandO{1}\rho$. As a result, the quasi-multiplicativity property and~\eqref{e.piv_S''} imply that
\[
\Pro \left[ \Piv_{\widetilde{S}}(\cross(2R,R)) \right] \leq  \grandO{1} \alpha^{an,++}_2(d_1+\rho,R) \, \alpha^{an,+}_3(d_0 + \rho,d_1) \, \alpha^{an}_4(\rho,d_0) \, .
\]
Since there are $\grandO{1}$ squares $S' \in \setS$ such that $d' \leq 4\rho$, the proof is over in this case.
\item Assume that $d' \in [4\rho,R/1000]$ and observe that $\Piv_{S'}(\cross(2R,R))$ is included in the intersection of the two independent events $\Piv_{\widetilde{S}}(\cross(2R,R))$ and $\neg \dense_{1/100}(\widetilde{S}\setminus S)$ (indeed, if $\dense_{1/100}(\widetilde{S} \setminus S)$ holds, then there cannot exist a Voronoi cell that intersects both $S'$ and $[-2R,2R] \times [-R,R]$, see Figure~\ref{f.boxes_dprime}). By using~\eqref{e.piv_S''} and the fact that $\Pro \left[ \neg \dense_{1/100}(\widetilde{S}\setminus S) \right] \leq \grandO{1}\exp(-\Omega(1)(d')^2)$, we obtain that $\Pro \left[ \Piv_{S'}(\cross(2R,R)) \right]$ is at most
\[
\grandO{1} \exp(-\Omega(1)(d')^2) \, \alpha^{an,++}_2(\widetilde{d}_1+\widetilde{\rho},R) \, \alpha^{an,+}_3(\widetilde{d}_0 + \widetilde{\rho},\widetilde{d}_1) \, \alpha^{an}_4(\widetilde{\rho},\widetilde{d}_0) \, .
\]
By the quasi-mutliplicativity property and since $\exp(-\Omega(1)(d')^2)$ decays super-polynomially fast, the above at most
\[
\grandO{1} \exp(-\Omega(1)(d')^2) \, \alpha^{an,++}_2(d_1+\rho,R) \, \alpha^{an,+}_3(d_0 + \rho,d_1) \, \alpha^{an}_4(\rho,d_0) \, .
\]
Let us now sum over each $S'$ such that $d' \in [4\rho,R/1000]$. Since, for each integer $k \in [\log_2(4\rho),\log_2(R/1000)]$, there exist at most $\grandO{1}2^{2k}$ squares $S'$ such that $d' \in [2^k,2^{k+1}]$, the sum is at most
\begin{multline*}
\grandO{1} \sum_{k=\log_2(4\rho)}^{\log_2(R/1000)} 2^{2k} \exp(-\Omega(1)2^{2k}) \, \alpha^{an,++}_2(d_1+\rho,R) \, \alpha^{an,+}_3(d_0 + \rho,d_1) \, \alpha^{an}_4(\rho,d_0)\\
\leq \grandO{1} \alpha^{an,++}_2(d_1+\rho,R) \, \alpha^{an,+}_3(d_0 + \rho,d_1) \, \alpha^{an}_4(\rho,d_0) \, .
\end{multline*}
\ei
This ends the proof.
\end{proof}

Now, we can prove Proposition~\ref{p.asymp_R2alpha4}.

\begin{proof}[Proof of Proposition~\ref{p.asymp_R2alpha4}] Let $\setS_1$ be the set of squares of the grid $(2 \rho \Z)^2$ that are included in $[-2R,2R] \times [-R,R]$ and are at distance at least $R/3$ from the sides of this rectangle, and let $\setS_2 \supseteq \setS_1$ be the set of squares of the grid $(2 \rho \Z)^2$ that intersect $[-2R,2R] \times [-R,R]$. First, note that if we use Lemma~\ref{l.piv_bulk}, we obtain that
\[
\sum_{S \in \setS_1} \Pro \left[ \Piv_S(\cross(2R,R)) \right] \asymp \left( \frac{R}{\rho} \right)^2 \alpha^{an}_4(\rho,R) \, .
\]
Hence, it is sufficient to prove that
\[
\sum_{S \text{ square of the grid } (2 \rho \Z)^2}  \Pro \left[ \Piv_S(\cross(2R,R)) \right] \leq \grandO{1} \, \left( \frac{R}{\rho} \right)^2 \alpha^{an}_4(\rho,R) \, .
\]
Moreover, by Lemma~\ref{l.piv_far_away}, it is sufficient to prove the estimate by summing only on $\setS_2$. Let $S \in \setS_2$. By using Lemma~\ref{l.piv_boundary} combined with the estimates~\eqref{e.alpha4easy} (to control $\alpha^{an,+}_3(\cdot,\cdot)$) and~\eqref{e.poly} (to control $\alpha^{an,++}_2(\cdot,\cdot)$), we obtain that there exists an exponent $a > 0$ such that
\[
\Pro \left[ \Piv_S(\cross(2R,R)) \right] \leq \grandO{1} \, \left( \frac{d_1 + \rho}{R} \right)^a \, \alpha^{an}_4(\rho,d_0) \, \alpha^{an}_4(d_0+\rho,d_1) \, .
\]
The quasi-multiplicativity property (together with~\eqref{e.poly}) implies that
\[
\alpha^{an}_4(\rho,d_0) \, \alpha^{an}_4(d_0+\rho,d_1) \leq \grandO{1} \alpha^{an}_4(\rho,d_0+\rho) \, \alpha^{an}_4(d_0+\rho,d_1+\rho) \leq \grandO{1} \, \alpha^{an}_4(\rho,d_1+\rho) \, .
\]
If we use once again the quasi-multiplicativity property and the estimate~\eqref{e.alpha4easy}, we obtain that
\[
\Pro \left[ \Piv_S (\cross(2R,R)) \right] \leq \grandO{1} \, \left( \frac{d_1 + \rho}{R} \right)^a \, \left( \frac{R}{d_1+\rho} \right)^2 \, \alpha^{an}_4(\rho,R) \, .
\]
%Now, note that:
%\[
%\alpha^{an}_4(r,d_1) \leq \grandO{1} \alpha^{an}_4(\rho,d_1+\rho) \leq \grandO{1} \frac{\alpha^{an}_4(r,R)}{\alpha^{an}_4(d_1+\rho,R)} \leq \grandO{1} \alpha^{an}4(\rho,R) \left( \frac{d_1 + \rho}{R} \right)^{-2} \, ,
%\]
%hence:
%\[
%\Pro \left[ \Piv_S (\cross(2R,R)) \right] \leq \grandO{1} \left( \frac{d_1 + \rho}{R} \right)^{a-2} \, \alpha^{an}_4(\rho,R) \, .
%\]
Now, note that the number of squares $S \in \setS_2$ such that $d_1+\rho \in [(2^k-1)\rho,2^{k+1}\rho]$ is $0$ if $k \geq \log_2(R/\rho)$ and is at most $\grandO{1} \, 2^{2k}$ otherwise. Therefore,
\begin{eqnarray*}
\sum_{S \in \setS_2} \Pro \left[ \Piv_S (\cross(2R,R)) \right] & \leq & \grandO{1} \alpha^{an}_4(\rho,R) \sum_{k = 0}^{\lfloor \log_2(R/\rho) \rfloor} 2^{2k} \left( \frac{2^k \rho}{R} \right)^{a-2}\\
& \leq & \grandO{1} \, \alpha^{an}_4(\rho,R) \, \left( \frac{\rho}{R} \right)^{a-2} \sum_{k = 0}^{\lfloor \log_2(R/\rho) \rfloor} 2^{ka}\\
& \leq & \grandO{1} \, \alpha^{an}_4(\rho,R) \, \left( \frac{\rho}{R} \right)^{a-2} \left( \frac{R}{\rho} \right)^{a}\\
& = & \grandO{1} \, \alpha^{an}_4(\rho,R) \, \left( \frac{R}{\rho} \right)^2 \, ,
\end{eqnarray*}
which is the desired result.
\end{proof}

Now, let us discuss the same kind of questions for arm events instead of crossing events, i.e. let us prove Proposition~\ref{p.arm_event_asymp_R2alpha4}. The main difference is that we will have to use Item~(ii) of Proposition~\ref{p.alpha4} instead of the weaker estimate~\eqref{e.alpha4easy}. As previously, let $y$ be a point of the plane, let $\rho \geq 1$, let $S = B_\rho(y)$ and let $R \in [10\rho,+\infty)$. Also, let $j \in \N^*$. We will need the following lemmas which are similar to Lemmas~\ref{l.piv_bulk}, \ref{l.piv_boundary} and~\ref{l.piv_far_away}.
\begin{lem}\label{l.piv_arm_1}
Let $y$, $\rho$, $R$ and $S=B_\rho(y)$ as above and assume that $S \subseteq A(R/4,R/2)$. Then,
\[
\Pro \left[ \Piv_S( \arm_j(1,R) ) \right] \leq \grandO{1} \alpha^{an}_j(R) \, \alpha^{an}_4(\rho,R) \, . 
\]
\end{lem}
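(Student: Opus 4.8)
The plan is to mimic the structure of Lemma~\ref{l.piv_bulk}, splitting the pivotal event $\Piv_S(\arm_j(1,R))$ into a ``$4$-arm contribution around $S$'' times an ``$j$-arm contribution at the global scale''. First I would observe that, up to a null set, if $S$ is annealed-pivotal for $\arm_j(1,R)$ then (a) the $4$-arm event $\arm_4(\rho, R/8)$ holds around $S$ — two black and two white arms from $\partial S$ to $\partial B_{R/8}(y)$, using that $S\subseteq A(R/4,R/2)$ so $B_{R/8}(y)\subseteq A(R/8, 5R/8)$ lies well inside $A(1,R)$ — and (b) the $j$-arm event $\arm_j(1, R/8)$ and the $j$-arm event $\arm_j(5R/8, R)$ both hold, suitably interpreted, because the $j$ interfaces of $\arm_j(1,R)$ must pass the annuli $A(1,R/8)$ and $A(5R/8,R)$ which are disjoint from $B_{R/8}(y)$. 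The geometric point is that $S$ being pivotal forces one of the $j$ crossing arms to be ``routed through'' a neighborhood of $S$, so locally near $S$ one sees a $4$-arm picture.

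The difficulty, exactly as in Lemma~\ref{l.piv_bulk}, is spatial dependence: $\arm_j(1,R)$ and $\arm_4(\rho,R/8)$ are degenerate events and a Voronoi cell could straddle $\partial B_{R/8}(y)$, so the events above are not literally independent. I would handle this with the same device used there: replace the genuine arm events by their ``hatted'' versions $\widehat{\arm}_4(\cdot,\cdot)$ (Definition~\ref{d.hatarm}) and $\widehat{\arm}_j(\cdot,\cdot)$, which are measurable with respect to $\omega$ restricted to the relevant annuli, and insert $\dense$ events at intermediate dyadic scales $2^k$ between $\rho$ and $R/8$ (and near $R$) to control the cells that might cross the interfaces between the three regions. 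Writing $\dense(2^k\rho) := \dense_{1/100}(A(2^k\rho, 2^{k+1}\rho))$ as in~\eqref{e.from_piv_to_arm}, one gets an inclusion
\[
\Piv_S(\arm_j(1,R)) \subseteq \widehat{\arm}_j(1,R/8)\cap \widehat{\arm}_4(2\rho, R/8)\cap \widehat{\arm}_j(5R/8,R) \;\cup\; \big(\text{small remainder from }\neg\dense\big),
\]
where the three hatted events are measurable with respect to disjoint regions (an annulus near $0$ at scale $R/8$, the annulus $A(y;2\rho,R/8)$, and the annulus $A(5R/8,R)$), hence genuinely independent, while the remainder is bounded by the usual $\grandO{1}\,\alpha^{an}_j\,2^{\grandO{1}k}e^{-\Omega(1)(2^k\rho)^2}$-type sum that was already shown to be negligible in Lemmas~\ref{l.piv_bulk} and~\ref{l.pivhat}.

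Finally I would apply Proposition~\ref{p.fandalpha} to each hatted event ($f_j \asymp \alpha^{an}_j$, $f_4\asymp\alpha^{an}_4$) and then the quasi-multiplicativity property Proposition~\ref{p.quasi} together with~\eqref{e.poly} to glue the scales: $\alpha^{an}_4(2\rho,R/8)\asymp\alpha^{an}_4(\rho,R)$, and $\alpha^{an}_j(1,R/8)\,\alpha^{an}_j(5R/8,R)\asymp \alpha^{an}_j(1,R)=\alpha^{an}_j(R)$ (absorbing the constant-ratio annuli $A(R/8,5R/8)$ into the $\grandO{1}$). This yields $\Pro[\Piv_S(\arm_j(1,R))]\leq \grandO{1}\,\alpha^{an}_j(R)\,\alpha^{an}_4(\rho,R)$ as claimed. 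The main obstacle is the bookkeeping in the splitting inclusion — making sure that pivotality genuinely forces a $4$-arm picture around $S$ at a scale comparable to $R$ and not merely around $\partial S$, and that the three regions can be taken disjoint while the $\dense$-remainder stays super-polynomially small; but all of this is routine given the machinery of Section~\ref{s.first} and Proposition~\ref{p.fandalpha}, and is essentially identical to the argument already carried out for $\cross(2R,R)$ in Lemma~\ref{l.piv_bulk}.
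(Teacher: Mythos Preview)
Your approach is essentially the paper's own argument and is correct for $j$ even (and for $j=1$): pivotality plus a $\dense$ event around $y$ forces the $j$-arm event $\arm_j(1,R/8)$ near the origin together with a local $4$-arm event $\arm_4(y;2^{k+1}\rho,R/16)$ around $S$; these hat to events with disjoint supports, one applies Proposition~\ref{p.fandalpha} and the quasi-multiplicativity property, and the $\neg\dense$ remainder is handled exactly as in Lemma~\ref{l.piv_bulk}. (Your extra factor $\widehat{\arm}_j(5R/8,R)$ is harmless but unnecessary, since $\alpha^{an}_j(5R/8,R)\asymp 1$.)

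There is, however, a genuine gap for odd $j\geq 3$. Your step~(a) --- ``pivotality of $S$ forces $\arm_4(y;\rho,R/8)$'' --- fails in that case. When $j$ is odd there are two consecutive arms of the same colour whose ``separation'' is not a colour interface; a cell near $S$ can be pivotal because it merges or splits these two arms, and the resulting local picture is not $BWBW$. What one actually gets (see Appendix~\ref{a.joddpivarm} and Nolin~\cite{nolin2008near}, Figure~12) is that $\Piv_S(\arm_j(1,R))\cap\dense(y;2^k\rho)$ implies
\[
\bigcup_{l=k}^{k_0-1}\Big(\widetilde{\arm}_4\big(y;2^{k+1}\rho,2^{l+1}\rho\big)\cap \widetilde{\arm}_5\big(y;2^{l+2}\rho,2^{k_0}\rho\big)\Big),
\]
i.e.\ a $4$-arm event up to some intermediate scale $2^l\rho$ and a $5$-arm event beyond it. To sum this and still land on $\grandO{1}\alpha^{an}_4(\rho,R)$ one needs the strict comparison $\alpha^{an}_5(r,R)\leq\grandO{1}(r/R)^{\epsilon}\alpha^{an}_4(r,R)$, which follows from Item~(iii) of Proposition~\ref{p.universal} together with Item~(ii) of Proposition~\ref{p.alpha4}. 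This is the ``new difficulty'' the paper flags, and it is not covered by the bookkeeping you describe; without it, your union bound over $l$ would pick up a logarithmic loss (or worse) in $R/\rho$.
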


The following is a generalization of Lemma~\ref{l.piv_arm_1}.

\begin{lem}\label{l.piv_arm_2}
Let $y$, $\rho$, $R$ and $S=B_\rho(y)$ as above and assume that $S \subseteq B_{R/2}$. Also, let $d$ be the distance between $y$ and $0$. Then,
\[
\Pro \left[ \Piv_S( \arm_j(1,R) ) \right] \leq \grandO{1} \alpha^{an}_j(R) \, \alpha^{an}_4(\rho,d) \hspace{1em} \text{ if } d \geq 2\rho \, ,
\]
and
\[
\Pro \left[ \Piv_S( \arm_j(1,R) ) \right] \leq \grandO{1} \alpha^{an}_j(\rho,R) \hspace{1em} \text{  otherwise.}
\]
\end{lem}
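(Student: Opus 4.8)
The plan is to follow the strategy of Lemmas~\ref{l.piv_bulk} and~\ref{l.piv_boundary}: bound $\Piv_S(\arm_j(1,R))$ from above by an intersection of arm‑type events living on pairwise disjoint annuli, pass to the ``hatted'' versions so that spatial independence applies, and reassemble with the quasi‑multiplicativity property (Proposition~\ref{p.quasi}), Proposition~\ref{p.fandalpha} and~\eqref{e.poly}. Throughout I use that, since any black or white path from $\partial B_1$ to $\partial B_R$ contains a subpath lying in $\overline{A(r,R)}$ joining $\partial B_r$ to $\partial B_R$ (the odd‑$j$ ``additional black path disjoint from the other arms' cells'' clause being respected by such restrictions), one has $\arm_j(1,R)\subseteq\arm_j(r,R)$ a.s.\ whenever $1\le r\le R$, and that if $B$ is a bounded Borel set disjoint from $A(r,R)$ then pivotality of $B$ for $\arm_j(1,R)$ forces $\Pro[\arm_j(r,R)\mid\omega\cap A(r,R)]>0$, i.e.\ $\Piv_B(\arm_j(1,R))\subseteq\widehat{\arm}_j(r,R)$ a.s.\ (by the conditional‑expectation remark of Subsection~\ref{ss.notations}).

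The case $d<2\rho$ is immediate: then $B_1\cup S\subseteq B_{3\rho}$ and $S$ is (a.s.) disjoint from $A(3\rho,R)$, so by the above $\Piv_S(\arm_j(1,R))\subseteq\widehat{\arm}_j(3\rho,R)$ a.s., whence $\Pro[\Piv_S(\arm_j(1,R))]\le f_j(3\rho,R)\le\grandO{1}\,\alpha^{an}_j(\rho,R)$ by Proposition~\ref{p.fandalpha}, Proposition~\ref{p.quasi} and~\eqref{e.poly}.

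Now suppose $2\rho\le d\le R/10$. Since $S$ is disjoint from both $A(1,d/4)$ and $A(10d,R)$ (here $|y|=d$ and $\rho\le d/2$, and $10d\le R$), we get $\Piv_S(\arm_j(1,R))\subseteq\widehat{\arm}_j(1,d/4)\cap\widehat{\arm}_j(10d,R)$ a.s., two events measurable with respect to $\omega$ on the disjoint annuli $A(1,d/4)$ and $A(10d,R)$. If moreover $\rho\le d/8$, then the argument of Lemma~\ref{l.pivhat} — which only uses that, conditionally on the colouring of a surrounding annulus, an annealed‑pivotal box forces a $4$‑arm between the relevant scales, irrespective of the global event — also gives $\Piv_S(\arm_j(1,R))\subseteq\Piv_S^{A(y;\rho,d/8)}(\arm_j(1,R))$ with $\Pro[\Piv_S^{A(y;\rho,d/8)}(\arm_j(1,R))]\le\grandO{1}\,\alpha^{an}_4(\rho,d/8)$, a third event measurable on the annulus $A(y;\rho,d/8)$, which is disjoint from the previous two. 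Spatial independence, Proposition~\ref{p.fandalpha}, Proposition~\ref{p.quasi} and~\eqref{e.poly} then yield
\[
\Pro[\Piv_S(\arm_j(1,R))]\le\grandO{1}\,\alpha^{an}_j(1,d/4)\,\alpha^{an}_4(\rho,d/8)\,\alpha^{an}_j(10d,R)\le\grandO{1}\,\alpha^{an}_j(R)\,\alpha^{an}_4(\rho,d)\,,
\]
where the last step uses $\alpha^{an}_j(1,d)\,\alpha^{an}_j(d,R)\asymp\alpha^{an}_j(1,R)=\alpha^{an}_j(R)$. If instead $\rho>d/8$ then $\alpha^{an}_4(\rho,d)=\Omega(1)$ by~\eqref{e.poly}, so the two‑region bound $\Pro[\Piv_S(\arm_j(1,R))]\le\grandO{1}\,\alpha^{an}_j(1,d/4)\,\alpha^{an}_j(10d,R)\le\grandO{1}\,\alpha^{an}_j(R)$ already gives the claim. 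Finally, if $d>R/10$ then $d\asymp R$ and $S$ sits at distance $\asymp R$ from both $0$ and $\partial B_R$; the argument proving Lemma~\ref{l.piv_arm_1} applies with the obvious modifications and gives $\Pro[\Piv_S(\arm_j(1,R))]\le\grandO{1}\,\alpha^{an}_j(R)\,\alpha^{an}_4(\rho,R)\le\grandO{1}\,\alpha^{an}_j(R)\,\alpha^{an}_4(\rho,d)$ (the last inequality by Proposition~\ref{p.quasi} and~\eqref{e.poly}, since $\rho<d\le R$).

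The main obstacle is the bookkeeping just sketched, on two fronts: (i) every geometric arm event must be replaced by a version measurable on a prescribed annulus before invoking spatial independence — this is exactly the purpose of the events $\widehat{\arm}_j(\cdot,\cdot)$ and $\Piv^E_D(\cdot)$ and of Proposition~\ref{p.fandalpha} — and (ii) one must keep track of the degenerate regimes ($d$ comparable to $\rho$, or to $R$, or $d/4<1$) in which one of the three annuli collapses; in each such regime the corresponding arm factor is $\Omega(1)$ by~\eqref{e.poly}, so the estimate still closes. No new ideas beyond those of Lemmas~\ref{l.piv_bulk}, \ref{l.piv_boundary}, \ref{l.pivhat} and~\ref{l.piv_arm_1} are needed.
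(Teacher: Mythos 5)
Your overall decomposition follows the paper's strategy (the paper itself says these four lemmas are proved like Lemmas~\ref{l.piv_bulk}, \ref{l.piv_boundary}, \ref{l.piv_far_away} for crossing events), and your case $d<2\rho$ and the final reassembly via quasi-multiplicativity are fine. However, there is a genuine gap in the step where you bound $\Pro\bigl[\Piv_S^{A(y;\rho,d/8)}(\arm_j(1,R))\bigr]\le\grandO{1}\,\alpha^{an}_4(\rho,d/8)$.

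You justify this by asserting that the argument of Lemma~\ref{l.pivhat} ``only uses that, conditionally on the colouring of a surrounding annulus, an annealed-pivotal box forces a $4$-arm between the relevant scales, irrespective of the global event.'' This assertion is false when $j$ is odd and $j\geq 3$, and the paper flags it as the key new difficulty: the proof of the lemma explicitly notes that for $j$ odd ``there is either a $4$-arm event or a $6$-arm event'' around a pivotal box, and refers to Appendix~\ref{a.joddpivarm}. There, the inclusion analogous to~\eqref{e.from_piv_to_arm} reads, for a pivotal box in the bulk,
\[
\Piv_S(\arm_j(1,R))\cap\dense(\cdot)\ \subseteq\ \bigcup_{l=k}^{k_0-1}\Bigl(\widetilde{\arm}_4(y;2^{k+1}\rho,2^{l+1}\rho)\cap\widetilde{\arm}_5(y;2^{l+2}\rho,2^{k_0}\rho)\Bigr),
\]
not $\arm_4(y;\cdot,\cdot)$ alone. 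The source of the extra $5$-arm is the unpaired ``additional black path'' in the definition of the odd-arm event: $S$ can be pivotal by affecting the separation constraint of that path, and this does not localise to a clean $4$-arm picture near $S$. Bounding the union above by $\grandO{1}\,\alpha^{an}_4(\rho_1,\rho_2)$ is still possible, but it requires the nontrivial input
\[
\alpha^{an}_5(\rho_1,\rho_2)\ \leq\ \grandO{1}\left(\frac{\rho_1}{\rho_2}\right)^{\epsilon}\alpha^{an}_4(\rho_1,\rho_2),
\]
coming from item~ii) of Proposition~\ref{p.alpha4} together with item~iii) of Proposition~\ref{p.universal}, so that the geometric sum over the crossover scale $l$ converges. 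Your proposal neither states the correct inclusion for $j$ odd nor invokes the $5$-arm/$4$-arm comparison, and the concluding sentence ``No new ideas beyond those of Lemmas~\ref{l.piv_bulk}, \ref{l.piv_boundary}, \ref{l.pivhat} and~\ref{l.piv_arm_1} are needed'' is precisely what the paper says is \emph{not} the case. The same issue propagates to your ``$d>R/10$'' subcase, since you defer to Lemma~\ref{l.piv_arm_1}, which needs the same fix. The bound you wrote down is correct, but the argument you give for it fails for $j$ odd.
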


Let $d_0=d_0(S)$ and $d_1=d_1(S)$ be defined as in the study of $\cross(2R,R)$, except that we consider distances to the box $B_R$ instead of the rectangle $[-2R,2R] \times [-R,R]$.

\begin{lem}\label{l.piv_arm_3}
Let $y$, $\rho$, $R$ and $S=B_\rho(y)$ as above and assume that $S \cap A(R/2,R) \neq \emptyset$. Then,
\[
\Pro \left[ \Piv_S( \arm_j(1,R) ) \right] \leq \grandO{1} \alpha^{an}_j(R) \, \alpha^{an,++}_3(d_1+\rho,R) \, \alpha^{an,+}_3(d_0+\rho,d_1) \, \alpha^{an}_4(\rho,d_0) \, . 
\]
\end{lem}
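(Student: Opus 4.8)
The plan is to follow the proof of Lemma~\ref{l.piv_boundary} essentially verbatim, adding one more factor to the decomposition and changing the geometry at the corners of $B_R$. Set up the boundary geometry as above the statement of Lemma~\ref{l.piv_boundary}: write $d_0=d_0(S)$ for the distance from $S$ to the nearest side of $B_R$, $y_0$ for the orthogonal projection of $y$ onto that side, $d_1=d_1(S)\ge d_0$ for the distance from $y_0$ to the nearest corner $y_1$ of $B_R$. Since $\rho\le R/10$ and $S\cap A(R/2,R)\ne\emptyset$, the centre satisfies $\|y\|_\infty\ge R/2-\rho\ge 2R/5$, so $S$ is disjoint from $B_{R/100}$ and, more generally, macroscopically far from the origin.

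I would then show that, up to a null set, $\Piv_S(\arm_j(1,R))$ is contained in the intersection of the following four events, each measurable with respect to $\omega$ on one of four pairwise disjoint regions:
\begin{itemize}
\item $\widehat{\arm}_j(1,R/100)$, measurable with respect to $\omega\cap A(1,R/100)$. This holds because $\arm_j(1,R)\subseteq\arm_j(1,R/100)$ and because, on $\Piv_S$, the conditional probability of $\arm_j(1,R)$ given $\omega$ outside $S$ is positive; since $A(1,R/100)$ is disjoint from $S$, the elementary conditional-expectation fact recalled in Subsection~\ref{ss.intro_piv} then gives that the conditional probability of $\arm_j(1,R/100)$ given $\omega\cap A(1,R/100)$ is positive.
\item A full-plane $4$-arm event around $S$ between scales $\rho$ and $(\rho+d_0)/10$, measurable with respect to $\omega\cap A(y;\rho,(\rho+d_0)/10)$, exactly as the ``$\Piv^{A(\cdot)}_S$'' event of Lemma~\ref{l.pivhat}: away from $\partial B_R$ an annealed-pivotal box forces four arms of alternating colours.
\item A half-plane $3$-arm event centred at $y_0$ between scales $10(d_0+\rho)$ and $d_1$, measurable with respect to $\omega\cap A(y_0;10(d_0+\rho),d_1)$, exactly as in Lemma~\ref{l.piv_boundary}: pinching an arm off a side of $B_R$ requires it together with two flanking arms of the opposite colour.
\item A quarter-plane $3$-arm event centred at the corner $y_1$ between scales $100(d_1+\rho)$ and $R$, measurable with respect to $\omega$ outside $B_{100(d_1+\rho)}(y_1)$.
\end{itemize}

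The only genuinely new point compared with Lemma~\ref{l.piv_boundary} is the last item: for the crossing event the two sides meeting at a corner carry endpoints of opposite colours, so one only extracts a $2$-arm event there, whereas for $\arm_j(1,R)$ the whole boundary $\partial B_R$ is a common target for the $j$ arms, so pinching one arm off a corner requires a $3$-arm event in the quarter-plane; the factor $\widehat{\arm}_j(1,R/100)$ likewise has no analogue in Lemma~\ref{l.piv_boundary} and is what produces the extra $\alpha^{an}_j(R)$. The four regions are pairwise disjoint exactly as in Lemma~\ref{l.piv_boundary} (each annulus has the next box as its inner square, and $B_{R/100}$ is disjoint from all of them; when $d_0$ or $d_1$ is of order $R$ several annuli degenerate, but then the corresponding $\alpha^{an,\pm}$-factor is $1$ by the convention $\alpha(r,R)=1$ for $r\ge R$). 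Hence, by spatial independence, $\Pro[\Piv_S(\arm_j(1,R))]$ is at most the product of the four probabilities, and each factor is estimated exactly as in Lemmas~\ref{l.piv_bulk}, \ref{l.pivhat} and~\ref{l.piv_boundary}: a dyadic union bound over scales, the density events ``$\dense$'' to remove spatial dependencies, and Proposition~\ref{p.fandalpha} together with quasi-multiplicativity (in the full plane, the half-plane and the quarter-plane, all available from Section~\ref{s.quasi}) give
\[
\Pro\big[\widehat{\arm}_j(1,R/100)\big]\asymp\alpha^{an}_j(R/100)\asymp\alpha^{an}_j(R)
\]
and, after replacing $(\rho+d_0)/10$ by $d_0$ via quasi-multiplicativity and~\eqref{e.poly}, bound the remaining three factors by $\grandO{1}\alpha^{an}_4(\rho,d_0)$, $\grandO{1}\alpha^{an,+}_3(d_0+\rho,d_1)$ and $\grandO{1}\alpha^{an,++}_3(d_1+\rho,R)$. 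Multiplying yields the claim.

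The main obstacle is the usual degeneracy of $\Piv_S$: before multiplying probabilities one must replace every piece by a $\widehat{\arm}$-type event measurable on a \emph{fixed} annulus, which means running the Lemma~\ref{l.piv_bulk}-style dyadic-union-bound-plus-density argument four times (full plane, half-plane, quarter-plane, and once more for the $j$ arms), and one must handle with care, as in Lemma~\ref{l.piv_boundary}, the boundary cases where $S$ touches $\partial B_R$ or where $d_0$ or $d_1$ is comparable to $R$ --- in all of which the corresponding arm-event factor is trivially $\grandO{1}$, so the estimate still closes.
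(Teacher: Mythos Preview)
Your decomposition is essentially the paper's own: one extra annulus around the origin to capture $\widehat{\arm}_j(1,R/100)$ (producing the factor $\alpha^{an}_j(R)$), and the corner event upgraded from a quarter-plane $2$-arm to a quarter-plane $3$-arm because for $\arm_j(1,R)$ all arms share the single target $\partial B_R$. For even $j$ your argument is complete and matches the paper.

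There is, however, a genuine gap when $j$ is odd and $\ge 3$. Your second item asserts that an annealed-pivotal box in the bulk of the annulus ``forces four arms of alternating colours''. This is automatic only when the $j$ arms are themselves alternating, i.e.\ when $j$ is even. For odd $j$ two consecutive arms have the same colour, and a box can be pivotal by separating \emph{one} of a same-coloured pair from $\partial B_R$ without creating four alternating arms around $S$ at every intermediate scale. The paper singles this out as a ``new difficulty'' (see the proof of Lemmas~\ref{l.piv_arm_1}--\ref{l.piv_arm_4} and Appendix~\ref{a.joddpivarm}): on $\Piv_S(\arm_j(1,R))\cap\dense$, what one actually obtains around $S$ is a dyadic union
\[
\bigcup_{l} \big\{\text{$4$-arm from scale $2^{k+1}\rho$ to $2^{l+1}\rho$}\big\}\cap\big\{\text{$5$-arm from $2^{l+2}\rho$ to $2^{k_0}\rho$}\big\},
\]
and summing this to $\grandO{1}\,\alpha^{an}_4(\rho,d_0)$ requires the strict comparison $\alpha^{an}_5(r,R)\le\grandO{1}(r/R)^\epsilon\,\alpha^{an}_4(r,R)$, i.e.\ Item~iii) of Proposition~\ref{p.universal} combined with Item~ii) of Proposition~\ref{p.alpha4}. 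The same mechanism recurs for the half-plane and quarter-plane pieces, where one needs the analogous estimates $\alpha^{an,+}_4\le\grandO{1}(r/R)^\epsilon\,\alpha^{an,+}_3$ and $\alpha^{an,++}_4\le\grandO{1}(r/R)^\epsilon\,\alpha^{an,++}_3$ recorded at the end of Appendix~\ref{a.joddpivarm}. Without invoking these extra inputs your bound does not close for odd $j$.
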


The following lemma is the analogue of Lemma~\ref{l.piv_far_away}:
\begin{lem}\label{l.piv_arm_4}
Let $\rho \geq 1$ and let $R \geq 100\rho$. Also, let $S$ be a square of the grid $2\rho\Z^2$ that intersects $\partial B_{R}$. Moreover, let $\setS$ be the set of all squares $S'$ of the grid $2\rho \Z^2$ that do not intersect $B_{R}$ and are such that $S$ is the argmin of $S'' \mapsto \dist(S'',S)$ where $S''$ ranges over the squares of the grid $2\rho\Z^2$ that intersect $\partial B_{R}$. Then,
\[
\sum_{S' \in \setS} \Pro \left[ \Piv_{S'}(\cross(2R,R)) \right] \leq \grandO{1} \, \alpha_j^{an}(R) \, \alpha^{an,++}_3(d_1+\rho,R) \, \alpha^{an,+}_3(d_0 + \rho,d_1) \, \alpha^{an}_4(\rho,d_0) \, .
\]
\end{lem}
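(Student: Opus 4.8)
The plan is to repeat, essentially verbatim, the proof of Lemma~\ref{l.piv_far_away}, with Lemma~\ref{l.piv_boundary} replaced everywhere by its arm-event analogue Lemma~\ref{l.piv_arm_3} (so that the pivotal events are those for $\arm_j(1,R)$, and the extra factor $\alpha^{an}_j(R)$ simply rides along in every bound). Fix $S'\in\setS$ and write $d'=d'(S')$ for the distance between $S'$ and $B_R$. First I would dispose of the squares with $d'\geq R/1000$: for such an $S'$, being annealed-pivotal forces that, conditionally on $\eta\setminus S'$, with positive probability some Voronoi cell meets both $S'$ and $B_R$, an event of probability at most $\grandO{1}\exp(-\Omega(1)(d')^2)$; hence the total contribution of these squares is at most $\grandO{1}\exp(-\Omega(1)R^2)$, which is negligible compared with the right-hand side since all the arm quantities there decay only polynomially by~\eqref{e.poly}.

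For $d'\leq R/1000$, let $y$ be the centre of $S$, set $\widetilde{\rho}=3(\rho+d')$ and $\widetilde{S}=B_{\widetilde{\rho}}(y)$, so that $\widetilde{S}\supseteq S\cup S'$ and therefore $\Piv_{S'}(\arm_j(1,R))\subseteq\Piv_{\widetilde{S}}(\arm_j(1,R))$. Since $\widetilde{\rho}\leq R/10$, Lemma~\ref{l.piv_arm_3} applies to $\widetilde{S}$, whose associated distances $\widetilde{d}_0,\widetilde{d}_1$ satisfy $|\widetilde{d}_i-d_i|\leq\grandO{1}(\rho+d')$ for $i=0,1$. If $d'\leq 4\rho$, then $\widetilde{\rho},\widetilde{d}_0,\widetilde{d}_1$ differ from $\rho,d_0,d_1$ by only $\grandO{1}\rho$, so the quasi-multiplicativity property (Proposition~\ref{p.quasi}, together with its half-plane and quarter-plane versions used in Proposition~\ref{p.universal}) and~\eqref{e.poly} turn the bound of Lemma~\ref{l.piv_arm_3} into $\grandO{1}\,\alpha^{an}_j(R)\,\alpha^{an,++}_3(d_1+\rho,R)\,\alpha^{an,+}_3(d_0+\rho,d_1)\,\alpha^{an}_4(\rho,d_0)$; as there are only $\grandO{1}$ squares $S'\in\setS$ with $d'\leq 4\rho$, this range is settled.

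Finally, for $d'\in[4\rho,R/1000]$, I would observe, exactly as in Lemma~\ref{l.piv_far_away}, that $\Piv_{S'}(\arm_j(1,R))$ is contained in the intersection of the two \emph{independent} events $\Piv_{\widetilde{S}}(\arm_j(1,R))$ and $\neg\dense_{1/100}(\widetilde{S}\setminus S)$ — the former measurable with respect to $\omega\setminus\widetilde{S}$, the latter with respect to $\eta$ on $\widetilde{S}\setminus S$ — and that $\Pro[\neg\dense_{1/100}(\widetilde{S}\setminus S)]\leq\grandO{1}\exp(-\Omega(1)(d')^2)$ by the argument of Lemma~\ref{l.dense}. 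Combining this with Lemma~\ref{l.piv_arm_3} and using quasi-multiplicativity to replace $\widetilde{\rho},\widetilde{d}_i$ by $\rho,d_i$ up to polynomially bounded factors gives
\[
\Pro\big[\Piv_{S'}(\arm_j(1,R))\big]\leq\grandO{1}\,e^{-\Omega(1)(d')^2}\,\alpha^{an}_j(R)\,\alpha^{an,++}_3(d_1+\rho,R)\,\alpha^{an,+}_3(d_0+\rho,d_1)\,\alpha^{an}_4(\rho,d_0).
\]
Summing over the $\grandO{1}\,2^{2k}$ squares $S'\in\setS$ with $d'\in[2^k,2^{k+1}]$ and then over $k\in[\log_2(4\rho),\log_2(R/1000)]$, the factor $2^{2k}e^{-\Omega(1)2^{2k}}$ beats the remaining polynomial corrections and the geometric series converges, yielding the claimed bound.

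The only genuine difficulty is bookkeeping: one must check that the perturbation $\rho\to\widetilde{\rho}$, $d_i\to\widetilde{d}_i$ of size $\grandO{1}(\rho+d')$ is always absorbable by quasi-multiplicativity together with~\eqref{e.poly} (which in particular requires that the quasi-multiplicativity property be available for the half-plane and quarter-plane $3$-arm quantities, via Subsection~\ref{ss.half-plane}/Proposition~\ref{p.universal}), and that the super-polynomial factor $e^{-\Omega(1)(d')^2}$ genuinely dominates those polynomial factors once summed over the dyadic scales of $d'$. There is no new idea beyond the proof of Lemma~\ref{l.piv_far_away}.
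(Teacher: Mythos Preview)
Your proposal is correct and is precisely the approach the paper takes: the paper's proof of Lemmas~\ref{l.piv_arm_1}--\ref{l.piv_arm_4} simply says they are ``very similar to the proof of the analogous results for crossing events (Lemmas~\ref{l.piv_bulk}, \ref{l.piv_boundary} and~\ref{l.piv_far_away})'', with the only new wrinkle (the $4$-arm vs.\ $6$-arm issue for $j$ odd) arising in the proofs of Lemmas~\ref{l.piv_arm_1}--\ref{l.piv_arm_3}, not in the passage from Lemma~\ref{l.piv_arm_3} to Lemma~\ref{l.piv_arm_4}. You have also correctly read past the typo in the statement (the event should be $\Piv_{S'}(\arm_j(1,R))$, not $\Piv_{S'}(\cross(2R,R))$).
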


\begin{proof}[Proof of Lemmas~\ref{l.piv_arm_1}, \ref{l.piv_arm_2}, \ref{l.piv_arm_3} and~\ref{l.piv_arm_4}]
The proof of these lemmas is very similar to the proof of the analogous results for crossing events (Lemmas~\ref{l.piv_bulk},~\ref{l.piv_boundary} and~\ref{l.piv_far_away}). However, there is a new difficulty when $j$ is odd and larger than $1$. More precisely, if some box in the bulk is pivotal (and if $\eta$ is sufficiently dense around this box) then there is a $4$-arm event around this box if $j$ is even and there is either a $4$-arm event or a $6$-arm event if $j$ is odd. For more details, see Appendix~\ref{a.joddpivarm}. See also~\cite{nolin2008near} (e.g. Figure~12 therein) where Nolin deals with the same problem for Bernoulli percolation on the triangular lattice.
\end{proof}

\begin{proof}[Proof of Proposition~\ref{p.arm_event_asymp_R2alpha4}] Let $\setS_1$ be the set of squares of the grid $(2 \rho \Z)^2$ that intersect $B_{R/2}^c$. By using Lemmas~\ref{l.piv_arm_3} and~\ref{l.piv_arm_4} and by following the proof of Proposition~\ref{p.asymp_R2alpha4}, we obtain that
\[
\sum_{S \in \setS_1} \Pro \left[ \Piv_S ( \arm_j(1,R) ) \right] \leq \grandO{1} \alpha^{an}_j(R) \left( \frac{R}{\rho} \right)^2 \alpha^{an}_4(\rho,R) \, .
\]
Let $\setS_2$ be the set of squares of the grid $(2 \rho \Z)^2$ that are included in $B_{R/2}$. Lemma~\ref{l.piv_arm_2} implies that
\[
\sum_{S \in \setS_2} \Pro \left[ \Piv_S ( \arm_j(1,R) ) \right] \leq \grandO{1} \alpha^{an}_j(\rho,R) + \grandO{1} \alpha^{an}_j(R) \sum_{k=0}^{\lfloor \log_2 \left( \frac{R}{\rho} \right) \rfloor} 2^{2k} \, \alpha^{an}_4(\rho,2^k\rho) \, .
\]
The quasi-multiplicativity property and the fact that $\alpha_4^{an}(2^k\rho,R) \geq \Omega(1) \left( 2^k\rho/R \right)^{2-\epsilon}$ (see Item~(ii) of Proposition~\ref{p.alpha4}) imply that
\[
\alpha^{an}_4(\rho,2^k\rho) \leq \grandO{1} \alpha^{an}_4(\rho,R) \, \left( \frac{R}{2^k\rho} \right)^{2-\epsilon} \, .
\]
Therefore, $\sum_{S \in \setS_2} \Pro \left[ \Piv_S ( \arm_j(1,R) ) \right]$ is less than or equal to
\begin{multline*}
\grandO{1} \alpha^{an}_j(\rho,R) + \grandO{1} \alpha^{an}_j(R) \, \alpha^{an}_4(\rho,R) \sum_{k=0}^{\lfloor \log_2 \left( \frac{R}{\rho} \right) \rfloor} 2^{2k} \left( \frac{R}{2^k\rho} \right)^{2-\epsilon}\\
\leq  \grandO{1} \alpha^{an}_j(\rho,R) + \grandO{1} \alpha^{an}_j(R) \, \alpha^{an}_4(\rho,R) \left( \frac{R}{\rho} \right)^2 \, .
\end{multline*}
We are done since $\setS_1 \cup \setS_2 = \lbrace \text{squares of the grid } (2 \rho \Z)^2 \rbrace$.
\end{proof}

\section{Extension of the results to the near-critical phase}\label{s.pneq1/2}

In this section, we extend the results of other sections to the near-critical phase. Remember the definition of the correlation length $L^{an}(p)$ from Definition~\ref{d.corrlength}. Let us first prove the following result.
\begin{lem}
\label{l.exp_decay}
For every $p > 1/2$, $L^{an}(p) < +\infty$.
\end{lem}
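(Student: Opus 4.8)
The plan is to combine the self-duality of Voronoi percolation with the exponential decay of connection probabilities in the subcritical phase. Fix $p>1/2$ and set $q=1-p<1/2$. First I would invoke self-duality: for every $R\geq 1$, a.s. the rectangle $[-2R,2R]\times[-R,R]$ is crossed left--right by a black path if and only if it is \emph{not} crossed top--bottom by a white path (Lemma~$12$ in Chapter~$8$ of~\cite{bollobas2006percolation}). Hence, for every $R\geq 1$,
\[
\Pro_p\left[\cross(2R,R)\right]=1-\Pro_p\left[\,[-2R,2R]\times[-R,R]\text{ is crossed top--bottom by white}\,\right].
\]
Using that $\Pro_p$ is invariant under rigid motions (rotate by a right angle) and the colour-swap symmetry (which maps $\Pro_p$ to $\Pro_{1-p}=\Pro_q$ and exchanges black and white), the top--bottom white crossing probability of $[-2R,2R]\times[-R,R]$ under $\Pro_p$ equals $\Pro_q[\cross(R,2R)]$. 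So it suffices to prove that $\Pro_q[\cross(R,2R)]\to 0$ as $R\to+\infty$.

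Next I would estimate $\Pro_q[\cross(R,2R)]$ by exponential decay. The event $\cross(R,2R)$ forces a black path inside $[-R,R]\times[-2R,2R]$ joining its left side $\{-R\}\times[-2R,2R]$ to its right side; such a path has Euclidean diameter at least $2R$ and meets the left side. Covering the left side by $\grandO{1}$ translates of the box $B_{R/8}$ and applying a union bound together with the translation invariance of $\Pro_q$, one obtains
\[
\Pro_q\left[\cross(R,2R)\right]\leq \grandO{1}\;\Pro_q\left[\text{there is a black path from } B_{R/8}(0)\text{ to }\partial B_R(0)\right].
\]
Since $q<1/2=p_c$ (Bollob\'as--Riordan~\cite{bollobas2006critical}), the exponential decay of subcritical connection probabilities for Voronoi percolation proved in~\cite{duminil2017exponential} (see also~\cite{ahlberg2017noise}) yields constants $C=C(q)<+\infty$ and $c=c(q)>0$ such that $\Pro_q[\text{black path from }B_{R/8}(0)\text{ to }\partial B_R(0)]\leq C\,e^{-cR}$. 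Combining the last three displays gives $\Pro_p[\cross(2R,R)]\geq 1-\grandO{1}\,e^{-cR}\to 1$ as $R\to+\infty$; in particular there is $R_0=R_0(p)<+\infty$ with $\Pro_p[\cross(2R_0,R_0)]\geq 1-\epsilon_0$, so that $L^{an}(p)\leq R_0<+\infty$.

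This argument is essentially routine, and there is no serious obstacle: the two points that deserve a little care are the geometric reduction of $\cross(R,2R)$ to a single one-endpoint connection event, and checking that the exponential-decay statement we cite is available in a form that controls a black path emanating from a box (rather than from a single Poisson point) — the latter follows from the cited results after a trivial covering of $B_{R/8}(0)$ by $\grandO{R^2}$ unit boxes, the extra polynomial factor being absorbed by the exponential.
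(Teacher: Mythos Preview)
Your proof is correct and follows essentially the same approach as the paper: both use self-duality to reduce $\Pro_p[\cross(2R,R)]\geq 1-\epsilon_0$ to controlling a subcritical connection probability at parameter $q=1-p<1/2$, and then invoke the exponential decay of $\alpha^{an}_{1,q}(R)$ from~\cite{bollobas2006critical,duminil2017exponential}. The paper's version is more terse (it simply states ``by duality, this implies that for every $p>1/2$ there exists $c'=c'(p)>0$ such that $\Pro_p[\cross(2R,R)]\geq 1-\exp(-c'(p)R)$''), whereas you spell out the rotation/colour-swap and the covering argument explicitly, but the content is the same.
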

\begin{proof}
This is a simple consequence of the exponential decay property Theorem~2 of~\cite{bollobas2006critical} or Theorem~1 of~\cite{duminil2017exponential}: for every $p<1/2$, there exists a constant $c = c(p) > 0$ such that
\[
\alpha^{an}_{1,p}(R) \leq \exp(-c(p) \, R) \, .
\]
Indeed, by duality, this implies that for every $p >1/2$ there exists a constant $c'=c'(p)>0$ such that
\[
\Pro_p \left[ \cross(2R,R) \right] \geq 1-\exp(-c'(p) R) \, .
\]
%By FKG-Harris inequality, we have:
%\[
%\alpha^{an}_1^{p,*}(R) = \alpha^{an}_1^{1-p}(R) \leq \grandO{1} \Pro_{1-p} \left[ 0 \leftrightarrow R \right]
%\]
%(where the constant in $\grandO{1}$ may depend on $p$). Now, note that $\neg \cross(2R,R)$ is included in the union of $\asymp R$ translated versions of the event $\arm_1^*(1,R)$. Hence:
%\[
%\Pro_p \left[ \cross(2R,R) \right] \leq \grandO{1} \, R \, e^{-c \, R} \, ,
%\]
%which implies the desired result.
\end{proof}

%\begin{rem}
%Actually, we can prove the above result without relying to~\cite{bollobas2006critical}. The proof would go as follows:
%\bi 
%\item We would state and prove the results of Section~\ref{s.pneq1/2} by assuming that our quantities are $< L^{an}(p)$ instead of $\leq L^{an}(p)$.
%\item We would then prove in Section~\ref{s.kesten} that, for every $R < L^{an}(p)$ we have:
%\[
%\frac{d}{dp} \Pro_p \left[ \cross(2R,R) \right] \asymp R^2 \, \alpha^{an}_4^{1/2}(R) \, ,
%\]
%hence:
%\[
%\Pro_p \left[ \cross(2R,R) \right] - \Pro_{1/2} \left[ \cross(2R,R) \right] \asymp (p-1/2) \, R^2 \, \alpha^{an}_4^{1/2}(R) \, .
%\]
%\item Since Proposition~\ref{p.alpha4}.............
%\ei
%\end{rem}

\subsection{Extension of the annealed and quenched box-crossing properties}\label{ss.pneq1/2_1}

Let us use the idea of Lemma~$4.17$ of~\cite{ahlberg2016sharpness} in order to extend the annealed box-crossing property to the near-critical regime.

%In Subsection~\ref{ss.pneq1/2_1}, we will prove the following analogues of Theorem~\ref{t.Tassion} and~\ref{t.AGMT} in the near-critical phase (with constants that do not depend on the parameter $p$). These results will be crucial to explain how we can extend most of the other partial results of the paper to near-criticality, see Subsection~\ref{ss.pneq1/2_2}. Remember the definition of the correlation length $L^{an}(p)$ from Definition~\ref{d.corrlength}.

\begin{prop}\label{p.near-crit_Tassion}
Let $\rho > 0$. There exists a constant $c=c(\rho) \in (0,1)$ such that, for every $p \in (1/2,3/4]$ and every $R \in (0,L^{an}(p)]$,
\[
c \leq \Pro_p \left[ \text{\textup{Cross}}(\rho R,R) \right] \leq 1-c \, .
\]
The constant $c$ may also depend on $\epsilon_0$ in the definition of $L^{an}(p)$.
\end{prop}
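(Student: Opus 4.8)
The plan is to treat the lower and upper bounds separately: the lower bound is essentially free and does not use the correlation length at all, while the upper bound is where $L^{an}(p)$ enters, via a Russo--Seymour--Welsh (RSW) bootstrap.

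For the lower bound, observe that $\cross(\rho R,R)$ is annealed-increasing, so $p\mapsto\Pro_p[\cross(\rho R,R)]$ is non-decreasing; hence for every $p\geq 1/2$ and every $R>0$,
\[
\Pro_p\!\left[\cross(\rho R,R)\right]\;\geq\;\Pro_{1/2}\!\left[\cross(\rho R,R)\right]\;\geq\;c(\rho)
\]
by Theorem~\ref{t.Tassion}. No restriction on $R$ is needed here.

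For the upper bound I would first extract a seed estimate from the definition of the correlation length. Since $L^{an}(p)$ is an infimum, one has $\Pro_p[\cross(2R,R)]<1-\epsilon_0$ for \emph{every} $R\in[1,L^{an}(p))$ --- the map $R\mapsto\Pro_p[\cross(2R,R)]$ need not be monotone, but if the inequality failed at some $R_0<L^{an}(p)$ then the infimum would be at most $R_0$, a contradiction. By the self-duality of Voronoi percolation (a rectangle is crossed lengthwise by black iff it is not crossed widthwise by white), this says equivalently that, uniformly over $p\in(1/2,3/4]$ and over scales below $L^{an}(p)$, white crosses the $2:1$ rectangle $[-2R,2R]\times[-R,R]$ in the short (easy) direction with probability $>\epsilon_0$. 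The remaining scales are harmless: for $R=L^{an}(p)$ one passes to the limit along a sequence $R_n\uparrow L^{an}(p)$ (or simply works with $R'=L^{an}(p)/2$), and any fixed bounded range of $R$ is controlled by monotonicity in $p$ together with $p\leq 3/4$, since then $\Pro_p[\cross(\rho R,R)]\leq\Pro_{3/4}[\cross(\rho R,R)]<1$ (this is, I believe, why the statement requires $p\leq 3/4$: when $p$ approaches $1$ one has $L^{an}(p)=1$ and macroscopic crossing probabilities tend to $1$, so the upper bound would otherwise be false).

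Finally I would propagate this seed to all aspect ratios. For $\rho\geq 2$ this is immediate since $\cross(\rho R,R)\subseteq\cross(2R,R)$, whence $\Pro_p[\cross(\rho R,R)]\leq\Pro_p[\cross(2R,R)]<1-\epsilon_0$. For $\rho<2$ one re-runs the RSW argument --- Tassion's argument as recalled in Appendix~\ref{a.tAGMT}, in the spirit of Lemma~$4.17$ of~\cite{ahlberg2016sharpness} --- applied to the white configuration: it uses only the FKG--Harris inequality (which holds at every $p$ for annealed-decreasing events such as white crossings), the rotational and reflectional symmetries of the model, and self-duality, none of which require $p=1/2$. This upgrades the seed to a bound $\Pro_p[\text{white crosses }[-\rho R,\rho R]\times[-R,R]\text{ the hard way}]\geq c(\rho,\epsilon_0)>0$, i.e. $\Pro_p[\cross(\rho R,R)]\leq 1-c(\rho,\epsilon_0)$, for every $R\leq L^{an}(p)$; equivalently this is the contrapositive of RSW for the black configuration (if a fat rectangle were crossed with probability close to $1$, so would a $2:1$ rectangle at a comparable scale). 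The main obstacle is precisely this last step: one must check that each crossing event entering the RSW combinations lives at a scale no larger than the target scale, so that the seed --- valid only below $L^{an}(p)$ --- can be invoked throughout the induction on aspect ratios. This scale bookkeeping is the reason the statement is confined to the near-critical window $R\leq L^{an}(p)$; away from it both the seed and the conclusion fail.
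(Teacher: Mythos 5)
Your high-level scaffolding matches the paper's: the lower bound is monotonicity in $p$ plus Theorem~\ref{t.Tassion}, the upper bound is a contrapositive argument anchored at the seed $\Pro_p\left[\cross(2R,R)\right]<1-\epsilon_0$ coming from the definition of $L^{an}(p)$, and your reading of the role of $p\leq 3/4$ (to control bounded $R$) and of the scale bookkeeping is correct. However, there is a genuine gap in the central step. You propose to ``re-run Tassion's RSW applied to the white configuration,'' on the grounds that it uses only FKG, symmetry and self-duality. This does not work as stated: for $p>1/2$ the white configuration is subcritical, so the white gluing objects that any RSW induction would need (white circuits in annuli, or white crossings at intermediate aspect ratios) do \emph{not} come with uniform-in-$p$ lower bounds inherited from criticality --- establishing such bounds is essentially the content of the proposition. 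Nor does Tassion's argument run from a seed $\geq\epsilon_0$ the way it runs from the exact self-dual value $1/2$; it is tied to criticality in a way that cannot simply be recycled at $p>1/2$.

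The paper avoids an RSW induction entirely and does a single self-contained gluing step. Assume for contradiction that $\Pro_p\left[\cross(r,2r)\right]>1-\overline{c}$ for some $r\in[N,L^{an}(p)/2]$. By the square-root trick one extracts segments $I_r,I_r'$ of length $r/N$ on the two short sides whose black connection probability is still $\geq 1-\overline{c}^{1/(4N)^2}$. One then glues four reflected/translated copies of this localized crossing with three translated \textbf{black} circuits $\text{Circ}(r/N,r)$ to get $\Pro_p\left[\cross(4r,2r)\right]\geq 1-\epsilon_0$, contradicting $2r\leq L^{an}(p)$. Both the square-root trick and the circuits are absent from your sketch, and the use of \emph{black} (not white) circuits is exactly what you cannot replace by ``white Tassion'': black circuits are annealed-increasing, so $\Pro_p\left[\text{Circ}(r/N,r)\right]\geq\Pro_{1/2}\left[\text{Circ}(r/N,r)\right]$, which in turn is controlled by~\eqref{e.poly}. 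The corresponding white objects enjoy no such bound. Your parenthetical remark (``this is the contrapositive of RSW for black: if a fat rectangle were crossed with probability close to $1$, so would a $2{:}1$ rectangle at a comparable scale'') is precisely the right statement; what is missing is the mechanism that proves it.
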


\begin{proof}
The left-hand-inequality is a direct consequence of Theorem~\ref{t.Tassion} (and is true for any $R \in (0,+\infty)$). Let us prove the right-hand-inequality. Let $\text{Circ}(r_1,r_2)$ be the event that there is a black circuit (i.e. an injective continuous function from $\R/\Z$ to the black region) in the annulus $A(r_1,r_2)$ surrounding the origin. Note that this event holds if and only if there is no white path from $\partial B_{r_1}$ to $\partial B_{r_2}$. Thanks to~\eqref{e.poly}, we know that there exists $h > 0$ such that $\Pro_{1/2} \left[ \text{Circ}(\rho,M\rho) \right] \geq 1-\frac{1}{h} M^{-h}$ for any $\rho \geq 1$ and $M \geq 1$. Fix some $N \in \N^*$ such that
\[
\left( 1 - \frac{1}{h} N^{-h}  \right)^3 \geq \left( 1-\epsilon_0 \right)^{1/2} \, ,
\]
where $\epsilon_0$ is the constant used to define $L^{an}(p)$. Next, fix some constant $\overline{c} \in (0,1)$ sufficiently small so that
\[
\left( 1-\overline{c}^{\frac{1}{(4N)^2}} \right)^4 \geq (1-\epsilon_0)^{1/2} \, .
\]
By gluing arguments, it is sufficient to prove that for every $r \in [N,\frac{L^{an}(p)}{2}]$ we have
\[
\Pro_p \left[ \cross^*(2r,r) \right] = 1- \Pro_p \left[ \cross (r,2r) \right] \geq \overline{c} \, .
\]
Assume (for a contradiction) that there exists $r \in [N,\frac{L^{an}(p)}{2}]$ such that $\Pro_p \left[ \cross (r,2r) \right] > 1-\overline{c}$. By the standard square-root trick, this implies that there exist a segment $I_r$ included in the left side of $[-r,r] \times [-2r,2r]$ and a segment $I_r'$ included in the right side of $[-r,r] \times [-2r,2r]$ such that: (a) the length of $I_r$ and $I_r'$ is $r/N$ and (b) the probability (under $\Pro_p$) that there is black path in $[-r,r] \times [-2r,2r]$ from $I_r$ to $I_r'$ is at least $1-\overline{c}^{\frac{1}{(4N)^2}}$. Let $\cross(I_r,I_r')$ denote this last event.

Now, note that there exist four events obtained by applying a translation or a reflection symmetry to $\cross(I_r,I_r')$ and three events obtained by applying a translation to $\text{Circ}(r/N,r)$ such that, if these seven events hold, then $\cross(4r,2r)$ holds, see Figure~\ref{f.gluing_paths}. By applying the (annealed) FKG-Harris inequality, we obtain that
\[
\Pro_p \left[ \cross(4r,2r) \right] \geq \left( (1-\overline{c}^{\frac{1}{(4N)^2}} \right)^4 \, \left( 1-\frac{1}{h} N^{-h} \right)^3 \geq (1-\epsilon_0)^{1/2}  \, (1-\epsilon_0)^{1/2} = 1-\epsilon_0 \, ,
\]
which is a contradiction since $2r \leq L^{an}(p)$. Note that we have used that (since $p > 1/2$)
\[
\Pro_p \left[ \text{Circ}(r/N,r) \right] \geq \Pro_{1/2} \left[ \text{Circ}(r/N,r) \right] \, .
\]
\begin{figure}[!h]
\begin{center}
\includegraphics[scale=0.6]{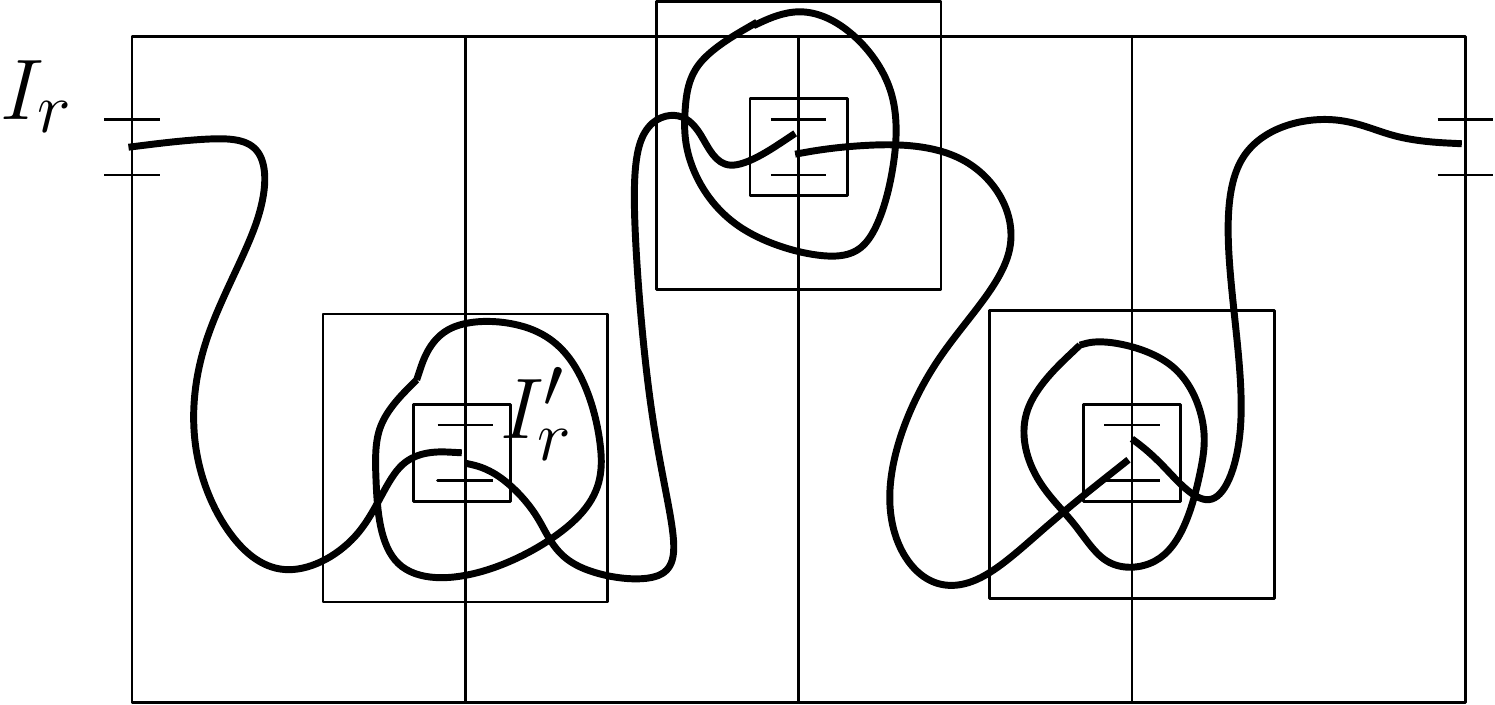}
\end{center}
\caption{Seven events to obtain $\cross(4r,2r)$.}\label{f.gluing_paths}
\end{figure}
\end{proof}

Now, we extend the quenched box-crossing result Theorem~\ref{t.AGMT}.

\begin{prop}\label{p.extension_AGMT}
Let $\rho > 0$. We have the following:
\bi
\item[i)] There exist an absolute constant $\epsilon> 0$ and a constant $C=C(\rho) < +\infty$ such that, for every $p \in (1/2,3/4]$ and every $R \in (0,+\infty)$ we have
\[
\Var \left( \Prob^\eta_p \left[ \cross(\rho R,R \right] \right) \leq C \, R^{-\epsilon} \, .
\]
This implies the following estimate:
\item[ii)] For every $\gamma \in (0,+\infty)$, there exists a positive constant $c=c(\rho,\gamma) \in (0,1)$ such that, for every $p \in (1/2,3/4]$ and every $R \in (0,L^{an}(p)]$,
\[
\Pro \left[ \, c \leq \Prob^\eta_p \left[ \cross(\rho R,R) \right] \leq 1-c \, \right] \geq 1 - R^{-\gamma} \, .
\]
\ei
The constants $C$ and $c$ may also depend on $\epsilon_0$ in the definition of $L^{an}(p)$.
\end{prop}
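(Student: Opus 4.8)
The plan is to first reduce~ii) to~i), and then to prove~i) by adapting the argument of Appendix~\ref{a.tAGMT}.

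\emph{From i) to ii).} For $R\le L^{an}(p)$, Proposition~\ref{p.near-crit_Tassion} gives $\E\!\left[\Prob^\eta_p\!\left[\cross(\rho R,R)\right]\right]=\Pro_p\!\left[\cross(\rho R,R)\right]\in[c_0,1-c_0]$ with $c_0=c_0(\rho,\epsilon_0)$, so~i) and Chebyshev's inequality yield $\Pro\!\left(\left|\Prob^\eta_p\!\left[\cross(\rho R,R)\right]-\Pro_p\!\left[\cross(\rho R,R)\right]\right|>c_0/2\right)\le 4CR^{-\epsilon}/c_0^2$, which is~ii) with the exponent $\epsilon$. To upgrade $\epsilon$ to an arbitrary $\gamma$ I would follow the bootstrap of~\cite{ahlberg2015quenched} (the paragraph below their Theorem~1.4): apply the bound just obtained at a large \emph{fixed} scale $r=r(\gamma)$; tile the scale-$R$ rectangle by scale-$r$ sub-rectangles, which depend on $\eta$ on pairwise disjoint regions and are hence independent, each being ``good'' (quenched crossing $\ge c_1$) with probability $\ge1-Cr^{-\epsilon}$; then a Peierls/duality argument (a bad quenched crossing of the large rectangle forces a dual chain of bad scale-$r$ boxes, an event of super-polynomially small probability once $r$ is large) together with the quenched FKG inequality and Proposition~\ref{p.near-crit_Tassion} recovers, with probability $\ge1-R^{-\gamma}$, a quenched crossing probability bounded below by some $c(\rho,\gamma)>0$. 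Everything here takes place at scales $\le L^{an}(p)$.

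\emph{Proof of i), the regime $R\le L^{an}(p)$.} This is the core of the statement. I would re-run the proof of Theorem~\ref{t.AGMT}~i) sketched in Appendix~\ref{a.tAGMT}: it bounds $\Var\!\left(\Prob^\eta_p\!\left[\cross(\rho R,R)\right]\right)$ via the Schramm--Steif inequality~\cite{schramm2010quantitative} applied to a randomized algorithm revealing the Poisson points that determine $\cross(\rho R,R)$, the revealment of each tile being controlled by a $4$-arm-type probability from that tile out to scale $R$ (near a side of the rectangle, an arm event in a half- or quarter-plane). The only places where $p=1/2$ is used in that argument are (a) the box-crossing estimate Theorem~\ref{t.Tassion} --- used to bound crossing and arm probabilities and to perform RSW gluings --- and (b) the classical BK inequality, used in its quenched form~\eqref{e.quenched_BK}; since~\eqref{e.quenched_BK} is valid for every $p$, and since for $R\le L^{an}(p)$ every crossing and arm estimate in the argument is invoked at a scale $\le R\le L^{an}(p)$, one may replace Theorem~\ref{t.Tassion} by its near-critical counterpart Proposition~\ref{p.near-crit_Tassion}, whose constants depend only on $\rho$ and $\epsilon_0$. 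One also needs the polynomial decay of near-critical arm probabilities at scales $\le L^{an}(p)$ (the analogue of the upper bound in~\eqref{e.poly}), which follows from Proposition~\ref{p.near-crit_Tassion} exactly as the corresponding bound follows from Theorem~\ref{t.Tassion} in~\cite{tassion2014crossing}. Carrying out these substitutions yields $\Var\!\left(\Prob^\eta_p\!\left[\cross(\rho R,R)\right]\right)\le CR^{-\epsilon}$ with an absolute $\epsilon>0$ and $C=C(\rho,\epsilon_0)$, uniformly over $p\in(1/2,3/4]$ and $R\le L^{an}(p)$.

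\emph{Proof of i), the regime $R>L^{an}(p)$, and the main obstacle.} Here the $4$-arm probabilities feeding the revealment bounds stay uniformly $\le CR^{-\epsilon}$: a $4$-arm event from scale $1$ to scale $R$ contains a white arm, so by monotonicity in the outer radius its probability is at most that at scale $L^{an}(p)$ (which is $\le C(L^{an}(p))^{-\epsilon}$ by the previous paragraph) and also at most the probability that a white (subcritical) path crosses $A(L^{an}(p),R)$, which is $\le\grandO{1}(R/L^{an}(p))^{\grandO{1}}\exp(-\Omega(1)R/L^{an}(p))$ by the exponential decay of the subcritical phase above the correlation length (Lemma~\ref{l.exp_decay}, see~\cite{bollobas2006critical,duminil2017exponential}); taking the minimum of the two bounds gives $\le CR^{-\epsilon}$ for all $R>L^{an}(p)$ once $\epsilon$ is chosen small enough (absolute). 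The same reasoning controls the crossing probabilities in the Schramm--Steif estimate; alternatively, in this regime one may note directly that $\Var\!\left(\Prob^\eta_p\!\left[\cross(\rho R,R)\right]\right)=\Var\!\left(\Prob^\eta_p\!\left[\cross^*(R,\rho R)\right]\right)\le\E\!\left[\Prob^\eta_p\!\left[\cross^*(R,\rho R)\right]\right]=\Pro_p\!\left[\cross^*(R,\rho R)\right]$, which decays exponentially in $R/L^{an}(p)$ and, combined with the case $R\le L^{an}(p)$ extended by RSW gluing up to scales of order $(\log L^{an}(p))\,L^{an}(p)$, is $\le CR^{-\epsilon}$ on the whole range. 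The genuinely delicate step is the previous paragraph: one must audit Appendix~\ref{a.tAGMT} and check that its entire machinery --- the multi-scale construction of the revealing algorithm, the control of revealments by (near-critical) arm probabilities, and the RSW gluings --- goes through \emph{verbatim} with Proposition~\ref{p.near-crit_Tassion} in place of Theorem~\ref{t.Tassion}, with constants uniform in $p\in(1/2,3/4]$ as long as $R\le L^{an}(p)$, and in particular that no step secretly relies on the annealed BK inequality (available only at $p=1/2$, see Proposition~\ref{p.BK}). Once this is done, the reduction of~ii) to~i) and the off-critical regime $R>L^{an}(p)$ are routine.
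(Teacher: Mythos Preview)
Your reduction of ii) to i) is fine and matches the paper's (both defer to the bootstrap in~\cite{ahlberg2015quenched}).

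For i), however, there is a genuine gap in your approach. You assert that the only $p$-specific ingredients in Appendix~\ref{a.tAGMT} are the box-crossing property and the \emph{quenched} BK inequality~\eqref{e.quenched_BK}. But the paper states explicitly that Step~B (the quenched one-arm estimate~\eqref{e.prop_3.11_AGMT}, i.e.\ Proposition~3.11 of~\cite{ahlberg2015quenched}) uses the \emph{annealed} BK inequality, which is only known at $p=1/2$ (Proposition~\ref{p.BK}). You flag this as a concern in your last paragraph but do not resolve it; replacing Theorem~\ref{t.Tassion} by Proposition~\ref{p.near-crit_Tassion} does not touch this issue.

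The paper's fix is different and much simpler. It does not re-prove Step~B at parameter $p$; instead it observes that the revealing algorithm in Appendix~\ref{a.tAGMT} has been chosen so that its revealment is controlled by the \emph{white} one-arm event $\arm_1^{*,\text{cell}}(S,R)$ (this is precisely the reason for the modification noted in Remark~\ref{r.extension_AGMT}). Since white events are decreasing in $p$, one has pointwise in $\eta$ the inequality $\Prob^\eta_p\!\left[\arm_1^{*,\text{cell}}(S,R)\right]\le\Prob^\eta_{1/2}\!\left[\arm_1^{*,\text{cell}}(S,R)\right]$, so~\eqref{e.prop_3.11_AGMT} at $p=1/2$ immediately yields the same estimate at every $p\in(1/2,3/4]$ and for \emph{every} $R>0$. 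Steps~A and~C go through unchanged (the Schramm--Steif bound of Corollary~\ref{c.Schramm_Steif} holds for all $p$, with an innocuous factor $1/(4p(1-p))$). This gives i) for all $R$ at once and makes your separate treatment of the regime $R>L^{an}(p)$ unnecessary.
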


\begin{proof}
The way we obtain Item~ii) from Item~i) is exactly the same as in the proof of Theorem~\ref{t.AGMT} (see~\cite{ahlberg2015quenched}) except that we use Proposition~\ref{p.near-crit_Tassion} instead of Theorem~\ref{t.Tassion}. So, let us prove Item~i). To this purpose, we rely on Appendix~\ref{a.tAGMT} where we recall the main steps of the proof of Theorem~\ref{t.AGMT}. In the case $p > 1/2$, the first step is exactly the same and we obtain that
\begin{equation}\label{e.var_biased}
\Var \left( \Prob^\eta_p \left[ \, \cross(\rho R,R) \, \right] \right) \leq \E \left[ \sum_{x \in \eta} \Prob_p^\eta \left[ \Piv_x^q(\cross(\rho R,R)) \right]^2 \right] \, .
\end{equation}

For the second step, we cannot use the BK inequality in the case $p>1/2$ since we do not know whether this inequality is true or not. So we prove the result corresponding to this step for $p>1/2$ by using the analogous result for $p=1/2$. More precisely, since $p > 1/2$, we have the following:
\begin{equation}\label{e.compare_p_and_1/2}
\Pro \left[ \Prob^\eta_p \left[ \arm_1^{*,\text{cell}}(S,R) \right] \geq R^{-\epsilon} \right] \leq \Pro \left[ \Prob^\eta_{1/2} \left[ \arm_1^{*,\text{cell}}(S,R) \right] \geq R^{-\epsilon} \right] \, ,
\end{equation}
where $S$ is the $1 \times 1$ square centered at $0$ and $\arm_1^{*,\text{cell}}(S,R)$ is the event defined in the paragraph above~\eqref{e.prop_3.11_AGMT}. Thanks to~\eqref{e.compare_p_and_1/2}, the following is a direct consequence of~\eqref{e.prop_3.11_AGMT}: For every $\gamma > 0$, there exists $\epsilon > 0$ such that the following holds:
\[
\Pro \left[ \Prob^\eta_p \left[ \arm_1^{*,\text{cell}}(S,R) \right] \geq R^{-\epsilon} \right] \leq \frac{1}{\epsilon} R^{-\gamma} \, .
\]
This ends the second step. The third and last step is exactly the same as in Appendix~\ref{a.tAGMT}. (Here we use that the theorems by Schramm and Steif stated in Appendix~\ref{a.Schramm-Steif} - and more precisely Corollary~\ref{c.Schramm_Steif} which is the inequality that we need - hold for every $p$. The only dependence on $p$ in Corollary~\ref{c.Schramm_Steif} is a factor $\frac{1}{4p(1-p)}$, but  this is not a problem since we have restricted ourself to the case $p \in (1/2,3/4]$.) This ends the proof.
\end{proof}

\begin{rem}\label{r.extension_AGMT}
Now, we can explain the reason why, in~Appendix~\ref{a.tAGMT}, we have (slightly) changed the algorithm used to estimate the sum $\E \left[ \sum_{x \in \eta} \Prob_p^\eta \left[ \Piv_x^q(\cross(\rho R,R)) \right]^2 \right]$. The reason is that we wanted to bound the revealment of the algorithm with a quantity that involves \textbf{only white arms} (so that we can use the obvious inequality~\eqref{e.compare_p_and_1/2}), which is not possible with the algorithm chosen in~\cite{ahlberg2015quenched}.
\end{rem}

\subsection{Extension of the results of Sections~\ref{s.first},~\ref{s.pivotals} and~\ref{s.quasi}}\label{ss.pneq1/2_2}

Before proving the annealed scaling relations, we need to prove that the results of Sections~\ref{s.first},~\ref{s.pivotals} and~\ref{s.quasi} are also true in the near-critical phase. More precisely, we need to prove that these results are also true for $p \in (1/2,3/4]$, providing that we assume that every length is less than or equal to the correlation length $L^{an}(p)$. An important fact is that the different constants (e.g. the constant $C=C(j)$ of the quasi-multiplicatitvity property Proposition~\ref{p.quasi}) will \textbf{not depend on $p$} but only on the parameter $\epsilon_0$ from the definition of the correlation length.\\

In Subsection~\ref{ss.pneq1/2_1}, we have proved that the annealed and quenched box crossing estimates also hold in the near-critical phase. In Sections~\ref{s.first},~\ref{s.pivotals} and~\ref{s.quasi}, we use only two properties specific to the parameter $p=1/2$:
\bi 
\item \textbf{The annealed BK inequality} (see Subsection~\ref{ss.correlation_in}). The only place where we have used this inequality is in the proof of the second part of Proposition~\ref{p.alpha4} (see Subsection~\ref{ss.alpha4}). In this proof, we have used this inequality and the fact that $\arm_{2j}(r_1,r_2)$ is included in the $j$-disjoint occurrence of $\arm_1(r_1,r_2)$ to prove that
\[
\alpha^{an}_{2j,1/2}(r_1,r_2) \leq \left( \alpha^{an}_{1,1/2}(r_1,r_2) \right)^j \, .
\]
Let $\arm^*_1(r_1,r_2)$ denote the event that there is a white path from $\partial B_{r_1}$ to $\partial B_{r_2}$. Note that $\arm_{2j}(r_1,r_2)$ is also included in the $j$-disjoint occurrence of $\arm_1^*(r_1,r_2)$, hence,
\[
\alpha^{an}_{2j,p}(r_1,r_2) \leq \Pro_p \left[ \arm_1^*(r_1,r_2)^{\square j} \right] \leq \Pro_{1/2} \left[ \arm_1^*(r_1,r_2)^{\square j} \right] \leq \Pro_{1/2} \left[ \arm_1^*(r_1,r_2) \right]^j \, ,
\]
since $p>1/2$ and by the annealed BK inequality at $p=1/2$. Hence, there exists $a > 0$ such that $\alpha^{an}_{2j,p}(r_1,r_2) \leq \grandO{1} (r_1/r_2)^a$, which is exactly what we needed in the proof of the second part of Proposition~\ref{p.alpha4}.
\item The fact that \textbf{the model is self-dual}. We have actually used this property implicitly all along Sections~\ref{s.pivotals} and~\ref{s.quasi}. If $Q$ is a quad, let $\cross^*(Q)$ be the event that there is a crossing of $Q$ by a \textbf{white} path. We have used a lot Proposition~\ref{p.a_lot_of_quads} both for the event $\cross(Q)$ and the event $\cross^*(Q)$ although we have proved this proposition only for the event $\cross(Q)$. When $p=1/2$, this is not a problem since $\cross(Q)$ and $\cross^*(Q)$ have the same $\Prob^\eta_{1/2}$-probabilities; but when $p>1/2$ we need to prove that Proposition~\ref{p.a_lot_of_quads} also holds with $\cross^*(Q)$ instead of $\cross(Q)$ as soon as we consider a domain $D$ such that $\text{diam}(D) \leq L^{an}(p)$. The proof is actually exactly the same except that we have to use Proposition~\ref{p.extension_AGMT} instead of Theorem~\ref{t.AGMT}.
\ei

\section{Proof of the annealed scaling relations}\label{s.kesten}

In this section, we prove our main result Theorem~\ref{t.scaling} by using the results of all the other sections. We first prove Proposition~\ref{p.alpha_jandalpha_1/2} and the scaling relation~\eqref{e.scaling2} of Theorem~\ref{t.scaling}. We follow the classical strategy developped by Kesten~\cite{kesten1987scaling} for Bernoulli percolation, see also~\cite{werner2007lectures,nolin2008near}. The main difference is that we deal with both the annealed notion and the quenched notion of pivotal events. We refer to Subsection~\ref{ss.intro_piv} for these two notions of pivotal events. Let us recall that, as explained in Section~\ref{s.pneq1/2}, all our results on arm and pivotal events also hold for $p \in (1/2,3/4]$ (with constants that do not depend on $p$) as soon as we work under the correlation length $L^{an}(p)$.

\begin{proof}[Proof of Proposition~\ref{p.alpha_jandalpha_1/2} and of~\eqref{e.scaling2} from Theorem~\ref{t.scaling}]
We will need the following lemma:
\begin{lem}\label{l.scaling}
Let $p \in (1/2,3/4]$ and $R \in [1,L^{an}(p)]$. We have
\begin{eqnarray}
\frac{d}{dp} \Pro_p \left[ \cross(2R,R) \right] & \asymp & R^2 \, \alpha^{an}_{4,p}(R) \, , \label{e.russocross}\\
\forall j \in \N^*, \, \left| \frac{d}{dp} \log ( \alpha^{an}_{j,p}(R) ) \right| & \leq & \grandO{1} \, R^2 \, \alpha^{an}_{4,p}(R) \, . \label{e.russoalphaj}
\end{eqnarray}
The constants in $\asymp$ and $\grandO{1}$ may only depend on the choice of $\epsilon_0$ in Definition~\ref{d.corrlength} (and on $j$ for $\grandO{1}$).
\end{lem}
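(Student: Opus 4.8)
The plan is to derive Lemma~\ref{l.scaling} from an annealed version of Russo's formula, feeding in the pivotal estimates already proved in Section~\ref{s.pivotals} (Propositions~\ref{p.asymp_R2alpha4} and~\ref{p.arm_event_asymp_R2alpha4} and Lemma~\ref{l.piv_one_point}) together with their near-critical extensions from Subsection~\ref{ss.pneq1/2_2}; recall that these hold for $p\in(1/2,3/4]$ below $L^{an}(p)$ with constants depending only on $\epsilon_0$. Conditionally on $\eta$, the coloring is a standard Bernoulli percolation on the countable set $\eta$, and $\cross(2R,R)$ (resp. $\arm_j(1,R)$) depends only on the colours of the finitely many Voronoi cells meeting $[-2R,2R]\times[-R,R]$ (resp. $[-R,R]^2$). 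Classical Russo's formula therefore gives, $\cross(2R,R)$ being quenched-increasing, $\frac{d}{dp}\Prob^\eta_p[\cross(2R,R)]=\sum_{x\in\eta}\Prob^\eta_p[\Piv^q_x(\cross(2R,R))]$, and for the non-monotone event $\arm_j(1,R)$ the general inequality $\big|\frac{d}{dp}\Prob^\eta_p[\arm_j(1,R)]\big|\le\sum_{x\in\eta}\Prob^\eta_p[\Piv^q_x(\arm_j(1,R))]$. Taking the expectation over $\eta$ and exchanging $\frac{d}{dp}$ and $\Ex$ yields
\[
\frac{d}{dp}\Pro_p[\cross(2R,R)]=\Ex_p\Big[\textstyle\sum_{x\in\eta}\un_{\Piv^q_x(\cross(2R,R))}\Big],\qquad \Big|\frac{d}{dp}\alpha^{an}_{j,p}(R)\Big|\le\Ex_p\Big[\textstyle\sum_{x\in\eta}\un_{\Piv^q_x(\arm_j(1,R))}\Big].
\]
The exchange of derivative and expectation follows from dominated convergence via the monotone coupling of $\omega_p$ and $\omega_{p+h}$ (uniform labels): these configurations differ only at points with a label in $(p,p+h]$, so $|\Prob^\eta_{p+h}[A]-\Prob^\eta_p[A]|\le|h|\,N(\eta)$, where $N(\eta)$ is the number of cells meeting the relevant box; and $\Ex[N(\eta)]<+\infty$ since a cell reaching distance $d$ forces an empty region of area $\Omega(d^2)$, of probability $\le\grandO{1}e^{-\Omega(1)d^2}$.

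For the upper bounds: if $x\in\eta$ is quenched-pivotal for an event $A$ then the singleton $\{x\}$ is quenched-pivotal, hence (a.s.) annealed-pivotal, hence so is the grid-square $S$ of $2\Z^2$ containing $x$ (by monotonicity of $\Piv_D(A)$ in $D$). Thus $\sum_{x\in\eta}\un_{\Piv^q_x(A)}\le\sum_{S\in 2\Z^2}|\eta\cap S|\,\un_{\Piv_S(A)}$. Since $\Piv_S(A)$ is measurable with respect to $\omega\setminus S$ it is independent of $\omega\cap S$, so $\Ex_p[|\eta\cap S|\,\un_{\Piv_S(A)}]=\Ex[|\eta\cap S|]\,\Pro_p[\Piv_S(A)]=4\,\Pro_p[\Piv_S(A)]$. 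Inserting this into Proposition~\ref{p.asymp_R2alpha4} (case $\rho=1$, near-critical version) gives $\frac{d}{dp}\Pro_p[\cross(2R,R)]\le\grandO{1}\,R^2\alpha^{an}_{4,p}(R)$, and into Proposition~\ref{p.arm_event_asymp_R2alpha4} (case $\rho=1$, which already uses $R^2\alpha^{an}_{4,p}(R)\ge\Omega(1)$) gives $\big|\frac{d}{dp}\alpha^{an}_{j,p}(R)\big|\le\grandO{1}\,\alpha^{an}_{j,p}(R)\,R^2\alpha^{an}_{4,p}(R)$; dividing by $\alpha^{an}_{j,p}(R)>0$ proves~\eqref{e.russoalphaj}.

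For the matching lower bound in~\eqref{e.russocross}, restrict the sum over $x\in\eta$ to the event $\{|\eta\cap S|=1\}$ and to the $\asymp R^2$ squares $S$ of $2\Z^2$ lying in the bulk of $[-2R,2R]\times[-R,R]$ (at distance $\ge R/3$ from its sides): on $\{|\eta\cap S|=1\}$ the unique point of $\eta\cap S$ is quenched-pivotal if and only if $S$ is quenched-pivotal, so
\[
\Ex_p\Big[\textstyle\sum_{x\in\eta}\un_{\Piv^q_x(\cross(2R,R))}\Big]\ \ge\ \sum_{S\text{ bulk}}\Pro_p\big[\{|\eta\cap S|=1\}\cap\Piv^q_S(\cross(2R,R))\big]\ \ge\ \Omega(1)\,R^2\,\alpha^{an}_{4,p}(R),
\]
the last step by Lemma~\ref{l.piv_one_point} (near-critical version). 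Combined with the upper bound this gives~\eqref{e.russocross}.

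The quantitative heart of the statement is already packaged in Propositions~\ref{p.asymp_R2alpha4},~\ref{p.arm_event_asymp_R2alpha4} and Lemma~\ref{l.piv_one_point}, so the \textbf{main obstacle} here is purely the annealed Russo bookkeeping: justifying differentiation under the $\eta$-expectation, and correctly translating quenched pivotality of points into annealed pivotality of squares — in particular the independence of $\Piv_S$ from $\omega\cap S$ (for the upper bound) and the $\{|\eta\cap S|=1\}$ restriction matching the hypothesis of Lemma~\ref{l.piv_one_point} (for the lower bound). One must also check that every cited $p=1/2$ estimate has indeed been transferred to $p\in(1/2,3/4]$ below the correlation length with constants depending only on $\epsilon_0$, as arranged in Subsection~\ref{ss.pneq1/2_2}.
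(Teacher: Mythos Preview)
Your proof is correct and follows essentially the same approach as the paper's: quenched Russo's formula, dominated convergence to pass to the annealed derivative, upper bound via $\Piv^q_x\subseteq\Piv_S$ and independence of $\Piv_S$ from $\omega\cap S$ feeding into Proposition~\ref{p.asymp_R2alpha4}/\ref{p.arm_event_asymp_R2alpha4}, and lower bound via the $\{|\eta\cap S|=1\}$ restriction and Lemma~\ref{l.piv_one_point}. Your use of the grid $2\Z^2$ is in fact more consistent with the hypotheses of the cited propositions than the paper's own wording, and your dominated-convergence justification via the monotone coupling is a clean variant of the paper's tail-bound argument.
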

Before proving Lemma~\ref{l.scaling}, let us explain why this lemma implies Proposition~\ref{p.alpha_jandalpha_1/2} and~\eqref{e.scaling2}.
If we integrate~\eqref{e.russocross} from $1/2$ to $p$, we obtain that
\[
\int_{1/2}^p R^2 \, \alpha^{an}_{4,u}(R) \, du  \asymp \Pro_{p} \left[ \cross(2R,R) \right] - \Pro_{1/2} \left[ \cross(2R,R) \right] \leq 1 \, .
\]
Moreover, if we integrate~\eqref{e.russoalphaj} from $1/2$ to $p$, we obtain that
\[
\left| \log (\alpha^{an}_{j,p}(R) ) - \log (\alpha^{an}_{j,1/2}(R) ) \right| \leq \grandO{1} \, \int_{1/2}^p R^2 \, \alpha^{an}_{4,u}(R) \, du \, .
\]
Hence: $\left| \log (\alpha^{an}_{j,p}(R) ) - \log (\alpha^{an}_{j,1/2}(R) ) \right| \leq \grandO{1}$ i.e. $\alpha^{an}_{j,p}(R) \asymp \alpha^{an}_{j,1/2}(R) \,$. Together with the quasi-multiplicativity property, this implies Proposition~\ref{p.alpha_jandalpha_1/2}.\\

Now, let us integrate~\eqref{e.russocross} from $1/2$ to $p$ with the choice $R=L^{an}(p)$. If we use Proposition~\ref{p.alpha_jandalpha_1/2} with $j=4$, we obtain that
\begin{multline*}
(p-1/2) \,L^{an}(p)^2 \, \alpha^{an}_{4,1/2}(L^{an}(p))\\
\asymp \Pro_{p} \left[ \cross(2L^{an}(p),L^{an}(p)) \right] - \Pro_{1/2} \left[ \cross(2L^{an}(p),L^{an}(p)) \right]\\
\in [1-\epsilon_0-\Pro_{1/2} \left[ \cross(2L^{an}(p),L^{an}(p)) \right],1] \, ,
\end{multline*}
which implies the scaling relation~\eqref{e.scaling2} from Theorem~\ref{t.scaling} since $\epsilon_0$ is sufficiently small.

\begin{proof}[Proof of Lemma~\ref{l.scaling}]
One of Kesten's ideas is to use Russo's differential formula (see for instance Theorem~$2.25$ in~\cite{grimmett1999percolation}) that can be stated as follows: Let $n \in \N^*$ and let $A \subseteq \lbrace -1,1 \rbrace^n$ be an increasing event. Also, let $\Prob^n_p=(p\delta_1+(1-p)\delta_{-1})^{\otimes n}$. Then,
\[
\frac{d}{dp} \Prob^n_p \left[ A \right] = \sum_{i = 1}^n \Prob^n_p \left[ \Piv_i^n(A) \right] \, .
\]
(See Subsection~\ref{ss.intro_piv} for our notations for pivotal events.) To use this formula, we have to work at the quenched level. Note that a.s. the number of points of $\eta$ whose cell intersects $[-2R,2R] \times [-R,R]$ is finite. Hence, if we condition on $\eta$, the event $\cross(2R,R)$ depends on finitely many points. So, we can use Russo's formula and we obtain that
\[
\frac{d}{dp} \Prob^\eta_p \left[ \cross(2R,R) \right] = \sum_{x \in \eta} \Prob_p^\eta \left[ \Piv^q_x(\cross(2R,R)) \right] \, .
\]
%When $A$ is increasing, a point $x$ is quenched-pivotal if and only if it is annealed-pivotal. Hence, if we let $N(A)$ be the number of points $x \in \eta$ such that $x$ is annealed-pivotal for $A$, we have:
%Let 
%\[
%\frac{d}{dp} \Pro_p \left[ \cross(2R,R) \right] = \E_p \left[ N(\cross(2R,R)) \right] \, .
%\]
Now, let $R \in [1,L^{an}(p)]$ and let $( S_i )_{i \in \N}$ be an enumeration of the squares of the grid $\Z^2$. For all $i$, let $N_i(\cross(2R,R))$ be the number of points $x \in \eta \cap S_i$ which are quenched-pivotal for $\cross(2R,R)$. Note that, if one fixes $R$ and let $M > 0$, then
\[
\text{Card} \{ x \, : \, \Prob_p^\eta \left[ \Piv^q_x(\cross(2R,R)) \right] > 0 \} \, .
\]
is larger than $M$ with probability less than $\grandO{1}e^{-\Omega(1)M^2}$. This implies that one can use dominated convergence and obtain that
\begin{equation}\label{e.russocross_upper}
\frac{d}{dp} \Pro_p \left[ \cross(2R,R) \right] = \sum_{i \in \N} \E_p \left[ N_i(\cross(2R,R)) \right] \, . 
\end{equation}
By using the fact that a.s. $\Piv_{S_i}^q( \cross(2R,R) ) \subseteq \Piv_{S_i}( \cross(2R,R) )$ and that $\Piv_{S_i}( \cross(2R,R) )$ is independent of the configuration in $S_i$ we obtain that
\begin{eqnarray*}
\E_p \left[ N_i(\cross(2R,R)) \right] & \leq & \E_p \left[ |  \eta \cap S_i  | \un_{\Piv^q_{S_i}( \cross(2R,R) )} \right]\\
& \leq & \E_p \left[ | \eta \cap S_i | \un_{\Piv_{S_i}( \cross(2R,R) )} \right]\\
& = & \E_p \left[ | \eta \cap S_i | \right] \, \Pro_p \left[ \Piv_{S_i}( \cross(2R,R) ) \right]\\
& \leq & \grandO{1} \Pro_p \left[ \Piv_{S_i}(\cross(2R,R)) \right] \, .
\end{eqnarray*}
If we combine the above with~\eqref{e.russocross_upper} and if we use Proposition~\ref{p.asymp_R2alpha4} (or rather the analogous result in the near-critical regime), we obtain that
\[
\frac{d}{dp} \Pro_p \left[ \cross(2R,R) \right] \leq \grandO{1} \sum_{i \in \N} \Pro_p \left[ \Piv_{S_i}(\cross(2R,R)) \right] \asymp \grandO{1} R^2 \, \alpha^{an}_{4,p}(R) \, ,
\]
which is the upper-bound of~\eqref{e.russocross}. The lower-bound is a simple consequence of Lemma~\ref{l.piv_one_point}. Indeed, this lemma implies that, if $S_i$ is in the ``bulk'' of $[-2R,2R] \times [-R,R]$, then,
\[
\E_p \left[ N_i(\cross(2R,R)) \right] \geq \Pro_p \left[ \left\lbrace \left| \eta \cap S_i \right| = 1 \right\rbrace \cap \Piv^q_{S_i} \left( \cross(2R,R) \right) \right] \geq \Omega(1) \, \alpha^{an}_{4,p}(R) \, .
\]

%If we follow Appendix~\ref{a.prolong}, we can prove that, if $S_i \subseteq [0,2R] \times [0,R]$ and is at distance at least $R/3$ from the sides of the rectangle (we will write $I_R$ for the set of such $i$'s), then the above is at least $\alpha^{an}_4(R)$, which gives us that:
%\[
%\frac{d}{dp} \Pro_p \left[ \cross(2R,R) \right] \geq R^2 \, \alpha^{an}_4^p(R) / \grandO{1} \, .
%\]
Now, let us prove~\eqref{e.russoalphaj} for $j = 1$. Since $\arm_1(R)$ is also an increasing event, by the same techniques as above we have
\[
\frac{d}{dp} \alpha^{an}_{1,p}(R) \leq \grandO{1} \sum_{i \in \N} \Pro_p \left[ \Piv_{S_i}(\arm_1(R)) \right] \, .
\]
Together with Proposition~\ref{p.arm_event_asymp_R2alpha4}, this implies~\eqref{e.russoalphaj} for $j = 1$.\\

Let us now prove~\eqref{e.russoalphaj} for $j \geq 2$. In this case, the events $\arm_j(R)$ are not monotonic. In particular, we cannot use Russo's formula. We rather use the following inequality that holds for any event and whose prove is exactly the same as Russo's formula: Let $n \in \N^*$ and $A \subseteq \lbrace -1,1 \rbrace^n$. Then,
\[
\left| \frac{d}{dp} \Prob^n_p \left[ A \right] \right| \leq \sum_{i = 1}^n \Prob^n_p \left[ \Piv_i^n(A) \right] \, .
\]
Thus, the proof of~\eqref{e.russoalphaj} in the case $j\geq 2$ is the same as in the case $j=1$.
\end{proof}
This ends the proof of Proposition~\ref{p.alpha_jandalpha_1/2} and of the scaling relation~\eqref{e.scaling2} from Theorem~\ref{t.scaling}.
\end{proof}

It only remains to prove the scaling relation~\eqref{e.scaling1} from Theorem~\ref{t.scaling}. Thanks to Proposition~\ref{p.alpha_jandalpha_1/2} applied to $j=1$, it is sufficient to prove the following lemma:
\begin{lem}\label{l.scaling2}
Let $p \in (1/2,3/4]$. We have:
\[
\theta^{an}(p) \asymp \alpha^{an}_{1,p} \left( L^{an}(p) \right) \, ,
\]
where the constants in $\asymp$ may only depend on the constant $\epsilon_0$ in the definition of $L^{an}(p)$.
\end{lem}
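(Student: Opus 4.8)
Write $L=L^{an}(p)$. The plan is to prove the two inequalities hidden in ``$\asymp$'' separately, with constants depending only on $\epsilon_0$. The upper bound is immediate: on $\{0\leftrightarrow\infty\}$ the black path from the origin must cross the annulus $A(1,L)$, so $\arm_1(1,L)$ holds and $\theta^{an}(p)\le\Pro_p[\arm_1(1,L)]=\alpha^{an}_{1,p}(L)$. For the lower bound I would follow Kesten's strategy and build a connection to infinity from three ingredients: (a) a connection of the origin to $\partial B_{2L}$, costing $\asymp\alpha^{an}_{1,p}(L)$; (b) a black circuit around $0$ in $A(L,3L/2)$ that is connected to infinity inside $\R^2\setminus B_L$, costing only a constant because the model is ``supercritical'' above scale $L$; and (c) an automatic gluing of (a) and (b).

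\textbf{The supercritical piece (b) — the main obstacle.} Let $\mathcal{C}_\infty$ be the event that there is a black circuit around $0$ in $A(L,3L/2)$ connected to $\infty$ by a black path in $\R^2\setminus B_L$; this event is annealed-increasing. I would first show $\Pro_p[\mathcal{C}_\infty]\ge c_0$ for a constant $c_0=c_0(\epsilon_0)>0$ \emph{independent of $p$}. The input is that $\Pro_p[\cross(2R,R)]\ge 1-h(\epsilon_0)$ for every $R\ge L$, with $h(\epsilon_0)\to 0$ as $\epsilon_0\to 0$: this holds at scale $L$ by the very definition of $L^{an}(p)$, and it propagates to all larger scales by a square-root-trick and gluing argument in the spirit of the proof of Proposition~\ref{p.near-crit_Tassion} together with the exponential decay of white crossings for $p>1/2$ (cf. Lemma~\ref{l.exp_decay}). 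Gluing $O(1)$ such crossings via the annealed FKG inequality, one gets that the probability of having a black circuit around $0$ in $A(2^kL,2^{k+1}L)$ \emph{joined to} a black circuit in the next annulus is $\ge 1-O(1)\,h(\epsilon_0)$, uniformly in $k\ge 1$ and in $p$. Taking $\epsilon_0$ small (this is precisely the second condition on $\epsilon_0$ in Definition~\ref{d.corrlength}), a standard Peierls/renormalization argument — comparing these ``good'' annuli to a highly supercritical one-dependent Bernoulli percolation — gives that, with probability bounded below by some $c_0(\epsilon_0)>0$, all these circuits occur and chain together out to infinity; hence $\Pro_p[\mathcal{C}_\infty]\ge c_0$. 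I expect this Peierls step to be the hard part: one must arrange the gluing so that consecutive circuits genuinely connect (a circuit in one annulus does not reach a circuit in the next one without a radial crossing), check that only scales $\ge L$ enter so that $c_0$ does not depend on $p$, and justify the ``crossing probabilities stay above threshold for all $R\ge L$'' statement.

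\textbf{The near-critical piece (a).} Next I would show $\Pro_p[0\leftrightarrow\partial B_{2L}]\ge c_1\,\alpha^{an}_{1,p}(L)$ with $c_1=c_1(\epsilon_0)>0$. Starting from $\arm_1(1,L)$, one extends the black arm inward to the origin with constant conditional probability: conditioning on $\omega\cap A(1,L)$ on the event $\widehat{\arm}_1(1,L)$ (Definition~\ref{d.hatarm}), one declares black all cells meeting $B_2$ and uses that $\eta$ is typically non-degenerate there, exactly as in the proof of~\eqref{e.poly} and of Lemma~\ref{l.piv_one_point}. One also extends the arm outward from $\partial B_L$ to $\partial B_{2L}$ with constant probability using the near-critical box-crossing property Proposition~\ref{p.near-crit_Tassion} at scale $\asymp L$ together with the quenched box-crossing and $\dense$ techniques of Section~\ref{s.first}. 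Combining the inward extension, the original arm and the outward extension by spatial independence on disjoint annuli (the $\widehat{\arm}$ mechanism), and invoking the near-critical quasi-multiplicativity of Section~\ref{s.pneq1/2} (legitimate since all scales involved are $\le 2L$, within a constant of $L$), yields $\Pro_p[0\leftrightarrow\partial B_{2L}]\asymp\alpha^{an}_{1,p}(L)$ with constants depending only on $\epsilon_0$.

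\textbf{Gluing (c) and conclusion.} Both $\{0\leftrightarrow\partial B_{2L}\}$ and $\mathcal{C}_\infty$ are annealed-increasing, so the annealed FKG inequality gives
\[
\theta^{an}(p)\ \ge\ \Pro_p\big[\{0\leftrightarrow\partial B_{2L}\}\cap\mathcal{C}_\infty\big]\ \ge\ \Pro_p[0\leftrightarrow\partial B_{2L}]\cdot\Pro_p[\mathcal{C}_\infty]\ \ge\ c_0\,c_1\,\alpha^{an}_{1,p}(L).
\]
The inclusion $\{0\leftrightarrow\partial B_{2L}\}\cap\mathcal{C}_\infty\subseteq\{0\leftrightarrow\infty\}$ is then clear: on this event there is a black path from $0\in(-L,L)^2$ to $\partial B_{2L}$ and a black circuit $\gamma$ around $0$ lying in the \emph{open} region $A(L,3L/2)$; since $\gamma$ separates $0$ from $\partial B_{2L}$ the path meets $\gamma$, and $\gamma$ is connected to infinity, so $0\leftrightarrow\infty$. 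Together with the upper bound this proves $\theta^{an}(p)\asymp\alpha^{an}_{1,p}(L^{an}(p))$ with constants depending only on $\epsilon_0$.
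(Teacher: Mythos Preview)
Your overall architecture is exactly Kesten's and matches the paper: upper bound trivial, lower bound via annealed FKG between a near-critical arm event and a supercritical structure above scale $L$. But step (b) as written does not close. You build circuits in the nested dyadic annuli $A(2^kL,2^{k+1}L)$ and then claim that ``all these circuits occur and chain together'' with probability $\ge c_0(\epsilon_0)$. Since you only establish a \emph{uniform} bound $\Pro_p[\text{good at level }k]\ge 1-O(1)\,h(\epsilon_0)$ (not tending to $1$ with $k$), the intersection over infinitely many $k$ has probability $0$; invoking ``one-dependent Bernoulli percolation'' and ``Peierls'' does not help here, because a chain of nested annuli is a $1$-dimensional object and percolation on $\Z$ requires every link to be present. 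To repair this you need either (i) to first show that $\Pro_p[\cross(2\cdot 2^kL,2^kL)]\to 1$ fast enough for the failure probabilities to be summable (a classical ACCFR/Russo sharpness step which you gesture at via Lemma~\ref{l.exp_decay} but do not actually carry out uniformly in $p$), or (ii) to replace the $1$D annulus chain by a genuine $2$D renormalization.

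The paper does (ii), and more economically than your sketch: it sets up a $2$-dependent edge percolation on the rescaled lattice $2L\cdot\Z^2$, declaring an edge good when the associated $4L\times 2L$ rectangle is crossed lengthwise, its two $2L\times 2L$ halves are crossed both ways, \emph{and} a black-density event $\widetilde{\dense}$ holds (this last condition is Voronoi-specific and is what guarantees that adjacent good rectangles genuinely share a black connection). Each edge is good with probability $\ge 1-4\epsilon_0$ directly from the definition of $L^{an}(p)$ --- no need to control any scale other than $L$ --- and a standard $2$D Peierls argument yields an infinite good cluster through $0$ with probability $\ge c(\epsilon_0)$. For part (a) the paper is also lighter than your extension machinery: it simply takes the annealed-increasing event $\arm_1(1,3L)\cap\{B_1\text{ entirely black}\}$ and applies FKG; the comparison $\alpha^{an}_{1,p}(3L)\asymp\alpha^{an}_{1,p}(L)$ then needs only the $j=1$ case of quasi-multiplicativity (Remark~\ref{r.quasi_j=1}), i.e.\ FKG plus a single box-crossing at scale $\asymp L$, rather than the full $\widehat{\arm}$/spatial-independence apparatus you invoke.
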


\begin{proof}
First, note that it is sufficient to prove the result for $p \in (1/2,p_0)$ for some $p_0 \in (1/2,3/4]$. For every bounded Borel subset of the plane $D$, let $\widetilde{\dense}(D)$ be the event that, for any $u \in D$, there exists $x \in \eta \cap D$ \textbf{that is black} and satisifies $||x-u||_2 \leq \diam(D)/100$ (we study this event since it is annealed increasing). With the same proof as Lemma~\ref{l.dense}, we can easily obtain that, if $p \in (1/2,3/4]$ and if $R$ is sufficiently large ($R \geq R_0 \geq 1000$, say), then,
\[
\Pro \left[ \widetilde{\dense}([-2R,2R] \times [-R,R]) \right] \geq 1-\epsilon_0 \, .
\]
Let $p_0$ be the larger parameter $p \in [1/2,3/4]$ such that $L^{an}(p) \geq  R_0$. Note that $p_0 > 1/2$. Consider some parameter $p \in (1/2,p_0)$. We now apply a Peierls argument. If $Q$ is a $4L^{an}(p) \times 2L^{an}(p)$ rectangle, then we say that $Q$ is \textbf{good} if: i) $Q$ is crossed lengthwise, ii) the two $2L^{an}(p) \times 2L^{an}(p)$ squares whose union is $Q$ are crossed from top to bottom and from left to right and iii) $\widetilde{\dense}(Q)$ holds. Note that $\Pro \left[ Q \text{ is good} \right] \geq 1-4\epsilon_0$ by definition of the correlation length and that $\{ Q \text{ is good} \}$ is annealed-increasing.

Now, let $\mathcal{L}(p)$ be the square lattice times $2L^{an}(p)$. We say that an edge $e=\{x,y\}$ of this lattice is good if the $4 L^{an}(p) \times 2 L^{an}(p)$ rectangle which is the union of the two $2L^{an}(p) \times 2L^{an}(p)$ squares centered at $x$ and $y$ is good. Note that we have defined a $2$-dependent percolation model on $\mathcal{L}(p)$ with parameter at least $1-4\epsilon_0$. A standard Peierls argument implies that, if $\epsilon_0$ is small enough, the probability that there is an infinite good path starting from $0$ is larger than some positive constant that depends only on $\epsilon_0$.

Now, note that if the two following properties hold then the event $\{ 0 \leftrightarrow \infty \}$ holds: (a) in the $2$-dependent percolation model on $\mathcal{L}(p)$, the twelve edges of $\mathcal{L}(p)$ closest to $0$ are good and there is an infinite path made of good edges starting from $0$; (b) $\arm_1(1,3L^{an}(p))$ holds and $B_1=[-1,1]^2$ is entirely colored black. This observation and the above paragraph (together with the annealed FKG-Harris inequality) imply the lemma.
\end{proof}

\section{The quasi-multiplicativity property}\label{s.quasi}

In this section, we only work at $p=1/2$, hence we forget the subscript $p$ in the notations. In Subsections~\ref{ss.quasi_even},~\ref{ss.QM_consequences} and~\ref{ss.half-plane}, we only rely on the results of Subsections~\ref{ss.warm} and~\ref{ss.a_lot_of_quads}. In Subsections~\ref{ss.QM_odd}, we also use the results of Subsection~\ref{ss.preliminary} which are consequences of Subsections~\ref{ss.quasi_even},~\ref{ss.QM_consequences} and~\ref{ss.half-plane}.

\subsection{The case $j$ even}\label{ss.quasi_even}

In this subsection, we prove the quasi-multiplicativity property Proposition~\ref{p.quasi} in the case $j$ even:

\begin{prop}\label{p.quasieven}
Let $j \in \N^*$ even. There exists a constant $C=C(j) \in [1,+\infty)$ such that, for all $1 \leq r_1 \leq r_2 \leq r_3$,
\begin{equation}\label{e.quasi}
\frac{1}{C} \, \alpha^{an}_j(r_1,r_3) \leq \alpha^{an}_j(r_1,r_2) \, \alpha^{an}_j(r_2,r_3) \leq C \, \alpha^{an}_j(r_1,r_3) \, .
\end{equation}
\end{prop}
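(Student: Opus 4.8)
## Proof plan for Proposition~\ref{p.quasieven}

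The plan is to follow the classical Kesten-type strategy for quasi-multiplicativity, combining the box-crossing inputs (Theorems~\ref{t.Tassion} and~\ref{t.AGMT}, and the quad version Proposition~\ref{p.a_lot_of_quads}) with the reverse-chronological trick: \emph{first} establish an ``extension lemma'' and a ``looking good with positive probability'' lemma, \emph{then} deduce the right-hand inequality of~\eqref{e.quasi}, and \emph{then} deduce the left-hand inequality together with Proposition~\ref{p.fandalpha}. Throughout, $j$ even lets me use the symmetric notion of well-separateness (interfaces are well separated when their \emph{endpoints} on $\partial B_r$, $\partial B_R$ are pairwise far apart), which is the content of the separation lemma (Lemma~\ref{l.good}) I would prove first. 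Crucially, because the model is random, ``looking good at scale $R$'' must constrain both the coloring \emph{and} the point process: I would ask that (i) the interfaces crossing $A(R/2,2R)$ are well separated at scale $\delta R$, (ii) $\dense_\delta(A(R/2,2R))$ holds, and (iii) $\qbc^1_\delta(A(R))$ holds for a suitable annulus $A(R)$ at scale $R$. Moreover, to be able to multiply probabilities across scales by spatial independence, the ``looking good'' events must be made \textbf{measurable with respect to $\omega \cap A(R/2,2R)$} — this forces the use of the $\widehat{\arm}$-type events of Definition~\ref{d.hatarm} rather than the bare arm events, which is exactly why Proposition~\ref{p.fandalpha} has to be proved hand-in-hand with the main inequality.

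The key steps, in order, would be:
\begin{enumerate}
\item[(1)] \emph{Separation lemma.} Condition on $\arm_j(r,R)$ and on $\eta$; using Proposition~\ref{p.a_lot_of_quads}, the (quenched) Harris--FKG inequality, and a standard ``pushing the interface endpoints apart with crossings and circuits'' argument, show that with probability bounded below (uniformly in $r,R$) the $j$ interfaces are well separated at scale $\delta R$ near $\partial B_R$ and at scale $\delta r$ near $\partial B_r$, and that $\dense$ and $\qbc$ hold in the relevant annuli. Phrase the conclusion so that the ``good'' event is measurable with respect to $\omega \cap A(R/2,2R)$ (resp.\ $\omega \cap A(r/2,2r)$); this is Lemma~\ref{l.looksgood}/Corollary~\ref{c.looksgood}.
\item[(2)] \emph{Extension lemma.} If $\arm_j(r,R)$ holds and the configuration looks good near $\partial B_R$ in the above sense, then conditionally on $\eta$ and on the $j$ interfaces one can, with probability $\geq c(\delta)>0$, extend the $j$ arms from $\partial B_R$ to $\partial B_{2R}$: denseness gives spatial independence, well-separateness gives disjoint ``corridors'', and $\qbc$ plus quenched Harris--FKG produces the needed crossings in each corridor. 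This is Lemma~\ref{l.extension}. Symmetrically for the inner scale.
\item[(3)] \emph{Right-hand inequality.} For $r_1\le r_2\le r_3$, intersect $\arm_j(r_1,r_2)$ (with the configuration looking good near $\partial B_{r_2}$ from inside, measurable w.r.t.\ $\omega \cap A(r_2/2,2r_2)\cap B_{r_2}$) with $\arm_j(r_2,r_3)$ (looking good near $\partial B_{r_2}$ from outside). These two events are supported on complementary regions hence independent; the extension lemma lets one glue the two families of arms across the annulus $A(r_2/2,2r_2)$ to produce $\arm_j(r_1,r_3)$. This yields $\alpha^{an}_j(r_1,r_3)\ge \tfrac1C\,\alpha^{an}_j(r_1,r_2)\,\alpha^{an}_j(r_2,r_3)$, possibly after replacing $2r_2$ by $r_2$ etc.\ via~\eqref{e.poly} and the $j=1$ case.
\item[(4)] \emph{Left-hand inequality and Proposition~\ref{p.fandalpha}.} Prove $\alpha^{an}_j(r,R)\le f_j(r,R)\le C\,\alpha^{an}_j(r,R)$: the first inequality is a.s.\ inclusion; for the second, decompose $\widehat{\arm}_j(r,R)$ according to the largest dyadic scale near $r$ (resp.\ $R$) at which $\dense$ fails, and on each piece use spatial independence between the $\widehat{\arm}$-event and a $\dense$-event plus the (already proven) right-hand inequality and~\eqref{e.poly}, exactly as in the $j=1$ warm-up in Subsection~\ref{ss.warm}. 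Given Proposition~\ref{p.fandalpha}, the left-hand inequality of~\eqref{e.quasi} follows immediately: $\widehat{\arm}_j^{ext}(r_1,r_2)$ is measurable w.r.t.\ $\omega\cap B_{r_2}$ and $\widehat{\arm}_j^{int}(r_2,r_3)$ w.r.t.\ $\omega\setminus B_{r_2}$, so they are independent, and $\arm_j(r_1,r_3)\subseteq \widehat{\arm}_j^{ext}(r_1,r_2)\cap\widehat{\arm}_j^{int}(r_2,r_3)$ gives $\alpha^{an}_j(r_1,r_3)\le f_j^{ext}(r_1,r_2)f_j^{int}(r_2,r_3)\le C\,\alpha^{an}_j(r_1,r_2)\alpha^{an}_j(r_2,r_3)$.
\end{enumerate}

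The main obstacle, as the paper itself flags, is Step~(1): controlling separation of interfaces \emph{conditionally on the degenerate event} $\arm_j(r,R)$, where the point process could in principle be wildly dense or sparse near $\partial B_R$. The fix is to never extend arms ``by hand'' but instead to always work conditionally on $\eta$ inside a quenched-good event ($\dense_\delta \cap \qbc^1_\delta$) whose probability can be pushed arbitrarily close to $1$ by shrinking $\delta$ — and then invoke an analogue of Proposition~\ref{p.techniquegeneral} to guarantee that $\arm_j(r,R)$ still occurs together with this good event with probability $\ge \tfrac12\alpha^{an}_j(r,R)$. This circularity (needing a Proposition~\ref{p.techniquegeneral}-type statement, which rests on Proposition~\ref{p.fandalpha}, which in turn I only prove in Step~(4)) is resolved by bootstrapping: I first prove the weaker analogues Lemma~\ref{l.looksgood}/Corollary~\ref{c.looksgood} directly, using only Subsections~\ref{ss.warm} and~\ref{ss.a_lot_of_quads}, then the quasi-multiplicativity, then Proposition~\ref{p.fandalpha}. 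A secondary technical point is that the same argument, run in a half-plane, gives quasi-multiplicativity for half-plane arm events with either parity, which is what later feeds the computation of $\alpha^{an,+}_{3}$ and — back in the full plane — the odd case treated in Subsection~\ref{ss.QM_odd}.
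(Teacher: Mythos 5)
Your outline captures the high-level architecture of the paper's proof — $\gp$/$\gi$-type good events measurable with respect to $\omega\cap A(R/2,2R)$, extension of arms conditionally on a good $\eta$ via quenched Harris--FKG and Proposition~\ref{p.a_lot_of_quads}, spatial independence through the $\widehat{\arm}$-events, and the bootstrap ordering that defers Propositions~\ref{p.fandalpha} and~\ref{p.techniquegeneral} until after quasi-multiplicativity is in hand. That said, there is a genuine gap in Step~(1) that the word ``directly'' papers over. What the Schramm--Steif push-apart argument gives you \emph{unconditionally} is the paper's Lemma~\ref{l.good}: $\Pro\left[G^{ext}_\delta(R)\right]\geq 1-\frac{1}{\epsilon}\delta^\epsilon$. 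It does \emph{not} directly give you the conditional statement you claim in Step~(1) (``condition on $\arm_j(r,R)$ and on $\eta$, show well-separation holds with probability bounded below''), because conditioning on the degenerate arm event could in principle skew $\eta$ into the bad event at scale $R$ — this is precisely the difficulty the paper flags in Figure~\ref{f.difficulties} and Subsection~\ref{ss.strategy_quasi}. The paper's actual Lemma~\ref{l.looksgood} ($f^{ext}_4(r,R)\leq C_1\,g^{ext}_{4,\overline\delta}(r,R)$) is proved by a multiscale \emph{iteration}: decompose $\widehat{\arm}^{ext}_4(r,R)\subseteq\big(\arm_4(r,R/4)\cap G^{ext}_\delta(R/4)\big)\cup\big(\widehat{\arm}^{ext}_4(r,R/16)\setminus G^{ext}_\delta(R/4)\big)$, use spatial independence, then iterate, and critically invoke the \emph{extension lemma} (Lemma~\ref{l.extension}) to compare $g^{ext}_{4,\delta}(r,R/(4\cdot 16^i))$ back to $g^{ext}_{4,\overline\delta}(r,R)$ and make the geometric series converge after shrinking $\delta$. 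Your proposal lists the separation/looking-good lemma as Step~(1) and the extension lemma as Step~(2); the dependency runs the other way (Lemma~\ref{l.good} $\to$ Lemma~\ref{l.extension} $\to$ Lemma~\ref{l.looksgood}), and without that inversion Step~(1) as you state it cannot be carried out.

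Two smaller points. First, the gluing in Step~(3) is not as clean as ``these two events are supported on complementary regions hence independent'': the auxiliary components $\qbc^{ext}(r_2/3)$ and $\qbc^{int}(3r_2)$ are \emph{not} supported on disjoint regions. The paper resolves this by defining $\gp^{ext/int}$ through conditional probabilities (so they become $\mathcal{F}$- resp.\ $\mathcal{G}$-measurable for $\mathcal{F}=\sigma(\omega\cap B_{2r_2/3})$, $\mathcal{G}=\sigma(\omega\setminus B_{3r_2/2})$) and then applying the elementary conditioning Lemma~\ref{l.key_in_qm} to get back from the conditional-probability events to the plain events $\qbc^{ext}\cap\qbc^{int}$; your proposal would need to spell out an analogue of that step. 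Second, your Steps~(1)--(3) only cover $r_1\geq\overline{r}$ for a fixed large $\overline{r}$ (denseness alone cannot control arbitrarily small scales); the paper handles $r_1\in[1,\overline{r}]$ separately via Lemma~\ref{l.extensionto1}, which uses a ``looks like the hexagonal lattice'' event as in Lemma~\ref{l.piv_one_point} to extend arms by hand down to scale~$1$. Your proposal omits this case entirely.
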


\begin{rem}
As we will see in Subsection~\ref{ss.half-plane}, the same proof will imply the quasi-multiplicativity property for the quantities $\alpha^{an,+}_j(\cdot,\cdot)$, for any $j$.
\end{rem}
As explained in Subsection~\ref{ss.strategy_quasi}, we first need to define what is a ``good percolation configuration'' (i.e. a configuration for which it is not difficult to extend the $j$ arms). \textbf{We write the proof of Proposition~\ref{p.quasieven} for $j=4$ since the proof for other even integers is the same}.

\subsubsection{What does ``looking good'' means for the Voronoi percolation configurations} 

\paragraph{The point configuration.} We consider $\delta \in (0,1/1000)$ and $R \in [\delta^{-2},+\infty)$. In the proof of Proposition~\ref{p.quasieven}, we use the following notations (the notations from the right-hand-side are those from Definition~\ref{d.dense} and Proposition~\ref{p.a_lot_of_quads}):
%\begin{defi}\label{d.dense_quasi}
%Let $\gp^1_\delta(R)$ be the event that, for every $u \in A(R/2,2R)$, there exists $x \in A(R/2,2R)$ at (Euclidean) distance at most $\delta \, R$ from $u$.
%\end{defi}
%We will also use extensively the following events (see Definition~\ref{d.a_lot_of_quads} for the notation $\mathcal{Q}_{\delta}(D)$):
\begin{eqnarray*}
\dense_\delta(R) & := & \dense_\delta \left( A(R/2,2R) \right) \, ,\\
\qbc_\delta(R) & := & \qbc_\delta^1 \left( A(3R/4,3R/2) \right) \, ,\\
\qbc^{ext}(R) & := & \qbc_{1/100}^1 \left( A(R,4R) \right) \cap \dense_{1/100} \left( A(R,4R) \right) \, ,\\
\qbc^{int}(R) & := & \qbc_{1/100}^1 \left( A(R/4,R) \right) \cap \dense_{1/100} \left( A(R/4,R) \right)  \, .
\end{eqnarray*}
Mind the presence of the events ``$\dense(\cdot)$'' in the definition of $\qbc^{ext}(R)$ and $\qbc^{int}(R)$ (to simplify the notations). Next, we define the two following events:
\begin{equation}
\gp^{ext}_\delta(R) := \left\lbrace \Pro \left[ \dense_\delta(R) \cap \qbc_\delta(R) \cap \qbc^{ext}(R) \cond \eta \cap A(R/2,2R) \right] \geq 3/4 \right\rbrace
\end{equation}
and
\begin{equation}
\gp^{int}_\delta(R) := \left\lbrace \Pro \left[ \dense_\delta(R) \cap \qbc_\delta(R) \cap \qbc^{int}(R) \cond \eta \cap A(R/2,2R) \right] \geq 3/4 \right\rbrace
\end{equation}
(for ``Good Point configuration''). In words, $\gp^{\ext}_\delta(R)$ (respectively $\gp^{int}_\delta(R)$) is the event that, conditionally on $\eta \cap A(R/2,2R)$, the probability that $\dense_\delta(R) \cap \qbc_\delta(R) \cap \qbc^{ext}(R)$ (respectively $\dense_\delta(R) \cap \qbc_\delta(R) \cap \qbc^{int}(R)$) holds is at least $3/4$. Note that, if $\dense_\delta(R)$ holds and if the Voronoi cell of some $x \in \eta$ intersects $A(3R/4,3R/2)$, then $x \in A(R/2,2R)$. Hence, $\dense_\delta(R) \cap \qbc_\delta(R)$ is measurable with respect to $\eta \cap A(R/2,2R)$. As a result, we have
\[
\gp^{ext}_\delta(R) = \dense_\delta(R) \cap \qbc_\delta(R) \cap \left\lbrace \Pro \left[ \qbc^{ext}(R) \cond \eta \cap A(R/2,2R) \right] \geq 3/4 \right\rbrace \, ,
\]
and the analogous property for $\gp^{int}_\delta(R)$. The reason why we do not choose to define $\gp^{ext}_\delta(R) = \dense_\delta(R) \cap \qbc_\delta(R) \cap \qbc^{ext}(R)$ is that we want $\gp^{ext}_\delta(R)$ to be measurable with respect to $\eta \cap A(R/2,2R)$ (and similarly for $\gp^{int}_\delta(R)$). This will be crucial in the whole proof.

\paragraph{The interfaces.} In Subsection~\ref{ss.preliminary}, we have estimated the events $\gi_\delta^{ext}(R)$ and $\gi_\delta^{int}(R)$ which are events that ``the interfaces are well separated''. In particular, we have proved Lemma~\ref{l.interfaces} by using that the exponent of the $3$-arm event in the half-plane is $2$. As we will see in Subsection~\ref{ss.half-plane}, the quasi-multiplicativity is a crucial ingredient in the computation of this exponent. Consequently, we cannot use Lemma~\ref{l.interfaces} in the present proof. We rather choose to consider a variant of the quantities $s^{ext}(r,R)$ and $s^{int}(r,R)$. More precisely:

We still consider $\delta \in (0,1/1000)$ and $R \in [\delta^{-2},+\infty)$. We also consider some $r \in [1,R]$. Following the appendix of~\cite{schramm2010quantitative}, we let $\widetilde{s}^{ext}(r,R)$ be the least distance between any pair of endpoints on $\partial B_R$ of two interfaces that go from $\partial B_r$ to $\partial B_R$. We write $\widetilde{\gi}^{ext}_\delta(R) = \lbrace \widetilde{s}^{ext}(3R/4,R) \geq 10 \delta R \rbrace$ and
\[
G^{ext}_\delta(R) = \gp^{ext}_\delta(R) \cap \widetilde{\gi}^{ext}_\delta(R) \, .
\]
Note that the event $\dense_\delta(R) \cap \widetilde{\gi}^{ext}_\delta(R)$ is measurable with respect to $\omega \cap A(R/2,2R)$. Therefore, $G^{ext}_\delta(R)$ is measurable with respect to $\omega \cap A(R/2,2R)$.

Similarly, we let $\widetilde{s}^{int}(r,R)$ be the least distance between any pair of endpoints on $\partial B_r$ of two interfaces that go from $\partial B_R$ to $\partial B_r$ and we write $\widetilde{\gi}^{int}_\delta(R) = \lbrace \widetilde{s}^{int}(R,3R/2) \geq 10 \delta R \rbrace$. We write $G^{int}_\delta(R) = \gp^{int}_\delta(R) \cap \widetilde{\gi}^{int}_\delta(R)$. The event $G^{int}_\delta(R)$ is measurable with respect to $\omega \cap A(R/2,2R)$.

\begin{rem}
As noted above, conditioning on $\dense_\delta(R)$ implies nice spatial independence properties. In what follows, we will often work with quads $Q$ and $Q'$ at distance more than $\delta R$ from each other and we will often use implicitly that, if we condition on $\dense_\delta(R)$, then there is no Voronoi cell that intersects the two quads, which implies that the events $\cross(Q)$ and $\cross^*(Q')$ are (conditionally) independent.
\end{rem}

We have the following estimates:

\begin{lem}\label{l.good}
There exists $\epsilon > 0$ such that, for every $\delta  \in (0,1/1000)$ and every $R \in [\delta^{-2},+\infty)$, we have
\begin{equation}\label{e.G+geq}
\Pro \left[ G^{ext}_\delta(R) \right] \geq 1 - \frac{1}{\epsilon} \delta^\epsilon
\end{equation}
and
\begin{equation}\label{e.G-geq}
\Pro \left[ G^{int}_\delta(R) \right] \geq 1 - \frac{1}{\epsilon} \delta^\epsilon \, .
\end{equation}
\end{lem}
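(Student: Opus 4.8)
The plan is to prove \eqref{e.G+geq} and \eqref{e.G-geq} by a union bound over the two "pieces" that make up $G^{ext}_\delta(R)$ (resp.\ $G^{int}_\delta(R)$), handling the point-configuration part and the interface part by different tools. The two inequalities are proved by the same argument, with $A(R/4,R)$, $A(R,4R)$, $A(3R/4,3R/2)$ and the inner/outer notions of well-separateness playing symmetric roles, so I will only discuss \eqref{e.G+geq}; it then suffices to show $\Pro[\neg \gp^{ext}_\delta(R)] \leq \grandO{1}\,\delta^\epsilon$ and $\Pro[\neg \widetilde{\gi}^{ext}_\delta(R)] \leq \grandO{1}\,\delta^\epsilon$.

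For the first estimate, recall that $\dense_\delta(R)\cap\qbc_\delta(R)$ is measurable with respect to $\eta\cap A(R/2,2R)$, so that
\[
\gp^{ext}_\delta(R) = \dense_\delta(R)\cap\qbc_\delta(R)\cap\{X\geq 3/4\},\qquad X:=\Pro\left[\qbc^{ext}(R)\cond \eta\cap A(R/2,2R)\right].
\]
I would then bound the three bad events separately. By the proof of Lemma~\ref{l.dense} applied to $A(R/2,2R)$ and using $R\geq\delta^{-2}$, $\Pro[\neg\dense_\delta(R)]\leq\grandO{1}\delta^{-2}\exp(-\Omega(1)(\delta R)^2)\leq\grandO{1}\exp(-\Omega(1)\delta^{-2})$. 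By Proposition~\ref{p.a_lot_of_quads} applied with $\gamma=1$ to $A(3R/4,3R/2)$ (whose diameter is $\asymp R\geq\delta^{-2}/100$), $\Pro[\neg\qbc_\delta(R)]\leq\grandO{1}R^{-1}\leq\grandO{1}\delta^2$. Finally, $\E[1-X]=\Pro[\neg\qbc^{ext}(R)]$, which by Proposition~\ref{p.a_lot_of_quads} and the proof of Lemma~\ref{l.dense} (both applied to $A(R,4R)$) is $\leq\grandO{1}R^{-1}\leq\grandO{1}\delta^2$; hence Markov's inequality gives $\Pro[X<3/4]=\Pro[1-X>1/4]\leq 4\,\E[1-X]\leq\grandO{1}\delta^2$. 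Summing, $\Pro[\neg\gp^{ext}_\delta(R)]\leq\grandO{1}\delta^2$.

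The second estimate is the main obstacle. We have $\neg\widetilde{\gi}^{ext}_\delta(R)=\{\widetilde{s}^{ext}(3R/4,R)<10\delta R\}$. Following the appendix of~\cite{schramm2010quantitative}, I would fix a net of $\grandO{1}\delta^{-1}$ equally spaced points $y\in\partial B_R$ and argue that, on $\neg\widetilde{\gi}^{ext}_\delta(R)$, there is some $y$ in the net such that a polychromatic $3$-arm event holds in the half-annulus $A(y;C\delta R,R/8)\cap B_R$: indeed, two distinct interfaces from $\partial B_{3R/4}$ to $\partial B_R$ with endpoints within $10\delta R$ realize, together with the monochromatic strip between them, three disjoint arms of alternating colors crossing that half-annulus. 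Hence $\Pro[\neg\widetilde{\gi}^{ext}_\delta(R)]\leq\grandO{1}\delta^{-1}\sup_y\Pro[\text{half-plane }3\text{-arm in }A(y;C\delta R,R/8)\cap B_R]$, and it remains to bound the half-plane $3$-arm probability at aspect ratio $\asymp\delta$ by $\grandO{1}\delta^{1+\epsilon}$ for some $\epsilon>0$. The subtle point — and the reason this lemma must be placed before the universal-exponent computation — is that we may \textbf{not} invoke Proposition~\ref{p.universal}, whose proof relies on the quasi-multiplicativity property that in turn will use the present lemma; so this half-plane $3$-arm bound must be obtained self-containedly, via the standard route of the appendix of~\cite{schramm2010quantitative}: box-crossing (Theorem~\ref{t.Tassion} and Proposition~\ref{p.a_lot_of_quads_bis}), the polynomial decay~\eqref{e.poly}, the $j=1$ case of quasi-multiplicativity (Subsection~\ref{ss.warm}), and the annealed BK inequality at $p=1/2$ (Proposition~\ref{p.BK}) to split the three disjoint arms. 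This yields $\Pro[\neg\widetilde{\gi}^{ext}_\delta(R)]\leq\grandO{1}\delta^\epsilon$.

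Combining the two estimates gives $\Pro[\neg G^{ext}_\delta(R)]\leq\grandO{1}\delta^2+\grandO{1}\delta^\epsilon\leq\grandO{1}\delta^\epsilon$, i.e.\ \eqref{e.G+geq}, after shrinking $\epsilon$; and \eqref{e.G-geq} follows by the symmetric argument. I expect the genuinely delicate work to lie entirely in the third paragraph: extracting the correct $\delta^{1+\epsilon}$-type bound on the half-plane $3$-arm event at aspect ratio $\asymp\delta$ without circular use of the universal arm exponents.
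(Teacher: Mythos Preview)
Your treatment of $\gp^{ext}_\delta(R)$ is correct and matches the paper's argument essentially verbatim.

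The gap is in your handling of $\widetilde{\gi}^{ext}_\delta(R)$. Your reduction to a half-plane $3$-arm event around a boundary point $y$ is valid, but the bound you need after the union bound over $\asymp\delta^{-1}$ points is $\alpha^{an,+}_3(C\delta R,R/8)\le O(\delta^{1+\epsilon})$. The tools you list do not deliver this: the polynomial decay~\eqref{e.poly} only gives some exponent $a>0$ that may be far below $1$, and the annealed BK inequality at $p=1/2$ (Proposition~\ref{p.BK}) applies to pairs of \emph{annealed-increasing} events, so you cannot split three alternating-color arms into three factors (one of the arms is white, hence decreasing). At best BK lets you split off the two same-colored arms, yielding $\alpha_1^{an,+}(\cdot)^2$, which is again only $O(\delta^{2a})$ with $a$ uncontrolled. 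In short, obtaining an exponent strictly larger than $1$ for the half-plane $3$-arm event is precisely the content of Proposition~\ref{p.universal}(ii), and that computation genuinely uses the quasi-multiplicativity you are trying to establish. This is why the paper reserves the ``$3$-arm plus union bound'' approach for Lemma~\ref{l.interfaces} (the \emph{strong} separation lemma), which is proved only \emph{after} quasi-multiplicativity for even $j$ and for half-plane arms is available.

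The paper's route is different and avoids any arm-event bound. It follows the interface-exploration argument of Lemma~A.2 of~\cite{schramm2010quantitative}: cover $\partial B_R$ by $O(1)$ arcs; on each arc, order the interfaces $\beta_1,\dots,\beta_k$ and, conditioning on $\{i\le k\}$ and on $\beta_i$, use box-crossing in the unexplored region to force the next endpoint away. The Voronoi-specific issue is that conditioning on $\beta_i$ in the annealed model biases the environment; the paper resolves this by first conditioning on $\eta$ lying in $\dense_\delta(R)\cap\widetilde{\qbc}_\delta(R)$ (Definition~\ref{d.a_lot_of_quads_bis}, Proposition~\ref{p.a_lot_of_quads_bis}) and then running the exploration under $\Prob^\eta$. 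The crucial technical point is that the crossing constant $\widetilde{c}(1)$ in Proposition~\ref{p.a_lot_of_quads_bis} is independent of $\delta$, so the exponent $\widetilde{\epsilon}$ produced by the exploration is absolute; combined with $\Pro[\neg(\dense_\delta(R)\cap\widetilde{\qbc}_\delta(R))]\le O(\delta^2)$ one gets $\Pro[\neg\widetilde{\gi}^{ext}_\delta(R)]\le O(\delta^{\widetilde{\epsilon}\wedge 2})$.
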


\begin{proof} We write only the proof of~\eqref{e.G+geq} since the proof of~\eqref{e.G-geq} is exactly the same. With the same proof as Lemma~\ref{l.dense} and thanks to Proposition~\ref{p.a_lot_of_quads}, we have
\[
\Pro \left[ \dense_\delta(R) \cap \qbc_\delta(R) \right] \geq 1-\grandO{1} \left( \delta^{-2} \exp \left( -\Omega(1) (\delta \, R)^2 \right) + R^{-1} \right) \, .
\]
Since $R \geq \delta^{-2}$, the above is at least
\[
1-\grandO{1} \left( \delta^{-2} \, \exp \left( -\Omega(1) \delta^{-2} \right) + \delta^2 \right) \geq 1-\grandO{1} \delta^{2} \, .
\]
If we apply Lemma~\ref{l.dense} and Proposition~\ref{p.a_lot_of_quads} once again, we obtain that $\Pro \left[ \qbc^{ext}(R) \right] \geq 1-\grandO{1}R^{-1}$. Therefore, with probability at least $1 - \grandO{1} R^{-1}$, we have
\[
\Pro \left[ \qbc^{ext}(R) \cond \eta \cap A(R/2,2R) \right] \geq 3/4 \, .
\]
As a result,
\[
\Pro \left[ \gp^{ext}_\delta(R) \right] \geq 1-\grandO{1} (R^{-1}+\delta^2) \geq 1-\grandO{1} \delta^2 \, .
\]
It only remains to prove that $\Pro \left[ \widetilde{\gi}^{ext}_\delta(R) \right] \geq 1 - \grandO{1} \delta^\epsilon$ for some $\epsilon > 0$. To this purpose, we follow the proof of Lemma~A$.2$ in~\cite{schramm2010quantitative} (which is written for site percolation on $\T$). First, we let $\alpha \subseteq \partial B_R$ be an arc of diameter $R/8$ and we let $Y$ be the set of points in $B_R$ at distance at most $R/8$ from $\alpha$. Let $\alpha_1$ be one of the two arcs in $(\partial Y \cap \partial B_R) \setminus \alpha$. Let $k$ be the number of interfaces crossing from $\partial Y \setminus \partial B_R$ to $\alpha$ and let $\beta_1, \cdots, \beta_k$ be these interfaces ordered in a way that, if $i_1 < i_2$, $\beta_{i_1}$ separates $\alpha_1$ from $\beta_{i_2 }$ in $Y$ (we will say that ``$\beta_{i_2}$ is on the right-hand-side of $\beta_{i_1}$'' and we will write $Y_i$ for the component of $Y \setminus \beta_i$ separated from $\alpha_1$ by $\beta_i$). Let $z_i$ denote the endpoint of $\beta_i$ on $\alpha$. We want to prove that there exists an absolute constant $\epsilon > 0$ such that
\begin{equation}\label{e.wellsep}
\Pro \left[ \forall i \in \lbrace 1, \cdots, {k-1} \rbrace, | z_i - z_{i+1} | \geq 10 \delta R \right] \geq 1-\grandO{1} \delta^\epsilon \, .
\end{equation}

The strategy in~\cite{schramm2010quantitative} is to condition on $\{ i \leq k \}$ and on $\beta_i$, use the fact that the percolation configuration on the right-hand-side of $\beta_i$ remains unbiased and finally conclude thanks to the box-crossing property. The fact that the (conditioned) configuration on the right-hand-side of $\beta_i$ is unbiased is not true in the case of Voronoi percolation since it gives information about the structure of the random tiling.

The strategy we choose is to condition on some $\eta$ such that $\dense_\delta(R) \, \cap \, \widetilde{\qbc}_\delta(R)$ holds, where
\[
\widetilde{\qbc}_\delta (R) := \lbrace \forall Q \in \widetilde{\mathcal{Q}}_{\delta}\left( A(R/2,2R) \right), \, \Prob^\eta \left[ \cross(Q) \right] \geq \widetilde{c}(1) \rbrace
\]
(see Definition~\ref{d.a_lot_of_quads_bis} for the definition of the set of quads $\widetilde{\mathcal{Q}}_{\delta}(D)$; the constant $\widetilde{c}(1)$ is the constant that comes from Proposition~\ref{p.a_lot_of_quads_bis}). Now, since $\eta$ is fixed, if we condition  on $\{ i \leq k \}$ and on $\beta_i$, then the (conditioned) configuration on the right-hand-side of $\beta_i$ remains unbiased. Moreover, the fact that $\widetilde{\qbc}_\delta (R)$ holds implies that we can use the box-crossing properties that are used in the proof of Lemma~A$.2$ of~\cite{schramm2010quantitative}. Finally (and we refer to~\cite{schramm2010quantitative} for more details), we obtain that, for some absolute constant $\widetilde{\epsilon} > 0$ and for any $\eta \in \dense_\delta(R) \, \cap \, \widetilde{\qbc}_\delta (R)$, we have
\[
\Prob^\eta \left[ \forall i \in \lbrace 1, \cdots, {k-1} \rbrace, | z_i - z_{i+1} | \geq 10 \delta R \right] \geq 1-\grandO{1} \delta^{\widetilde{\epsilon}} \, .
\]
(The fact that $\widetilde{\epsilon}$ does not depend on $\delta$ is crucial and comes from the fact that $\widetilde{c}(1)$ does not depend on $\delta$.) Next, note that Lemma~\ref{l.dense} and Proposition~\ref{p.a_lot_of_quads_bis} imply that
\begin{eqnarray*}
\Pro \left[ \dense_\delta(R) \cap \widetilde{\qbc}_\delta (R) \right] & \geq & 1-\grandO{1} \left( \delta^{-2} \exp \left( -\Omega(1) (\delta \, R)^2 \right) + R^{-1} \right)\\
& \geq & 1-\grandO{1}\delta^{2} \, .
\end{eqnarray*}
So, we have obtained~\eqref{e.wellsep} (with $\epsilon = \widetilde{\epsilon} \wedge 2$). It is not difficult to see (by choosing an appropriate covering of $\partial B_R$ by $\grandO{1}$ arcs $\alpha$) that this implies that
\[
\Pro \left[ \widetilde{\gi}^{ext}_\delta(R) \right] = \Pro \left[ \widetilde{s}^{ext}(3R/4,R) \geq 10 \delta R \right] \geq 1 - \grandO{1} \delta^\epsilon \, .
\]
\end{proof}

\subsubsection{Extension of the arms when the configuration is good}

Lemma~\ref{l.extension} below is the analogue of Lemma~A$.3$ of~\cite{schramm2010quantitative} and roughly says that if the $4$-arm event holds at some scale and if the configuration is good at this scale then we can extend the four arms to a larger scale with non-negligible probability. If $\delta \in (0,1/1000)$, $R \in [\delta^{-2},+\infty)$ and $r \in [1,R]$, we write
\[
g_{4,\delta}^{ext}(r,R) = \Pro \left[ \text{\textbf{A}}_4(r,R) \cap G_\delta^{ext}(R) \right] \, .
\]
Similarly, if $\delta \in (0,1/1000)$, $r \in [\delta^{-2},+\infty)$ and $R \in [r,+\infty)$, we write
\[
g_{4,\delta}^{int}(r,R) = \Pro \left[ \text{\textbf{A}}_4(r,R) \cap G_{\delta}^{int}(r) \right] \, .
\]

\begin{rem}\label{r.poly_et_delta_bar}
Note that~\eqref{e.poly} implies that there exists a constant $c > 0$ such that, for all $1 \leq \rho_1 \leq \rho_2$ that satisfy $\rho_1 \geq \rho_2/4^3$, we have $\alpha^{an}_4(\rho_1,\rho_2) \geq c$. Together with Lemma~\ref{l.good}, this implies that, if $\delta \in (0,1/1000)$ is sufficiently small, then for all $R  \in [\delta^{-2},+\infty)$ and all $r \in [R/4^3,R]$, we have
\[
g^{ext}_{4,\delta}(r,R) \geq c/2 \, .
\]
Similarly, if $\delta \in (0,1/1000)$ is sufficiently small, then for all $r \in [\delta^{-2},+\infty)$ and all $R \in [r,4^3r]$, we have
\[
g_{4,\delta}^{int}(r,R) \geq c/2 \, .
\]
\end{rem}

\begin{lem}\label{l.extension}
There exists $\overline{\delta} \in (0,1/1000)$ such that, for any $\delta \in (0,1/1000)$, there is some constant $a=a(\delta) \in (0,1)$ satisfying the following:
\begin{enumerate}
\item For every $R \in [(\overline{\delta} \vee \delta)^{-2},+\infty)$ and every $r \in [1,R/4]$, we have
\begin{equation}\label{e.gplus}
g_{4,\overline{\delta}}^{ext}(r,4R) \geq a \, g^{ext}_{4,\delta}(r,R) \, .
\end{equation}
\item For every $r \in [4 (\overline{\delta} \vee \delta)^{-2} ,+\infty)$ and every $R \in [4r,+\infty)$, we have
\begin{equation}\label{e.gmoins}
g_{4,\overline{\delta}}^{int}(r/4,R) \geq a \, g^{int}_{4,\delta}(r,R) \, .
\end{equation}
% we have:
%\begin{equation}
%g^-_{4,\overline{\delta}}(r/4,R) \geq a \, g^-_{4,\delta}(r,R)
%\end{equation}
%and:
%\begin{equation}
%\widetilde{g}^-_{4,\overline{\delta}}(r/4,R) \geq a \, \widetilde{g}^-_{4,\delta}(r,R).
%\end{equation}
\end{enumerate}
Moreover, we can (and do) assume that $\overline{\delta}$ is sufficiently small so that Remark~\ref{r.poly_et_delta_bar} holds with $\delta=\overline{\delta}$.
\end{lem}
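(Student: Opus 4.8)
The plan is to prove only~\eqref{e.gplus}, since~\eqref{e.gmoins} is obtained by the same argument after an inversion of scales. So fix $\delta\in(0,1/1000)$, take $R\geq(\overline\delta\vee\delta)^{-2}$ (with $\overline\delta$ to be chosen small, absolute), and $r\in[1,R/4]$. The starting point is a configuration realizing $\arm_4(r,R)\cap G^{ext}_\delta(R)$; we want to show that, with probability $\geq a(\delta)$ under $\Pro$, we can additionally realize $\arm_4(r,4R)$ and the good event $G^{ext}_{\overline\delta}(4R)$ at the larger scale, in a way that is \emph{compatible} with the smaller-scale configuration — crucially, $G^{ext}_\delta(R)$ is measurable with respect to $\omega\cap A(R/2,2R)$, while $\arm_4(r,R)$ can be taken measurable with respect to $\omega\cap B_R$ after intersecting with the corresponding $\widehat{\arm}$-type event. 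The extension happens in the annulus $A(R,4R)$, which is disjoint from $B_R$ up to the boundary $\partial B_R$, so we will be able to glue by conditioning on $\eta$ and on the interfaces crossing a neighborhood of $\partial B_R$.

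The key steps, in order, are the following. First, condition on $\eta$: since $\eta\in\gp^{ext}_\delta(R)$, with conditional probability $\geq 3/4$ the auxiliary point-configuration event $\dense_\delta(R)\cap\qbc_\delta(R)\cap\qbc^{ext}(R)$ holds; moreover one argues (again by Lemma~\ref{l.dense} and Proposition~\ref{p.a_lot_of_quads}, applied now at scale $4R$) that one can further impose $\eta\in\gp^{ext}_{\overline\delta}(4R)$ and the density/box-crossing events needed in $A(R,16R)$, at the cost of a factor depending only on $\overline\delta$ (which is why $\overline\delta$ must be absolute — Proposition~\ref{p.a_lot_of_quads_bis}'s $\delta$-independent constant is exactly what makes this uniform). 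Second, condition additionally on the $k$ interfaces $\beta_0,\dots,\beta_{k-1}$ crossing $A(3R/4,R)$: on the event $\widetilde{\gi}^{ext}_\delta(R)$ their endpoints on $\partial B_R$ are $10\delta R$-separated, and since $\eta\in\dense_\delta(R)$ the corresponding interface \emph{strands} near $\partial B_R$ are genuinely well-separated (no Voronoi cell meets two of them). Third, using $\eta\in\qbc^{ext}(R)\cap\qbc_{\overline\delta}(4R)$ and the family-of-quads statement Proposition~\ref{p.a_lot_of_quads} (for both colors — at $p=1/2$ this is free), pick, for each of the four interfaces that we want to continue, a quad drawn on the appropriate grid joining $\beta_i$ to $\partial B_{4R}$, with the four quads pairwise at distance $>\overline\delta\cdot 4R$ and each living in the appropriate sector; the event $F$ that each such quad is crossed in the prescribed color has, by the quenched Harris--FKG inequality under the conditional measure (which, after conditioning on $\eta$ and the $\beta_i$'s, remains a product measure with the forced colorings only \emph{helping} the relevant monotone events), conditional probability $\geq c_0(\overline\delta)$. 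On $F$, $\arm_4(r,4R)$ holds. Fourth, one checks that on the same event one may also arrange $\widetilde{\gi}^{ext}_{\overline\delta}(4R)$ — this is where one wants $\overline\delta$ small enough and where one re-invokes (the proof of) Lemma~\ref{l.good} at scale $4R$, i.e.\ $\Pro[\widetilde{\gi}^{ext}_{\overline\delta}(4R)]\geq 1-\tfrac1\epsilon\overline\delta^{\epsilon}$, and then uses Proposition~\ref{p.techniquegeneral}-style reasoning (or a direct conditional FKG bound) to keep it with positive probability alongside $\arm_4(r,4R)$. Collecting the four factors — $3/4$ from $\gp^{ext}$, an $\overline\delta$-depending factor from upgrading $\eta$ to the scale-$4R$ good point-config, $c_0(\overline\delta)$ from FKG on the quads, and a constant from the good-interface upgrade — gives $g^{ext}_{4,\overline\delta}(r,4R)\geq a\,g^{ext}_{4,\delta}(r,R)$ with $a=a(\delta)$ (the dependence on $\delta$ entering only through the first conditioning step and through $r\leq R/4$ versus $r\leq R$; note $\overline\delta$ is absolute). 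The last sentence of the lemma, that $\overline\delta$ can be taken so small that Remark~\ref{r.poly_et_delta_bar} applies, is just a choice.

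The main obstacle, as the text around Figure~\ref{f.difficulties} warns, is the \textbf{lack of spatial independence} of the random tiling: conditioning on $\arm_4(r,R)$ could a priori make $\eta$ pathological near $\partial B_R$, so one must \emph{build the goodness into the event being estimated} rather than hope to recover it afterwards — this is precisely why the proof works with $g^{ext}_{4,\delta}(r,R)=\Pro[\arm_4(r,R)\cap G^{ext}_\delta(R)]$ and why $G^{ext}_\delta(R)$ was carefully arranged to be measurable with respect to $\omega\cap A(R/2,2R)$. A secondary subtlety is ensuring that all constants coming from the scale-$4R$ manipulations are \emph{absolute} (not $\delta$-dependent), which forces the use of $\overline\delta$ as a fixed small parameter and of Proposition~\ref{p.a_lot_of_quads_bis} with its $\delta$-free constant; and one must be careful that conditioning on the interfaces $\beta_i$ only reveals the colors it is forced to reveal, leaving a genuine product measure on the complementary region so that quenched Harris--FKG applies.
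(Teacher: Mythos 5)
Your proposal follows the same outline as the paper's proof: condition on $\eta$, use the structure of $\gp^{ext}_\delta(R)$, condition on the interfaces, pick well-separated quads, apply quenched FKG, then obtain $G^{ext}_{\overline\delta}(4R)$ at the larger scale and tune $\overline\delta$ at the end. Two points deserve attention.

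First, there is a genuine logical gap in your Step Four. You suggest keeping $\widetilde{\gi}^{ext}_{\overline\delta}(4R)$ (hence $G^{ext}_{\overline\delta}(4R)$) alongside $\arm_4(r,4R)$ by ``Proposition~\ref{p.techniquegeneral}-style reasoning'': but Proposition~\ref{p.techniquegeneral} is a \emph{consequence} of the quasi-multiplicativity property, which Lemma~\ref{l.extension} is one of the main ingredients for, so that invocation would be circular. Your fallback, a ``direct conditional FKG bound,'' does not apply either, since $G^{ext}_{\overline\delta}(4R)$ is not a monotone event. The correct mechanism here is pure spatial independence: all the events you have conditioned on so far (namely $\arm_4(r,R)\cap\gp^{ext}_\delta(R)\cap\{\widetilde{s}^{ext}(r,R)\geq 10\delta R\}$, the interfaces $\beta_j$, and the gluing event $F$ built from the $Q(\beta_i)\in\mathcal{Q}_\delta(A(3R/4,3R/2))$) are measurable with respect to $\omega\cap B_{2R}$, whereas $G^{ext}_{\overline\delta}(4R)$ is measurable with respect to $\omega\cap A(2R,8R)$. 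Hence the two are independent under $\Prob^\eta_{B_{2R}}$, and Lemma~\ref{l.good} immediately gives $\nu^\eta_{r,R,(\beta_j)_j}\bigl[G^{ext}_{\overline\delta}(4R)\mid F\bigr]\geq 1-\frac{1}{\epsilon}\overline\delta^\epsilon$. This is precisely why the events $G^{ext}_\delta(\cdot)$, $\gp^{ext}_\delta(\cdot)$, $F$ were carefully set up to live on the correct annuli.

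Second, the bookkeeping of constants is where the lemma actually bites, and your sketch blurs it. There are three probability factors: $c(\delta,1)^4$ from crossing the $Q(\beta_i)$ (this one is $\delta$-dependent, and feeds only into the final $a(\delta)$); an \emph{absolute} constant $c'$ from extending through the quads $Q^{ext}(R,i)$, which is absolute precisely because $\qbc^{ext}(R)$ uses the fixed mesh $1/100$; and the loss $\frac{1}{\epsilon}\overline\delta^\epsilon$ from Lemma~\ref{l.good}. The reason the scheme converges is that $\overline\delta$ is chosen after $c'$ so that $3c'/4-\frac{1}{\epsilon}\overline\delta^\epsilon\geq c'/2$; your presentation should make explicit that the extension constant does not depend on $\delta$ (you invoke Proposition~\ref{p.a_lot_of_quads_bis} for uniformity, but that enters only through Lemma~\ref{l.good}, not through the extension step itself, which relies on the fixed-mesh $\qbc^{ext}$).
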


%\begin{rem}
%Remember that we in Subsection~\ref{ss.warm}, we proved the polynomial decay phenomenon~\eqref{e.poly_before_quasi}. What was missing in~\eqref{e.poly_before_quasi} is that the left-hand-inequality holds for every $1 \leq r \leq R$ and not only for $R \geq r$ large enough. Now the fact that the left-hand-inequality holds for every $1 \leq r \leq R$ is a direct consequence of Lemma~\ref{l.extension}. Hence, we can use the following: there exists a constant $C''$ such that, for every $1 \leq r \leq R$:
%\begin{equation}\label{e.poly_for_proof}
%\frac{1}{C''} \left( \frac{r}{R} \right)^{C''} \leq \alpha^{an}_4(r,R) \leq C'' \left( \frac{r}{R} \right)^{1/C''} \, .
%\end{equation}
%\[
%\alpha^{an}_4(r,R) \geq g_{4,\overline{\delta}}^+(r,R) \geq g_{4,\overline{\delta}}^+(r,R)
%\]
%and that:
%\[
%\begin{cases}
%a(\overline{\delta})^{\lfloor \log_4(R/r) \rfloor} g_{4,\overline{\delta}}^+(r,4r) & := 0  \\

%K_{\log}^a(v,w) & := c_a \frac 1 {\|v-w\|_2^{2} \, \log(\|v-w\|_2+1)^{1+a} } \text{  if }v \neq w \, .
%\end{cases}
%\]
%\end{rem}

\begin{proof}[Proof of Lemma~\ref{l.extension}]
Let us first prove~\eqref{e.gplus}. Let $\overline{\delta} \in (0,1/1000)$ to be determined later and consider $R$, $r$ and $\delta$ as in the statement of the lemma. We write $\Prob^\eta_{B_{2R}}$ for the probability measure $\Pro$ conditioned on $\eta \cap B_{2R}$. Note that this is the probability measure obtained by coloring $\eta \cap B_{2R}$ uniformly and by sampling (independently of the coloring of $\eta \cap B_{2R}$) a colored Poisson point process in $\R^2 \setminus B_{2R}$ of intensity $\text{Leb}_{\R^2 \setminus B_{2R}} \otimes \left( \frac{\delta_{-1}+\delta_1}{2} \right)$.\\

Fix some $\eta \in \gp^{ext}_\delta(R)$ such that $\Prob^\eta_{B_{2R}} \left[ \arm_4(r,R) \cap \lbrace \widetilde{s}^{ext}(r,R) \geq 10 \delta  R \rbrace  \right] > 0$ and
%(in other words, such that $\gp^{ext}_\delta(R)$ holds in $\eta$ and $\Prob^\eta \left[ \arm_4(r,R) \cap GC^{ext}_\delta(r,R) \right] > 0$)
%Given a coloring of $\eta \cap B(2R)$,w
write $\beta_0, \cdots, \beta_{k-1}$ for the interfaces that cross $A(r,R)$ in counter-clockwise order. We assume that the right-hand-side of $\beta_0$ (if one goes from $\partial B_r$ to $\partial B_R$) is black. First, we work under the following conditional probability measure:
\[
\nu^\eta_{r,R,(\beta_j)_j} := \Prob^\eta_{B_{2R}} \left[ \cdot \cond \arm_4(r,R) \cap \gp^{ext}_\delta(R) \cap \lbrace \widetilde{s}^{ext}(r,R) \geq 10 \delta R \rbrace, \, \beta_0, \cdots, \beta_{k-1} \right] \, .
\]
%In other words, we fix a possible sequence for $\beta_0, \cdots, \beta_{k-1}$ with $k \geq 3$ and such that the endpoints are well separated (i.e. so that $s^{ext}(R) > 10 \delta R$ holds) and (under $\Prob^\eta_{B(2R)}$) we work conditionally on $\beta_0, \cdots, \beta_{k-1}$. 
We keep such an $\eta$ fixed until we explicitly say that we take the expectation under $\Pro$ (see below~\eqref{e.key_lemma_ext}). Let us define four rectangles $Q^{ext}(R,0), \cdots, Q^{ext}(R,3)$ (which belong to the set of quads $\mathcal{Q}_{1/100} \left( A(R,4R) \right)$ from Definition~\ref{d.a_lot_of_quads}) in Figure~\ref{f.Qplus(R,i)}.

\begin{figure}[!h]
\begin{center}
\includegraphics[scale=0.50]{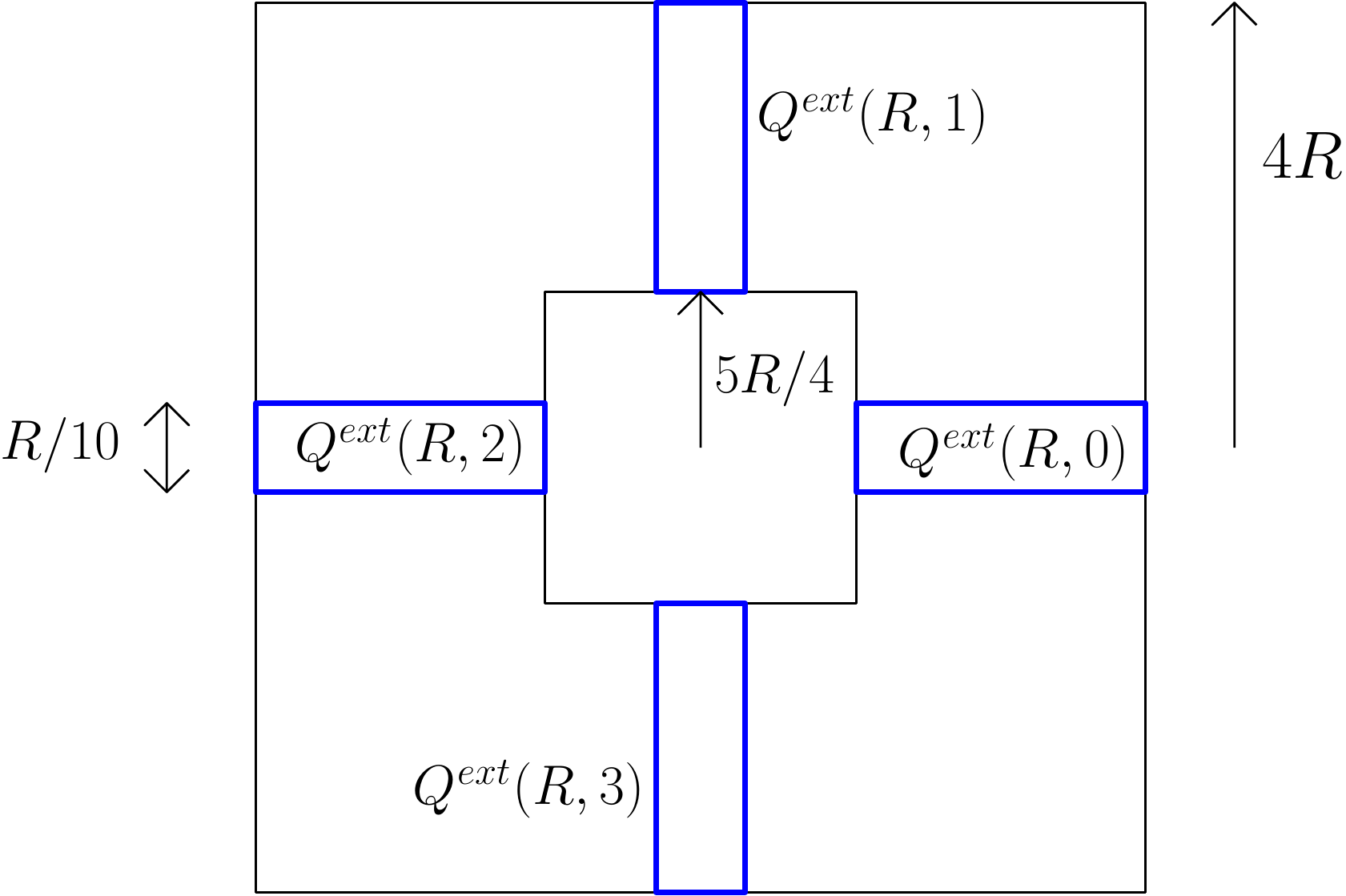}
\end{center}
\caption{The quads $Q^{ext}(R,0), \cdots, Q^{ext}(R,3)$.}
\label{f.Qplus(R,i)}
\end{figure}

It is not difficult to see that we can choose four quads $Q(\beta_i) \in  \mathcal{Q}_\delta \left( A(3R/4,3R/2) \right)$, $i \in \lbrace 0, \cdots, 3 \rbrace$, such that: (a) the intersection of $Q(\beta_i)$ and $\beta_{i-1} \cup \beta_{i}$ is one of the two distinguished sides of $Q(\beta_i)$, (b) $Q(\beta_i) \cap B_R$ is in the region between $\beta_{i-1}$ and $\beta_i$, (c) if $Q(\beta_i)$ is crossed, then $Q^{ext}(R,i)$ is crossed widthwise, (d) if $0 \leq i \neq j \leq 3$, then there is no Voronoi cell that intersects both $Q(\beta_i)$ and $Q^{ext}(R,j)$, (e) if $0 \leq i \neq j \leq 3$, then there is no Voronoi cell that intersects both $Q(\beta_i)$ and $Q(\beta_j)$. See Figure~\ref{f.Q(beta)}.

\begin{figure}[!h]
\begin{center}
\includegraphics[scale=1]{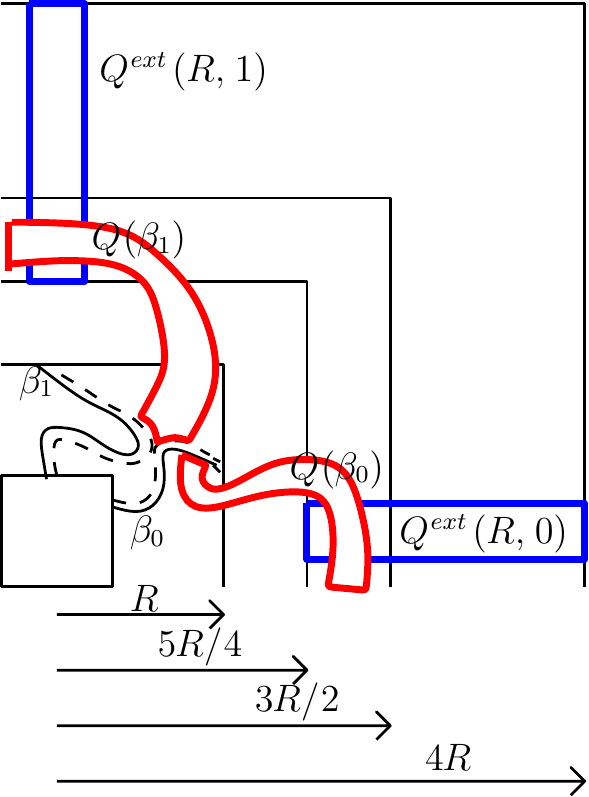}
\end{center}
\caption{The quads $Q(\beta_1)$ and $Q(\beta_2)$.}
\label{f.Q(beta)}
\end{figure}

Note that, since the quads $Q(\beta_i)$ belong to the set $\mathcal{Q}_\delta \left( A(3R/4,3R/2) \right)$, then the probability under $\nu^\eta_{r,R,(\beta_j)_j}$ that $Q(\beta_i)$ is crossed is at least $c(\delta,1)$, where $c(\delta,1)$ is the constant of Proposition~\ref{p.a_lot_of_quads}. (Note that we have implicitly used the (quenched) Harris-FKG inequality since conditioning on $(\beta_j)_j$ affects the percolation process as follows: if the right-hand-side of $\beta_j$ is black (respectively white) then there is a black (respectively white) crossing from $\beta_j$ to $\beta_{j-1}$.) Now, let $F = F(\beta_0, \cdots, \beta_{k-1})$ denote the event that there are black crossings in $Q(\beta_0)$ and $Q(\beta_2)$ and white crossings in $Q(\beta_1)$ and $Q(\beta_3)$. We have
\begin{equation}\label{e.a'delta}
\nu^\eta_{r,R,(\beta_j)_j} \left[ F \right] \geq c(\delta,1)^4 \, .
\end{equation}
Our next goal is to prove the following:
\begin{equation}\label{e.3/4a'delta}
\nu^\eta_{r,R,(\beta_j)_j} \left[ \qbc^{ext}(R) \cond F \right] \geq 3/4 \, .
\end{equation}
To this purpose, remember that
\[
\gp^{ext}_\delta(R) = \dense_\delta(R) \cap \qbc_\delta(R) \cap \left\lbrace \Pro \left[ \qbc^{ext}(R) \cond \eta \cap A(R/2,2R) \right] \geq 3/4 \right\rbrace \, .
\]
Since $\sigma \left( \qbc^{ext}(R), \eta \cap A(R/2,2R) \right)$ is independent of $\eta \cap B_{R/2}$, we have
\[
\Pro \left[ \qbc^{ext}(R) \cond \eta \cap A(R/2,2R) \right] = \Prob_{B_{2R}}^\eta \left[ \qbc^{ext}(R) \right] \, .
\]
Moreover, i) $\nu^\eta_{r,R,(\beta_j)_j} \left[ \cdot \, | \, F \right]$ is the probability measure $\Prob^\eta_{B_{2R}}$ conditioned on $\gp^{ext}_\delta(R)$ and on other events which are measurable with respect to $\omega \cap B_{2R}$ and ii) $\qbc^{ext}(R)$ is $\Prob^\eta_{B_{2R}}$-independent of $\omega \cap B_{2R}$. This implies~\eqref{e.3/4a'delta}.\\

We are now in shape to extend the arms to scale $4R$ since $\qbc^{ext}(R)$ gives quenched box crossing estimates for enough quads in $A(R,4R)$. If $F$ holds and if there are black crossings of $Q^{ext}(R,0)$ and $Q^{ext}(R,2)$ and white crossings of $Q^{ext}(R,1)$ and $Q^{ext}(R,3)$, then $\text{\textbf{A}}_4(r,4R)$ holds. As a result, the (quenched) Harris-FKG inequality implies that there exists an absolute constant $c' > 0$ such that
\[
\nu^\eta_{r,R,(\beta_j)_j} \left[ \text{\textbf{A}}_4(r,4R) \cond F \cap \qbc^{ext}(R), \, \eta \right] \geq c' \, ,
\]
hence,
\begin{equation}\label{e.extendeq}
\nu^\eta_{r,R,(\beta_j)_j} \left[ \text{\textbf{A}}_4(r,4R) \cond F \cap \qbc^{ext}(R) \right] \geq c' \, .
\end{equation}
Let us use once again that $\nu^\eta_{r,R,(\beta_j)_j}$ is the probability measure $\Prob_{B_{2R}}^\eta$ conditioned on events measurable with respect to $\omega \cap B_{2R}$. Let us also use that (under $\nu^\eta_{r,R,(\beta_j)_j}$) the event $F$ is measurable with respect to $\omega \cap B_{2R}$. Moreover, $G^{ext}_{\bar{\delta}}(4R)$ is measurable with respect to $\omega \setminus B_{2R}$. Together with Lemma~\ref{l.good}, this implies that
\begin{equation}\label{e.GC4Req}
\nu^\eta_{r,R,(\beta_j)_j} \left[ G^{ext}_{\overline{\delta}}(4R) \cond F \right] \geq 1-\frac{1}{\epsilon} \overline{\delta}^\epsilon.
\end{equation}
If we combine~\eqref{e.3/4a'delta}, \eqref{e.extendeq} and~\eqref{e.GC4Req}, we obtain that
\[
\nu^\eta_{r,R,(\beta_j)_j} \left[ \text{\textbf{A}}_4(r,4R) \cap G^{ext}_{\overline{\delta}}(4R) \cond F \right] \geq 3c'/4 - \frac{1}{\epsilon}\overline{\delta}^\epsilon.
\]
We choose $\overline{\delta}$ sufficiently small so that $3c'/4 - \frac{1}{\epsilon}\overline{\delta}^\epsilon \geq c'/2$ (here the fact that $c'$ does not depend on $\delta$ is crucial). Now, by combining the above inequality with~\eqref{e.a'delta} we obtain that
\begin{equation}\label{e.key_lemma_ext}
\nu^\eta_{r,R,(\beta_j)_j} \left[ \text{\textbf{A}}_4(r,4R) \cap G^{ext}_{\overline{\delta}}(4R) \right] \geq \frac{c(\delta,1)^4 \, c'}{2} \, .
\end{equation}
If we take the expectation under $\Prob^\eta_{B_{2R}}$ and then under $\Pro$, we obtain that
\begin{multline*}
g^{ext}_{4,\overline{\delta}}(r,4R) = \Pro \left[ \text{\textbf{A}}_4(r,4R) \cap G^{ext}_{\overline{\delta}}(4R) \right]\\
\geq \frac{c(\delta,1)^4 \, c'}{2} \, \Pro \left[ \text{\textbf{A}}_4(r,R) \cap \gp^{ext}_\delta(R) \cap \lbrace \widetilde{s}^{ext}(r,R) \geq 10 \delta R \rbrace \right].
\end{multline*}
Note that, if $1 \leq r_1 \leq r_2 \leq r_3$, then $\widetilde{s}^{ext}(r_1,r_3) \geq \widetilde{s}^{ext}(r_2,r_3)$, hence,
\[
\Pro \left[ \arm_4(r,R) \cap \gp^{ext}_\delta(R) \cap \lbrace \widetilde{s}^{ext}(r,R) \geq 10 \delta R \rbrace \right] \geq g^{ext}_{4,\delta}(r,R) \, .
\]
Finally, we have obtained~\eqref{e.gplus} (with $a = a(\delta) = c(\delta,1)^4 \, c'/2$).\\

Note that we have obtained the following more precise result: Let $\widetilde{\arm}^{ext}_4(r,R)$ denote the event that there are four arms of alternating colors $\gamma_0, \cdots, \gamma_3$ from $\partial B_r$ to $\partial B_R$ such that $\gamma_i \cap A(R/2,R) \subseteq Q^{ext}(R/4,i)$ (see Figure~\ref{f.Qplus(R,i)} for the definition of these rectangles). Let $\delta$, $r$ and $R$ be as in the statement of Item~$1$ of Lemma~\ref{l.extension}. Then,
\[
\Pro \left[ \widetilde{\arm}_4^{ext}(r,4R) \cap G^{ext}_{\overline{\delta}}(4R) \right] \geq a \, 
g^{ext}_{4,\delta}(r,R) \, .
\]
Actually, if we follow the proof we can see that we also have the following: Let $F_R$ be an event measurable with respect to $\omega \setminus B_{2R}$ such that $\Pro \left[ F_R \right] \geq 1-c'/4$. Then,
\[
\Pro \left[ \widetilde{\arm}_4^{ext}(r,4R) \cap G^{ext}_{\overline{\delta}}(4R) \cap F_R \right] \geq \frac{a}{2} \, g^{ext}_{4,\delta}(r,R) \, .
\]
%As for~\eqref{gtildeplus}, we take the expectation of the square of the above inequality and we easily conclude by using both the following martingale inequality:
%\begin{multline*}
%\widetilde{g}_{4,\overline{\delta}}(r,4R)^2 = \E \left[ \Prob^\eta \left[ \text{\textbf{A}}_4(r,4R), G^+_{\overline{\delta}}(4R) \right]^2 \right]\\
%\geq \E \left[ \Prob^\eta_{B(2R)} \left[ \text{\textbf{A}}_4(r,4R), G^+_{\overline{\delta}}(4R) \right]^2 \right],
%\end{multline*}
%and the following pointwise equality:
%\[
%\Prob^\eta \left[ \text{\textbf{A}}_4(r,R), G^+_\delta(R) \right] =  \Prob^\eta_{B(2R)} \left[ \text{\textbf{A}}_4(r,R), G^+_\delta(R) \right].
%\]
%That ends the proof.

The proof of~\eqref{e.gmoins} is exactly the same. As in the case of~\eqref{e.gplus}, we can also obtain a stronger result: Let $Q^{int}(r,0), \cdots, Q^{int}(r,3)$ be the four rectangles defined on Figure~\ref{f.Qmoins(r,i)} and write $\widetilde{\arm}^{int}_4(r,R)$ for the event that there are four arms of alternating colors $\gamma_0, \cdots, \gamma_3$ from $\partial B_r$ to $\partial B_R$ such that $\gamma_i \cap A(r,2r) \subseteq Q^{int}(4r,i)$. If $\overline{\delta}$ is sufficiently small and if $\delta$, $r$ and $R$ are as in Item~$2$ of Lemma~\ref{l.extension}, then the following holds: There exists $a=a(\delta) > 0$ and $c' > 0$ such that, for every event $F_r$ measurable with respect to $\omega \cap B_{r/2}$ that satisfies $\Pro \left[ F_r \right] \geq 1-c'$, we have
\begin{equation}\label{e.extension_better}
\Pro \left[ \widetilde{\arm}^{int}_4(r/4,R) \cap G^{int}_{\overline{\delta}}(r/4) \cap F_r \right] \geq a \, g_{4,\delta}^{int}(r,R) \, .
\end{equation}
\end{proof}

\begin{figure}[!h]
\begin{center}
\includegraphics[scale=0.50]{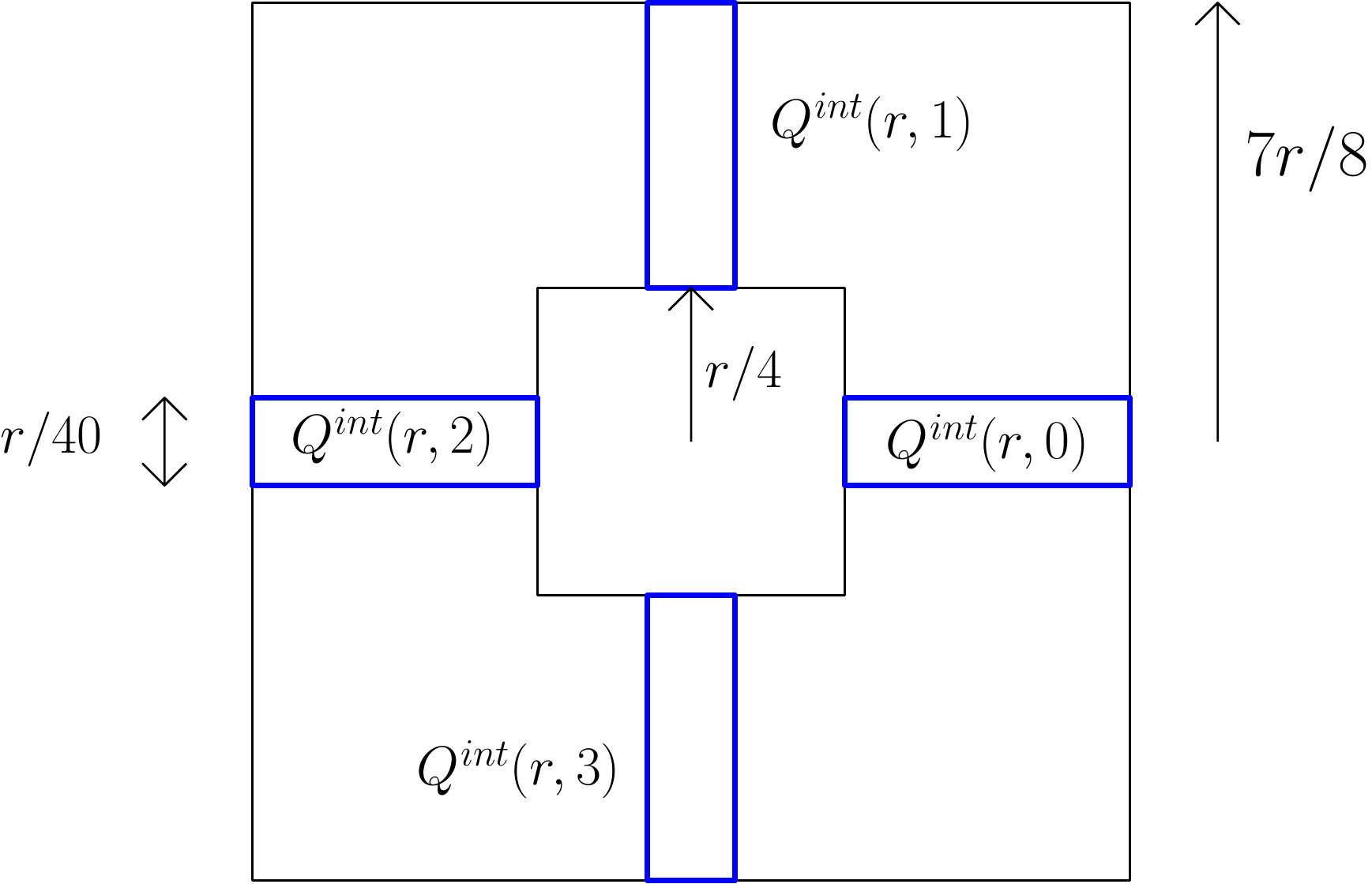}
\end{center}
\caption{The quads $Q^{int}(r,0), \cdots, Q^{int}(r,3)$.}
\label{f.Qmoins(r,i)}
\end{figure}

\subsubsection{The probability to look good if the $4$-arm event holds}

We now consider the following events: 
\begin{eqnarray*}
\widehat{\text{\textbf{A}}}^{ext}_4(r,R) & = & \left\lbrace \Pro \left[ \text{\textbf{A}}_4(r,R) \cond \omega \cap B_R \right] > 0 \right\rbrace \, ,\\
\widehat{\text{\textbf{A}}}^{int}_4(r,R) & = & \left\lbrace \Pro \left[ \text{\textbf{A}}_4(r,R) \cond \omega \setminus B_r \right] > 0 \right\rbrace \, ,
\end{eqnarray*}
and the two following quantities:
\[
f_4^{ext}(r,R) = \Pro \left[ \widehat{\text{\textbf{A}}}^{ext}_4(r,R) \right] \, ; \, f_4^{int}(r,R) = \Pro \left[ \widehat{\text{\textbf{A}}}^{int}_4(r,R) \right] \, .
\]
We want to prove that the quantities $\alpha^{an}_4(r,R)$, $g^{ext}_{4,\bar{\delta}}(r,R)$, $g^{int}_{4,\bar{\delta}}(r,R)$, $f^{ext}_4(r,R)$ and $f^{int}_4(r,R)$ are of the same order. We have the following result (where $\overline{\delta}$ is the constant of Lemma~\ref{l.extension}):

\begin{lem}\label{l.looksgood}
There exist $C_1 \in [1,+\infty)$ and $\overline{r} \in [\overline{\delta}^{-2},+\infty)$ such that, for every $r \in [\overline{r},+\infty)$ and $R \in [16r,+\infty)$,
\begin{equation}\label{e.looksgoodplus}
g^{ext}_{4,\overline{\delta}}(r,R) \geq f^{ext}_4(r,R)/C_1 \, ,
\end{equation}
and
\begin{equation}\label{e.looksgoodmoins}
g^{int}_{4,\overline{\delta}}(r,R) \geq f^{int}_4(r,R)/C_1 \, .
\end{equation}
% we have:
%\begin{equation}
%g^-_{4,\overline{\delta}}(r,R) \geq f_4^-(r,R)/C_1
%\end{equation}
%and:
%\begin{equation}
%\widetilde{g}^-_{4,\overline{\delta}}(r,R) \geq \widetilde{f}^-_4(r,R)/C_1,
%\end{equation}
%where $\overline{\delta}$ is as in Lemma~N (and Remark~N ??).
\end{lem}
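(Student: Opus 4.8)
We prove \eqref{e.looksgoodplus}; the proof of \eqref{e.looksgoodmoins} is entirely symmetric, exchanging the roles of the origin and infinity (working with $\R^2\setminus B_r$, with $G^{int}$ in place of $G^{ext}$, and with the ``fanning in'' of interfaces in place of the ``fanning out''; the extra bookkeeping this requires is exactly why the rest of Section~\ref{s.quasi} is phrased the way it is). Fix a small absolute constant $\delta_0\in(0,1/1000)$, to be chosen at the end, and set $\rho:=R/16$. The plan is: (i) extract, out of the degenerate ``hat'' event $\widehat{\arm}^{ext}_4(r,\rho)$, a \emph{genuine} four-arm event at a scale comparable to $R$; then (ii) grow it out to scale $R$ while creating the good event $G^{ext}_{\overline\delta}(R)$, by the very argument already used in the proof of Lemma~\ref{l.extension}. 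First I would use that $\arm_4(r,R')\supseteq\arm_4(r,R)$ for $R'\le R$, so $\widehat{\arm}^{ext}_4(r,R)\subseteq\widehat{\arm}^{ext}_4(r,\rho)$ up to a null set and $f_4^{ext}(r,R)\le f_4^{ext}(r,\rho)$; hence it suffices to bound $g^{ext}_{4,\overline\delta}(r,R)$ from below by an absolute constant times $f_4^{ext}(r,\rho)$. Note that $\widehat{\arm}^{ext}_4(r,\rho)$ is measurable with respect to $\omega\cap B_\rho$, hence independent of the (unbiased) colored configuration in $\R^2\setminus B_\rho$.

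The hard part is step (i): the hat event provides a completion of $\omega\cap B_\rho$ realizing $\arm_4(r,\rho)$ only with \emph{positive} — not bounded-below — conditional probability, and a priori conditioning on it may force $\eta$ to be very degenerate (Figure~\ref{f.difficulties}). The key point is that the \emph{only} cleanliness of $\eta$ one can afford to buy after such a conditioning is cleanliness at scale $\asymp R$, and that this is exactly enough. Let $D:=\dense_{\delta_0}(A(\rho/2,2\rho))$ (enlarged slightly if necessary), an $\eta$-measurable scale-$R$ event. On $D$ every Voronoi cell meeting the bulk of $A(\rho/2,2\rho)$ has diameter $\grandO{1}\delta_0 R$, so, with $\rho':=\rho(1-C\delta_0)$ and $C$ a suitable absolute constant, every cell meeting $A(r,\rho')$ has its centre inside $B_\rho$; thus the Voronoi percolation restricted to cells meeting $A(r,\rho')$ is measurable with respect to $\omega\cap B_\rho$. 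Since on $\widehat{\arm}^{ext}_4(r,\rho)$ some completion realizes $\arm_4(r,\rho)$, the four arms of that completion, restricted to $A(r,\rho')$, witness $\arm_4(r,\rho')$ \emph{deterministically}, i.e.
\[
\widehat{\arm}^{ext}_4(r,\rho)\cap D\ \subseteq\ \arm_4(r,\rho').
\]
Moreover $D$ fails with probability only $\grandO{1}\delta_0^{-2}\exp(-\Omega(1)(\delta_0 R)^2)$, which for fixed $\delta_0$ is superpolynomially small in $R$, whereas $\alpha^{an}_4(r,\rho)^{-1}\le\grandO{1}(R/r)^{\grandO{1}}$ by \eqref{e.poly} and $f_4^{ext}(r,\rho)\ge\alpha^{an}_4(r,\rho)$; hence $\Pro[\neg D\mid\widehat{\arm}^{ext}_4(r,\rho)]\le 1/2$ once $r$ (hence $R$) exceeds some absolute constant, so that $\Pro[\widehat{\arm}^{ext}_4(r,\rho)\cap D]\ge\tfrac12 f_4^{ext}(r,\rho)$.

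For step (ii) I would condition on $\omega\cap B_\rho$ (which freezes the arms reaching $\partial B_{\rho'}$ and their interfaces), expose the fresh unbiased configuration in $A(\rho,2R)$, and argue as in the proof of Lemma~\ref{l.extension}: on a further $\eta$-event of probability $\ge 1-\grandO{1}R^{-\gamma}$ (any $\gamma>0$, via Lemma~\ref{l.dense} and Proposition~\ref{p.a_lot_of_quads}, which together with the quenched Harris--FKG inequality and the box-crossing estimates are available after the conditioning), one can route the four arms from $\partial B_{\rho'}$ through disjoint quads so that by scale $R/2$ the interface endpoints on $\partial B_{R/2}$ are $10\overline\delta R$-separated (the Schramm--Steif ``fanning out'', which in the exterior direction works no matter how bunched the arms are at scale $\rho'$), continue them to $\partial B_R$ with endpoints still separated, and realize $\qbc_{\overline\delta}(R)\cap\qbc^{ext}(R)\cap\dense_{\overline\delta}(R)$; altogether this yields $\arm_4(r,R)\cap G^{ext}_{\overline\delta}(R)$ with conditional probability at least an absolute constant $c_0>0$. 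Taking expectations, using the independence noted above and a union bound to absorb the two $\grandO{1}R^{-\gamma}$ error events (which are $\le\tfrac14 f_4^{ext}(r,\rho)$ once $R$ is large, since $f_4^{ext}(r,\rho)\ge\alpha^{an}_4(r,\rho)\ge\Omega(1)(r/R)^{\grandO{1}}$ and $\gamma$ is large), one gets
\[
g^{ext}_{4,\overline\delta}(r,R)\ \ge\ c_0\,\Pro\big[\widehat{\arm}^{ext}_4(r,\rho)\cap D\cap(\text{fresh }\eta\text{-event})\big]\ \ge\ \frac{c_0}{4}\,f_4^{ext}(r,\rho)\ \ge\ \frac{c_0}{4}\,f_4^{ext}(r,R),
\]
which is \eqref{e.looksgoodplus} with $C_1=4/c_0$, once $\overline{r}$ is taken large enough that all scales involved exceed $\overline\delta^{-2}$ and the estimates above hold. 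I expect step (i) — the deterministic inclusion $\widehat{\arm}^{ext}_4(r,\rho)\cap D\subseteq\arm_4(r,\rho')$ and the verification that a scale-$R$ density event beats the polynomial factor $\alpha^{an}_4(r,\rho)^{-1}$ — to be the conceptual heart of the argument; step (ii) is the same mechanism as in Lemma~\ref{l.extension}, specialised to the easy exterior direction.
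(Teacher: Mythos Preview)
Your step~(i) is fine and is essentially the same observation the paper uses: on a scale-$R$ density event, the hat event forces a genuine $4$-arm event at a slightly smaller radius, and the density event is so cheap (superpolynomially in $R$) that it beats the polynomial lower bound on $f_4^{ext}$.

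The gap is in step~(ii). You claim that, conditionally on $\omega\cap B_\rho$ (on the event $\widehat{\arm}^{ext}_4(r,\rho)\cap D$), the fresh configuration in $B_\rho^c$ extends the four arms from $\partial B_{\rho'}$ to $\partial B_R$ with probability bounded below by an \emph{absolute} constant $c_0$, ``no matter how bunched the arms are at scale $\rho'$''. This is not true. The event $D$ only controls cell diameters; it says nothing about the separation of the interface endpoints on $\partial B_{\rho'}$, and that separation is entirely determined by the frozen $\omega\cap B_\rho$. If the four endpoints sit within a window of width $\varepsilon R$, then continuing all four arms to $\partial B_R$ forces a $4$-arm event from scale $\varepsilon R$ to scale $R$, whose probability is at most $\grandO{1}\varepsilon^{\Omega(1)}$ --- not bounded below. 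The Schramm--Steif ``fanning out'' you invoke is a statement about the separation of interface endpoints at the \emph{outer} boundary \emph{given that the interfaces cross the annulus}; it does not provide a uniform lower bound on the crossing probability when the inner endpoints are bunched. That is precisely why Lemma~\ref{l.extension} carries the hypothesis $G^{ext}_\delta(R)$, which includes the separation event $\widetilde{\gi}^{ext}_\delta(R)$.

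What the paper does instead is to make your step~(i) \emph{iterative} rather than one-shot: decompose $\widehat{\arm}^{ext}_4(r,R)$ according to whether the full good event $G^{ext}_\delta(R/4)$ (density \emph{and} quenched box-crossing \emph{and} interface separation) holds, getting
\[
f^{ext}_4(r,R)\ \le\ g^{ext}_{4,\delta}(r,R/4)\ +\ f^{ext}_4(r,R/16)\cdot\Pro\big[\neg G^{ext}_\delta(R/4)\big],
\]
by spatial independence. The point is that $\Pro[\neg G^{ext}_\delta(R/4)]\le\frac{1}{\epsilon}\delta^\epsilon$ (Lemma~\ref{l.good}) is small in $\delta$ but \emph{not} in $R$, so one cannot absorb it in one step as you do with $D$; instead one iterates down the scales $R/4,\,R/4^3,\,\dots$, obtaining a sum of $g^{ext}_{4,\delta}$ terms. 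Each of these carries the separation event, so Lemma~\ref{l.extension} applies and converts them all to $g^{ext}_{4,\overline\delta}(r,R)$ at cost $a(\overline\delta)^{-2}$ per dyadic step. Choosing $\delta$ small enough that $\frac{1}{\epsilon}\delta^\epsilon\, a(\overline\delta)^{-2}\le\tfrac12$ makes the series geometric and yields the bound. The iteration is exactly what replaces the missing uniform extension bound in your step~(ii).
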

We have the following corollary (which is a direct consequence of Lemma~\ref{l.looksgood} and Remark~\ref{r.poly_et_delta_bar}):
\begin{cor}\label{c.looksgood}
There exists a constant $C_2 \in [1,+\infty)$ such that, for every $r \in [\overline{r},+\infty)$ and every $R \in [r,+\infty)$,
\[
g^{ext}_{4,\overline{\delta}}(r,R) \leq \alpha^{an}_4(r,R) \leq f^{ext}_4(r,R) \leq C_2 \, g^{ext}_{4,\overline{\delta}}(r,R) \, ,
\]
and
\[
g^{int}_{4,\overline{\delta}}(r,R) \leq \alpha^{an}_4(r,R) \leq f^{int}_4(r,R) \leq C_2 \, g^{int}_{4,\overline{\delta}}(r,R) \, .
\]
% we have:
%\[
%g^-_{4,\overline{\delta}}(r,R) \leq \alpha^{an}_4(r,R) \leq f^-_4(r,R) \leq C_2 \, g^-_{4,\overline{\delta}}(r,R) \, ,
%\]
%and:
%\[
%\widetilde{g}^-_{4,\overline{\delta}}(r,R) \leq \widetilde{\alpha^{an}}_4(r,R) \leq \widetilde{f}^-_4(r,R) \leq C_2 \, \widetilde{g}^-_{4,\overline{\delta}}(r,R) \, .
%\]
\end{cor}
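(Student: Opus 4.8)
We are to establish Lemma~\ref{l.looksgood}, equivalently the non‑trivial half of Corollary~\ref{c.looksgood}: that $f_4^{ext}(r,R)\le C_1\,g^{ext}_{4,\overline{\delta}}(r,R)$ and its interior analogue. The opposite bounds are immediate ($G^{ext}_{\overline\delta}(R)$ is an extra constraint, so $g^{ext}_{4,\overline\delta}\le\alpha^{an}_4$; and a.s.\ $\arm_4(r,R)\subseteq\widehat{\arm}_4^{ext}(r,R)$, so $\alpha^{an}_4\le f^{ext}_4$; comparability when $R/r$ is bounded is Remark~\ref{r.poly_et_delta_bar} together with~\eqref{e.poly}). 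I would only write the proof of~\eqref{e.looksgoodplus}, the proof of~\eqref{e.looksgoodmoins} being symmetric. The argument follows the appendix of~\cite{schramm2010quantitative}, combining the extension mechanism of Lemma~\ref{l.extension} with a separation‑of‑interfaces argument, and — crucially — never uses the quasi‑multiplicativity property, which is deduced only afterwards from Lemma~\ref{l.looksgood}.

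\textbf{Peeling off the outer scale.} I would first show, for $r\ge\overline r$ and $R\ge 16r$, that $f_4^{ext}(r,R)\le 2\,\alpha^{an}_4(r,R/2)$. Put $\dense'(R):=\dense_{1/100}(A(R/4,R))$. On $\dense'(R)$ no Voronoi cell meeting $B_{R/2}$ is centred outside $B_R$, so $\arm_4(r,R/2)$ is $\sigma(\omega\cap B_R)$‑measurable there; since $\widehat{\arm}_4^{ext}(r,R)$ forces $\Pro[\arm_4(r,R/2)\mid\omega\cap B_R]\ge\Pro[\arm_4(r,R)\mid\omega\cap B_R]>0$ and, on $\dense'(R)$, this conditional probability is $\{0,1\}$‑valued, we get $\widehat{\arm}_4^{ext}(r,R)\cap\dense'(R)\subseteq\arm_4(r,R/2)$. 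Hence $f_4^{ext}(r,R)\le\alpha^{an}_4(r,R/2)+\Pro[\widehat{\arm}_4^{ext}(r,R)\setminus\dense'(R)]$. Using the nesting $\widehat{\arm}_4^{ext}(r,R)\subseteq\widehat{\arm}_4^{ext}(r,R/4)$ (up to a null set; the same conditional‑expectation fact recalled in the paper) and the independence of $\widehat{\arm}_4^{ext}(r,R/4)\in\sigma(\omega\cap B_{R/4})$ from $\dense'(R)\in\sigma(\eta\cap A(R/4,R))$, the error term is $\le f_4^{ext}(r,R/4)\,\Pro[\neg\dense'(R)]\le\grandO{1}e^{-\Omega(1)R^2}$ by Lemma~\ref{l.dense}; the lower bound in~\eqref{e.poly} gives $\alpha^{an}_4(r,R/2)\ge\Omega(1)R^{-\grandO{1}}$, which dominates this error once $\overline r$ is large, proving the claim. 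An interior density‑peeling, in the spirit of the proof of the case $j=1$ in Subsection~\ref{ss.warm}, would be used for~\eqref{e.looksgoodmoins}.

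\textbf{Separating the interfaces and re‑extending.} It then remains to prove $\alpha^{an}_4(r,R/2)\le\grandO{1}\,g^{ext}_{4,\overline{\delta}}(r,R)$. I would run the mechanism of Lemma~\ref{l.extension} ``from the inside out'': on $\arm_4(r,R/2)$, condition on $\eta$ and on the four interfaces $\beta_0,\dots,\beta_3$ crossing $A(r,R/2)$. The point process restricted to the annulus $A(R/2,4R)$ is fresh and, with probability $1-\grandO{1}\,\overline\delta^{\,\Omega(1)}$ (Lemma~\ref{l.dense}, Propositions~\ref{p.a_lot_of_quads} and~\ref{p.a_lot_of_quads_bis}), is dense and satisfies the quenched box‑crossing property there. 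On this event one attaches to $\beta_0,\dots,\beta_3$ a bounded number of quads from the families of Definitions~\ref{d.a_lot_of_quads} and~\ref{d.a_lot_of_quads_bis}, and, using the quenched FKG–Harris inequality and Proposition~\ref{p.a_lot_of_quads}, colours them so that the four arms are prolonged to $\partial B_R$ with endpoints $\overline\delta R$‑separated there (this last part being exactly the separation estimate underlying Lemma~\ref{l.good}, carried out at scale $R$ on the fixed tiling $\eta$), while the remaining freedom on $\omega\cap A(R/2,2R)$ is used — with conditional probability $\ge\Omega(1)$ by Lemma~\ref{l.good} — to realize $G^{ext}_{\overline\delta}(R)$. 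Averaging over the interfaces and over $\eta$ gives $g^{ext}_{4,\overline\delta}(r,R)\ge\Omega(1)\,\alpha^{an}_4(r,R/2)$, and together with the previous step this yields~\eqref{e.looksgoodplus}.

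\textbf{The main difficulty.} It is precisely the degeneracy of Figure~\ref{f.difficulties}: conditioning on $\arm_4$ could in principle make $\eta$ atypically sparse, or the incoming interface endpoints atypically clustered, near the annulus where one wishes to prolong the arms, ruining either the density hypotheses or the attachment of the quads. I would circumvent this exactly as in the proof of Lemma~\ref{l.extension}: never condition on $\arm_4$ on the annulus where one acts, only on the configuration strictly inside it; build the ``good point configuration'' events ($\dense_\delta$, $\qbc_\delta$, $\qbc^{ext}$, $\widetilde{\gi}^{ext}_\delta$) so that they live in an annulus disjoint from what has been revealed, hence unbiased; and absorb the super‑polynomially small probability that these fail into the merely polynomial lower bound on $\alpha^{an}_4$ coming from~\eqref{e.poly}. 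The separation of the interfaces must moreover be obtained from box‑crossing alone (as in Lemma~\ref{l.good}), since the half‑plane $3$‑arm exponent and the stronger separation Lemma~\ref{l.interfaces} are only established later. Keeping all of this compatible with the circular‑looking order Lemma~\ref{l.looksgood} $\to$ quasi‑multiplicativity $\to$ Proposition~\ref{p.fandalpha}, without ever invoking quasi‑multiplicativity, is where the real work lies.
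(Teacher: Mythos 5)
Your first step is correct: with $\widehat{\arm}_4^{ext}(r,R)\subseteq\widehat{\arm}_4^{ext}(r,R/4)$ and a single density-peeling, one does obtain $f_4^{ext}(r,R)\le 2\,\alpha^{an}_4(r,R/2)$ once $\overline r$ is large, since $\Pro[\neg\dense']$ is super-exponentially small while~\eqref{e.poly} provides a polynomial lower bound on $\alpha^{an}_4$. The gap lies in the second step. The inequality $\alpha^{an}_4(r,R/2)\le\grandO{1}\,g^{ext}_{4,\overline{\delta}}(r,R)$ you set out to show there is not a sub-problem of Lemma~\ref{l.looksgood}: combined with your first step it \emph{is} Lemma~\ref{l.looksgood} (up to a bounded ratio of scales), and the conditioning argument sketched for it runs straight into the difficulty of Figure~\ref{f.difficulties}. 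Once you condition on $\arm_4(r,R/2)$ and on all of $\eta$, the annulus $A(R/2,4R)$ is not ``fresh'': only the coloring is, while $\eta\cap A(R/2,4R)$ is fully revealed, and the event that $\eta$ is good there (dense, with good quenched crossings) is not independent of $\arm_4(r,R/2)$ — cells meeting $A(r,R/2)$ can have centres far inside $A(R/2,4R)$ when $\eta$ is sparse near $\partial B_{R/2}$. The natural way to bound $\Pro[\arm_4(r,R/2)\setminus\{\eta\text{ good in }A(R/2,4R)\}]$ is to factor through a quantity of the form $f_4^{ext}(r,R/4)\cdot\Pro[\neg G^{ext}_\delta(R/4)]$, and $\Pro[\neg G^{ext}_\delta(R/4)]$ is only \emph{polynomially} small in $\delta$ (not super-polynomially small, as your last paragraph asserts in order to absorb it); so one is left having to bound $f_4^{ext}$ at a smaller scale, which is the original problem again.

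This is exactly why the paper iterates rather than performing a single extension step. It proves the recursion $f_4^{ext}(r,R)\le g^{ext}_{4,\delta}(r,R/4)+f_4^{ext}(r,R/16)\,\Pro[\neg G^{ext}_\delta(R/4)]$, unwinds it all the way down, and brings all the $g^{ext}_{4,\delta}(r,\cdot)$ terms back up to scale $R$ with repeated applications of Lemma~\ref{l.extension}; the crucial device is to take $\delta$ small relative to $a(\overline{\delta})$ so that the geometric series $\sum_i\bigl(\tfrac{1}{\epsilon}\delta^\epsilon a(\overline{\delta})^{-2}\bigr)^i$ converges. Your sketch discards precisely this bootstrapping and offers no mechanism to control the conditional law of $\eta$ near $\partial B_{R/2}$ given $\arm_4(r,R/2)$; restoring it would essentially reconstruct the paper's proof of Lemma~\ref{l.looksgood}.
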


\begin{proof}[Proof of Lemma~\ref{l.looksgood}]
We only prove~\eqref{e.looksgoodplus} since the the proof of~\eqref{e.looksgoodmoins} is essentially the same. Let $\delta \in (0,\overline{\delta})$ to be chosen later, let $\overline{r} = 4^3 \,\delta^{-2}$, and let $r$ and $R$ be as in the statement of the lemma. First, note that if $\dense_\delta(R/4)$ holds then every $x \in \eta$ whose Voronoi cell intersects $A(r,R/4)$ is in $B_R$. Hence, $\widehat{\arm}_4^{ext}(r,R) \cap \dense_\delta(R/4) \subseteq \arm_4(r,R/4)$. Remember that $\dense_\delta(R/4) \subseteq G^{ext}_\delta(R/4)$. As a result,
\begin{eqnarray*}
\widehat{\text{\textbf{A}}}_4^{ext}(r,R) & \subseteq & \big( \text{\textbf{A}}_4(r,R/4) \cap G^{ext}_\delta(R/4) \big) \cup \big( \widehat{\text{\textbf{A}}}_4^{ext}(r,R) \setminus G^{ext}_\delta(R/4) \big)\\
& \subseteq & \big( \text{\textbf{A}}_4(r,R/4) \cap G_{\delta}^{ext}(R/4) \big) \cup \big( \widehat{\text{\textbf{A}}}_4^{ext}(r,R/16) \setminus G^{ext}_\delta(R/4) \big) \, .
\end{eqnarray*}
As a result, $f_4^{ext}(r,R)$ is smaller than or equal to
\begin{align*}
& \Pro \left[ \text{\textbf{A}}_4(r,R/4) \cap G^{ext}_\delta(R/4) \right] + \Pro \left[ \widehat{\text{\textbf{A}}}_4^{ext}(r,R/16) \setminus G^{ext}_\delta(R/4) \right]\\
& = g^{ext}_{4,\delta}(r,R/4) + \Pro \left[  \widehat{\text{\textbf{A}}}_4^{ext}(r,R/16) \setminus G^{ext}_\delta(R/4) \right] \, .
\end{align*}

Remember that $G^{ext}_\delta(R/4)$ is measurable with respect to $\omega \cap A(R/8,R/2)$ and that $\widehat{\text{\textbf{A}}}_4^{ext}(r,R/16)$ is measurable with respect to $\omega \cap B_{R/16}$. Hence, by spatial independence, the above equals
\begin{equation}\label{e.key_in_looksgood}
g^{ext}_{4,\delta}(r,R/4) + f^{ext}_4(r,R/16) \cdot \Pro \left[ \neg G^{ext}_\delta(R/4) \right] \, .
\end{equation}
Since $R/4 \geq \delta^{-2}$, Lemma~\ref{l.good} implies that
\[
f^{ext}_4(r,R) \leq g^{ext}_{4,\delta}(r,R/4) + \frac{1}{\epsilon} \delta^\epsilon f^{ext}_4(r,R/16) \, .
\]
By repeating the above argument, we obtain that $f^{ext}_4(r,R)$ is at most
\[
\sum_{i=0}^{l-1} \left( \left( \frac{1}{\epsilon} \delta^\epsilon \right)^i g^{ext}_{4,\delta} \left( r,\frac{R}{4 \cdot 16^i} \right) \right) + \left( \frac{1}{\epsilon} \delta^\epsilon \right)^l \, ,
\]
where $l = \lfloor \log_{16} \left( R/r \right) \rfloor$. Lemma~\ref{l.extension} then implies that that the above is at most
\begin{equation}\label{e.sumg_1}
\sum_{i=0}^{l-1} \left( \left( \frac{1}{\epsilon} \delta^\epsilon \right)^i a(\delta)^{-1} a(\overline{\delta})^{-2i} \, g^{ext}_{4,\overline{\delta}}(r,R) \right) + \left( \frac{1}{\epsilon} \delta^\epsilon  \right)^l \, .
\end{equation}
Lemma~\ref{l.extension} also implies the following inequality:
\begin{equation}\label{e.sumg_2}
g^{ext}_{4,\overline{\delta}}(r,R) \geq a(\overline{\delta})^{2l-1} g^{ext}_{4,\overline{\delta}} \left( r,\frac{R}{4 \cdot 16^{l-1}} \right) \, . 
\end{equation}
Remember Remark~\ref{r.poly_et_delta_bar}: there exists an absolute constant $c > 0$ such that
\begin{equation}\label{e.claim}
g^{ext}_{4,\overline{\delta}} \left( r,\frac{R}{4 \cdot 16^{l-1}} \right) \geq c \, .
\end{equation}
%Now, we need the following claim:
%\begin{claim}\label{cl.g} There exists a constant $b(\delta) > 0$ such that:
%\[
%g^{ext}_{4,\overline{\delta}} \left( r,\frac{R}{4 \cdot 16^{l-1}} \right) \geq b(\delta) \, .
%\]
%\end{claim}
Let us end the proof: We choose $\delta \in (1,\overline{\delta})$ small enough so that $\frac{1}{\epsilon} \delta^\epsilon a(\overline{\delta})^{-2} \leq 1/2$. If we combine~\eqref{e.sumg_1}, \eqref{e.sumg_2} and~\ref{e.claim} we obtain that
\begin{eqnarray*}
f_4^{ext}(r,R) & \leq & g^{ext}_{4,\overline{\delta}}(r,R) \sum_{i=0}^{+\infty} \left( \left( \frac{1}{\epsilon} \delta^\epsilon \right)^i a(\delta)^{-1} a(\overline{\delta})^{-2i} \right) + g^{ext}_{4,\overline{\delta}}(r,R) \left( \frac{1}{\epsilon} \delta^\epsilon  \right)^l a(\overline{\delta})^{-(2l-1)} (c')^{-1} \\ 
& \leq & g^{ext}_{4,\overline{\delta}}(r,R) \left( \frac{2}{a(\delta)} + \frac{a(\overline{\delta})}{  c} \right) \, ,
\end{eqnarray*}
which ends the proof.
\end{proof}

\subsubsection{Proof of the quasi-multiplicativity property}

We are now in shape to prove Proposition~\ref{p.quasieven}. We first prove it for $r_1$ sufficiently large and we prove separately the left-hand and right-hand inequalities. Below,  $\overline{r}$ is the constant of Lemma~\ref{l.looksgood}. Remember that $\overline{r} \geq \overline{\delta}^{-2}$ where $\overline{\delta}$ is the constant of Lemma~\ref{l.extension}.
\begin{proof}[Proof of the left-hand-inequality of Proposition~\ref{p.quasieven} in the case $r_1 \geq \overline{r}$]
%First, let us prove the result under the assumption that $r_2 \leq \overline{r}$. We have:
%\begin{eqnarray*}
%\alpha^{an}_4(r_1,r_3) & \leq & \alpha^{an}_4(r_2,r_3)\\
%& = & \frac{\alpha^{an}_4(r_1,r_2)}{\alpha^{an}_4(r_1,r_2)} \, \alpha^{an}_4(r_2,r_3)\\
%& \leq & \frac{1}{\alpha^{an}_4(1,\overline{r})} \,  \alpha^{an}_4(r_1,r_2) \, \alpha^{an}_4(r_2,r_3) \, .
%\end{eqnarray*}
%Now, assume that $r_2 \geq \overline{r}$.
We have
\begin{eqnarray*}
\alpha^{an}_4(r_1,r_3) & \leq & \Pro \left[ \text{\textbf{A}}_4(r_1,r_2) \cap \text{\textbf{A}}_4(r_2,r_3) \right]\\
& \leq & \Pro \left[ \widehat{\text{\textbf{A}}}_4^{ext}(r_1,r_2) \cap \widehat{\text{\textbf{A}}}_4^{int}(r_2,r_3) \right]\\
& = & \Pro \left[ \widehat{\text{\textbf{A}}}^{ext}_4(r_1,r_2) \right] \cdot \Pro \left[ \widehat{\text{\textbf{A}}}^{int}_4(r_2,r_3) \right] \, ,
\end{eqnarray*}
by spatial independence. The above inequality can be rewritten as follows:
\[
\alpha^{an}_4(r_1,r_3) \leq f^{ext}_4(r_1,r_2) \, f^{int}_4(r_2,r_3) \, ,
\]
so Corollary~\ref{c.looksgood} implies the desired result.
\end{proof}

\begin{proof}[Proof of the right-hand-inequality of Proposition~\ref{p.quasieven} in the case $r_1 \geq 16\overline{r}$]
%First, we work under the condition $r_2 \geq 4\overline{r}$.
We distinguish between four cases:
\begin{enumerate}
\item Assume that $r_1 \geq r_2/16$ and $r_2 \geq r_3/16$. Then, this is a direct consequence of~\eqref{e.poly}.
\item Assume that $r_1 \geq r_2/16$ and $r_2 \leq r_3/16$.
By Corollary~\ref{c.looksgood}, we have
\[
\alpha^{an}_4(r_1,r_2) \, \alpha^{an}_4(r_2,r_3) \leq \alpha^{an}_4(r_2,r_3) \leq \grandO{1} g^{int}_{4,\overline{\delta}}(r_2,r_3) \, .
\]
By applying Lemma~\ref{l.extension} (here, we use that $r_2 \geq 16\overline{r} \geq 16\overline{\delta}^{-2}$ since $r_1 \geq 16\overline{r}$), we obtain that the above is at most $\grandO{1} \alpha^{an}_4(r_2/16,r_3)$ (which is at most $\grandO{1} \alpha^{an}_4(r_1,r_3)$ since $r_1 \geq r_2/16$).
\item The case ``$r_1 \leq r_2/16$ and $r_2 \geq r_3/16$'' is treated similarly.
%as above, it is sufficient to prove that:
%\[
%\widetilde{g}_{4,\overline{\delta}}(r_1,r_2) \leq \grandO{1} \widetilde{\alpha^{an}}_4(r_1,r_3).
%\]
%If $r_1 \geq \overline{r}_0/16$, then it is a direct consequence of Lemma~N. If $r_1 < \overline{r}_0/16$, then it is a direct consequence of (POLY).
\item Now, we treat the case $r_1 \leq r_2/16$ and $r_2 \leq r_3/16$. First, we write the following simple inequality:
\[
\alpha^{an}_4(r_1,r_2) \, \alpha^{an}_4(r_2,r_3) \leq \alpha^{an}_4(r_1,\frac{r_2}{3}) \, \alpha^{an}_4(3r_2,r_3) \, .
\]

Corollary~\ref{c.looksgood} implies that
\begin{equation}\label{e.1st_step_gluing}
\alpha^{an}_4(r_1,\frac{r_2}{3}) \, \alpha^{an}_4(3r_2,r_3) \leq \grandO{1} \; g^{ext}_{4,\overline{\delta}}(r_1,\frac{r_2}{3}) \, g^{int}_{4,\overline{\delta}}(3r_2,r_3) \, .
\end{equation}
 
Now, the proof is very similar to the one of Lemma~\ref{l.extension}. However, we have to be a little more careful because we have to deal with the interactions between scales $r_2/3$ and $3r_2$. First, as it is explained in the paragraph below~\eqref{e.3/4a'delta}, we can write the events $\gp^{ext}_{\delta}(R)$ (and similarly $\gp^{int}_\delta(R)$) a little differently. More precisely, we have
\begin{multline}\label{e.key_end_1}
\gp^{ext}_{\overline{\delta}}(r_2/3)\\ = \dense_{\overline{\delta}}(r_2/3) \cap \qbc_{\overline{\delta}}(r_2/3) \cap \left\lbrace \Pro \left[ \qbc^{ext}(r_2/3) \cond \eta \cap B_{2r_2/3} \right] \geq 3/4 \right\rbrace
\end{multline}
and similarly
\begin{multline}\label{e.key_end_2}
\gp^{int}_{\overline{\delta}}(3r_2)\\ = \dense_{\overline{\delta}}(3r_2) \cap \qbc_{\overline{\delta}}(3r_2) \cap \left\lbrace \Pro \left[ \qbc^{int}(3r_2) \cond \eta \setminus B_{3r_2/2} \right] \geq 3/4 \right\rbrace \, .
\end{multline}
Since $\qbc^{ext}(\cdot)$ and $\qbc^{int}(\cdot)$ do not depend on the coloring, we actually have
\[
\Pro \left[ \qbc^{ext}(r_2/3) \cond \eta \cap B_{2r_2/3} \right] = \Pro \left[ \qbc^{ext}(r_2/3) \cond \omega \cap B_{2r_2/3} \right] \text{ and }
\]
\[
\Pro \left[ \qbc^{int}(3r_2) \cond \eta \setminus B_{3r_2/2} \right] = \Pro \left[ \qbc^{int}(3r_2) \cond \omega \setminus B_{3r_2/2} \right] \, .
\]
As a result,~\eqref{e.1st_step_gluing} can be rewritten as follows:
\begin{align*}
& \alpha^{an}_4(r_1,\frac{r_2}{3}) \, \alpha^{an}_4(3r_2,r_3)\\
& \leq \grandO{1} \Pro \Big[ \arm_4(r_1,r_2/3) \cap \widetilde{\gi}^{ext}_{\overline{\delta}}(r_2/3) \cap \dense_{\overline{\delta}}(r_2/3) \cap \qbc_{\overline{\delta}}(r_2/3)\\
& \hspace{2em} \cap \left\lbrace \Pro \left[ \qbc^{ext}(r_2/3) \cond \omega \cap B_{2r_2/3} \right] \geq 3/4 \right\rbrace \Big]\\
& \hspace{1em} \times \Pro \Big[ \arm_4(3r_2,r_3) \cap \widetilde{\gi}^{int}_{\overline{\delta}}(3r_2) \cap \dense_{\overline{\delta}}(3r_2) \cap \qbc_{\overline{\delta}}(3r_2)\\
& \hspace{3em} \cap \left\lbrace \Pro \left[ \qbc^{int}(3r_2) \cond \omega \setminus B_{3r_2/2} \right] \geq 3/4 \right\rbrace \Big] \, .
\end{align*}
We need the following lemma:
\begin{lem}\label{l.key_in_qm}
Let $\mathcal{F}$ and $\mathcal{G}$ be two sub-$\sigma$-algebras, let $A_1 \in \mathcal{F}$, $A_2 \in \mathcal{G}$, and let $B_1$ and $B_2$ be two events such that $\sigma(B_1,\mathcal{F})$ is independent of $\mathcal{G}$ and $\sigma(B_2, \mathcal{G})$ is independent of $\mathcal{F}$. Then,
\[
\Pro \left[ A_1 \cap B_1 \cap A_2 \cap B_2 \right] \geq \frac{1}{2} 
\Pro \left[ A_1 \cap \left\lbrace \Pro \left[ B_1 \cond \mathcal{F} \right] \geq 3/4 \right\rbrace \right] \Pro \left[ A_2 \cap \left\lbrace \Pro \left[ B_2 \cond \mathcal{G} \right] \geq 3/4 \right\rbrace \right] \, .
\]
\end{lem}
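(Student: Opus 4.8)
The plan is to pass to the ``good'' restrictions of $A_1$ and $A_2$ and then condition on $\mathcal{F} \vee \mathcal{G}$. Set $\widetilde{A}_1 := A_1 \cap \lbrace \Pro[B_1 \cond \mathcal{F}] \geq 3/4 \rbrace$ and $\widetilde{A}_2 := A_2 \cap \lbrace \Pro[B_2 \cond \mathcal{G}] \geq 3/4 \rbrace$. Since $\Pro[B_1 \cond \mathcal{F}]$ is $\mathcal{F}$-measurable, $\widetilde{A}_1 \in \mathcal{F}$, and likewise $\widetilde{A}_2 \in \mathcal{G}$. Because $\widetilde{A}_i \subseteq A_i$, it is enough to prove
\[
\Pro \left[ \widetilde{A}_1 \cap B_1 \cap \widetilde{A}_2 \cap B_2 \right] \geq \frac{1}{2} \, \Pro \left[ \widetilde{A}_1 \right] \, \Pro \left[ \widetilde{A}_2 \right] .
\]

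The key step is the identity $\Pro[B_1 \cond \mathcal{F} \vee \mathcal{G}] = \Pro[B_1 \cond \mathcal{F}]$ a.s.\ (and, symmetrically, $\Pro[B_2 \cond \mathcal{F} \vee \mathcal{G}] = \Pro[B_2 \cond \mathcal{G}]$ a.s.). To see this, I would fix $F \in \mathcal{F}$ and $G \in \mathcal{G}$ and use the independence of $\sigma(B_1,\mathcal{F})$ and $\mathcal{G}$ to write $\Pro[B_1 \cap F \cap G] = \Pro[B_1 \cap F] \, \Pro[G] = \E[\Pro[B_1 \cond \mathcal{F}] \un_F] \, \Pro[G]$; since $\Pro[B_1 \cond \mathcal{F}] \un_F$ is $\mathcal{F}$-measurable and $\mathcal{F}$ is independent of $\mathcal{G}$ (which itself follows from the hypotheses, as $\mathcal{F} \subseteq \sigma(B_1,\mathcal{F})$), this equals $\E[\Pro[B_1 \cond \mathcal{F}] \un_{F \cap G}]$. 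The events $F \cap G$ form a $\pi$-system generating $\mathcal{F} \vee \mathcal{G}$, so the identity follows.

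Now I would condition on $\mathcal{F} \vee \mathcal{G}$. Since $\un_{\widetilde{A}_1} \un_{\widetilde{A}_2}$ is $(\mathcal{F} \vee \mathcal{G})$-measurable,
\[
\Pro \left[ \widetilde{A}_1 \cap B_1 \cap \widetilde{A}_2 \cap B_2 \right] = \E \left[ \un_{\widetilde{A}_1 \cap \widetilde{A}_2} \, \Pro \left[ B_1 \cap B_2 \cond \mathcal{F} \vee \mathcal{G} \right] \right] .
\]
On the event $\widetilde{A}_1 \cap \widetilde{A}_2$, the identity above gives $\Pro[B_1 \cond \mathcal{F} \vee \mathcal{G}] \geq 3/4$ and $\Pro[B_2 \cond \mathcal{F} \vee \mathcal{G}] \geq 3/4$, hence $\Pro[B_1 \cap B_2 \cond \mathcal{F} \vee \mathcal{G}] \geq 1/2$ there, by a union bound on the complements. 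Therefore the right-hand side is at least $\frac{1}{2} \Pro[\widetilde{A}_1 \cap \widetilde{A}_2]$; and since $\widetilde{A}_1 \in \mathcal{F}$, $\widetilde{A}_2 \in \mathcal{G}$ and $\mathcal{F}$ is independent of $\mathcal{G}$, this equals $\frac{1}{2} \Pro[\widetilde{A}_1] \Pro[\widetilde{A}_2]$, which is what we want.

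The only point that needs care is the conditional-independence identity $\Pro[B_1 \cond \mathcal{F} \vee \mathcal{G}] = \Pro[B_1 \cond \mathcal{F}]$; once it is in place, the conclusion is a one-line union bound together with the independence of $\mathcal{F}$ and $\mathcal{G}$. I expect the $\pi$-system argument for that identity to be the one worth spelling out, the rest being routine bookkeeping.
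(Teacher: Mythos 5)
Your proof is correct and takes essentially the same route as the paper: restrict to the events where the conditional probabilities of $B_1$, $B_2$ given the smaller $\sigma$-fields are at least $3/4$, condition on $\mathcal{F}\vee\mathcal{G}$, apply the identity $\Pro[B_1\mid\mathcal{F}\vee\mathcal{G}]=\Pro[B_1\mid\mathcal{F}]$ (and its symmetric counterpart), use a union bound to get the factor $1/2$, and finish with the independence of $\mathcal{F}$ and $\mathcal{G}$. The only difference is cosmetic — the paper starts from the left-hand side and bounds downward, while you start from the right-hand side — and you additionally spell out the $\pi$-system argument for the conditional-independence identity, which the paper states without proof.
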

\begin{proof}
We have
\begin{align*}
& \Pro \left[ A_1 \cap B_1 \cap A_2 \cap B_2 \right]\\
& \geq \E \left[ \un_{A_1 \cap B_1 \cap A_2 \cap B_2} \un_{\{ \Pro \left[ B_1 \, | \, \mathcal{F} \vee \mathcal{G} \right] \geq 3/4 \}} \un_{\{ \Pro \left[ B_2 \, | \, \mathcal{F}\vee \mathcal{G} \right] \geq 3/4 \}} \right]\\
& = \E \left[ \un_{A_1 \cap A_2} \E \left[  \un_{B_1 \cap B_2} \cond \mathcal{F}\vee \mathcal{G} \right] \un_{\{ \Pro \left[ B_1 \, | \, \mathcal{F}\vee \mathcal{G} \right] \geq 3/4 \}} \un_{\{ \Pro \left[ B_2 \, | \, \mathcal{F}\vee \mathcal{G} \right] \geq 3/4 \}}  \right]\\
& \geq \frac{1}{2} \E \left[  \un_{A_1} \un_{\{ \Pro \left[ B_1 \, | \, \mathcal{F}\vee \mathcal{G} \right] \geq 3/4 \}} \un_{A_2} \un_{\{ \Pro \left[ B_2 \, | \, \mathcal{F}\vee \mathcal{G} \right] \geq 3/4 \}} \right] \, .
\end{align*}
But, since $\sigma(B_1,\mathcal{F})$ is independent of $\mathcal{G}$, we have
\[
\Pro \left[ B_1 \cond \mathcal{F}\vee \mathcal{G} \right] = \Pro \left[ B_1  \cond \mathcal{F} \right]  \, .
\]
Similarly,
\[
\Pro \left[ B_2 \cond \mathcal{F}\vee \mathcal{G} \right] = \Pro \left[ B_2  \cond \mathcal{G} \right] \, .
\]
This implies the result since $\mathcal{F}$ is independent of $\mathcal{G}$.
\end{proof}
If we apply this lemma to $\mathcal{F} = \sigma(\omega \cap B_{2r_2/3})$, $\mathcal{G} = \sigma(\omega \setminus B_{3r_2/2})$, $A_1 = \arm_4(r_1,r_2/3) \cap \widetilde{\gi}^{ext}_{\overline{\delta}}(r_2/3) \cap \dense_{\overline{\delta}}(r_2/3) \cap \qbc_{\overline{\delta}}(r_2/3)$, $A_2 = \arm_4(3r_2,r_3) \cap \widetilde{\gi}^{int}_{\overline{\delta}}(3r_2) \cap \dense_{\overline{\delta}}(3r_2) \cap \qbc_{\overline{\delta}}(3r_2)$, $B_1 = \qbc^{ext}(r_2/3)$ and $B_2 = \qbc^{int}(3r_2)$, we obtain that
\begin{multline*}
\alpha^{an}_4(r_1,\frac{r_2}{3}) \, \alpha^{an}_4(3r_2,r_3)\\
\leq \grandO{1} \Pro \Big[ \arm_4(r_1,r_2/3) \cap \widetilde{\gi}^{ext}_{\overline{\delta}}(r_2/3) \cap \dense_{\overline{\delta}}(r_2/3) \cap \qbc_{\overline{\delta}}(r_2/3) \cap \qbc^{ext}(r_2/3) \\
\cap \arm_4(3r_2,r_3) \cap \widetilde{\gi}^{int}_{\overline{\delta}}(3r_2) \cap \dense_{\overline{\delta}}(3r_2) \cap \qbc_{\overline{\delta}}(3r_2) \cap \qbc^{int}(3r_2) \Big] \, .
\end{multline*}
Now, we can condition on $\eta$ and on the interfaces and conclude (with arguments similar to the proof of Lemma~\ref{l.extension}) that the above is at most $\grandO{1} \alpha^{an}_4(r_1,r_3)$.
\end{enumerate}
\end{proof}

We have obtained the quasi-multiplicativity property for $r_1 \geq 16\overline{r}$: there exists a constant $C' \in [1,+\infty)$ such that, for every $16\overline{r} \leq r_1 \leq r_2 \leq r_3$,
\begin{equation}\label{e.quasibelowrbar}
\frac{1}{C'} \, \alpha^{an}_4(r_1,r_3) \leq \alpha^{an}_4(r_1,r_2) \, \alpha^{an}_4(r_2,r_3) \leq C' \, \alpha^{an}_4(r_1,r_3) \, .
\end{equation}
In order to obtain the full result, we need the following lemma:
\begin{lem}\label{l.extensionto1}
For every $\overline{\overline{r}}$ sufficiently large, there exists a constant $C_3=C_3(\overline{\overline{r}}) < +\infty$ such that, for every $r \in [1,\overline{\overline{r}}]$ and every $R \in [\overline{\overline{r}},+\infty)$, we have
\[
\alpha^{an}_4(\overline{\overline{r}},R) \leq C_3 \, \alpha^{an}_4(r,R) \, .
\]
\end{lem}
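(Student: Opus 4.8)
Since $\arm_4(1,R) \subseteq \arm_4(r,R)$ for every $r \in [1,\overline{\overline{r}}]$ (restricting each arm to the smaller annulus), we have $\alpha^{an}_4(1,R) \leq \alpha^{an}_4(r,R)$, so it suffices to produce $C_3=C_3(\overline{\overline{r}})<+\infty$ with $\alpha^{an}_4(\overline{\overline{r}},R) \leq C_3\,\alpha^{an}_4(1,R)$ for every $R \geq \overline{\overline{r}}$. If $R \leq 4\overline{\overline{r}}$ this follows from~\eqref{e.poly}, which gives $\alpha^{an}_4(1,R) \geq \Omega(1)$ (a constant depending on $\overline{\overline{r}}$) while $\alpha^{an}_4(\overline{\overline{r}},R) \leq 1$; so we may assume $R \geq 4\overline{\overline{r}}$.

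Fix $\delta=\overline{\delta}$, the constant of Lemma~\ref{l.extension}, and take $\overline{\overline{r}}$ large: $\overline{\overline{r}} \geq \overline{r}$ (so Corollary~\ref{c.looksgood} applies) and $\overline{\overline{r}}=4^{k}\rho_0$ with $k=k(\overline{\overline{r}}) \in \N$ and $\rho_0 \in (4\overline{\delta}^{-2},16\overline{\delta}^{-2}]$ an absolute constant. Starting from $g^{int}_{4,\overline{\delta}}(\overline{\overline{r}},R) \geq \alpha^{an}_4(\overline{\overline{r}},R)/C_2$ (Corollary~\ref{c.looksgood}), I would iterate the interior-extension bound~\eqref{e.gmoins} of Lemma~\ref{l.extension} $k-1$ times (all the inner radii involved lie in $[4\overline{\delta}^{-2},\overline{\overline{r}}]$ and $R \geq 4\overline{\overline{r}}$, so this is legitimate) and then apply the stronger form~\eqref{e.extension_better} once more with the trivial choice of auxiliary event; this yields
\[
\Pro\!\left[\widetilde{\arm}^{int}_4(\rho_0,R) \cap G^{int}_{\overline{\delta}}(\rho_0)\right] \;\geq\; c_1(\overline{\overline{r}})\,\alpha^{an}_4(\overline{\overline{r}},R) \quad\text{with}\quad c_1(\overline{\overline{r}})=a(\overline{\delta})^{k}/C_2 .
\]
On $\widetilde{\arm}^{int}_4(\rho_0,R)$ the four arms enter $A(\rho_0,2\rho_0)$ only through the prescribed rectangles $Q^{int}(4\rho_0,i)$, so in particular they reach $\partial B_{\rho_0}$ at four known places, in the right cyclic order and with prescribed colours. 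Moreover $G^{int}_{\overline{\delta}}(\rho_0) \subseteq \dense_{\overline{\delta}}\!\left(A(\rho_0/2,2\rho_0)\right)$, and an elementary computation using convexity of Voronoi cells shows that on this density event no cell meeting $A(\rho_0,R)$ has its centre in $B_{\rho_0/2}$; hence $\widetilde{\arm}^{int}_4(\rho_0,R) \cap G^{int}_{\overline{\delta}}(\rho_0)$ is measurable with respect to $\omega \setminus B_{\rho_0/2}$, and in particular independent of $\omega \cap B_{\rho_0/2}$.

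It remains to extend the four pinned arms from the bounded scale $\rho_0$ down to scale $1$, and this is the only delicate point — the tiling near the origin may be arbitrarily degenerate, so one cannot merely ``re-colour''. I would use the by-hand device that also appears in the treatment of Lemma~\ref{l.piv_one_point}: introduce the event that the Voronoi tiling inside $B_{\rho_0/2}$ is a $1/100$-perturbation of the triangular lattice of mesh $2$, together with the event that its (finitely many) cells are coloured so as to realise the standard four-sector alternating pattern around the origin, arranged to produce four monochromatic arms of alternating colours from $\partial B_1$ to four prescribed points of $\partial B_{\rho_0/2}$; and a ``$\text{Color}$''-type event forcing the cells that meet fixed connector rectangles $\widetilde{Q}_i \subseteq A(\rho_0/2,\rho_0)$ to be monochromatic of the appropriate colours. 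Each of these auxiliary events has probability bounded below by a constant depending only on $\rho_0$ (for the tiling and the central colouring because they concern boundedly many points; for the connector event because the density in $G^{int}_{\overline{\delta}}(\rho_0)$ bounds the number of cells involved), and the first two are independent of $\omega \setminus B_{\rho_0/2}$. On the intersection of all these events each incoming arm $\gamma_i$ is linked, through its monochromatic connector, to the matching arm of the four-sector pattern, which reaches $\partial B_1$, so $\arm_4(1,R)$ holds; multiplying the displayed estimate by the constant lower bounds for the auxiliary probabilities gives $\alpha^{an}_4(1,R) \geq c_2(\overline{\overline{r}})\,\alpha^{an}_4(\overline{\overline{r}},R)$, i.e. the claim with $C_3 = 1/c_2(\overline{\overline{r}})$. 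The main obstacle is exactly this last step: fitting the pinned arms, the monochromatic connectors and the hexagonal central pattern together, and checking the routine but fiddly independence/measurability statements that allow the bounded-below probabilities to be multiplied; everything before that is a bounded number of applications of Lemma~\ref{l.extension} and Corollary~\ref{c.looksgood}.
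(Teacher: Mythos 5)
Your overall route is the paper's: reduce to $r=1$ by monotonicity, use Corollary~\ref{c.looksgood} together with the extension estimates of Lemma~\ref{l.extension} (in the strengthened form~\eqref{e.extension_better}) to show that a pinned version $\widetilde{\arm}^{int}_4$ of the four-arm event, together with a good-environment event at a bounded scale, has probability at least $c(\overline{\overline{r}})\,\alpha^{an}_4(\overline{\overline{r}},R)$, and then extend the four arms by hand down to scale $1$ via a ``Hex''-plus-``Color'' construction as in Lemma~\ref{l.piv_one_point}. The extra iteration of~\eqref{e.gmoins} down to an absolute scale $\rho_0$ is harmless but unnecessary, since the constant is allowed to depend on $\overline{\overline{r}}$; the paper works directly at scale $\overline{\overline{r}}$.

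There is, however, one step that fails as written. You justify the lower bound on the probability of the connector ``Color''-type event by asserting that ``the density in $G^{int}_{\overline{\delta}}(\rho_0)$ bounds the number of cells involved''. It does not: the event $\dense_{\overline{\delta}}\left(A(\rho_0/2,2\rho_0)\right)$ only guarantees that $\eta$ is sufficiently dense, i.e.\ it bounds the number of relevant points from \emph{below}, whereas what you need is an upper bound. Conditionally on $\eta$, forcing every point of $\eta\cap\bigcup_i\widetilde{Q}_i$ to carry a prescribed colour costs $2^{-|\eta\cap\bigcup_i\widetilde{Q}_i|}$, which is not bounded away from $0$ on $G^{int}_{\overline{\delta}}(\rho_0)$. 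This is precisely why the paper introduces the event $\dense^N$ (``dense but not too much'') in the proof of Lemma~\ref{l.piv_one_point}, and why~\eqref{e.extension_better} is stated with an arbitrary auxiliary event $F_r$ of probability close to $1$: the intended use is to take $F_r=\dense^N$ there, not the trivial choice you make. With that substitution (fix $N$ so that $\Pro\left[\dense^N\right]\geq 1-c'$, note that $\dense^N$ is measurable with respect to the region allowed for $F_r$, and apply the generalized quenched FKG inequality to pay the factor $2^{-N}$), your argument closes and coincides with the paper's.
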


Before proving this lemma, let us explain why it enables us to conclude the proof of Proposition~\ref{p.quasieven}. Fix a quantity $\overline{\overline{r}} \geq 16 \overline{r}$ sufficiently large so that Lemma~\ref{l.extensionto1} holds. Let $r_1 \leq \overline{\overline{r}}$, $r_2 \in [r_1,+\infty)$ and $r_3 \in [r_2,+\infty)$. We distinguish between three cases:
\bi 
\item[1.] If $r_3 \leq \overline{\overline{r}}$ we are done thanks to~\eqref{e.poly}.
\item[2.] If $r_3 \geq \overline{\overline{r}} \geq r_2$ then we can use Lemma~\ref{l.extensionto1} to obtain that
\begin{eqnarray*}
\alpha^{an}_4(r_1,r_3) & \leq & \alpha^{an}_4(r_2,r_3)\\
& = & \frac{1}{\alpha^{an}_4(r_1,r_2)} \, \alpha^{an}_4(r_1,r_2) \, \alpha^{an}_4(r_2,r_3)\\
& \leq & \frac{1}{\alpha^{an}_4(1,\overline{\overline{r}})} \, \alpha^{an}_4(r_1,r_2) \, \alpha^{an}_4(r_2,r_3)\\
& \leq & \frac{1}{\alpha^{an}_4(1,\overline{\overline{r}})} \, \alpha^{an}_4(r_2,r_3)\\
& \leq & \frac{1}{\alpha^{an}_4(1,\overline{\overline{r}})} \, \alpha^{an}_4(\overline{\overline{r}},r_3)\\
& \leq & \frac{C_3}{\alpha^{an}_4(1,\overline{\overline{r}})} \, \alpha^{an}_4(r_1,r_3) \, .
\end{eqnarray*} 
The above implies the left-hand and right-hand sides of the quasi-multiplicativity property.
\item[3.] If $r_2 \geq \overline{\overline{r}}$, we can use Lemma~\ref{l.extensionto1} and~\eqref{e.quasibelowrbar} to obtain that
\begin{eqnarray*}
\alpha^{an}_4(r_1,r_3) & \leq & \alpha^{an}_4(\overline{\overline{r}},r_3)\\
& \leq & C' \, \alpha^{an}_4(\overline{\overline{r}},r_2) \, \alpha^{an}_4(r_2,r_3)\\
& \leq & C' \, C_3 \, \alpha^{an}_4(r_1,r_2) \, \alpha^{an}_4(r_2,r_3)\\
& \leq & C' \, C_3 \, \alpha^{an}_4(\overline{\overline{r}},r_2) \, \alpha^{an}_4(r_2,r_3)\\
& \leq & C' \, C_3 \, C' \, \alpha^{an}_4(\overline{\overline{r}},r_3)\\
& \leq & C' \, C_3 \, C' \, C_3 \, \alpha^{an}_4(r_1,r_3) \, ,
\end{eqnarray*}
and we are done.
\ei

\begin{proof}[Sketch of proof of Lemma~\ref{l.extensionto1}]
The proof is very similar to the proof of Lemma~\ref{l.piv_one_point}. Therefore, we only sketch it. Let $\dense^N(r)$ be the event defined in the proof of Lemma~\ref{l.piv_one_point}. Corollary~\ref{c.looksgood} and the inequality~\eqref{e.extension_better} imply that there exists an absolute constant $c \in (0,1)$ such that, for every $r$ sufficiently large, there exists $N=N(r)$ satisfying
 \[
\forall R \geq 4r, \, \Pro \left[ \widetilde{\arm}^{int}_4(r,R) \cap \dense^N(r) \right] \geq c \, \alpha_4^{an}(r,R) \, ,
\] 
where $\widetilde{\arm}^{int}_4(r ,R)$ is the event defined above~\eqref{e.extension_better}. Now, if we follow the proof of Lemma~\ref{l.piv_one_point}, we obtain that we can extend the four arms with probability larger than some constant that depends only on $r$ and $N$. More precisely, we obtain that there exists a constant $c'=c'(r,N)$ such that
\[
\alpha^{an}_4(R) \geq c' \, \Pro \left[ \widetilde{\arm}^{int}_4(r,R) \cap \dense^N(r) \right] \, .
\]
This ends the proof.
\end{proof}

\subsection{A consequence of the quasi-multiplicativity property}\label{ss.QM_consequences}

In this subsection, we prove Proposition~\ref{p.fandalpha} (where, instead of the events $\widehat{\arm}_j^{ext}(r,R)$ and $\widehat{\arm}_j^{int}(r,R)$ studied in Subsection~\ref{ss.quasi_even}, we consider the analogous event $\widehat{\arm}_j(r,R)$). We prove it only in the case $j$ even. See Subsection~\ref{ss.QM_odd} for the case $j$ odd.
\begin{proof}[Proof of Proposition~\ref{p.fandalpha} in the case $j$ even] We write the proof for $j=4$ since the proof for any $j \in \N^*$ even is essentially the same. Let $\dense(R) = \dense_{1/100}(A(R/2,2R))$. We have
\begin{eqnarray*}
\widehat{\arm}_4(r,R) & \subseteq & \widehat{\arm}_4^{int}(r,R/2) \cup \big( \widehat{\arm}_4(r,R) \setminus \dense(R) \big) \, .
\end{eqnarray*}
Hence,
\[
f_4(r,R) \leq f^{int}_4(r,R/2) + \Pro \left[ \neg \dense(R) \right] \, .
\]
By Corollary~\ref{c.looksgood}, if $r$ is sufficiently large, then $f^{int}_4(r,R/2) \asymp \alpha^{an}_4(r,R/2)$. Moreover, thanks to the quasi-multiplicativity property, $\alpha^{an}_4(r,R/2) \asymp \alpha^{an}_4(r,R)$. Furthermore, $\Pro \left[ \neg \dense(R) \right] \leq \grandO{1} \exp(-\Omega(1) R^2)$ while $\alpha^{an}_4(r,R)$ decays polynomially fast in $r/R \geq 1/R$. Hence, if $r$ sufficiently large ($r \geq r_0$, say) and if $R \in [r,+\infty)$, then,
\[
f_4(r,R) \leq \grandO{1} \alpha^{an}_4(r,R) \, .
\]
If $1 \leq r \leq r_0$, then we have
\[
f_4(r,R) \leq f_4(r_0,R) \leq \grandO{1} \alpha^{an}_4(r_0,R) \leq \grandO{1} \alpha_4^{an}(r,R) \, ,
\]
where the last inequality follows from the quasi-multiplicativity property. This ends the proof.
\end{proof}

\subsection{Arm events in the half-plane}\label{ss.half-plane}

In this subsection, we study $j$-arm events in the half-plane for any $j \in \N^*$.

\begin{rem}\label{r.ext_to_half}
In Subsection~\ref{ss.quasi_even}, we have restricted ourself to the case $j$ even since we wanted to deal with arms of alternating colors. In the case of the half-plane, whatever $j$ is odd or even, the arms are of alternating colors. As a result, if we follow the arguments of Subsection~\ref{ss.quasi_even}, we obtain the quasi-multiplicativity property for $j$-arm events in the half-plane for any $j \in \N^*$. We also obtain the analogues of Propositions~\ref{p.fandalpha} and~\ref{p.techniquegeneral}. (Of course, the proofs also work for arm events in a wedge, for instance in the quarter-plane.)
\end{rem}
%All the properties stated above (and in particular the quasi-multiplicativity property) are also true if we consider arms that live in a wedge (typically, in a half plane) instead of the plane - actually, as pointed out in the Appendix of~\cite{schramm2010quantitative}, this last case is easier to prove since we can order the interfaces. In particular, the quasi-multiplicativity property is also true for the quantities $\alpha^{an}_j^+(r,R)$ defined in the paragraph above Proposition~\ref{p.universal}.

We now use the quasi-multiplicativity property to compute the exponents of the $2$ and $3$-arm events in the half-plane.

\begin{proof}[Proof of Items~i) and~ii) of Proposition~\ref{p.universal}] (We follow~\cite{werner2007lectures}, first exercise sheet.) First, note that thanks to the quasi-multiplicativity property, it is sufficient to prove the result for $r=1$ and for $R \geq 1$ sufficiently large. We define the two following events (where $\half$ is the upper half-plane):
\bi 
\item[1.] For every $j \in \Z$, let $I_j = [j,j+1] \times \lbrace 0 \rbrace$ and write $F^{2,+}_j(R)$ for the event that there exist $y \in I_j$ and $\gamma_1, \, \gamma_2$ two paths such that: (a) $\gamma_1$ and $\gamma_2$ are included in $B_R(y) \cap \half$, (b) $\gamma_1$ and $\gamma_2$ join $y$ to $\partial B_R(y)$, (c) $\gamma_1$ is black and $\gamma_2$ is white and (d) $\gamma_1$ is on the right-hand-side of $\gamma_2$. (Note that this implies in particular that $y$ belongs to the intersection of two Voronoi cells.)
\item[2.] Let $S$ be a $1 \times 1$ square of the grid $\Z^2$ and write $F^{3,+}_S(R)$ for the event that there exists $y \in S$ that is the lowest point in $B_R(y)$ of a black component that intersects $\partial B_R(y)$.
%(Note that for a.e. every $\eta$ there do not exist two Voronoi cells $C$ and $C'$ such that the lowest point of $C$ and the lowest point of $C'$ are at the same height. Hence, if we do not write ``unique'' above then $F^{3,+}$ is modified only modulo a negligible event.)
\ei
Let $\eta \in \dense_{1/100} \left( B_{2R} \right) \cap \qbc_{1/100}^3 \left( B_{2R} \right)$ (see Subsection~\ref{ss.formal_events} for the definition of these events). If we follow the first exercise sheet of~\cite{werner2007lectures} (in this exercise sheet, one has to use the BK inequality; this is not a problem since we work at the quenched level), we obtain that there exists a constant $C \in [1,+\infty)$ such that\footnote{Actually, for the $3$-arm event the proof in the case of Voronoi percolation is easier than in the case of Bernoulli percolation since, for Voronoi percolation, a.s. a cluster cannot have two lowest points.}
\[
\frac{1}{C} \leq \sum_{j \, : \, I_j \cap B_{R/2} \neq \emptyset} \Prob^\eta \left[ F^{2,+}_j(R) \right] \leq C
\]
and
\begin{equation}\label{e.F3}
\frac{1}{C} \leq \sum_{S \, : \, S \cap B_{R/2} \neq \emptyset} \Prob^\eta \left[ F^{3,+}_S(R) \right] \leq C \, .
\end{equation}
Lemma~\ref{l.dense} and Proposition~\ref{p.a_lot_of_quads} imply that
\[
\Pro \left[ \dense_{1/100} \left( B_{2R} \right) \cap \qbc_{1/100}^3 \left( B_{2R} \right) \right] \geq 1- \left( \grandO{1}e^{-\Omega(1) \, R^2} + \grandO{1}R^{-3} \right) \geq 1 - \grandO{1} R^{-3} \, .
\]
Let us conclude the proof in the case of the $3$-arm event (the case of the $2$-arm event is treated similarly). If we combine the above estimate with~\eqref{e.F3}, we obtain that
\begin{multline*}
\frac{1}{C}(1-\grandO{1} R^{-3}) \leq \sum_{S \, : \, S \cap B_{R/2} \neq \emptyset} \Pro \left[ F^{3,+}_S(R) \right]\\
\leq C + \grandO{1} R^{-3} \text{Card} \{ S \, : \, S \cap B_{R/2} \neq \emptyset \} \leq C + \grandO{1} R^{-1} \, .
\end{multline*}
Since the annealed model is translation invariant, $\Pro \left[ F_S^{3,+}(R) \right]$ does not depend on $S$, and if $R$ is sufficiently large, we have
\[
\Pro \left[ F_S^{3,+}(R) \right] \asymp R^{-2} \, .
\]
Therefore, it is sufficient to prove that, for every $R$ sufficiently large,
\[
\alpha^{an,+}_3(R) \asymp \Pro \left[ F_S^{3,+}(R) \right] \, .
\]
\bi 
\item[i)] The proof that $\Pro \left[ F^{3,+}_S(R) \right] \leq \grandO{1} \alpha^{an,+}_3(R)$ is essentially the same as the one of the inequality $\Pro \left[ \arm_4^\square(S,R) \right] \leq \grandO{1} \, \alpha^{an}_4(\rho,R)$ of Proposition~\ref{l.warm-up_piv}. Hence, we leave it to the reader.
\item[ii)] We also leave the proof that $\Pro \left[ F^{3,+}_S(R) \right] \geq \Omega(1) \, \alpha^{an,+}_3(R)$ to the reader since one can show this by extending the arms ``by hands'' exactly as in the proof of Lemma~\ref{l.extensionto1}.
\ei
This ends the proof.
\end{proof}

\subsection{The case $j$ odd}\label{ss.QM_odd}

Let us prove the quasi-multiplicativity property in the case $j$ odd.
\begin{proof}[Proof of Proposition~\ref{p.quasi} in the case $j$ odd] To deal with an odd number of arms, it is not sufficient to work with the events $\widetilde{\gi}^{ext}_\delta(R)$ and $\widetilde{\gi}^{int}_\delta(r)$ that we have studied in Subsection~\ref{ss.quasi_even}. More precisely, in order to extend two consecutive arms of the same color, we need to work with a configuration of interfaces that satisfy the following condition: ``the endpoints are far away from the other interfaces'' (and not only ``the endpoints are far away from each other''). In other words, we need to work with a configuration of interfaces that satisfy the event $\gi^{ext}_\delta(R)$ (or $\gi^{int}_\delta(r)$) defined above Lemma~\ref{l.interfaces}. Since the proof of Lemma~\ref{l.interfaces} (that gives estimates on the quantities $\Pro \left[ \gi^{ext}_\delta(R) \right]$ and $\Pro \left[ \gi^{int}_\delta(r) \right]$) only relies on Subsections~\ref{ss.quasi_even},~\ref{ss.QM_consequences} and~\ref{ss.half-plane}, we can now use this result.

If we modify the definition of $G^{ext}_\delta(R)$ and let
\[
G^{ext}_\delta(R)=\gp^{ext}_\delta(R) \cap \gi_\delta^{ext}(R)
\]
instead of
\[
G^{ext}_\delta(R)=\gp^{ext}_\delta(R) \cap \widetilde{\gi}_\delta^{ext}(R) \, ,
\]
and if we definie similarly $G^{int}_\delta(R)=\gp^{int}_\delta(R) \cap \gi_\delta^{int}(R)$, then the proof of the quasi-multiplicativity property in the case $j$ odd is the same as in the case $j$ even (except that we now need Lemma~\ref{l.interfaces} to prove the analogue of Lemma~\ref{l.good}).
\end{proof}

We end this subsection by noting that: (a) now, the proof of Proposition~\ref{p.fandalpha} in the case $j$ odd is the same as in the case $j$ even and (b) we can compute the universal arm-exponent for the $5$-arm event. Let us be a little more precise about the computation of this exponent:

\begin{proof}[Proof of Item~iii) of Proposition~\ref{p.universal}] We work with the following event:\\
Let $S$ be a $1 \times 1$ square of the grid $\Z^2$ and write $F^5_S(R)$ for the event that there exists a point $x \in \eta \cap S$ such that: (a) the cell of $x$ is white, (b) there exist five paths $\gamma_1, \cdots, \gamma_5$ (in counter-clockwise order, say) that join the cell of $x$ to $\partial B_R(x)$, (c) $\gamma_i$ is white (respectively black) if $i$ is odd (respectively even) and (d) if $i \neq j$ then there is no Voronoi cell that is intersected by both $\gamma_i$ and $\gamma_j$.

If we follow the proof of Items~i) and~ii) of Proposition~\ref{p.universal} (i.e. if we follow both the first exercise sheet of \cite{werner2007lectures} and Subsection~\ref{ss.half-plane} of the present paper), we obtain that there exists a constant $C \in [1,+\infty)$ such that
\[
\forall \eta \in \dense_{1/100}(B_{2R}) \cap \qbc^3_{1/100}(B_{2R}), \, \frac{1}{C} \leq \sum_{S \, : \, S \cap B_{R/2} \neq \emptyset} \Prob^\eta \left[ F_S(R) \right] \leq C \, ,
\]
and thus that it is sufficient to prove that, for every $R \geq 1$ sufficiently large, we have
\[
\alpha^{an}_5(R) \asymp \Pro \left[ F_S^5(R) \right] \, .
\]
Since we now have the quasi-multiplicativity property in the case $j=5$, the proof of this last estimate is the same as the analogous estimates discussed in Subsection~\ref{ss.half-plane}.
\end{proof}

\appendix

\section{An extension of Schramm and Steif's algorithm theorem}\label{a.Schramm-Steif}

In this appendix, we state an extension of Schramm and Steif's algorithm theorem that has been proved by Roberts and Sengul in~\cite{roberts2016exceptional}. We first need the following definition: Let $n \in \N$ and let $f \, : \, \lbrace -1,1 \rbrace^n \rightarrow \R$. An \textbf{algorithm} that determines $f$ is a procedure that asks the values of the bits step by step where at each step the algorithm can ask for the value of one or several bits and the choice of the new bit(s) to ask is based on the values of the bits previously queried. We also ask that the algorithm stops once $f$ is determined. We denote by $\Prob^n_p$ the probability measure on $\Omega^n := \lbrace -1,1 \rbrace^n$ defined by
\[
\Prob^n_p = \left( p\delta_1+(1-p)\delta_{-1}\right)^{\otimes n} \, .
\]
A crucial quantity is the revealment of an algorithm $\mathcal{A}$. This is defined as follows:
\[
\delta_\mathcal{A}^p = \underset{i \in \lbrace 1, \cdots, n \rbrace}{\max} \Prob^n_p \left[ i \text{ is queried by } \mathcal{A} \right] \, .
\]
To state Schramm and Steif's result, we also need to introduce the notion of discrete Fourier decomposition: Let $S \subseteq \lbrace 1, \cdots, n \rbrace$ and let $\omega \in \Omega^n$. We write
\[
\chi_S^p(\omega) = \prod_{i \in S} \left( \sqrt{\frac{1-p}{p}} \un_{\omega_i = 1} - \sqrt{\frac{p}{1-p}} \un_{\omega_i = -1}  \right) \, .
\]
Note that $\left( \chi_S^p \right)_{S \subseteq \lbrace 1, \cdots, n \rbrace}$ is an orthonormal family of $L^2 \left( \Omega^n , \Prob^n_p \right)$, thus we can define $\left( \widehat{f}^p_S \right)_S$, the Fourier coefficients of $f \, : \, \Omega^n \rightarrow \R$ at level $p$, as follows:
\[
f = \sum_{S \subseteq \lbrace 1, \cdots, n \rbrace} \widehat{f}^p(S) \chi_S^p \, .
\]
The result by Schramm and Steif is the following (they proved it for $p=1/2$ but the proof for any $p$ is the same):
\begin{thm}[Theorem~$1.8$ of~\cite{schramm2010quantitative}]
For every $f \, : \, \Omega^n \rightarrow \R$, every algorithm $\mathcal{A}$ that determines $f$ and every $k \in \N^*$; we have
\[
\sum_{S \subseteq \lbrace 1, \cdots, n \rbrace \, : \, |S| = k} \widehat{f}^p(S)^2 \leq \delta^p_\mathcal{A} \, k \, \Ex_p^n \left[ f^2 \right] \, .
\]
\end{thm}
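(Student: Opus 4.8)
The statement is Theorem~1.8 of~\cite{schramm2010quantitative}, proved there for $p=1/2$, and the plan is to reproduce that argument, noting at each step that the only features of the biased system used are that $\big(\chi_S^p\big)_{S\subseteq\{1,\dots,n\}}$ is an orthonormal basis of $L^2(\Omega^n,\Prob^n_p)$ and that the coordinates are independent under $\Prob^n_p$ --- both valid for every $p\in(0,1)$ verbatim. So I would fix $f$ and an algorithm $\mathcal{A}$ determining $f$, run $\mathcal{A}$, and let $J=J(\omega)$ be the (random) set of queried coordinates, $\tau$ the number of steps, and $\mathcal{F}_t$ the $\sigma$-field generated by the first $t$ queried coordinates together with their values. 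Setting $M_t=\Ex^n_p[f\mid\mathcal{F}_t]$ one has $M_\tau=f$, so $f-\Ex^n_p[f]=\sum_{t\ge 1}\nabla_t f$ with $\nabla_t f:=M_t-M_{t-1}$ (and $\nabla_t f=0$ for $t>\tau$), and these increments are pairwise orthogonal.

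The first key ingredient is a Fourier bookkeeping identity: for any $S$ with $|S|=k\ge 1$ one has $\widehat{f}^p(S)=\Ex^n_p\big[f\,\chi_S^p\,\un_{S\subseteq J}\big]$. Indeed, conditionally on $\mathcal{F}_\tau$ the unqueried coordinates are still fresh (distributed as under $\Prob^n_p$, independent of $\mathcal{F}_\tau$), so if $S\not\subseteq J$ then, writing $\chi_S^p=\chi_{S\cap J}^p\chi_{S\setminus J}^p$, the factor $\chi_{S\setminus J}^p$ has conditional mean zero. The same computation applied at an intermediate step shows that the contribution of a given $S$ is concentrated at the single (random) step at which the last coordinate of $S$ is revealed. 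Grouping the $S$ of size $k$ by that step and decomposing $f$ along the increments $\nabla_t f$, one gets $\sum_{|S|=k}\widehat{f}^p(S)^2=\sum_t\Ex^n_p[\nabla_t f\cdot\psi_t]$, where $\psi_t$ is the ``degree-$k$ part of $f$ whose last-revealed coordinate is the $t$-th query''. Applying the Cauchy--Schwarz inequality, first to each term and then over $t$,
\[
\sum_{|S|=k}\widehat{f}^p(S)^2\;\le\;\Big(\sum_t\Ex^n_p[(\nabla_t f)^2]\Big)^{1/2}\Big(\sum_t\Ex^n_p[\psi_t^2]\Big)^{1/2}.
\]
The first factor equals $\|f-\Ex^n_p[f]\|_{L^2(\Prob^n_p)}\le\Ex^n_p[f^2]^{1/2}$ by orthogonality of the increments together with $\|\chi_{\{i\}}^p\|_2=1$.

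The whole content of the theorem is then the estimate $\sum_t\Ex^n_p[\psi_t^2]\le\delta^p_{\mathcal{A}}\,k\,\sum_{|S|=k}\widehat{f}^p(S)^2$, and this is the step I expect to be the main obstacle. It rests on two ingredients: each coordinate $i$ is queried with probability at most $\delta^p_{\mathcal{A}}$ (this is where the revealment enters), and each $S$ with $|S|=k$ spreads its Fourier weight over its $k$ coordinates (the combinatorial source of the factor $k$), so that after re-indexing by the last-revealed coordinate the sum compares with $\sum_i\Ex^n_p\big[(\text{part of the degree-}k\text{ spectrum touching }i)\,\un_{i\in J}\big]$, whose deterministic counterpart sums to exactly $k\sum_{|S|=k}\widehat{f}^p(S)^2$. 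The delicacy, and the place where care is needed, is that $J$ is data-dependent: one must condition on the history of $\mathcal{A}$ strictly before the query of a given coordinate $i$ and exploit that this history is a function of $(\omega_l)_{l\ne i}$ only, so that $\omega_i$ is fresh and $\chi_{\{i\}}^p$ has conditional mean zero --- this is precisely what kills the ``wrong-parity'' cross terms and decouples the revealment from the Fourier weights. Once the second-factor bound is in hand, writing $A:=\sum_{|S|=k}\widehat{f}^p(S)^2$ the displayed inequality reads $A\le\Ex^n_p[f^2]^{1/2}(\delta^p_{\mathcal{A}}kA)^{1/2}$, i.e. $A\le\delta^p_{\mathcal{A}}\,k\,\Ex^n_p[f^2]$, which is the claim.
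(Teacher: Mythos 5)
The paper itself does not prove this statement but merely cites Theorem~1.8 of~\cite{schramm2010quantitative} and remarks that the proof given there for $p=1/2$ carries over verbatim to general $p$, because $(\chi_S^p)_S$ is still orthonormal under $\Prob^n_p$ and the coordinates are still independent --- which is precisely your stated plan, so you take the same route. Your sketch of the Schramm--Steif argument (martingale decomposition along the algorithm, the bookkeeping identity $\widehat{f}^p(S)=\Ex^n_p[f\,\chi_S^p\,\un_{S\subseteq J}]$, double Cauchy--Schwarz, and the revealment bound on the second factor) correctly identifies all the moving parts, and you are right to flag the last bound as the substantive step.
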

We need the following extension of this theorem: Let $I \subseteq \lbrace 1, \cdots, n \rbrace$ and, if $\mathcal{A}$ is some algorithm, let us write
\[
\delta_\mathcal{A}^p(I) = \underset{i \in I}{\max} \, \Prob^n_p \left[ i \text{ is queried by } \mathcal{A} \right] \, .
\]

\begin{prop}[Theorem~2.3 of~\cite{roberts2016exceptional}]\label{p.gen_Schramm-Steif_1}
For every $f \, : \, \Omega^n \rightarrow \R$, every algorithm $\mathcal{A}$ that determines $f$, every $I \subseteq \lbrace 1, \cdots, n \rbrace$ and every $k \in \N^*$, we have
\[
\sum_{S \subseteq I \, : \, |S| = k} \widehat{f}^p(S)^2 \leq \delta^p_\mathcal{A}(I) \, k \, \Ex^n_p \left[ f^2 \right] \, .
\]
\end{prop}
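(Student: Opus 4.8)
The statement to prove is Proposition~\ref{p.gen_Schramm-Steif_1}, a localized version of the Schramm--Steif algorithm theorem where one only controls the revealment on a subset $I$ of the coordinates and only sums Fourier weight over subsets $S \subseteq I$.

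The plan is to adapt the martingale argument that proves the original Schramm--Steif theorem (Theorem~$1.8$ of~\cite{schramm2010quantitative}), keeping track of which coordinates lie in $I$. First I would recall the setup: run the algorithm $\mathcal{A}$, let $J$ be the (random) set of coordinates it queries, and let $(\mathcal{F}_t)_{t \geq 0}$ be the filtration generated by the queried bits in the order they are revealed, with $\tau$ the stopping time at which the algorithm halts (so $f = \E^n_p[f \mid \mathcal{F}_\tau]$ is $\mathcal{F}_\tau$-measurable). For a fixed $S \subseteq I$ with $|S| = k$, the key object is the martingale $M_t = \E^n_p[\chi_S^p \mid \mathcal{F}_t]$. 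One has $M_0 = 0$ when $S \neq \emptyset$ (and there is nothing to prove for $k \geq 1$ if $S = \emptyset$), and $M_\tau$ reduces to $\widehat{f}^p(S)$ after pairing against $f$; more precisely $\widehat{f}^p(S) = \E^n_p[f \chi_S^p] = \E^n_p[f \, M_\tau]$ since $f$ is $\mathcal{F}_\tau$-measurable.

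The heart of the matter is the increment estimate: the martingale $M_t$ only changes at step $t$ if the bit queried at that step belongs to $S$, because $\chi_S^p$ is a product over $i \in S$ of mean-zero factors, so conditioning on bits outside $S$ does not move the conditional expectation of $\chi_S^p$. Hence $M_\tau = \sum_{i \in S} (M_{t_i} - M_{t_i - 1})$ where $t_i$ is the step at which $i$ is queried (and the corresponding term is $0$ if $i$ is never queried). Using orthogonality of martingale increments and the identity $\widehat{f}^p(S) = \E^n_p[(f - \E^n_p f)\, M_\tau]$ together with Cauchy--Schwarz, one arrives at $\sum_{S \subseteq I, |S|=k} \widehat{f}^p(S)^2 \leq \E^n_p[f^2] \cdot \sup \{ \sum_{S \subseteq I, |S| = k} (\text{increment contribution}) \}$, and the sum over $S$ of the squared increments, reorganized by which coordinate $i$ produced the jump, telescopes into $k$ times a sum over $i \in I$ of $\Prob^n_p[i \text{ is queried}]$ weighted by the relevant conditional variances, which is bounded by $k \, \delta^p_{\mathcal{A}}(I) \, \E^n_p[f^2]$. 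The point where the restriction to $I$ enters is exactly here: each jump of $M$ (for $S \subseteq I$) happens at a coordinate $i \in S \subseteq I$, so only the revealments of coordinates in $I$ appear, giving $\delta^p_{\mathcal{A}}(I)$ rather than $\delta^p_{\mathcal{A}}$.

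The main obstacle is bookkeeping rather than a new idea: one must carefully reproduce the combinatorial step of the Schramm--Steif proof (the passage from the per-$S$ martingale bound to the summed bound, controlling $\sum_S$ of the jump sizes by $k$ times a per-coordinate quantity) while being scrupulous that every index encountered is forced to lie in $I$. Since this is precisely Theorem~2.3 of~\cite{roberts2016exceptional}, the cleanest route is to cite that reference for the statement and only sketch that the proof is the verbatim Schramm--Steif argument with the observation above; I would not grind through the martingale computation in full, as it is identical in structure to the original modulo tracking the set $I$.
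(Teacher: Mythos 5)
Your proposal captures the same key idea as the paper's proof and defers to~\cite{roberts2016exceptional} for the details, which is exactly what the paper does. The paper's version of the argument is, however, cleaner on one point: it modifies the Schramm--Steif argument by replacing the auxiliary function $g$ of~\cite{schramm2010quantitative} by $g(\omega)=\sum_{S\subseteq I,\,|S|=k}\widehat f^p(S)\,\chi^p_S(\omega)$, and then runs the \emph{single} Cauchy--Schwarz step $\|g\|_2^2=\E^n_p[fg]=\E^n_p\bigl[f\,\E^n_p[g\mid\mathcal F_\tau]\bigr]\le\|f\|_2\,\|\E^n_p[g\mid\mathcal F_\tau]\|_2$ against the aggregate martingale $\E^n_p[g\mid\mathcal F_t]$. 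Your write-up instead introduces a per-$S$ martingale $\E^n_p[\chi^p_S\mid\mathcal F_t]$ and then gestures at summing the resulting Cauchy--Schwarz bounds over $S$; applied literally (bounding each $\widehat f^p(S)^2$ separately and summing) this is lossy and does not produce the factor $\delta^p_{\mathcal A}(I)\,k$, because the cross-terms between different $S$ are essential to the tight estimate. The per-$S$ observation you make (that $\E^n_p[\chi^p_S\mid\mathcal F_t]$ can only move when a coordinate of $S\subseteq I$ is queried, whence only $\delta^p_{\mathcal A}(I)$ enters) is correct and is indeed the reason the restriction to $I$ works, but it should feed into the increment bound for the \emph{aggregate} martingale $\E^n_p[g\mid\mathcal F_t]$ rather than into a per-$S$ Cauchy--Schwarz. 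So: same route as the paper, and the crucial insight is correctly identified, but the intermediate ``sum the per-$S$ Cauchy--Schwarz bounds'' step as phrased would not close; the clean statement is the redefinition of $g$ as above.
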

\begin{proof}
The proof is very close to the proof of Theorem~$1.8$ of~\cite{schramm2010quantitative}, except that we need to (slightly) change the definition of the function $g$ therein. More precisely, we need to work with
\[
g \, : \,  \omega \mapsto \sum_{S \subseteq I \, : \, |S| = k} \widehat{f}^p(S) \, \chi_S^p(\omega) \, .
\]
See~\cite{roberts2016exceptional} for more details.
\end{proof}

The reason why we are interested in the above theorem is the following property (see Subsection~\ref{ss.intro_piv} for the definition of the pivotal event $\Piv^n_i(A)$):
\begin{prop}\label{p.piv_Fourier}
Let $A \subseteq \Omega^n$ be an increasing event. Also, let $f$ be the $\pm 1$-indicator function of $A$ (i.e. $f = 2\un_A - 1$). Then, for every $i \in \lbrace 1, \cdots, n \rbrace$, we have
\[
\widehat{f}^p(\lbrace i \rbrace) = 2\sqrt{p \, (1-p)} \; \Prob^n_p \left[ \Piv^n_i(A) \right] \, .
\]
\end{prop}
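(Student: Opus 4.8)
The plan is to compute the Fourier coefficient $\widehat{f}^p(\{i\})$ directly from the definition $\widehat{f}^p(\{i\}) = \Ex^n_p[f \chi^p_{\{i\}}]$ by conditioning on the coordinates other than $i$. First I would fix a configuration $\omega$ and write $\omega^{i+}$ (resp. $\omega^{i-}$) for the configuration obtained from $\omega$ by setting the $i$-th coordinate equal to $1$ (resp. $-1$); note that $f(\omega^{i+})$ and $f(\omega^{i-})$ depend only on $(\omega_j)_{j \neq i}$. Conditionally on $(\omega_j)_{j\neq i}$, the coordinate $\omega_i$ equals $1$ with probability $p$ and $-1$ with probability $1-p$, so by the definition of $\chi^p_{\{i\}}$,
\begin{align*}
\Ex^n_p\left[ f \chi^p_{\{i\}} \cond (\omega_j)_{j \neq i} \right] &= p\, f(\omega^{i+}) \sqrt{\tfrac{1-p}{p}} - (1-p)\, f(\omega^{i-}) \sqrt{\tfrac{p}{1-p}}\\
&= \sqrt{p(1-p)}\,\bigl( f(\omega^{i+}) - f(\omega^{i-}) \bigr) \, .
\end{align*}

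Next I would use that $A$ is increasing and $f = 2\un_A - 1$: the difference $f(\omega^{i+}) - f(\omega^{i-})$ is always nonnegative, it equals $2$ exactly when flipping the $i$-th coordinate changes the value of $\un_A$, and it equals $0$ otherwise. Moreover, since $A$ is increasing, whether $i$ is pivotal does not depend on the value of $\omega_i$, so $\{f(\omega^{i+}) - f(\omega^{i-}) = 2\}$ is precisely the event $\Piv^n_i(A)$ (which is measurable with respect to $(\omega_j)_{j \neq i}$). Taking expectations in the displayed identity then gives
\[
\widehat{f}^p(\{i\}) = \Ex^n_p\bigl[ \sqrt{p(1-p)}\,(f(\omega^{i+}) - f(\omega^{i-})) \bigr] = 2\sqrt{p(1-p)}\,\Prob^n_p\left[ \Piv^n_i(A) \right],
\]
which is the claimed formula.

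There is no serious obstacle here; the statement is essentially a one-line computation once the conditioning is set up. The only points requiring a little care are the sign bookkeeping in the definition of $\chi^p_{\{i\}}$, and the use of monotonicity twice: once to rule out the case $f(\omega^{i+}) < f(\omega^{i-})$ (so that the difference is always $0$ or $2$), and once to identify the pivotal event with an event not depending on $\omega_i$, which is what makes the final expectation collapse to $\Prob^n_p[\Piv^n_i(A)]$.
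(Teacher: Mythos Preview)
Your proof is correct and is exactly the standard computation; the paper itself does not write out a proof but simply refers to the case $p=1/2$ in \cite{garban2014noise}, and what you wrote is precisely that argument carried out for general $p$.
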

\begin{proof}
The proof is exactly the same as in the case $p=1/2$, which can be found for instance in~\cite{garban2014noise}, Proposition~$4.5$.
\end{proof}

The two above propositions imply the following corollary, which is the result that we will need:
\begin{cor}\label{c.Schramm_Steif}
Let $A \subseteq \Omega^n$ be an increasing event. Then, for every algorithm $\mathcal{A}$ that determines $\un_A$ and every $I \subseteq \lbrace 1, \cdots, n \rbrace$, we have
\[
\sum_{i \in I} \Prob^n_p \left[ \Piv^n_i(A) \right]^2 \leq \frac{1}{4 p \, (1-p)} \, \delta^p_\mathcal{A}(I) \, .
\]
\end{cor}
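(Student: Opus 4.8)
The plan is to combine Proposition~\ref{p.gen_Schramm-Steif_1} (the Roberts--Sengul refinement of the Schramm--Steif theorem) with Proposition~\ref{p.piv_Fourier} applied at Fourier level $k=1$. First I would set $f = 2\un_A - 1$, the $\pm 1$-indicator of $A$. Since $f$ takes values in $\{-1,1\}$ we have $f^2 \equiv 1$, hence $\Ex^n_p[f^2] = 1$. Moreover, any algorithm $\mathcal{A}$ that determines $\un_A$ also determines $f$ (the two functions have the same level sets), so $\mathcal{A}$ is an admissible algorithm for $f$ in the sense of Proposition~\ref{p.gen_Schramm-Steif_1}.

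Next I would apply Proposition~\ref{p.gen_Schramm-Steif_1} to $f$, the algorithm $\mathcal{A}$, the set $I$, and $k = 1$. This gives
\[
\sum_{S \subseteq I \, : \, |S| = 1} \widehat{f}^p(S)^2 \leq \delta^p_\mathcal{A}(I) \cdot 1 \cdot \Ex^n_p[f^2] = \delta^p_\mathcal{A}(I) \, .
\]
The sum on the left is exactly $\sum_{i \in I} \widehat{f}^p(\{i\})^2$, since the singletons $\{i\}$ with $i \in I$ are precisely the subsets of $I$ of cardinality $1$.

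Then I would use Proposition~\ref{p.piv_Fourier}, which states that for the $\pm 1$-indicator $f$ of the increasing event $A$ one has $\widehat{f}^p(\{i\}) = 2\sqrt{p(1-p)}\,\Prob^n_p[\Piv^n_i(A)]$ for every $i$. Squaring, $\widehat{f}^p(\{i\})^2 = 4p(1-p)\,\Prob^n_p[\Piv^n_i(A)]^2$. Substituting this into the previous inequality yields
\[
4p(1-p)\sum_{i \in I}\Prob^n_p[\Piv^n_i(A)]^2 \leq \delta^p_\mathcal{A}(I) \, ,
\]
and dividing by $4p(1-p) > 0$ gives the claimed bound. There is essentially no obstacle here: the statement is a direct corollary, the only minor point to remark being that the revealment-type quantity $\delta^p_\mathcal{A}(I)$ is the one attached to the set $I$ (so that the refined Proposition~\ref{p.gen_Schramm-Steif_1}, rather than the original Schramm--Steif theorem, is needed to localize the bound to coordinates in $I$), and that $\mathcal{A}$ determining $\un_A$ is the same as determining $f$.
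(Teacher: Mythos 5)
Your proof is correct and follows exactly the route the paper intends: apply Proposition~\ref{p.gen_Schramm-Steif_1} with $k=1$ and $\Ex^n_p[f^2]=1$, then substitute the level-one Fourier coefficients via Proposition~\ref{p.piv_Fourier}. The paper simply asserts that the corollary follows from those two propositions; you have supplied the (straightforward) details.
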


\section{The proof of the quenched box-crossing property in \cite{ahlberg2015quenched}}\label{a.tAGMT}

In this section, we only work at $p=1/2$, hence we forget the subscript $p$ in the notations. We recall the main steps of the proof of Theorem~\ref{t.AGMT} by Ahlberg, Griffiths, Morris and Tassion (which is Theorem~1.4 in~\cite{ahlberg2015quenched}). There are two reasons why we need to recall this proof: i) In~\cite{ahlberg2015quenched}, the theorem is proved for the analogous model in which $\eta$ is a family of $n$ points sampled uniformly and independently in some fixed rectangle. As pointed out in~\cite{ahlberg2015quenched} (below the statement of their Theorem~$1.4$) the proof in the case we are interested in (i.e. in which $\eta$ is a Poisson process in the whole plane) is essentially the same. We explain briefly why. ii) In order to extend this result to $p>1/2$ (in Subsection~\ref{ss.pneq1/2_1}) we have to modify a little the end of the proof.\\

Let us first note that (as it explained at the end of the paper~\cite{ahlberg2015quenched}) Item~ii) of Theorem~\ref{t.AGMT} is an easy consequence of Item~i) of this theorem. As a result, we only explain the strategy in order to obtain Item~$i)$.

\paragraph{A. A martingale estimate.} First, the authors of~\cite{ahlberg2015quenched} prove a martingale estimate. Let $\rho, \, R > 0$. Also, let $N \in \N^*$ and consider $\eta_N$ a configuration of $4e^{2N}$ points sampled uniformly in $[-e^N,e^N]^2$, independently of each other. Remember the definition of pivotal events from Subsection~\ref{ss.intro_piv}. The following is not exactly Theorem~$2.1$ of~\cite{ahlberg2015quenched} but the proof is the same:
\begin{equation}\label{e.martingale_AGMT}
\Var \left( \Prob^{\eta_N} \left[ \, \cross(\rho R,R) \, \right] \right) \leq \E \left[ \sum_{x \in \eta_N} \Prob^{\eta_N} \left[ \Piv^{\eta_N}_x( \cross(\rho R,R) ) \right]^2 \right] \, ,
\end{equation}
where $\Prob^{\eta_N}:=\left(\frac{\delta_1}{2}+\frac{\delta_{-1}}{2} \right)^{\eta_N}$. (Note that the point process $\eta_N$ is a.s. finite, hence we have only finitely many Voronoi cells.) Now, note that we can couple $\eta_N$ with a Poisson process of intensity $1$ in the plane (denoted by $\eta$) so that, with probability going to $1$ as $N$ goes to $+\infty$ superpolynomially fast in $N$, we have\footnote{This is for instance a consequence of Le Cam's identity which implies that:
\[
\sum_{k=0}^{+\infty} \left| \Pro \left[ | \eta_N \cap [-N,N]^2 | = k \right] - \Pro \left[ | \eta \cap [-N,N]^2 | = k \right] \right| \leq 2 \times 4N^2 \frac{4N^2}{4e^{2N}} \leq e^{-\Omega(1)N} \, .
\]}
\[
\eta \cap [-N,N]^2 = \eta_N \cap [-N,N]^2 \, .
\]
Since the event $\cross(\rho R,R)$ depends only on the points of $\eta \cap [-N,N]^2$ with probability that goes to $1$ as $N$ goes to $+\infty$ superpolynomially fast in $N$, the above together with~\eqref{e.martingale_AGMT} implies that
\begin{equation}\label{e.martingale_AGMT_bis}
\Var \left( \Prob^\eta \left[ \, \cross(\rho R,R) \, \right] \right) \leq \E \left[ \sum_{x \in \eta} \Prob^{\eta} \left[ \Piv^q_x( \cross(\rho R,R) ) \right]^2 \right] \, .
\end{equation}
(See Subsection~\ref{ss.intro_piv} for the definition of $\Piv^q_x(\cdot)$.)

\paragraph{B. An estimate on the $1$-arm event.} The next result we need is an analogue of Proposition~$3.11$ of~\cite{ahlberg2015quenched}. This proposition is proved in the case where $\eta$ is a set of $n$ independent points sampled uniformly in a rectangle but with exactly the same proof we obtain the following result:

Let $S$ be the $1 \times 1$ square centered at some point $y$ and let $\arm_1^{*,\text{cell}}(S,r)$ be the event that there exists a point $x \in \eta \cap S$ such that there is a white path from the cell of $x$ to a cell that intersects $\partial B_r(y)$ (note that the cell of $x$ is not necessarily white). For every $\gamma > 0$, there exists $\epsilon=\epsilon(\gamma) > 0$ such that the following holds:
\begin{equation}\label{e.prop_3.11_AGMT}
\Pro \left[ \Prob^\eta \left[ \arm_1^{*,\text{cell}}(S,r) \right] \geq r^{-\epsilon} \right] \leq \frac{1}{\epsilon} r^{-\gamma} \, .
\end{equation}
(Note that we have decided to study white arms instead of black arms. It will make sense in Subsection~\ref{ss.pneq1/2_1}.)

\paragraph{C. A Schramm and Steif's algorithm method.} The last step of the proof relies on a theorem from~\cite{schramm2010quantitative}. In our case, we are going to use the extension of Schramm-Steif result discussed in Appendix~\ref{a.Schramm-Steif}. Let $\setS_1$ (respectively $\setS_2$) be the subset of all the $1 \times 1$ squares of the grid $\Z^2$ that are below (respectively above) the line $\R \times \lbrace 0 \rbrace$ and that are at distance at most $(\rho + 1)R$ from the rectangle $[-\rho R,\rho R] \times [-R,R]$ (we include the squares that intersect $\R \times \lbrace 0 \rbrace$ in both $\setS_1$ and $\setS_2$). Also, let $\setS_3$ be all the remaining $1 \times 1$ squares of the grid $\Z^2$. The following is a direct consequence of~\eqref{e.martingale_AGMT_bis} (and of the symmetries of the model):
\begin{align*}
& \Var \left( \Prob^\eta \left[ \, \cross(\rho R,R) \, \right] \right)\\
& \leq \sum_{k=1}^{3} \E \left[ \sum_{S \in \setS_k} \sum_{x \in \eta \cap S} \Prob^{\eta} \left[ \Piv^q_x( \cross(\rho R,R) ) \right]^2 \right]\\
& = 2 \; \E \left[ \sum_{S \in \setS_1} \sum_{x \in \eta \cap S} \Prob^{\eta} \left[ \Piv^q_x( \cross(\rho R,R) ) \right]^2 \right] + \E \left[ \sum_{S \in \setS_3} \sum_{x \in \eta \cap S} \Prob^{\eta} \left[ \Piv^q_x( \cross(\rho R,R) ) \right]^2 \right] \, .\\
\end{align*}

Let us first deal with the sum over $\setS_3$. This sum is less than or equal to the expectation of the number of points which are at distance at least $(\rho + 1)R$ from the rectangle $[-\rho R,\rho R] \times [-R,R]$ but whose cell intersects this rectangle. It is not difficult to see that this quantity is less than $\grandO{1} e^{-\Omega(1)R^2}$ (where the constants in $\grandO{1}$ and $\Omega(1)$ may depend on $\rho$).\\

Now, let us bound the sum over $\setS_1$. Here, we follow the ideas of~\cite{ahlberg2015quenched} but we use a slightly different algorithm, which can be defined as follows: (we use the same notations as~\cite{ahlberg2014noise} where the authors use this kind of algorithm to study the Boolean model): Let $Q_0$ denote the set of all $x \in \eta$ whose cell intersects the set $\left( \R \times \lbrace R \rbrace \right) \cap \left( [-\rho R,\rho R] \times [-R,R] \right)$. Also, let $A_0$ be the set of all $x \in Q_0$ which are white. For each $k \in \N^*$, we define $A_k$ and $Q_k$ (for ``active'' and ``queried'' sets) inductively as follows:
\bi 
\item[i)] Let $Q_k$ be the set of all points $x \in \eta$ such that: (a) the cell of $x$ is adjacent to the cell of some $y \in A_{k-1}$  and (b) the cell of $x$ intersects the rectangle $[-\rho R,\rho R] \times [-R,R]$. Reveal the color of each point of $Q_k$.
\item[ii)] Let $A_k$ be the set of all $x \in Q_k$ which are white.
\item[iii)] Stop if $A_k = A_{k-1}$.
\ei

Note that this algorithm (that we denote by $\mathcal{A}_R$) determines the event that there is a white top-bottom crossing of $[-\rho R,\rho R] \times [-R,R]$ which is the complement of the event $\cross(\rho R,R)$. As a result, this algorithm determines $\cross(\rho R,R)$. Corollary~\ref{c.Schramm_Steif} implies that
\[
\E \left[ \sum_{S \in \setS_1} \sum_{x \in \eta \cap S} \Prob^{\eta} \left[ \Piv^q_x( \cross(\rho R,R) ) \right]^2 \right] \leq \delta_{\mathcal{A}_R}(\setS_1) \, ,
\]
where
\[
\delta_{\mathcal{A}_R}(\setS_1) = \underset{S \in \setS_1}{\max} \, \underset{x \in \eta}{\max} \, \Prob^\eta \left[ x \text{ is queried by } \mathcal{A}_R \right] \, .
\]
It remains to show that this last quantity is at least polynomially small in $R$. This can be done by noting that
\[
\underset{S \in \setS_1}{\max} \, \underset{x \in \eta}{\max} \, \Prob^\eta \left[ x \text{ is queried by } \mathcal{A}_R \right] \leq \underset{S \in \setS_1}{\max} \,  \Prob^\eta \left[ \arm_1^{*,\text{cell}}(S,R-1) \right] \, .
\]
The fact that the above quantity is at least polynomially small in $R$ is an easy consequence of~\eqref{e.prop_3.11_AGMT} (for instance with $\gamma = 3$). We refer to~\cite{ahlberg2015quenched} for more details.

\section{Pivotal events for $\arm_j(1,R)$ when $j$ is odd}\label{a.joddpivarm}

In this appendix, we prove Lemmas~\ref{l.piv_arm_1}, \ref{l.piv_arm_2}, \ref{l.piv_arm_3} and~\ref{l.piv_arm_4} in the case $j$ odd. We do not need them in order to prove our main result Theorem~\ref{t.scaling}. However, we need them in order to prove that Propositions~\ref{p.arm_event_asymp_R2alpha4} and~\ref{p.alpha_jandalpha_1/2} also hold when $j$ is odd. Let $S \subseteq A(R/4,R/2)$ be a $2\rho \times 2\rho$ square centered at some point $y$, let $\dense(y;\rho') = \dense_{1/100}(A(y;\rho',2\rho'))$, and assume that $\Piv_S(\arm_j(1,R)) \cap \dense(y;2^k \rho)$ holds for some $k \in \lbrace 0, \cdots, \lfloor \log_2 \left( \frac{R}{16\rho} \right) \rfloor =: k_0 \rbrace$. This implies that $\arm_j(1,R/8)$ holds. In the case $j$ even, this also implies that the $4$-arm event $\arm_4(y;2^{k+1}\rho,2^{k_0}\rho)$ holds, where $\arm_4(y;2^{k+1}\rho,2^{k_0}\rho)$ is $\arm_4(2^{k+1}\rho,2^{k_0}\rho)$ translated by $y$. If $j$ is odd, this  rather implies that the following more complicated event holds:
\[
\bigcup_{l=k}^{k_0-1} \left( \widetilde{\arm}_4(y;2^{k+1}\rho,2^{l+1}\rho) \cap \widetilde{\arm}_5(y;2^{l+2}\rho,2^{k_0}\rho) \right) \, ,
\]
where: i) $\widetilde{\arm}_4(y;\rho',\rho'')$ is the event that there is a point $x \in \eta$ such that: (a) $C(x)$ (the Voronoi cell of $x$) intersects $A(y;\rho''/2,2\rho'')$ and (b) there are four arms of alternating colors in $A(y;\rho',2\rho'')$ from $\partial B_{\rho'}(y)$ to $\partial B_{\rho''}(y) \cup \partial C(x)$ and ii) $\widetilde{\arm}_5(y;\rho',\rho'')$ is the event that there is a point $x \in \eta$ such that: (a) $C(x)$ intersects $A(y;\rho'/2,2\rho')$ and (b) there are five 	arms of alternating colors in $A(y;\rho'/2,\rho'')$ from $\partial B_{\rho'}(y) \cup \partial C(x)$ to $\partial B_{\rho''}(y)$. (Actually, instead of the $5$-arm event, we could have asked that a $6$-arm event with colors following the order $(B,B,W,B,B,W)$ holds, where $B=$black and $W=$white.) See~\cite{nolin2008near} (for instance Figure~12 therein) for a similar observation in the case of Bernoulli percolation on the triangular lattice. Now, write
\[
\widehat{\widetilde{\arm}}_4(y;\rho',\rho'') = \left\lbrace  \Pro \left[ \widetilde{\arm}_4(y;\rho',\rho'') \cond \omega \cap A(y;\rho',\rho'') \right] > 0 \right\rbrace  \, , 
\]
and
\[
\widehat{\widetilde{\arm}}_5(y;\rho',\rho'') = \left\lbrace  \Pro \left[ \widetilde{\arm}_5(y;\rho',\rho'') \cond \omega \cap A(y;\rho',\rho'') \right] > 0 \right\rbrace \, .
\]
By spatial independence and by a union-bound, we have
\begin{multline*}
\Pro \left[ \bigcup_{l=k}^{k_0-1} \left( \widehat{\widetilde{\arm}}_4(y;2^{k+1}\rho,2^{l+1}\rho) \cap \widehat{\widetilde{\arm}}_5(y;2^{l+2}\rho,2^{k_0}\rho) \right) \, \right]\\
\leq \sum_{l=k}^{k_0-1} \Pro \left[ \widehat{\widetilde{\arm}}_4(y;2^{k+1}\rho,2^{l+1}\rho) \right] \cdot \Pro \left[ \widehat{\widetilde{\arm}}_5(y;2^{l+2}\rho,2^{k_0}\rho) \right] \, .
\end{multline*}
By using arguments very similar to those of the proof of $\Pro \left[ \Piv_S (\cross(2R,R) \right] \leq \grandO{1} \, \alpha^{an}_4(\rho,R)$ in Lemma~\ref{l.piv_bulk} and of the proof of Lemma~\ref{l.pivhat}, we obtain that
\[
\Pro \left[ \widehat{\widetilde{\arm}}_4(y;\rho',\rho'') \right] \leq \grandO{1} \alpha^{an}_4(\rho',\rho'') \, ,
\]
and
\[
\Pro \left[ \widehat{\widetilde{\arm}}_5(y;\rho',\rho'') \right] \leq \grandO{1} \, \alpha^{an}_5(\rho',\rho'') \, .
\]
Proposition~\ref{p.alpha4} and Item~iii) of Proposition~\ref{p.universal} then imply that
\begin{equation}\label{e.alpha4and5}
\alpha^{an}_5(\rho_1,\rho_2) \leq \grandO{1} \left( \frac{\rho_1}{\rho_2} \right)^\epsilon \, \alpha^{an}_4(\rho_1,\rho_2) \, .
\end{equation}
Together with the above results and the quasi-multiplicativity property, this implies that
\[
\sum_{l=k}^{k_0-1} \Pro \left[ \widehat{\widetilde{\arm}}_4(y;2^{k+1}\rho,2^{l+1}\rho) \right] \cdot \Pro \left[ \widehat{\widetilde{\arm}}_5(y;2^{l+2}\rho,2^{k_0}\rho) \right] \leq \grandO{1} \, \alpha^{an}_4(\rho,R) \, .
\]
%\[
%\Pro \left[ \bigcup_{k=l}^{\frac{R}{6\rho}} \widehat{\arm}_4(2^k\rho,2^l \rho) \cap \widehat{\arm}_{6,X}(2^{l+1}\rho,R/6) \right] \leq \grandO{1} \alpha^{an}_4(2^k\rho,R) \, .
%\]
Finally, if we had said that the event $\Piv_S(\arm_j(1,R)) \cap \dense(y;2^k \rho)$ implied that the $4$-arm event $\arm_4(y;2^{k+1}\rho,R/8)$ held, then it would have given a true estimate. Now, the proof of Lemma~\ref{l.piv_arm_1} is very similar to the proof of the inequality $\Pro \left[ \Piv_S (\cross(2R,R)) \right] \leq \grandO{1} \, \alpha^{an}_4(\rho,R)$ of Lemma~\ref{l.piv_bulk}. To obtain the other lemmas, we need to make similar observations for arm events near $\partial B_R$. The only difference is that, instead of using the estimate~\eqref{e.alpha4and5}, we need to use the following similar results (whose proofs are exactly the same as the proof of the second part of Proposition~\ref{p.alpha4} written in Subsection~\ref{ss.alpha4}):
\[
\alpha^{an,+}_4(\rho',\rho'') \leq \grandO{1} \, \left( \frac{\rho'}{\rho''} \right)^\epsilon \, \alpha^{an,+}_3(\rho',\rho'') \, ,
\]
and
\[
\alpha^{an,++}_4(\rho',\rho'') \leq \grandO{1} \, \left( \frac{\rho'}{\rho''} \right)^\epsilon \, \alpha^{an,++}_3(\rho',\rho'') \, .
\]
We leave the details to the reader.

\section{The quantities $\E \left[ \Prob^\eta \left[ \arm_j(r,R) \right]^2 \right]$}\label{a.quant}

In this appendix, we only work at $p=1/2$, hence we forget the subscript $p$ in the notations. We study the following quantities:
\[
\widetilde{\alpha}_j(r,R) := \sqrt{ \E \left[ \Prob^\eta \left[ \arm_j(r,R) \right]^2 \right]} \, .
\]
More precisely, we prove that some of the results that we have proved for the quantities $\alpha^{an}_j(r,R)$ are also true for the $\widetilde{\alpha}_j(r,R)$'s. We actually do not need the results of this appendix in the present paper but we include them here since there will be crucial in the paper~\cite{quant_voro} where we prove in particular that
\begin{equation}\label{e.key_of_quant}
\widetilde{\alpha}_j(r,R) \asymp \alpha^{an}_j(r,R) \, .
\end{equation}
We refer to~\cite{quant_voro} for the motivations behind~\eqref{e.key_of_quant}.\\

Let us start the study of the quantities $\widetilde{\alpha}_j(r,R)$. First note that, by Jensen's inequality, we have
\begin{equation}\label{e.1st_quant}
\widetilde{\alpha}_j(r,R)^2 \leq \alpha^{an}_j(r,R) \leq \widetilde{\alpha}_j(r,R) \, .
\end{equation}
%To see why~\eqref{e.key_of_quant} is very useful to estimate arm events, let us use it to prove very easily that $\alpha_5^{an}(r,R) \leq \grandO{1} \, \alpha_1^{an}(r,R) \, \alpha_4^{an}(r,R)$, which optimizes and simplifies the proof of the second part of Proposition~\ref{p.alpha4} of the present paper:
%\begin{eqnarray*}
%\alpha_5^{an}(r,R) & = & \E \left[ \Prob^\eta \left[ \arm_5(r,R) \right] \right]\\
%& \leq & \E \left[ \Prob^\eta \left[ \arm_4(r,R) \right] \Prob^\eta \left( \arm_1(r,R) \right] \right] \text{ by Reimer's inequality~\cite{reimer2000proof} applied to } \Prob^\eta\\
%& \leq & \sqrt{ \E \left[ \Prob^\eta \left[ \arm_4(r,R) \right]^2 \right]} \sqrt{ \E \left[ \Prob^\eta \left[ \arm_1(r,R) \right]^2 \right]}  \text{ by the Cauchy-Schwartz inequality}\\
%& \asymp & \alpha_1^{an}(r,R) \, \alpha_4^{an}(r,R) \text{ by } \eqref{e.key_of_quant} \, .
%\end{eqnarray*}
%As we can see above,~\eqref{e.key_of_quant} will for instance help us to use Reimer's inequality. In~\cite{quant_voro}, in order to prove~\eqref{e.key_of_quant}, we will need estimates on the quantities $\widetilde{\alpha}^{an}_j(r,R)$. In the present appendix, we explain why a lot of results that we have proved for $\alpha_j(r,R)$ in the present paper are also true for $\widetilde{\alpha}^{an}_j(r,R)$. Let us emphasize that the results of this appendix are not used in the present paper.
As a result, the following polynomial decay property is a direct consequence of~\eqref{e.poly}:
\begin{equation}\label{e.poly_bis}
\frac{1}{C} \, \left( \frac{r}{R} \right)^{C} \leq \widetilde{\alpha}_{j}(r,R) \leq C \, \left( \frac{r}{R} \right)^{1/C} \, .
\end{equation}

\subsection{The quasi-multiplicativity property}
In this subsection, we explain how the proof of the quasi-multiplicativity property written in Section~\ref{s.quasi} can be adapted in order to prove the following:
\begin{prop}\label{p.quasi_tilde}
The quasi-multiplicativity property also holds for the quantities $\widetilde{\alpha}_j(r,R)$ i.e. there exists $C=C(j) \in [1,+\infty)$ such that, for every $1 \leq r_1 \leq r_2 \leq r_3$,
\[
\frac{1}{C} \, \widetilde{\alpha}_j(r_1,r_3) \leq \widetilde{\alpha}_j(r_1,r_2) \, \widetilde{\alpha}_j(r_2,r_3) \leq C \, \widetilde{\alpha}_j(r_1,r_3) \, .
\]
\end{prop}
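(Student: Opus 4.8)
The plan is to mimic the proof of the quasi-multiplicativity property for $\alpha^{an}_j$ carried out in Section~\ref{s.quasi}, replacing every annealed probability $\Pro\left[\,\cdot\,\right]$ by the $L^2$-norm $\|\Prob^\eta\left[\,\cdot\,\right]\|_2 := \sqrt{\E\left[\Prob^\eta\left[\,\cdot\,\right]^2\right]}$ of the quenched probability. The key point is that the structural ingredients of that proof — the ``good'' events $G^{ext}_\delta(R)$, $G^{int}_\delta(R)$, their large-probability estimate Lemma~\ref{l.good}, the extension Lemma~\ref{l.extension}, and the looking-good Lemma~\ref{l.looksgood} — rely only on: (a) the quenched box-crossing property and the quenched FKG-Harris inequality (which are statements about $\Prob^\eta$ and so fit the $L^2$ framework unchanged), and (b) spatial independence of events measurable with respect to $\omega$ restricted to disjoint regions. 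For the $L^2$-version, the role of ``spatial independence'' is played by the following observation: if $U$ is measurable w.r.t.\ $\omega\cap D_1$ and $V$ w.r.t.\ $\omega\cap D_2$ with $D_1,D_2$ disjoint, then conditionally on $\eta$ the events are independent, hence $\Prob^\eta\left[U\cap V\right]=\Prob^\eta\left[U\right]\Prob^\eta\left[V\right]$, and so by Cauchy–Schwarz on the product measure $\|\Prob^\eta\left[U\cap V\right]\|_2\le\|\Prob^\eta\left[U\right]\|_2\,\|\Prob^\eta\left[V\right]\|_2$ — exactly the substitute for the product identity $\Pro\left[U\cap V\right]=\Pro\left[U\right]\Pro\left[V\right]$ used throughout Section~\ref{s.quasi}. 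In the other direction, when one wants a lower bound one uses $\|\Prob^\eta\left[U\cap V\right]\|_2\ge\Pro\left[U\cap V\right]=\Pro\left[U\right]\Pro\left[V\right]\ge\|\Prob^\eta\left[U\right]\|_2^2\|\Prob^\eta\left[V\right]\|_2^2$ only when needed, but more often one simply reuses that $\widetilde{\alpha}_j(r,R)\asymp$ (by~\eqref{e.1st_quant}) the same powers of $r/R$ bounding $\alpha^{an}_j(r,R)$, so the ``$\asymp$ up to changing both radii by a bounded factor'' manoeuvres go through verbatim.

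Concretely, I would introduce the quenched-$L^2$ analogues of the quantities of Section~\ref{s.quasi}:
\[
\widetilde{g}^{ext}_{4,\delta}(r,R) = \left\|\Prob^\eta\left[\text{\textbf{A}}_4(r,R)\cap G^{ext}_\delta(R)\right]\right\|_2\,,\qquad
\widetilde{f}^{ext}_4(r,R) = \left\|\Prob^\eta\left[\widehat{\text{\textbf{A}}}^{ext}_4(r,R)\right]\right\|_2\,,
\]
and similarly $\widetilde{g}^{int}$, $\widetilde{f}^{int}$. One then checks, step by step:
\emph{(i)} the extension lemma: the proof of Lemma~\ref{l.extension} is carried out at the quenched level already (it conditions on $\eta$ and on the interfaces and applies the quenched FKG-Harris inequality), so taking $L^2$-norms at the very end instead of $\Pro$-expectation gives $\widetilde{g}^{ext}_{4,\overline\delta}(r,4R)\ge a\,\widetilde{g}^{ext}_{4,\delta}(r,R)$ with the same constant $a=a(\delta)$; \emph{(ii)} Lemma~\ref{l.good} is a statement about $\Pro\left[G^{ext}_\delta(R)\right]$, i.e.\ about the $\eta$-measurable event directly, so it is reused unchanged; \emph{(iii)} Lemma~\ref{l.looksgood} and Corollary~\ref{c.looksgood}: the only inequalities used there are the decomposition $\widehat{\text{\textbf{A}}}^{ext}_4(r,R)\subseteq(\text{\textbf{A}}_4(r,R/4)\cap G^{ext}_\delta(R/4))\cup(\widehat{\text{\textbf{A}}}^{ext}_4(r,R/16)\setminus G^{ext}_\delta(R/4))$, a union bound, and spatial independence of $G^{ext}_\delta(R/4)$ (measurable w.r.t.\ $\omega\cap A(R/8,R/2)$) from $\widehat{\text{\textbf{A}}}^{ext}_4(r,R/16)$ (measurable w.r.t.\ $\omega\cap B_{R/16}$). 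For the $L^2$-version, a union bound for $\|\cdot\|_2$ is $\|\Prob^\eta\left[U\cup V\right]\|_2\le\|\Prob^\eta\left[U\right]\|_2+\|\Prob^\eta\left[V\right]\|_2$ (triangle inequality in $L^2(\Pro)$), and the product step is replaced by $\|\Prob^\eta\left[\widehat{\text{\textbf{A}}}^{ext}_4(r,R/16)\setminus G^{ext}_\delta(R/4)\right]\|_2\le\|\Prob^\eta\left[\widehat{\text{\textbf{A}}}^{ext}_4(r,R/16)\right]\|_2\cdot\|\mathbf 1_{\neg G^{ext}_\delta(R/4)}\|_\infty$ — but here $\neg G^{ext}_\delta(R/4)$ is an $\eta$-measurable event, not a probability, so one rather writes the conditional-on-$\eta$ product $\Prob^\eta\left[\widehat{\text{\textbf{A}}}^{ext}_4(r,R/16)\setminus G^{ext}_\delta(R/4)\right]=\Prob^\eta\left[\widehat{\text{\textbf{A}}}^{ext}_4(r,R/16)\right]\mathbf 1_{\neg G^{ext}_\delta(R/4)}$ and then takes $L^2$-norms, using independence of $\widehat{\text{\textbf{A}}}^{ext}_4(r,R/16)$ (a function of $\omega$, hence of $\eta$) from the $\eta$-event $G^{ext}_\delta(R/4)$, exactly as in~\eqref{e.key_in_looksgood}. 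The geometric summation and the appeal to~\eqref{e.poly_bis} (in place of~\eqref{e.poly}) then close the argument, and the large-$r_1$ case of Proposition~\ref{p.quasi_tilde} follows as in the proof of Proposition~\ref{p.quasieven}.

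Finally, the reduction from ``$r_1$ large'' to ``$r_1\ge 1$'' goes through Lemma~\ref{l.extensionto1}: its proof extends the four arms ``by hand'' near scale $1$ after conditioning on $\eta$ satisfying a $\dense^N$-type event, which again is quenched and survives the passage to $L^2$-norms; combined with~\eqref{e.poly_bis} this gives $\widetilde\alpha_4(\overline{\overline r},R)\le C_3\,\widetilde\alpha_4(r,R)$, and the three-case bookkeeping at the end of Subsection~\ref{ss.quasi_even} yields the full statement. The same scheme handles $j$ odd (using the modified good events of Subsection~\ref{ss.QM_odd}, and Lemma~\ref{l.interfaces}, whose proof again only uses quenched inputs and the already-established $L^2$ or annealed arm estimates) and $j=1$ (the warm-up of Subsection~\ref{ss.warm}, whose proof is likewise a conditional-on-$\eta$ argument). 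The main obstacle I anticipate is purely bookkeeping: making sure that \emph{every} place in Section~\ref{s.quasi} where the annealed independence identity $\Pro\left[U\cap V\right]=\Pro\left[U\right]\Pro\left[V\right]$ or the annealed FKG-Harris inequality for \emph{annealed}-increasing events is invoked is in fact reducible to a conditional-on-$\eta$ statement (so that one can take $L^2$-norms afterwards) — the annealed FKG for annealed-increasing events does \emph{not} have a clean $L^2$ analogue, but inspection shows that in Section~\ref{s.quasi} FKG is always applied at the quenched level (to quenched-increasing events under $\Prob^\eta$), so no genuine difficulty arises; it is only a matter of verifying this line by line.
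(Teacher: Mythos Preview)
Your plan is essentially the paper's own: introduce $\widetilde g^{ext}_{4,\delta}$, $\widetilde g^{int}_{4,\delta}$, $\widetilde f^{ext}_4$, $\widetilde f^{int}_4$ as the $L^2$-norms of the quenched probabilities, and rerun Lemmas~\ref{l.extension}, \ref{l.looksgood}, Corollary~\ref{c.looksgood} and Lemma~\ref{l.extensionto1} in that setting. Two points need correcting, however.

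First, in your step (iii) you write that ``$\neg G^{ext}_\delta(R/4)$ is an $\eta$-measurable event'' and hence $\Prob^\eta\left[\widehat{\arm}^{ext}_4(r,R/16)\setminus G^{ext}_\delta(R/4)\right]=\Prob^\eta\left[\widehat{\arm}^{ext}_4(r,R/16)\right]\,\un_{\neg G^{ext}_\delta(R/4)}$. This is false: $G^{ext}_\delta(R)=\gp^{ext}_\delta(R)\cap\widetilde{\gi}^{ext}_\delta(R)$, and the interface event $\widetilde{\gi}^{ext}_\delta(R)$ depends on the \emph{coloring}, not only on $\eta$. The correct identity is the quenched conditional independence $\Prob^\eta\left[\widehat{\arm}^{ext}_4(r,R/16)\setminus G^{ext}_\delta(R/4)\right]=\Prob^\eta\left[\widehat{\arm}^{ext}_4(r,R/16)\right]\cdot\Prob^\eta\left[\neg G^{ext}_\delta(R/4)\right]$ (the two events are measurable with respect to $\omega$ restricted to disjoint sets); then the two factors are functions of $\eta$ restricted to disjoint regions, hence independent under~$\Pro$, and the recursion closes. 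The paper handles this by expanding the square $\E\big[(\Prob^\eta[\cdot]+\Prob^\eta[\cdot])^2\big]$ and bounding the cross term, arriving at
\[
\widetilde f^{ext}_4(r,R)^2\le \widetilde g^{ext}_{4,\delta}(r,R/4)^2+3\,\widetilde f^{ext}_4(r,R/16)^2\cdot\Pro\left[\neg G^{ext}_\delta(R/4)\right],
\]
which is the $L^2$-analogue of~\eqref{e.key_in_looksgood}. Your triangle-inequality route gives an equivalent (linear) recursion once the measurability slip is fixed.

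Second, in your step (i) you say the proof of Lemma~\ref{l.extension} is ``carried out at the quenched level already'' so one just takes $L^2$-norms at the end. Not quite: that proof works under $\Prob^\eta_{B_{2R}}$ (conditioning only on $\eta\cap B_{2R}$), because one must \emph{resample} $\eta$ in $A(R,4R)$ to get $\qbc^{ext}(R)$ and $G^{ext}_{\overline\delta}(4R)$. Thus the pointwise inequality is between $\Prob^\eta_{B_{2R}}\left[\arm_4(r,4R)\cap G^{ext}_{\overline\delta}(4R)\right]$ and $\Prob^\eta_{B_{2R}}\left[\arm_4(r,R)\cap G^{ext}_\delta(R)\right]$; to pass to $\widetilde g^{ext}_{4,\overline\delta}(r,4R)$ one needs the martingale (Jensen) inequality $\E\left[\Prob^\eta[A]^2\right]\ge\E\left[\Prob^\eta_{B_{2R}}[A]^2\right]$, together with the pointwise equality $\Prob^\eta\left[\arm_4(r,R)\cap G^{ext}_\delta(R)\right]=\Prob^\eta_{B_{2R}}\left[\arm_4(r,R)\cap G^{ext}_\delta(R)\right]$. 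The paper records exactly these two ingredients; the rest is as you describe, including an $L^2$-version of Lemma~\ref{l.key_in_qm} for the gluing step of the right-hand inequality.
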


\begin{proof} The proof is very close to the proof of Proposition~\ref{p.quasi}. To simplify the notations, we write the proof in the case $j=4$. The proof for any other even integer is the same and the proof for any odd integer requires the same modifications as in Subsection~\ref{ss.QM_odd}. We use the same notations as in Subsection~\ref{ss.quasi_even} (remember in particular the definition of the events $G^{ext}_\delta(R)$ and $G^{int}_\delta(r)$ in the beginning of this section).

Let us first state and prove an analogue of Lemma~\ref{l.extension}. We need the following notation: If $\delta \in (0,1/1000)$, $R \in [\delta^{-2},+\infty)$ and $r \in [1,R]$, we write
\[
\widetilde{g}_{4,\delta}^{ext}(r,R) = \sqrt{\E \left[ \Prob^\eta \left[ \text{\textbf{A}}_4(r,R) \cap G_\delta^{ext}(R) \right]^2 \right]} \, .
\]
Similarly, if $\delta \in (0,1/1000)$, $r \in [\delta^{-2},+\infty)$ and $R \in [r,+\infty)$, we write
\[
\widetilde{g}_{4,\delta}^{int}(r,R) = \sqrt{ \E \left[ \Prob^\eta \left[ \text{\textbf{A}}_4(r,R) \cap G_{\delta}^{int}(r) \right]^2 \right]} \, .
\]
\begin{lem}\label{l.extension_bis}
There exists $\overline{\delta} \in (0,1/1000)$ such that, for any $\delta \in (0,1/1000)$, there is some constant $a=a(\delta) \in (0,1)$ satisfying the following:
\begin{enumerate}
\item For every $R \in [\overline{\delta}^{-2} \vee \delta^{-2},+\infty)$ and every $r \in [1,R/4]$, we have
\begin{equation}\label{e.gplus_bis}
\widetilde{g}_{4,\overline{\delta}}^{ext}(r,4R) \geq a \, \widetilde{g}^{ext}_{4,\delta}(r,R) \, .
\end{equation}
\item For every $r \in [4 ( \overline{\delta}^{-2} \vee \delta^{-2} ),+\infty)$ and every $R \in [4r,+\infty)$, we have
\begin{equation}\label{e.gmoins_bis}
\widetilde{g}_{4,\overline{\delta}}^{int}(r/4,R) \geq a \, \widetilde{g}^{int}_{4,\delta}(r,R) \, .
\end{equation}
% we have:
%\begin{equation}
%g^-_{4,\overline{\delta}}(r/4,R) \geq a \, g^-_{4,\delta}(r,R)
%\end{equation}
%and:
%\begin{equation}
%\widetilde{g}^-_{4,\overline{\delta}}(r/4,R) \geq a \, \widetilde{g}^-_{4,\delta}(r,R).
%\end{equation}
\end{enumerate}
\end{lem}
\begin{proof}
We write only the proof of~\eqref{e.gplus_bis} since the proof of~\eqref{e.gmoins_bis} is the same. We use exactly the same notations as in the proof of Lemma~\ref{l.extension}. By~\eqref{e.key_lemma_ext}, if $\overline{\delta}$ is sufficiently small, then,
\[
\nu^\eta_{r,R,(\beta_j)_j} \left[ \text{\textbf{A}}_4(r,4R) \cap G^{ext}_{\overline{\delta}}(4R) \right] \geq c \, ,
\]
for some constant $c=c(\delta)>0$. If we take the expectation under $\Prob^\eta_{B_{2R}}$, we obtain that
\begin{multline*}
\Prob^\eta_{B_{2R}} \left[ \text{\textbf{A}}_4(r,4R) \cap G^{ext}_{\overline{\delta}}(4R) \right]\\
\geq c \, \Prob^\eta_{B_{2R}} \left[ \arm_4(r,R) \cap \gp^{ext}_\delta(R) \cap \lbrace \widetilde{s}^{ext}(r,R) \geq 10 \delta R \rbrace \right]\\
\geq c \, \Prob^\eta_{B_{2R}} \left[ \text{\textbf{A}}_4(r,R) \cap G^{ext}_\delta(R) \right] \, .
\end{multline*}
We then conclude by using both the following martingale inequality:
\begin{multline*}
\widetilde{g}_{4,\overline{\delta}}(r,4R)^2 = \E \left[ \Prob^\eta \left[ \text{\textbf{A}}_4(r,4R) \cap G^{ext}_{\overline{\delta}}(4R) \right]^2 \right] \geq \E \left[ \Prob^\eta_{B_{2R}} \left[ \text{\textbf{A}}_4(r,4R) \cap G^{ext}_{\overline{\delta}}(4R) \right]^2 \right],
\end{multline*}
and the following pointwise equality:
\[
\Prob^\eta \left[ \text{\textbf{A}}_4(r,R) \cap G^{ext}_\delta(R) \right] =  \Prob^\eta_{B_{2R}} \left[ \text{\textbf{A}}_4(r,R) \cap G^{ext}_\delta(R) \right].
\]
\end{proof}
\begin{rem}\label{r.poly_et_delta_bar_bis}
Note that Remark~\ref{r.poly_et_delta_bar} and Jensen's inequality imply the following: There exists $c' > 0$ such that, if $\delta \in (0,1/1000)$ is sufficiently small, then for all $R  \in [\delta^{-2},+\infty)$ and all $r \in [R/4^3,R]$, we have
\[
\widetilde{g}^{ext}_{4,\delta}(r,R) \geq c' \, .
\]
Similarly, if $\delta \in (0,1/1000)$ is sufficiently small, then for all $r \in [\delta^{-2},+\infty)$ and all $R \in [r,4^3r]$, we have
\[
\widetilde{g}_{4,\delta}^{int}(r,R) \geq c' \, .
\]
We can (and do) assume that the quantity $\overline{\delta}$ of Lemma~\ref{l.extension_bis} is sufficiently small so that the above holds with $\delta=\overline{\delta}$.
\end{rem}
We now state and prove an analogue of Lemma~\ref{l.looksgood}. We first need the two following notations:
\[
\widetilde{f}_4^{ext}(r,R) = \sqrt{\E \left[ \Prob^\eta \left[ \widehat{\text{\textbf{A}}}^{ext}_4(r,R) \right]^2 \right]} \, ; \, \widetilde{f}_4^{int}(r,R) = \sqrt{\E \left[ \Prob^\eta \left[ \widehat{\text{\textbf{A}}}^{int}_4(r,R) \right]^2 \right]} \, .
\]
\begin{lem}\label{l.looksgood_bis}
There exist $C_1 \in [1,+\infty)$ and $\overline{r} \in [\overline{\delta}^{-2},+\infty)$ such that, for every $r \in [\overline{r},+\infty)$ and $R \in [16r,+\infty)$,
\begin{equation}\label{e.looksgoodplus_bis}
\widetilde{g}^{ext}_{4,\overline{\delta}}(r,R) \geq \widetilde{f}^{ext}_4(r,R)/C_1 \, ,
\end{equation}
and
\begin{equation}\label{e.looksgoodmoins_bis}
\widetilde{g}^{int}_{4,\overline{\delta}}(r,R) \geq \widetilde{f}^{int}_4(r,R)/C_1 \, .
\end{equation}
% we have:
%\begin{equation}
%g^-_{4,\overline{\delta}}(r,R) \geq f_4^-(r,R)/C_1
%\end{equation}
%and:
%\begin{equation}
%\widetilde{g}^-_{4,\overline{\delta}}(r,R) \geq \widetilde{f}^-_4(r,R)/C_1,
%\end{equation}
%where $\overline{\delta}$ is as in Lemma~N (and Remark~N ??).
\end{lem}
We have the following corollary (which is a direct consequence of Lemma~\ref{l.looksgood_bis} and Remark~\ref{r.poly_et_delta_bar_bis}):
\begin{cor}\label{c.looksgood_bis}
There exists a constant $C_2 \in [1,+\infty)$ such that, for every $r \in [\overline{r},+\infty)$ and every $R \in [r,+\infty)$,
\[
\widetilde{g}^{ext}_{4,\overline{\delta}}(r,R) \leq \widetilde{\alpha}_4(r,R) \leq \widetilde{f}^{ext}_4(r,R) \leq C_2 \, \widetilde{g}^{ext}_{4,\overline{\delta}}(r,R) \, ,
\]
and
\[
\widetilde{g}^{int}_{4,\overline{\delta}}(r,R) \leq \widetilde{\alpha}_4(r,R) \leq \widetilde{f}^{int}_4(r,R) \leq C_2 \, \widetilde{g}^{int}_{4,\overline{\delta}}(r,R) \, .
\]
% we have:
%\[
%g^-_{4,\overline{\delta}}(r,R) \leq \alpha^{an}_4(r,R) \leq f^-_4(r,R) \leq C_2 \, g^-_{4,\overline{\delta}}(r,R) \, ,
%\]
%and:
%\[
%\widetilde{g}^-_{4,\overline{\delta}}(r,R) \leq \widetilde{\alpha^{an}}_4(r,R) \leq \widetilde{f}^-_4(r,R) \leq C_2 \, \widetilde{g}^-_{4,\overline{\delta}}(r,R) \, .
%\]
\end{cor}

\begin{proof}[Proof of Lemma~\ref{l.looksgood_bis}]
Let us prove~\eqref{e.looksgoodplus_bis} (the proof of~\eqref{e.looksgoodmoins_bis} is essentially the same). As noted in the proof of Lemma~\ref{l.looksgood}, we have
\[
\widehat{\text{\textbf{A}}}_4^{ext}(r,R) \subseteq \big( \text{\textbf{A}}_4(r,R/4) \cap G_{\delta}^{ext}(R/4) \big) \cup \big( \widehat{\text{\textbf{A}}}_4^{ext}(r,R/16) \setminus G^{ext}_\delta(R/4) \big) \, .
\]
This implies that $\widetilde{f}_4^{ext}(r,R)^2$ is smaller than or equal to
\begin{align*}
& \E \left[ \left( \Prob^\eta \left[ \text{\textbf{A}}_4(r,R/4) \cap G^{ext}_\delta(R/4) \right] + \Prob^\eta \left[ \widehat{\text{\textbf{A}}}_4^{ext}(r,R/16) \setminus G^{ext}_\delta(R/4) \right] \right)^2 \right]\\
& = \widetilde{g}^{ext}_{4,\delta}(r,R/4)^2 + 2 \E \left[ \Prob^\eta \left[ \text{\textbf{A}}_4(r,R/4) \cap G^{ext}_\delta(R/4) \right] \cdot \Prob^\eta \left[  \widehat{\text{\textbf{A}}}_4^{ext}(r,R/16) \setminus G^{ext}_\delta(R/4) \right] \right]\\
& + \E \left[ \Prob^\eta \left[ \widehat{\text{\textbf{A}}}_4^{ext}(r,R/16) \setminus G^{ext}_\delta(R/4) \right]^2 \right]\\
& \leq \widetilde{g}^{ext}_{4,\delta}(r,R/4)^2 + 3 \E \left[ \Prob^\eta \left[ \widehat{\text{\textbf{A}}}^{ext}_4(r,R/16) \right] \cdot \Prob^\eta \left[ \widehat{\text{\textbf{A}}}_4^{ext}(r,R/16) \setminus G^{ext}_\delta(R/4) \right] \right]\\
& = \widetilde{g}^{ext}_{4,\delta}(r,R/4)^2 + 3 \widetilde{f}^{ext}_4(r,R/16)^2 \cdot \Pro \left[ \neg G^{ext}_\delta(R/4) \right]  \, ,
\end{align*}
by spatial independence. This inequality is the analogue of~\eqref{e.key_in_looksgood} in the proof of Lemma~\ref{l.looksgood}. Now, the proof is exactly the same as the one of Lemma~\ref{l.looksgood}.
\end{proof}

We are now in shape to prove Proposition~\ref{p.quasi_tilde}. We first prove it for $r_1$ sufficiently large.
\begin{proof}[Proof of the left-hand-inequality in the case $r_1 \geq \overline{r}$] Thanks to Corollary~\ref{c.looksgood_bis}, the proof is the same as in Subsection~\ref{ss.quasi_even}.
\end{proof}

\begin{proof}[Proof of the right-hand-inequality in the case $r_1 \geq 16\overline{r}$]
%First, we work under the condition $r_2 \geq 4\overline{r}$.
If we do not have both $r_1 \leq r_2/6$ and $r_2 \leq r_3/6$ then the proof is exactly the same as in~Subsection~\ref{ss.quasi_even}, so let us assume that $r_1 \leq r_2/6$ and $r_2 \leq r_3/6$. By Corollary~\ref{c.looksgood_bis}, we have
\[
\widetilde{\alpha}_4(r_1,\frac{r_2}{3}) \, \widetilde{\alpha}_4(3r_2,r_3) \leq \grandO{1} \widetilde{g}_{4,\overline{\delta}}^{ext}(r_1,\frac{r_2}{3}) \, \widetilde{g}_{4,\overline{\delta}}^{int}(3r_2,r_3) \, .
\]
By~\eqref{e.key_end_1} and~\eqref{e.key_end_2}, $\left( \widetilde{g}_{4,\overline{\delta}}^{ext}(r_1,\frac{r_2}{3}) \, \widetilde{g}_{4,\overline{\delta}}^{int}(3r_2,r_3) \right)^2$ equals
\begin{align*}
& \E \Big[ \Prob^\eta \Big[ \arm_4(r_1,r_2/3) \cap \widetilde{\gi}^{ext}_{\overline{\delta}}(r_2/3) \cap \dense_{\overline{\delta}}(r_2/3) \cap \qbc_{\overline{\delta}}(r_2/3)\\
& \hspace{2em} \cap \left\lbrace \Pro \left[ \qbc^{ext}(r_2/3) \cond \eta \cap B_{2r_2/3} \right] \geq 3/4 \right\rbrace \Big]^2 \Big]\\
& \hspace{1em} \times \E \Big[ \Prob^\eta \Big[ \arm_4(3r_2,r_3) \cap \widetilde{\gi}^{int}_{\overline{\delta}}(3r_2) \cap \dense_{\overline{\delta}}(3r_2) \cap \qbc_{\overline{\delta}}(3r_2)\\
& \hspace{3em} \cap \left\lbrace \Pro \left[ \qbc^{int}(3r_2) \cond \eta \setminus B_{3r_2/2} \right] \geq 3/4 \right\rbrace \Big]^2 \Big] \, .
\end{align*}
%Since the first intersection of five events is measurable with respect to $\omega \cap B_{2r_2/3}$ and the second intersection of five events is measurable with respect to $\omega \cap B_{3r_2/2}$, we obtain that the above equals:
%\begin{align*}
%&\E \Big[ \Prob^\eta \Big[ \arm_4(r_1,r_2/3) \cap \gi^{ext}_{\overline{\delta}}(r_2/3) \cap \dense_{\overline{\delta}}(r_2/3) \cap \qbc_{\overline{\delta}}(r_2/3)\\
%& \hspace{2em} \cap \left\lbrace \Pro \left[ \qbc^{ext}(r_2/3) \cond \eta \cap B_{2r_2/3} \right] \geq 3/4 \right\rbrace\\
%& \hspace{1em} \times \Prob^\eta \Big[ \arm_4(3r_2,r_3) \cap \gi^{int}_{\overline{\delta}}(3r_2) \cap \dense_{\overline{\delta}}(3r_2) \cap \qbc_{\overline{\delta}}(3r_2)\\
%& \hspace{3em} \cap \left\lbrace \Pro \left[ \qbc^{int}(3r_2) \cond \eta \setminus B_{3r_2/2} \right] \geq 3/4 \right\rbrace \Big]^2 \Big]
%\end{align*}
Write $X_1=\Prob^\eta \left[ \arm_4(r_1,r_2/3) \cap \widetilde{\gi}^{ext}_{\overline{\delta}}(r_2/3) \right]$, $A_1=\dense_{\overline{\delta}}(r_2/3) \cap \qbc_{\overline{\delta}}(r_2/3)$, $B_1=\qbc^{ext}(r_2/3)$, $X_2=\Prob^\eta \left[ \arm_4(3r_2,r_3) \cap \widetilde{\gi}^{int}_{\overline{\delta}}(3r_2) \right]$, $A_2=\dense_{\overline{\delta}}(3r_2) \cap \qbc_{\overline{\delta}}(3r_2)$ and $B_2=\qbc^{int}(3r_2)$. Then, the above equals
\begin{align*}
\E \left[ X_1^2 \, \un_{A_1} \, \un_{\left\lbrace \Pro \left[ B_1 \, |\, \eta \cap B_{2r_2/3} \right] \geq 3/4 \right\rbrace} \right] \E \left[ X_2^2 \, \un_{A_2} \,  \un_{\left\lbrace \Pro \left[ B_2 \, | \, \eta \setminus B_{3r_2/2} \right] \geq 3/4 \right\rbrace} \right] \, .
\end{align*}
Let us use the following lemma whose proof is the same as Lemma~\ref{l.key_in_qm}.
\begin{lem}
Let $\mathcal{F}$ and $\mathcal{G}$ be two sub-$\sigma$-algebras, let $A_1 \in \mathcal{F}$, $A_2 \in \mathcal{G}$, let $X_1$ be an $\mathcal{F}$-measurable random variable, $X_2$ a $\mathcal{G}$-measurable random variable, and let $B_1$ and $B_2$ be two events such that $\sigma(B_1,\mathcal{F})$ is independent of $\mathcal{G}$ and $\sigma(B_2,\mathcal{G})$ is independent of $\mathcal{F}$. Then,
\[
\E \left[ (X_1 \, X_2)^2 \, \un_{A_1 \cap B_1 \cap A_2 \cap B_2} \right] \geq \frac{1}{2} \E \left[ X_1^2 \un_{A_1} \, \un_{\left\lbrace \Pro \left[ B_1 \, | \, \mathcal{F} \right] \geq 3/4 \right\rbrace} \right] \cdot \E \left[ X_2^2 \un_{A_2}  \, \un_{\left\lbrace \Pro \left[ B_2 \, | \, \mathcal{G} \right] \geq 3/4 \right\rbrace} \right] \, .
\]
\end{lem}
If we apply this lemma to $\mathcal{F}=\sigma(\eta \cap B_{2r_2/3})$ and $\mathcal{G}=\sigma(\eta \setminus B_{3r_2/2})$, we obtain that $\widetilde{\alpha}_4(r_1,\frac{r_2}{3}) \, \widetilde{\alpha}_4(3r_2,r_3)$ is less than or equal to
\begin{multline*}
\grandO{1} \E \Big[ \Prob^\eta \left[ \arm_4(r_1,r_2/3) \cap \widetilde{\gi}^{ext}_{\overline{\delta}}(r_2/3) \right]^2 \, \un_{ \dense_{\overline{\delta}}(r_2/3) \cap \qbc_{\overline{\delta}}(r_2/3) \cap \qbc^{ext}(r_2/3)}\\
\times \Prob^\eta \left[ \arm_4(3r_2,r_3) \cap \widetilde{\gi}^{int}_{\overline{\delta}}(3r_2) \right] \, \un_{\dense_{\overline{\delta}}(3r_2) \cap \qbc_{\overline{\delta}}(3r_2) \cap \qbc^{int}(3r_2)} \Big] \, ,
\end{multline*}
that equals
\begin{multline*}
\grandO{1} \E \Big[ \Prob^\eta \Big[ \arm_4(r_1,r_2/3) \cap \widetilde{\gi}^{ext}_{\overline{\delta}}(r_2/3) \cap \dense_{\overline{\delta}}(r_2/3) \cap \qbc_{\overline{\delta}}(r_2/3) \cap \qbc^{ext}(r_2/3)\\
\cap \arm_4(3r_2,r_3) \cap \widetilde{\gi}^{int}_{\overline{\delta}}(3r_2) \cap \dense_{\overline{\delta}}(3r_2) \cap \qbc_{\overline{\delta}}(3r_2) \cap \qbc^{int}(3r_2) \Big]^2 \Big] \, .
\end{multline*}
Now, by ``gluing'' arguments, the above is at most $\grandO{1} \widetilde{\alpha}_4(r_1,r_3)^2$ and we are done.
\end{proof}

We have obtained the quasi-multiplicativity property for $r_1 \geq 16\overline{r}$, so (as in Subsection~\ref{ss.quasi_even}) it only remains to prove the following lemma, which is the analogue of Lemma~\ref{l.extensionto1}.
\begin{lem}\label{l.extensionto1_bis}
For every $\overline{\overline{r}}$ sufficiently large, there exists a constant $C_3=C_3(\overline{\overline{r}}) < +\infty$ such that, for every $r \in [1,\overline{\overline{r}}]$ and every $R \in [\overline{\overline{r}},+\infty)$, we have
\[
\widetilde{\alpha}_4(\overline{\overline{r}},R) \leq C_3 \, \widetilde{\alpha}_4(r,R) \, .
\]
\end{lem}
\begin{proof}[Sketch of proof of Lemma~\ref{l.extensionto1_bis}]
First note that, by making observations similar to the end of the proof of Lemma~\ref{l.extension} and by following the proof of Lemma~\ref{l.extension_bis}, we obtain the following result: Let $\widetilde{\arm}^{int}_4(r,R)$ be the event defined at the end of the proof of Lemma~\ref{l.extension}. Then, if $\overline{\delta}$ is sufficiently small and if $\delta$, $r$ and $R$ are as in Item~$2$ of Lemma~\ref{l.extension_bis}, then the following holds: there exists $a=a(\delta) > 0$ and $c > 0$ such that, for every event $F_r$ measurable with respect to $\omega \cap B_{r/2}$ satisfying $\Pro \left[ F_r \right] \geq 1-c$,
\[
\E \left[ \Prob^\eta \left[ \widetilde{\arm}^{int}_4(r/4,R) \cap G^{int}_{\overline{\delta}}(r/4) \cap F_r \right]^2 \right] \geq a \, \widetilde{g}_{4,\delta}^{int}(r,R)^2 \, .
\]
Let $\dense^N(r)$ be the event defined in the proof of Lemma~\ref{l.piv_one_point}. Corollary~\ref{c.looksgood_bis} and the above inequality imply that there exists an absolute constant $c \in (0,1)$ such that, for every $r$ sufficiently large, there exists $N=N(r)$ satisfying
\[
\forall R \geq 4r, \,  \E \left[ \Prob^\eta \left[ \widetilde{\arm}^{int}_4(r,R) \cap \dense^N(r) \right]^2 \right] \geq c \, \widetilde{\alpha}_4(r,R)^2 \, .
\] 
Now, if we follow the proof of Lemma~\ref{l.piv_one_point}, we obtain that there exists a constant $c'=c'(r,N)$ such that
\[
\widetilde{\alpha}_4(R)^2 \geq c' \, \E \left[ \Prob^\eta \left[ \widetilde{\arm}^{int}_4(r,R) \cap \dense^N(r) \right]^2 \right] \, ,
\]
which ends the proof.
\end{proof}
This also ends the proof of Proposition~\ref{p.quasi_tilde}.
\end{proof}

We also have the following analogues of Propositions~\ref{p.fandalpha} and~\ref{p.techniquegeneral} (with the same proofs):
\begin{prop}\label{p.fandalpha_bis}
Let $j \in \N^*$, let $1 \leq r \leq R$, and write
\begin{eqnarray}
\widetilde{f}_j(r,R) & = & \sqrt{\E \left[ \Prob^\eta \left[ \widehat{\arm}_j(r,R) \right]^2 \right]} \ .
\end{eqnarray}
There exists a constant $C = C(j) < +\infty$ such that
\[
\widetilde{\alpha}_j(r,R) \leq \widetilde{f}_j(r,R) \leq C \, \widetilde{\alpha}_j(r,R) \, .
\]
\end{prop}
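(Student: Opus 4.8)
The plan is to reproduce the proof of Proposition~\ref{p.fandalpha} essentially word for word, but at the level of $L^2(\Pro)$-norms: everywhere the original argument uses the annealed quantities $\alpha^{an}_j$, $f_j$, $f_j^{int}$, one instead uses $\widetilde{\alpha}_j$, $\widetilde{f}_j$, $\widetilde{f}_j^{int}$, and everywhere it uses the annealed quasi-multiplicativity property, Corollary~\ref{c.looksgood}, and the polynomial decay~\eqref{e.poly}, one instead uses their $L^2$-analogues Proposition~\ref{p.quasi_tilde}, Corollary~\ref{c.looksgood_bis} and~\eqref{e.poly_bis}. I would write the argument for $j$ even, say $j=4$; the case $j$ odd then needs only the modifications of Subsection~\ref{ss.QM_odd} (replacing $\widetilde{\gi}$ by $\gi$ in the ``good'' events), which are legitimate here since Lemma~\ref{l.interfaces} and the $j$-odd versions of Proposition~\ref{p.quasi_tilde} and Corollary~\ref{c.looksgood_bis} have already been established.

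The lower bound $\widetilde{\alpha}_j(r,R)\le\widetilde{f}_j(r,R)$ is immediate from the a.s.\ inclusion $\arm_j(r,R)\subseteq\widehat{\arm}_j(r,R)$, which gives $\Prob^\eta[\arm_j(r,R)]\le\Prob^\eta[\widehat{\arm}_j(r,R)]$ for $\Pro$-a.e.\ $\eta$; square and integrate. For the upper bound I would start from the pointwise inclusion used in the proof of Proposition~\ref{p.fandalpha}: with $\dense(R)=\dense_{1/100}(A(R/2,2R))$,
\[
\widehat{\arm}_4(r,R)\subseteq\widehat{\arm}_4^{int}(r,R/2)\cup\big(\widehat{\arm}_4(r,R)\setminus\dense(R)\big).
\]
Since $\dense(R)$ is $\eta$-measurable, conditioning on $\eta$ and using $\Prob^\eta[\,\cdot\,]\le1$ turns this into the pointwise bound $\Prob^\eta[\widehat{\arm}_4(r,R)]\le\Prob^\eta[\widehat{\arm}_4^{int}(r,R/2)]+\un_{\neg\dense(R)}$, hence, after squaring and taking $\E$,
\[
\widetilde{f}_4(r,R)^2\le\widetilde{f}_4^{int}(r,R/2)^2+3\,\Pro[\neg\dense(R)].
\]
This is the exact $L^2$-counterpart of the one-line union bound in the original proof, and it is dealt with in the same spirit as the square is expanded in Lemma~\ref{l.looksgood_bis}.

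From here everything closes as before. For $r\ge\overline{r}$ (the constant of Lemma~\ref{l.looksgood_bis}) and $R\ge2r$, Corollary~\ref{c.looksgood_bis} gives $\widetilde{f}_4^{int}(r,R/2)\asymp\widetilde{\alpha}_4(r,R/2)$, Proposition~\ref{p.quasi_tilde} together with~\eqref{e.poly_bis} gives $\widetilde{\alpha}_4(r,R/2)\asymp\widetilde{\alpha}_4(r,R)$, and since $\Pro[\neg\dense(R)]$ decays super-polynomially in $R$ while $\widetilde{\alpha}_4(r,R)^2\ge\Omega(1)R^{-\grandO{1}}$ uniformly in $r\le R$ by~\eqref{e.poly_bis}, the error term is absorbed once $R$ exceeds an absolute constant; for the remaining bounded range of $R$ both sides are $\asymp 1$. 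For $1\le r\le\overline{r}$ one uses the a.s.\ monotonicity $\widehat{\arm}_4(r,R)\subseteq\widehat{\arm}_4(\overline{r},R)$ together with Proposition~\ref{p.quasi_tilde}. I do not expect a real obstacle: all the substantive content is already in Proposition~\ref{p.quasi_tilde} and Corollary~\ref{c.looksgood_bis}, and the only point needing a small amount of care is the $L^2$-form of the union bound displayed above, which is harmless precisely because the $\dense$-error event is $\eta$-measurable and can be bounded pointwise by $1$.
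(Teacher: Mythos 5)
Your proposal is correct and takes exactly the route the paper intends: since the paper proves Proposition~\ref{p.fandalpha_bis} by the remark "with the same proofs" as Proposition~\ref{p.fandalpha}, your job was precisely to replace $f_4, f_4^{int}, \alpha^{an}_4$ and the quasi-multiplicativity/Corollary~\ref{c.looksgood} inputs by $\widetilde f_4, \widetilde f_4^{int}, \widetilde\alpha_4$, Proposition~\ref{p.quasi_tilde} and Corollary~\ref{c.looksgood_bis}, which you do, and the only step needing genuine care — converting the union bound $\widehat{\arm}_4(r,R)\subseteq\widehat{\arm}_4^{int}(r,R/2)\cup(\widehat{\arm}_4(r,R)\setminus\dense(R))$ into an $L^2$ inequality — is handled exactly as in Lemma~\ref{l.looksgood_bis}, via the pointwise bound $\Prob^\eta[\widehat{\arm}_4(r,R)]\le\Prob^\eta[\widehat{\arm}_4^{int}(r,R/2)]+\un_{\neg\dense(R)}$ (valid because $\dense(R)$ is $\eta$-measurable) followed by squaring.
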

\begin{prop}\label{p.techniquegeneral_bis}
Let $j \in \N^*$. For every $h \in (0,1)$, there exists a constant $\epsilon = \epsilon(j,h) \in (0,1)$ such that, for every $1 \leq r \leq R$ and for every event $G$ which is measurable with respect to $\omega \setminus A(2r,R/2)$ and that satisfies $\Pro \left[ G \right] \geq 1-\epsilon$, we have
\[
\E \left[ \Prob^\eta \left[ \arm_j(r,R) \cap G \right]^2 \right] \geq (1-h) \, \widetilde{\alpha}_j(r,R)^2 \, .
\]
\end{prop}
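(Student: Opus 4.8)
The plan is to upgrade the proof of Proposition~\ref{p.techniquegeneral} to the $L^2$ setting: since we now control a second moment we cannot merely subtract a ``bad'' probability, so instead I would expand a square and apply the Cauchy--Schwarz inequality. Write $X := \Prob^\eta\left[\arm_j(r,R)\right]$ and $Y := \Prob^\eta\left[\arm_j(r,R) \setminus G\right]$, so that $\Prob^\eta\left[\arm_j(r,R) \cap G\right] = X - Y$ with $0 \le Y \le X$ pointwise. From $(X-Y)^2 \ge X^2 - 2XY$ and Cauchy--Schwarz one gets
\[
\E\left[\left(X-Y\right)^2\right] \ge \E\left[X^2\right] - 2\sqrt{\E\left[X^2\right]\E\left[Y^2\right]} = \widetilde{\alpha}_j(r,R)^2 - 2\sqrt{\widetilde{\alpha}_j(r,R)^2\,\E\left[Y^2\right]} \, ,
\]
so the whole statement reduces to showing $\E\left[Y^2\right] \le \tfrac{h^2}{4}\,\widetilde{\alpha}_j(r,R)^2$ once $\epsilon = \epsilon(j,h)$ is chosen small enough.

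First I would dispose of the degenerate regime $R < 4r$, in which $A(2r,R/2)$ is empty, $G$ is an arbitrary event with $\Pro\left[G\right] \ge 1-\epsilon$, and $\widetilde{\alpha}_j(r,R) \ge c_0 > 0$ by~\eqref{e.poly_bis}; there one just bounds $Y \le \Prob^\eta\left[\neg G\right]$, hence $\E\left[Y^2\right] \le \E\left[\Prob^\eta\left[\neg G\right]\right] = \Pro\left[\neg G\right] \le \epsilon$, which is $\le \tfrac{h^2}{4}\widetilde{\alpha}_j(r,R)^2$ as soon as $\epsilon \le c_0^2 h^2/4$. In the main regime $R \ge 4r$ I would argue exactly as in Proposition~\ref{p.techniquegeneral}: since $\arm_j(r,R) \subseteq \widehat{\arm}_j(2r,R/2)$ off a $\Pro$-null event, $\Pro$-a.s.
\[
Y \le \Prob^\eta\left[\widehat{\arm}_j(2r,R/2) \cap \neg G\right] = \Prob^\eta\left[\widehat{\arm}_j(2r,R/2)\right]\cdot\Prob^\eta\left[\neg G\right] \, ,
\]
the last equality being the conditional independence, given $\eta$, of the colours in $A(2r,R/2)$ and of those outside it. Now $\Prob^\eta\left[\widehat{\arm}_j(2r,R/2)\right]$ depends only on $\eta\cap A(2r,R/2)$ and $\Prob^\eta\left[\neg G\right]$ only on $\eta\setminus A(2r,R/2)$, and these two restrictions of the Poisson process are independent, so squaring and taking expectations factorizes:
\[
\E\left[Y^2\right] \le \E\left[\Prob^\eta\left[\widehat{\arm}_j(2r,R/2)\right]^2\right]\cdot\E\left[\Prob^\eta\left[\neg G\right]^2\right] \le \widetilde{f}_j(2r,R/2)^2 \cdot \Pro\left[\neg G\right] \le \epsilon\,\widetilde{f}_j(2r,R/2)^2 \, .
\]
Finally, Proposition~\ref{p.fandalpha_bis} gives $\widetilde{f}_j(2r,R/2) \le C\,\widetilde{\alpha}_j(2r,R/2)$, and the quasi-multiplicativity property Proposition~\ref{p.quasi_tilde} together with~\eqref{e.poly_bis} gives $\widetilde{\alpha}_j(2r,R/2) \le C'\,\widetilde{\alpha}_j(r,R)$ (the endpoint factors $\widetilde{\alpha}_j(r,2r)$ and $\widetilde{\alpha}_j(R/2,R)$ being bounded below by an absolute constant), so $\E\left[Y^2\right] \le C''\epsilon\,\widetilde{\alpha}_j(r,R)^2$; taking $\epsilon$ with $C''\epsilon \le h^2/4$ finishes the proof.

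No step here is genuinely hard — this is a bookkeeping upgrade of Proposition~\ref{p.techniquegeneral}. The one point I would be most careful about is the factorization of $\E\left[\Prob^\eta[\widehat{\arm}_j(2r,R/2)]^2\,\Prob^\eta[\neg G]^2\right]$: it combines the quenched (given $\eta$) independence of the colours inside and outside the annulus with the annealed independence of $\eta$ on the annulus and on its complement, and this is precisely why the intermediate bound has to be phrased through the $\eta\cap A(2r,R/2)$-measurable event $\widehat{\arm}_j(2r,R/2)$ rather than through $\arm_j(r,R)$ itself.
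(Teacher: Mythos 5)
Your proof is correct and is precisely the $L^2$ upgrade the paper has in mind when it says Proposition~\ref{p.techniquegeneral_bis} has ``the same proof'' as Proposition~\ref{p.techniquegeneral}: the pointwise bound $\Prob^\eta[\arm_j(r,R)\setminus G]\le\Prob^\eta[\widehat{\arm}_j(2r,R/2)]\Prob^\eta[\neg G]$, the factorization of $\E[\cdot]$ via the independence of $\eta\cap A(2r,R/2)$ and its complement, and the comparison $\widetilde{f}_j\asymp\widetilde{\alpha}_j$ from Proposition~\ref{p.fandalpha_bis} with quasi-multiplicativity are exactly the ingredients used elsewhere in the paper (compare the expand-the-square-and-Cauchy--Schwarz manipulation in the proof of Lemma~\ref{l.looksgood_bis}). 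Your explicit handling of the degenerate regime $R<4r$ and the remark on why one must route the argument through the $\eta\cap A(2r,R/2)$-measurable event $\widehat{\arm}_j$ are both apt.
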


\subsection{Pivotal events}

Let us first prove the following lemma which is the analogue of Lemma~\ref{l.piv_one_point}.
\begin{lem}\label{l.piv_one_point_bis}
Let $R \geq 1$ and let $S$ be a $2 \times 2$ square included in $[-2R,2R] \times [-R,R]$ and at distance at least $R/3$ from the sides of this rectangle. Then,
\[
\E \left[ \Prob^\eta \left[ \Piv^q_S(\cross(2R,R) \right]^2 \un_{|\eta \cap S| = 1} \right] \geq \Omega(1) \widetilde{\alpha}_4(R)^2 \, .
\]
\end{lem}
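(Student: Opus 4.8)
Looking at Lemma~\ref{l.piv_one_point_bis}, the goal is to establish a second-moment analogue of Lemma~\ref{l.piv_one_point}: we want $\E[\Prob^\eta[\Piv^q_S(\cross(2R,R))]^2 \un_{|\eta\cap S|=1}] \geq \Omega(1)\widetilde\alpha_4(R)^2$. The plan is to mimic the proof of Lemma~\ref{l.piv_one_point} step by step, replacing each annealed probability estimate by its ``quenched second moment'' counterpart, exactly as the proof of Proposition~\ref{p.quasi_tilde} mimics the proof of Proposition~\ref{p.quasi}.

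First I would recall the key intermediate events from the proof of Lemma~\ref{l.piv_one_point}: the events $\widetilde{\arm}^\square_4(B_r,R)$, $\dense^N(r)$, $\text{Color}(r_1/2)$ and $\text{Hex}(r_1/2)$, together with the constants $r_1\geq r_0$ and $N\in\N$. The first ingredient I need is the second-moment version of~\eqref{e.PPPPP}, namely that for a suitable $r_1$ and $N$,
\[
\E\left[\Prob^\eta\left[\widetilde{\arm}^\square_4(B_{r_1},R)\cap\dense^N(r_1)\right]^2\right]\geq\Omega(1)\,\widetilde\alpha_4(R)^2 .
\]
This follows exactly as in Lemma~\ref{l.piv_one_point}: the proof of~\eqref{e.warm-up_piv} is built from Lemma~\ref{l.warm-up_piv} and Proposition~\ref{p.fandalpha}, and the quantitative appendix provides the corresponding tools (Proposition~\ref{p.fandalpha_bis}, Proposition~\ref{p.techniquegeneral_bis}, and the quasi-multiplicativity property Proposition~\ref{p.quasi_tilde}). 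One runs the same argument at the quenched level: the analogue of~\eqref{e.warm-up_piv} is the statement that for $G$ measurable with respect to $\omega\setminus A(2r,R/6)$ with $\Pro[G]\geq 1-\epsilon$ one has $\E[\Prob^\eta[\widetilde{\arm}^\square_4(B_r,R)\cap G]^2]\geq\epsilon\,\widetilde\alpha_4(r,R)^2$, obtained from Lemma~\ref{l.warm-up_piv} (applied at scale $r$ and scale $R$) and Proposition~\ref{p.fandalpha_bis} just as Corollary~\ref{c.warm-up_piv} follows from Lemma~\ref{l.warm-up_piv} and Proposition~\ref{p.fandalpha}. Taking $G=\dense^N(r_1)$ (with $\Pro[G]\geq 1-\epsilon$ for appropriate $N$, $r_1$) gives the displayed bound.

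Next I would perform the ``extend by hand'' step inside the quenched second moment. Working pointwise in $\eta$: on the event $\dense^N(r_1)$ the configuration $\eta$ is fixed with at most $N$ points in $A(r_1/2,2r_1)$ and with the density property, so conditioning on $\text{Color}(r_1/2)$ costs a factor at most $2^{-N}$ and conditioning additionally on $\text{Hex}(r_1/2)$ costs a factor bounded below by a constant depending only on $r_1$; moreover these events are measurable with respect to $\omega\cap B_{r_1}$, and together with $\widetilde{\arm}^\square_4(B_{r_1},R)$ they force $\Piv^q_S(\cross(2R,R))$ with $|\eta\cap S|=1$. Quantitatively, using the quenched FKG-Harris inequality (in the generalized form cited in the footnote to Lemma~\ref{l.piv_one_point}) one gets, for every $\eta\in\dense^N(r_1)$,
\[
\Prob^\eta\left[\{|\eta\cap S|=1\}\cap\Piv^q_S(\cross(2R,R))\right]\geq c(r_1,N)\,\Prob^\eta\left[\widetilde{\arm}^\square_4(B_{r_1},R)\right],
\]
where I use that $\Prob^\eta[\widetilde{\arm}^\square_4(B_{r_1},R)\cap\dense^N(r_1)]=\Prob^\eta[\widetilde{\arm}^\square_4(B_{r_1},R)]\un_{\dense^N(r_1)}$ since $\dense^N(r_1)$ is $\eta$-measurable. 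Squaring, multiplying by $\un_{\dense^N(r_1)}$, and taking $\E[\cdot]$ gives
\[
\E\left[\Prob^\eta\left[\Piv^q_S(\cross(2R,R))\right]^2\un_{|\eta\cap S|=1}\right]\geq c(r_1,N)^2\,\E\left[\Prob^\eta\left[\widetilde{\arm}^\square_4(B_{r_1},R)\cap\dense^N(r_1)\right]^2\right]\geq\Omega(1)\,\widetilde\alpha_4(R)^2,
\]
which is the claim.

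I expect the main obstacle to be bookkeeping rather than a genuine new difficulty: one must check carefully that every step of Lemma~\ref{l.piv_one_point} (in particular the chain $\text{Lemma~\ref{l.warm-up_piv}}\to$ Corollary-type strengthening $\to$ choice of $r_1,N\to$ hand extension) survives being squared and put under $\E[\cdot]$, using the quenched-level analogues Proposition~\ref{p.fandalpha_bis}, Proposition~\ref{p.techniquegeneral_bis} and Proposition~\ref{p.quasi_tilde} in place of Proposition~\ref{p.fandalpha}, Proposition~\ref{p.techniquegeneral} and Proposition~\ref{p.quasi}. The only slightly delicate point is that $\widetilde\alpha_4$ controls $\sqrt{\E[\Prob^\eta[\cdot]^2]}$, so one must always keep the probabilities \emph{inside} the conditional measure $\Prob^\eta$ before taking the outer expectation and squaring — exactly the manipulation already carried out in the proofs of Lemma~\ref{l.extension_bis}, Lemma~\ref{l.looksgood_bis} and Lemma~\ref{l.extensionto1_bis} — and one must make sure the events whose colors are resampled ($\text{Color}$, $\text{Hex}$, the hand-extension event) are measurable with respect to a region disjoint from the region carrying the $\widetilde\arm^\square_4$-arms so that the pointwise factorization above is legitimate.
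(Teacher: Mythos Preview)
Your approach is essentially the same as the paper's: first establish the second-moment analogue of~\eqref{e.warm-up_piv} via Propositions~\ref{p.quasi_tilde}, \ref{p.fandalpha_bis} and~\ref{p.techniquegeneral_bis}, then repeat the hand-extension of Lemma~\ref{l.piv_one_point}. The paper's proof is equally terse on the second step.

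There is one slip in your bookkeeping, though. The event $\text{Hex}(r_1/2)$ is \emph{not} a coloring event: it depends only on the point configuration $\eta\cap B_{r_1/2}$ and is therefore $\eta$-measurable (either $0$ or $1$ under $\Prob^\eta$). Consequently the pointwise inequality you claim for \emph{all} $\eta\in\dense^N(r_1)$ is false as written --- your argument only gives it on $\dense^N(r_1)\cap\text{Hex}(r_1/2)$. The fix is immediate: on $\dense^N(r_1)$, the random variable $\Prob^\eta[\widetilde{\arm}^\square_4(B_{r_1},R)]$ is $\sigma(\eta\setminus B_{r_1/2})$-measurable, while $\un_{\text{Hex}(r_1/2)}$ is $\sigma(\eta\cap B_{r_1/2})$-measurable; by spatial independence of the Poisson process you can factor out $\Pro[\text{Hex}(r_1/2)]>0$ after squaring and taking $\E[\cdot]$, recovering exactly the bound you want. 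This is the step hidden in the paper's phrase ``the proof is exactly the same as the one of Lemma~\ref{l.piv_one_point}''.
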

\begin{proof}
With exactly the same proof as in Subsection~\ref{ss.pivotals1} (but by using Proposition~\ref{p.quasi_tilde} instead of Proposition~\ref{p.quasi}, Proposition~\ref{p.fandalpha_bis} instead of Proposition~\ref{p.fandalpha}, and Proposition~\ref{p.techniquegeneral_bis} instead of Proposition~\ref{p.techniquegeneral}), we have the following: There exists $r_0\geq 1$ and $\epsilon \in (0,1)$ such that, for every $r \geq r_0$ and for every event $G$ measurable with respect to $\omega \setminus A(2r,R/2)$ that satisfies $\Pro \left[ G \right] \geq 1-\epsilon$,
\[
\E \left[ \Prob^\eta \left[ \widetilde{\arm}^\square_4(B_r,R) \cap G \right]^2 \right] \geq \epsilon \, \widetilde{\alpha}_4(r,R)^2 \, ,
\]
where $\widetilde{\arm}^\square_4(B_r,R)$ is the event defined in the beginning of the proof of Lemma~\ref{l.piv_one_point}. Now, the proof is exactly the same as the one of Lemma~\ref{l.piv_one_point}.
\end{proof}

The following is a direct consequence of Lemma~\ref{l.piv_one_point_bis} and of results from~\cite{ahlberg2015quenched}:
\begin{cor}
There exists $\epsilon > 0$ such that, for every $R \in (0,+\infty)$,
\[
\widetilde{\alpha}_4(R) \leq \frac{1}{\epsilon} R^{1+\epsilon} \, .
\]
\end{cor}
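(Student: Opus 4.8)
The plan is to obtain this exactly as the first part of Proposition~\ref{p.alpha4} was obtained from the estimate of~\cite{ahlberg2015quenched}, but with $\widetilde{\alpha}_4$ in place of $\alpha^{an}_4$ and with Lemma~\ref{l.piv_one_point_bis} in place of Lemma~\ref{l.piv_one_point}. Since the inequality is only meaningful for $R \geq 1$, I would restrict to that range. First I would invoke the estimate recalled in Appendix~\ref{a.tAGMT} (equation~\eqref{e.agmt_est}): letting $\setS_1$ be the collection of $2 \times 2$ squares of the grid $2\Z^2$ contained in $[-2R,2R] \times [-R,R]$ and at distance at least $R/3$ from its sides (so that $|\setS_1| \asymp R^2$), there is an absolute $\epsilon_1 > 0$ with
\[
\E \left[ \sum_{S \in \setS_1} \sum_{x \in \eta \cap S} \Prob^\eta \left[ \Piv^q_x(\cross(2R,R)) \right]^2 \right] \leq \grandO{1} \, R^{-\epsilon_1} \, .
\]

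Next I would lower-bound the left-hand side. On $\{|\eta \cap S| = 1\}$ the unique point $x$ of $\eta \cap S$ satisfies $\Piv^q_x(\cross(2R,R)) = \Piv^q_S(\cross(2R,R))$, so $\sum_{x \in \eta \cap S} \Prob^\eta[\Piv^q_x(\cross(2R,R))]^2 \geq \Prob^\eta[\Piv^q_S(\cross(2R,R))]^2 \un_{|\eta \cap S| = 1}$; taking expectations, summing over $S \in \setS_1$, and applying Lemma~\ref{l.piv_one_point_bis} to each term, the displayed quantity is at least $\Omega(1) \, |\setS_1| \, \widetilde{\alpha}_4(R)^2 \asymp \Omega(1) \, R^2 \, \widetilde{\alpha}_4(R)^2$. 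Chaining the two bounds gives $\widetilde{\alpha}_4(R)^2 \leq \grandO{1} \, R^{-2-\epsilon_1}$ for every $R \geq 1$.

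Finally I would take square roots: choosing $\epsilon \in (0,1)$ small enough that $\epsilon \leq \epsilon_1/2$ and $1/\epsilon$ absorbs the implied constant, one gets $\widetilde{\alpha}_4(R) \leq \frac{1}{\epsilon} R^{-(1+\epsilon)}$ for all $R \geq 1$, and since $R^{-(1+\epsilon)} \leq R^{1+\epsilon}$ when $R \geq 1$, this is in particular $\leq \frac{1}{\epsilon} R^{1+\epsilon}$, which is the claimed bound. There is no genuine obstacle left: the one substantive ingredient is Lemma~\ref{l.piv_one_point_bis} (the $\widetilde{\alpha}$-analogue of Lemma~\ref{l.piv_one_point}), which has already been proved — via Proposition~\ref{p.techniquegeneral_bis} and the variant event $\widetilde{\arm}^\square_4(B_r,R)$, reducing to the argument of Lemma~\ref{l.piv_one_point} — and everything else is the elementary Jensen-type inequality $\sum_{x \in \eta \cap S} \Prob^\eta[\Piv^q_x]^2 \geq \Prob^\eta[\Piv^q_S]^2\un_{|\eta \cap S| = 1}$ together with the counting $|\setS_1| \asymp R^2$. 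The only care needed is to carry the restriction $\{|\eta \cap S| = 1\}$ consistently through all three displays, exactly as in the proof of the first part of Proposition~\ref{p.alpha4}.
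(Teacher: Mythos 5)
Your proof is correct and follows essentially the same route as the paper: you take the Schramm--Steif/AGMT estimate~\eqref{e.agmt_est}, lower-bound its left-hand side on the event $\{|\eta\cap S|=1\}$ without the Jensen step, and invoke Lemma~\ref{l.piv_one_point_bis} (in place of Lemma~\ref{l.piv_one_point}) together with $|\setS_1|\asymp R^2$. You also rightly note that the exponent in the statement should read $R^{-(1+\epsilon)}$, which is what your argument (and the paper's) actually delivers.
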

\begin{proof}
In the proof of the first part of Proposition~\ref{p.alpha4}, we have explained how to prove the analogous result for $\alpha^{an}_4(R)$. In this proof (written in Subsection~\ref{ss.pivotals1}), we have used the following result from~\cite{ahlberg2015quenched}:
\[
\sum_{S} \E \left[ \Prob^\eta \left[ \Piv^q_S(\cross(2R,R) \right]^2 \un_{|\eta \cap S| = 1} \right] \leq \grandO{1} R^{-\Omega(1)} \, ,
\]
where the sum is over all the squares of the grid $\Z^2$ included in $[-2R,2R] \times [-R,R]$ and at distance at least $R/3$ from the sides of this rectangle. We have then used Jensen's inequality and Lemma~\ref{l.piv_one_point}. If we do not use Jensen's inequality and if we use Lemma~\ref{l.piv_one_point_bis} instead of Lemma~\ref{l.piv_one_point}, we obtain the desired result.
\end{proof}

As in Subsection~\ref{ss.alpha4}, we prove an estimate about the $4$ and $5$-arm events. Note that we know that $\alpha^{an}_5(r,R) \asymp \left( \frac{r}{R} \right)^2$ but this does not imply that $\widetilde{\alpha}_5(r,R) \asymp (r/R)^2$ (we will prove this last estimate in~\cite{quant_voro}). We have the following:
\begin{prop}\label{p.alpha4_bis}
There exists an absolute constant $\epsilon > 0$ such that, for every $1 \leq r \leq R$,
\[
\widetilde{\alpha}_5(r,R) \leq \frac{1}{\epsilon} \, \widetilde{\alpha}_4(r,R) \, \left( \frac{r}{R} \right)^{\epsilon} \, .
\] 
\end{prop}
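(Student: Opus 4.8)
The plan is to run Beffara's ``extra-arm'' argument (the proof of the second part of Proposition~\ref{p.alpha4}) at the level of the quantities $\widetilde{\alpha}_j$ --- that is, working with $\Prob^\eta$ and taking second moments, and replacing the annealed BK inequality by the quenched one (and by Reimer's inequality under the product measure $\Prob^\eta$). By the quasi-multiplicativity property for the $\widetilde{\alpha}_j$'s (Proposition~\ref{p.quasi_tilde}) together with \eqref{e.poly_bis}, it suffices to fix a large absolute constant $M\geq 100$ (to be chosen only at the end) and to prove that there is $\epsilon'>0$ such that, for every $\rho\geq M$,
\begin{equation}\label{e.singlescale_54}
\widetilde{\alpha}_5(\rho,M\rho)\leq\grandO{1}\,M^{-\epsilon'}\,\widetilde{\alpha}_4(\rho,M\rho)\,.
\end{equation}
Indeed, iterating \eqref{e.singlescale_54} over the $\simeq\log_M(R/r)$ scales between $r$ and $R$ --- each step costing bounded quasi-multiplicativity factors for both $\widetilde\alpha_4$ and $\widetilde\alpha_5$ --- yields the statement once $M$ is large enough that the product of these bounded factors with $M^{-\epsilon'}$ is still $\leq M^{-\epsilon'/2}$; the finitely many scales with $r\leq M$ are absorbed into the final constant using $\widetilde\alpha_5\leq\widetilde\alpha_4\leq 1$, and the passage from powers of $M$ to arbitrary $r\leq R$ is done via quasi-multiplicativity and \eqref{e.poly_bis}.

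To prove \eqref{e.singlescale_54}, fix $\gamma>0$ (chosen below), let $\qac_c(\cdot)$ be as in Subsection~\ref{ss.alpha4} but invoking Theorem~\ref{t.AGMT} with exponent $\gamma$, so that $\Pro[\qac_c(\rho)]\geq 1-\grandO{1}\rho^{-\gamma}$ for an absolute $c=c(\gamma)\in(0,1)$, and set $\gp(\rho,M)=\bigcap_{k=0}^{\lfloor\log_5 M\rfloor-1}\big(\dense(5^k\rho)\cap\qac_c(5^k\rho)\big)$, so that $\Pro[\neg\gp(\rho,M)]\leq\grandO{1}\rho^{-\gamma}$. Fix $\eta\in\gp(\rho,M)$. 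Running exactly Beffara's interface argument (condition on three consecutive interfaces; since $\eta\in\dense(\cdot)$, the $\lfloor\log_5 M\rfloor$ events ``an additional path crosses $A(5^k\rho,2\cdot 5^k\rho)$'' are $\Prob^\eta$-independent, each of $\Prob^\eta$-probability $\leq 1-c$ by $\qac_c$) gives $\Prob^\eta[\arm_5(\rho,M\rho)]\leq(1-c)^{\lfloor\log_5 M\rfloor}\,\Ex^\eta[Y^3\un_{Y\geq 4}]$, where $Y$ is the number of interfaces crossing $A(\rho,M\rho)$. Since $2j$ interfaces provide $j$ white and $j$ black monochromatic crossings of $A(\rho,M\rho)$ in pairwise disjoint Voronoi cells, one has $\{Y\geq 2j\}\subseteq\arm_4(\rho,M\rho)\,\square\,\arm_1^*(\rho,M\rho)^{\square(j-2)}$, so Reimer's inequality \cite{reimer2000proof} applied under the uniform product measure $\Prob^\eta$ yields $\Prob^\eta[Y\geq 2j]\leq\Prob^\eta[\arm_4(\rho,M\rho)]\cdot\Prob^\eta[\arm_1^*(\rho,M\rho)]^{j-2}$; moreover on $\gp(\rho,M)$ a black circuit in $A(5^k\rho,2\cdot 5^k\rho)$ blocks any white crossing of $A(\rho,M\rho)$, whence $\Prob^\eta[\arm_1^*(\rho,M\rho)]\leq(1-c)^{\lfloor\log_5 M\rfloor}\leq 1/2$ for $M$ large. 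Summing the resulting geometric series, $\Ex^\eta[Y^3\un_{Y\geq 4}]\leq\grandO{1}\,\Prob^\eta[\arm_4(\rho,M\rho)]$ on $\gp(\rho,M)$, hence $\Prob^\eta[\arm_5(\rho,M\rho)]\leq\grandO{1}(1-c)^{\lfloor\log_5 M\rfloor}\Prob^\eta[\arm_4(\rho,M\rho)]$ there.

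Squaring and integrating, $\E[\Prob^\eta[\arm_5(\rho,M\rho)]^2\un_{\gp(\rho,M)}]\leq\grandO{1}(1-c)^{2\lfloor\log_5 M\rfloor}\widetilde{\alpha}_4(\rho,M\rho)^2$, while $\E[\Prob^\eta[\arm_5(\rho,M\rho)]^2\un_{\neg\gp(\rho,M)}]\leq\Pro[\neg\gp(\rho,M)]\leq\grandO{1}\rho^{-\gamma}$. Using $\rho\geq M$ and the a priori bound $\widetilde{\alpha}_4(\rho,M\rho)\geq \tfrac1C M^{-C}$ from \eqref{e.poly_bis} (with $C$ absolute), the choice $\gamma=2C+3$ makes the second term at most $\grandO{1}M^{-3}\widetilde{\alpha}_4(\rho,M\rho)^2$; since $(1-c)^{\lfloor\log_5 M\rfloor}$ decays like a fixed power $M^{-a}$ of $M$, we get $\widetilde{\alpha}_5(\rho,M\rho)^2\leq\grandO{1}M^{-2\epsilon'}\widetilde{\alpha}_4(\rho,M\rho)^2$ with $\epsilon'=\min(a,\tfrac32)$, which is \eqref{e.singlescale_54}.

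The main obstacle is the quenched second-moment step: one needs $\Ex^\eta[Y^3\un_{Y\geq 4}]\leq\grandO{1}\Prob^\eta[\arm_4(\rho,M\rho)]$ on $\gp(\rho,M)$ with an \emph{absolute} constant, so that what survives averaging is a factor $\widetilde{\alpha}_4(\rho,M\rho)^2$ rather than merely $\widetilde{\alpha}_4(\rho,M\rho)$; this forces a disjoint-occurrence inequality applied to the \emph{non-monotone} event $\arm_4$, hence the use of Reimer's inequality at the quenched level (recall that its annealed counterpart is false, see Subsection~\ref{ss.alpha4}). A secondary difference from Beffara's proof --- where the sharp estimate $\alpha^{an}_5\asymp M^{-2}$ is already available and renders the error term harmless --- is that here only the weak bound $\widetilde{\alpha}_4\geq C^{-1}M^{-C}$ is known, so the additive error $\Pro[\neg\gp(\rho,M)]$ must be made polynomially small of order $\rho^{-\gamma}$ with $\gamma$ chosen large in terms of $C$; this is exactly where the freedom of the exponent $\gamma$ in Theorem~\ref{t.AGMT} is used.
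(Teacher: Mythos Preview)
Your proof is correct and follows the same overall strategy as the paper: run Beffara's extra-arm argument quenchedly on a good event $\gp(\rho,M)$, use Reimer's inequality under the product measure $\Prob^\eta$ to bound $\Ex^\eta[Y^3\un_{Y\geq4}]$ by $\grandO{1}\,\Prob^\eta[\arm_4(\rho,M\rho)]$, square, integrate, and absorb the error $\Pro[\neg\gp]$ against $\widetilde\alpha_4(\rho,M\rho)^2$ via the polynomial lower bound \eqref{e.poly_bis}; then iterate over dyadic scales using the quasi-multiplicativity of Proposition~\ref{p.quasi_tilde}.

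The one noteworthy difference is in how $\Prob^\eta[\arm_4]$ is extracted. You apply Reimer directly to the non-monotone inclusion $\{Y\geq 2j\}\subseteq\arm_4\,\square\,(\arm_1^*)^{\square(j-2)}$, which yields $\Ex^\eta[Y^3\un_{Y\geq4}]\leq\grandO{1}\,\Prob^\eta[\arm_4]$ with no further input. The paper instead bounds $\Prob^\eta[Y\geq j]\leq\Prob^\eta[\arm_1]^j$ and then compares with $\Prob^\eta[\arm_4]$ via a separate quenched \emph{lower} bound $\Prob^\eta[\arm_4(\rho,M\rho)]\geq M^{-b}$ on $\gp$; this extra step is why the paper's $\gp$ is built from the richer events $\qbc^N_{1/100}$ (needed to run a quenched version of the proof of \eqref{e.poly}) rather than just $\qac_c$. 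Your route is slightly more economical --- it lets you take a leaner good event and skip the quenched lower-bound construction --- but the two arguments are otherwise interchangeable, and the handling of the error term (choosing the exponent $\gamma$, resp.\ $N$, large enough to beat the a priori polynomial bound on $\widetilde\alpha_4$) is the same in both.
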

\begin{proof}
First, fix some $N \in \N^*$ sufficiently large so that
\[
\widetilde{\alpha}_5(\rho,\rho')^2 \geq \frac{1}{N} \, \left( \frac{\rho}{\rho'} \right)^{N-1} \, .
\]
Let $M \geq 100$, $\rho \geq M$ and
\[
\gp(\rho,M) = \bigcap_{k=0}^{\lfloor \log_2 ( M ) \rfloor - 1} \dense_{1/100}(A(2^k\rho,5\cdot 2^k \rho)) \cap \qbc^N_{1/100} (A(2^k \rho,5\cdot 2^k \rho)) \, .
\]
Note that
\[
\Pro \left[ \gp(\rho,M) \right] \geq 1 - \grandO{1} \rho^{-N} \geq 1 - \grandO{1} M^{-N} \, .
\]
As in the proof of the second part of Proposition~\ref{p.alpha4} (written in Subsection~\ref{ss.alpha4}), there exists a constant $c \in (0,1)$ such that, if $\eta \in \gp(\rho,M)$, then
\[
\Prob^\eta \left[ \arm_5(\rho,M\rho) \right] \leq (1-c)^{ \log \left( M \right)} \, \Ex^\eta \left[ Y^3 \un_{Y \geq 4} \right] \, ,
\]
where $Y$ is the number of interfaces from $\partial B_{\rho}$ to $\partial B_{M \rho}$. By Reimer's inequality~\cite{reimer2000proof}, we have $\Prob^\eta \left[ Y \geq j \right]  \leq \Prob^\eta \left[ \arm_1(\rho,M\rho) \right]^j$. If $\eta \in \gp(\rho,M)$, then $\Prob^\eta \left[ \arm_1(\rho,M\rho) \right] \leq (1-a)^{\log \left( M \right) }$ for some $a \in (0,1)$. Moreover, if $\eta \in  \gp(\rho,M)$, then $\Prob^\eta \left[ Y \geq 4 \right] = \Prob^\eta \left[ \arm_4(\rho,M \rho) \right] \geq M^{-b}/b$ for some $b \in (0,+\infty)$. Hence,
\[
\Ex^\eta \left[ Y^3 \un_{Y \geq 4} \right] \leq d \, \Prob^\eta \left[ \arm_4(\rho,M\rho) \right] \, ,
\]
for some $d \in (0,+\infty)$. As a result,
\[
\widetilde{\alpha}_5(\rho,M\rho)^2 \leq \grandO{1} \, (1-c)^{2\log \left( M \right) } \, \widetilde{\alpha}_4(\rho,M\rho)^2 + \grandO{1}M^{-N} \, .
\]
Now, the proof is essentially the same as the proof of the second part of Proposition~\ref{p.alpha4} (except that we use Proposition~\ref{p.quasi_tilde} instead of Proposition~\ref{p.quasi}).
\end{proof}

In~\cite{quant_voro}, we will need results similar to the results of Subsections~\ref{ss.pivotals1} and~\ref{ss.pivotals2} but for the quantity $\E \left[ \Prob^\eta \left[ \Piv_S(A) \right]^2 \right]$ instead of $\Pro \left[ \Piv_S(A) \right]$ (where $A$ is a crossing or an arm event). In particular, we will need the following lemma whose proof is very closed to the proof of Lemma~\ref{l.piv_bulk} and of the different lemmas of Subsection~\ref{ss.pivotals2}.
\begin{lem}\label{l.piv_hat_bis}
Define $\Piv_D^E(A)$ as in the beginning of Subsection~\ref{ss.pivotals2}. Let $\rho,r,R \in [1,+\infty]$ such that $\rho \leq r/10$ and $r \leq R/2$, let $y$ be a point of the plane, and let $S=B_\rho(y)$ be the $2\rho \times 2\rho$ square centered at $y$. We have the following:
\bi 
\item[i)] Let $\rho_1 \in [\rho,+\infty)$ and $\rho_2 \in [\rho_1,+\infty)$ and assume that $B_{\rho_2}(y) \subseteq A(r,R)$. Then,
\[
\E \left[ \Prob^\eta \left[ \Piv_S^{A(y;\rho_1,\rho_2)}\left( \arm_j(r,R) \right) \right]^2 \right] \leq \grandO{1} \widetilde{\alpha}_4(\rho_1,\rho_2)^2 \, .
\]
\item[ii)] Let $\rho_1\in [1,R/4]$ and assume that $S \subseteq A(\rho_1,4\rho_1)$. Then,
\[
\E \left[ \Prob^\eta \left[ \Piv_S^{A(r,\rho_1) \cup A(4\rho_1,R)}\left( \arm_j(r,R) \right) \right]^2 \right] \leq \grandO{1} \widetilde{\alpha}_j(r,R)^2 \, .
\]
\ei
\end{lem}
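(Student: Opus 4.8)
The plan is to transcribe the first-moment arguments of Subsections~\ref{ss.pivotals1} and~\ref{ss.pivotals2} (Lemma~\ref{l.piv_bulk}, Lemma~\ref{l.pivhat} and Lemmas~\ref{l.piv_arm_1}--\ref{l.piv_arm_4}) to the level of second moments: everywhere I would replace $\Pro[\,\cdot\,]=\E[\Prob^\eta[\,\cdot\,]]$ by $\E[\Prob^\eta[\,\cdot\,]^2]$, the quantities $\alpha^{an}_j$ by $\widetilde{\alpha}_j$ and $f_j$ by $\widetilde{f}_j$, invoking Proposition~\ref{p.quasi_tilde}, Proposition~\ref{p.fandalpha_bis}, Proposition~\ref{p.techniquegeneral_bis} and the polynomial decay~\eqref{e.poly_bis} in place of their $p=1/2$ counterparts. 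The key structural device, which makes the squares manageable, is to arrange every intermediate event so that it is \emph{witnessed by the colouring in an explicit annular region}; then two independences are available at once --- conditionally on $\eta$, the colourings of disjoint regions are independent, and the point process $\eta$ itself is independent on disjoint regions --- and this is exactly what turns a product of local $\Prob^\eta$-probabilities into a product of expectations. This is why one passes through the events $\widehat{\arm}$ of Definition~\ref{d.hatarm} and through density events rather than working directly with $\arm$.

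For part~(i), I would set $\dense(y;\rho',\rho'')=\dense_{1/100}(A(y;\rho',2\rho'))\cap\dense_{1/100}(A(y;\rho'',2\rho''))$, put $k_0=\lfloor\log_2(\rho_2/(4\rho_1))\rfloor$, and observe, exactly as in the derivation of~\eqref{e.union_bound_piv} in the proof of Lemma~\ref{l.piv_bulk}, that for each $k\in\{0,\dots,k_0\}$
\[
\Piv_S^{A(y;\rho_1,\rho_2)}(\arm_j(r,R))\subseteq\widehat{\arm}_4\big(y;2^{k+2}\rho_1,\rho_2/2\big)\cup\Big(\Piv_S^{A(y;\rho_1,\rho_2)}(\arm_j(r,R))\setminus\dense(y;2^k\rho_1,\rho_2/2)\Big),
\]
where $\widehat{\arm}_4(y;\cdot,\cdot)$ is the translate of the event of Definition~\ref{d.hatarm}: a positive conditional probability (given the colouring in $A(y;\rho_1,\rho_2)$) that $S$ is annealed-pivotal for $\arm_j(r,R)$ forces two black and two white disjoint arms across $A(y;2^{k+2}\rho_1,\rho_2/2)$ once the relevant density holds (density prevents Voronoi cells from leaking across the bounding circles), and this arm configuration is then already witnessed inside the annulus. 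Expanding $\Prob^\eta[A\cup B]^2\le 2\Prob^\eta[A]^2+2\Prob^\eta[B]^2$, iterating down the scales $2^k\rho_1$ with a final term $\neg\dense(y;2^{k_0+1}\rho_1,\rho_2/2)$, I reduce to $\E[\Prob^\eta[\widehat{\arm}_4(y;2^{k+2}\rho_1,\rho_2/2)]^2]$ --- which equals $\widetilde{f}_4(2^{k+2}\rho_1,\rho_2/2)^2\asymp\widetilde{\alpha}_4(2^{k+2}\rho_1,\rho_2/2)^2$ by Proposition~\ref{p.fandalpha_bis} --- and $\E[\Prob^\eta[\widehat{\arm}_4(y;2^{k+2}\rho_1,\rho_2/2)]^2\,\un_{\neg\dense(y;2^k\rho_1,\rho_2/2)}]$, which, since $\Prob^\eta[\widehat{\arm}_4(y;2^{k+2}\rho_1,\rho_2/2)]$ depends only on $\eta$ in $A(y;2^{k+2}\rho_1,\rho_2/2)$ while $\un_{\neg\dense(y;2^k\rho_1,\rho_2/2)}$ depends only on $\eta$ in the disjoint region $A(y;2^k\rho_1,2^{k+1}\rho_1)\cup A(y;\rho_2/2,\rho_2)$, factors as $\widetilde{f}_4(2^{k+2}\rho_1,\rho_2/2)^2\,\Pro[\neg\dense(y;2^k\rho_1,\rho_2/2)]$ with $\Pro[\neg\dense(\cdot)]\le\grandO{1}\exp(-\Omega(1)(2^k\rho_1)^2)$ (same proof as Lemma~\ref{l.dense}). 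Proposition~\ref{p.quasi_tilde} and~\eqref{e.poly_bis} give $\widetilde{\alpha}_4(2^{k+2}\rho_1,\rho_2/2)\le\grandO{1}2^{\grandO{1}k}\widetilde{\alpha}_4(\rho_1,\rho_2)$, so the series over $k$ converges and sums to $\grandO{1}\widetilde{\alpha}_4(\rho_1,\rho_2)^2$.

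For part~(ii) the conditioning region is $A(r,\rho_1)\cup A(4\rho_1,R)$ and $S\subseteq A(\rho_1,4\rho_1)$. As in part~(i), up to density errors near $\partial B_{\rho_1}$, near $\partial B_{4\rho_1}$ and around $S$ (each absorbed by the same union bound over scales), $\Piv_S^{A(r,\rho_1)\cup A(4\rho_1,R)}(\arm_j(r,R))$ is contained in the intersection of three events supported in essentially disjoint annuli: a ``hat'' $j$-arm event $\widehat{\arm}_j(r,\rho_1)$, a ``hat'' $4$-arm event around $S$ reaching distance $\asymp\dist(S,\{\partial B_{\rho_1},\partial B_{4\rho_1}\})$ inside $A(\rho_1,4\rho_1)$ (a $3$-arm event in a half- or quarter-plane if $S$ abuts a bounding circle; in all cases the $\E[\Prob^\eta[\,\cdot\,]^2]$ of this middle factor is $\grandO{1}$ by Proposition~\ref{p.fandalpha_bis} and~\eqref{e.poly_bis}), and $\widehat{\arm}_j(4\rho_1,R)$. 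Factoring $\Prob^\eta[\,\cdot\,]$ via conditional independence given $\eta$, squaring, and factoring the expectation via independence of $\eta$ on disjoint regions, I obtain $\grandO{1}\,\widetilde{f}_j(r,\rho_1)^2\,\widetilde{f}_j(4\rho_1,R)^2\asymp\widetilde{\alpha}_j(r,\rho_1)^2\,\widetilde{\alpha}_j(4\rho_1,R)^2\asymp\widetilde{\alpha}_j(r,R)^2$, using Proposition~\ref{p.fandalpha_bis} and then Proposition~\ref{p.quasi_tilde}.

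The hard part is not any single estimate --- each is a second-moment echo of something already proved --- but the simultaneous bookkeeping that keeps every intermediate event witnessable by the colouring in a prescribed annulus \emph{and} keeps the density/error events in regions disjoint from it, so that both factorizations (``conditionally on $\eta$'' and ``$\eta$ on disjoint regions'') apply after squaring; lining up the radii (the powers of $2$) so that these disjointnesses actually hold is the main thing to get right. A secondary nuisance is the odd-$j$ case, in which a pivotal box may be enclosed by a $6$-arm rather than a $4$-arm configuration: this is handled exactly as in Appendix~\ref{a.joddpivarm}, now using Proposition~\ref{p.alpha4_bis} (the estimate $\widetilde{\alpha}_5\le\grandO{1}(r/R)^\epsilon\widetilde{\alpha}_4$, together with the half- and quarter-plane analogues of the second part of Proposition~\ref{p.alpha4}), which makes the $6$-arm contributions summably negligible in the union bounds above.
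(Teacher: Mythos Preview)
Your proposal is correct and follows essentially the same route as the paper: the same density-and-$\widehat{\arm}$ union decomposition, the same spatial-independence factorization after squaring, Propositions~\ref{p.quasi_tilde} and~\ref{p.fandalpha_bis} in place of their annealed counterparts, and the same appeal to Appendix~\ref{a.joddpivarm}/Proposition~\ref{p.alpha4_bis} for odd $j$. The only cosmetic differences are that the paper bounds $\Prob^\eta[\Piv]$ by the full sum, squares it, and Cauchy--Schwarz's the cross terms (rather than iterating $(a+b)^2\le 2a^2+2b^2$), and that for (ii) the paper simply says ``essentially the same'' without inserting your middle $4$-arm factor (which is harmless since it is $\grandO{1}$); the half-/quarter-plane comparisons you mention are not needed here because $B_{\rho_2}(y)\subseteq A(r,R)$ keeps you in the bulk.
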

\begin{proof}
We prove only i) since the proof of ii) is essentially the same (actually, in the case $j$ odd, the proof of i) is slightly more technical). We first prove i) in the case $j$ even and then in the case $j$ odd. We use the following notation, where $0<\rho'\leq \rho''$:
\[
\dense(\rho',\rho'')=\dense_{1/100} (A(y;\rho',2\rho')) \cap \dense_{1/100} ( A(y;\rho'',2\rho'') ) \, .
\]
Since $j$ is even, then for any $k \in \{ 0,\cdots,\lfloor \log_2(\rho_2/(4\rho_1))\rfloor =: k_0 \}$, $\Piv_S^{A(y;\rho_1,\rho_2)}(\arm_j(r,R))$ is included in
\[
 \arm_4(y;2^{k+1}\rho_1,\rho_2/2) \cup \left( \Piv_S^{A(y;\rho_1,\rho_2)}(\arm_j(r,R)) \setminus \dense(2^k\rho_1,\rho_2/2) \right) \, ,
\]
where $\arm_4(y;\cdot,\cdot)$ is the $4$-arm event translated by $y$. As a result, $\Piv_S^{A(y;\rho_1,\rho_2)}(\arm_j(r,R))$ is also included in
\begin{align*}
& \arm_4(y;2\rho_1,\rho_2/2) \bigcup \left( \bigcup_{k=0}^{k_0} \arm_4(y;2^{k+2}\rho_1,\rho_2/2) \setminus \dense(2^k\rho_1,\rho_2/2) \right) \bigcup \neg \dense(2^{k_0+1}\rho_1,\rho_2/2) \\
& \subseteq \widehat{\arm}_4(y;2\rho_1,\rho_2/2) \bigcup \left( \bigcup_{k=0}^{k_0} \widehat{\arm}_4(y;2^{k+2}\rho_1,\rho_2/2) \setminus \dense(2^k\rho_1,\rho_2/2) \right) \bigcup \neg \dense(2^{k_0+1}\rho_1,\rho_2/2) \, ,
\end{align*}
where
\[
\widehat{\arm}_4(y;\rho',\rho'') = \left\lbrace  \Pro \left[ \arm_4(y;\rho',\rho'') \cond \omega \cap A(y;\rho',\rho'') \right] > 0 \right\rbrace  \, .
\]
By $\sigma$-additivity, $\Prob^\eta \left[ \Piv_S^{A(y;\rho_1,\rho_2)}(\arm_j(r,R)) \right]$ is less than or equal to
\[
\Prob^\eta \left[ \widehat{\arm}_4(y;2\rho_1,\rho_2/2) \right] + \sum_{k=0}^{k_0} \Prob^\eta \left[ \widehat{\arm}_4(y;2^{k+2}\rho_1,\rho_2/2) \right] \un_{\neg \dense(2^k\rho_1,\rho_2/2)} + \un_{\neg \dense(2^{k_0+1}\rho_1,\rho_2/2)} \, .
\]
Now, note that, for every $k \in \{0,\cdots,k_0\}$,
\bi 
\item $\Prob^\eta \left[ \widehat{\arm}_4(y;2^{k+2}\rho_1,\rho_2/2) \right]$ is independent of $\un_{\neg \dense(2^k\rho_1,\rho_2/2)}$;
\item $\E \left[ \Prob^\eta \left[ \widehat{\arm}_4(y;2^{k+2}\rho_1,\rho_2/2) \right]^2 \right] \leq C (2^k)^C \widetilde{\alpha}_4(y;\rho_1,\rho_2)$ for some $C <+\infty$ by~Propositions~\ref{p.fandalpha_bis} and~\ref{p.quasi_tilde};
\item $\Pro \left[ \dense(2^k\rho_1,\rho_2/2) \right] \leq \grandO{1}\exp(-\Omega(1) (2^k \rho_1)^2)$.
\ei
Note also that
\bi 
\item $\E \left[ \Prob^\eta \left[ \widehat{\arm}_4(y;2\rho_1,\rho_2/2) \right]^2 \right] \leq C \widetilde{\alpha}_4(\rho_1,\rho_2)$ and:
\item $\Pro \left[ \dense(2^{k_0+1}\rho_1,\rho_2/2) \right] \leq \grandO{1}\exp(-\Omega(1) (2^{k_0} \rho_1)^2)$.
\ei
Let us end the proof by showing that
\begin{multline*}
\E \Big[ \Big( \Prob^\eta \left[ \widehat{\arm}_4(y;2\rho_1,\rho_2/2) \right] +\\
\sum_{k=0}^{k_0} \Prob^\eta \left[ \widehat{\arm}_4(y;2^{k+2}\rho_1,\rho_2/2) \right] \un_{\neg \dense(2^k\rho_1,\rho_2/2)} + \un_{\neg \dense(2^{k_0+1}\rho_1,\rho_2/2)} \Big)^2 \Big]
\end{multline*}
is at most $\grandO{1} \widetilde{\alpha}_4(y;\rho_1,\rho_2)^2$. If we expand the above square, apply the Cauchy-Schwarz inequality to each of the $2^{k_0+3}$ terms and use the five items above, then we obtain the desired result since
\[
\sum_{k,l} \sqrt{(2^k)^{C} \exp(-\Omega(1) (2^k \rho_1)^2) (2^l)^C \exp(-\Omega(1) (2^l \rho_1)^2)} < +\infty \, .
\]
Let us now assume that $j$ is odd. In this case, as explained in Appendix~\ref{a.joddpivarm}, the pivotal event can be described for instance by using the $4$ and the $5$-arm events (and not only the $4$-arm event). As in Appendix~\ref{a.joddpivarm}, the result is obtained by using a comparison estimate between the $4$ and $5$-arm events. The only difference is that in the present case we use Proposition~\ref{p.alpha4_bis} instead of the analogous comparison estimate between $\alpha_4^{an}(\cdot,\cdot)$ and $\alpha^{an}_5(\cdot,\cdot)$.
\end{proof}

\bibliographystyle{alpha}
\bibliography{ref_perco}

\ni
{\bf Hugo Vanneuville} \\
Univ. Lyon 1\\
UMR5208, Institut Camille Jordan, 69100 Villeurbanne, France\\
vanneuville@math.univ-lyon1.fr\\
\url{http://math.univ-lyon1.fr/~vanneuville/}\\
Supported by the ERC grant Liko No 676999\\

\end{document}